\newcommand{\Old}[1]{}
\newcommand{\New}[1]{{\color{purple}#1}}
\newcommand{\nickc}[1]{{\color{red} {\bf [} {\color{red} COMMENT Nick  Jan 11: } #1{\bf ]}}}
\newcommand{\nickk}[1]{{\color{blue} #1}}
\newcommand{\nickca}[1]{{\color{purple} {\bf [} {\color{blue} COMMENT Nick  Jan 11: } #1{\bf ]}}}
\newcommand{\nickka}[1]{{\color{purple} #1}}
\newcommand{\nickcb}[1]{{\color{red} {\bf [} {\color{blue} COMMENT Nick  Jan 11: } #1{\bf ]}}}
\newcommand{\nickkb}[1]{{\color{blue} #1}}
\newcommand{\crisc}[1]{{\color{orange} {\bf [}{\color{orange} COMMENT Cris  Jan 11: } #1{\bf ]}}}
\newcommand{\textred}[1]{{\color{red}#1}}
\renewcommand{\nickc}[1]{{}}
\renewcommand{\nickk}[1]{{#1}}
\renewcommand{\nickca}[1]{{}}
\renewcommand{\nickka}[1]{{#1}}
\renewcommand{\nickcb}[1]{{}}
\renewcommand{\nickkb}[1]{{#1}}
\renewcommand{\crisc}[1]{}
\renewcommand{\textred}[1]{{#1}}
\renewcommand{\New}[1]{{#1}}
\theoremstyle{definition}
\newtheorem{thm}{Theorem}[section]
\newtheorem{lem}[thm]{Lemma}
\newtheorem{prop}[thm]{Proposition}
\newtheorem{cor}[thm]{Corollary}
\newcommand{\textdef}[1]{\textit{#1}}
\newcommand{\setR}{\ensuremath{\mathbb{R}}}
\newcommand{\setZ}{\ensuremath{\mathbb{Z}}}
\newcommand{\setN}{\ensuremath{\mathbb{N}}}
\newcommand{\set}[2][]{#1\{ {#2} #1\}}
\newcommand{\abs}[2][]{#1| #2 #1|}
\newcommand{\ceil}[2][]{#1\lceil #2 #1\rceil}
\newcommand{\floor}[2][]{#1\lfloor #2 #1\rfloor}
\newcommand{\paren}[2][]{#1( #2 #1)}
\newcommand{\aas}{a.a.s.}
\newcommand{\uar}{u.a.r.}
\newcommand{\eps}{\varepsilon}
\newcommand{\eqdef}{:=}
\newcommand{\eqdefinv}{=:}
\newcommand{\ds}{\ensuremath{\mathbf{d}}}
\newcommand{\ts}{\ensuremath{\mathbf{t}}}
\newcommand{\twos}{\ensuremath{\mathbf{2}}}
\newcommand{\threes}{\ensuremath{\mathbf{3}}}
\newcommand{\Ys}{\ensuremath{\mathbf{Y}}}
\newcommand{\Dcal}{\ensuremath{\mathcal{D}}}
\newcommand{\Ecal}{\ensuremath{\mathcal{E}}}
\newcommand{\Gcal}{\ensuremath{\mathcal{G}}}
\newcommand{\Jcal}{\ensuremath{\mathcal{J}}}
\newcommand{\Kcal}{\ensuremath{\mathcal{K}}}
\newcommand{\Pcal}{\ensuremath{\mathcal{P}}}
\newcommand{\Scal}{\ensuremath{\mathcal{S}}}
\newcommand{\prekernel}{pre-kernel}
\newcommand{\Prekernel}{Pre-kernel}
\newcommand{\prekernels}{pre-kernels}
\DeclareMathOperator{\funcprob}{\mathbb{P}}
\newcommand{\prob}[2][]{\funcprob #1(#2#1)}
\newcommand{\probcond}[3][]{\funcprob#1(#2\,#1|\,#3#1)}
\newcommand{\mean}[2][]{\mathbb{E}\,#1(#2#1)}
\newcommand{\meancond}[3][]{\mathbb{E}\,#1(#2 #1| #3 #1)}
\newcommand{\var}[2][]{\mathop{\rm Var}#1(#2#1)}
\newcommand{\lambdakc}[1][]{\lambda(k,{\ifthenelse{\equal{#1}{}}{c}{#1}})}
\newcommand{\lambdakcparam}[3][]{\lambda #1(#2,#3 #1)}
\newcommand{\fpo}[1]{f_{#1}}
\newcommand{\etac}[1][]{\eta_c}
\newcommand{\tpo}[3][]{\mathop{\rm Po}#1(#2, #3 #1)}
\newcommand{\tpoisson}[3][]{\mathop{\rm Po}#1(#2, #3 #1)}
\newcommand{\Bollobas}{Bollob\'as}
\newcommand{\Luczak}{\L uczak}
\newcommand{\Wormald}{Wormald}
\newcommand{\tD}{\ensuremath{\tilde\Dcal}}
\newcommand{\markwalk}[1][]{
  Z(k,{\ifthenelse{\equal{#1}{}}{c}{#1}})
}
\newcommand{\markstepwalk}[1][]{
  Z_{\ifthenelse{\equal{#1}{}}{j}{#1}}
}
\newcommand{\totalmarkstepwalk}[1][]{
  Y_{\ifthenelse{\equal{#1}{}}{j}{#1}}
}
\newcommand{\markstepwalkplus}[1][]{
  Z_{\ifthenelse{\equal{#1}{}}{j}{#1}}^{+}
}
\newcommand{\totalmarkstepwalkplus}[1][]{
  Y_{\ifthenelse{\equal{#1}{}}{j}{#1}}^{+}
}
\newcommand{\markstepwalkminus}[1][]{
  Z_{\ifthenelse{\equal{#1}{}}{j}{#1}}^{-}
}
\newcommand{\totalmarkstepwalkminus}[1][]{
  Y_{\ifthenelse{\equal{#1}{}}{j}{#1}}^{-}
}
\newcommand{\markstepdel}[1][]{
  Z_{\ifthenelse{\equal{#1}{}}{j}{#1}}
}
\newcommand{\totalmarkstepdel}[1][]{
  Y_{\ifthenelse{\equal{#1}{}}{j}{#1}}
}
\newcommand{\probsizekc}[1][]{
  q(k,
  {\ifthenelse{\equal{#1}{}}{c}{#1}})
}
\newcommand{\probsizekstep}[1][]{
  p_{\ifthenelse{\equal{#1}{}}{j}{#1}}
}
\newcommand{\probkillstep}[1][]{
  p_{\ifthenelse{\equal{#1}{}}{j}{#1}}'
}
\DeclareMathOperator{\multisub}{multi}
\newcommand{\multik}{G_{k}^{\multisub}}
\newcommand{\multiknm}[1][]{\multik(n,m)}
\newcommand{\simplek}{G_k}
\newcommand{\simpleknm}[1][]{\simplek(n,m)}
\newcommand{\cNM}{C(N,M)}
\newcommand{\mcacti}{{m'}}
\newcommand{\gcore}{g_{\textrm{core}}}
\newcommand{\wcore}{w_{\textrm{core}}}
\newcommand{\wpre}{w_{\textrm{pre}}}
\newcommand{\gcacti}{g_{\textrm{forest}}}
\newcommand{\gpre}{g_{\textrm{pre}}}
\newcommand{\nN}{\check n}
\newcommand{\nNopt}{\check n^*}
\newcommand{\nopt}{n^*}
\newcommand{\RN}{\check R}
\newcommand{\f}{f_1}
\newcommand{\ff}{f_2}
\newcommand{\fff}{f_3}
\newcommand{\FF}{\ff}
\newcommand{\fg}{g_1}
\newcommand{\fgg}{g_2}
\newcommand{\nn}{n_1}
\newcommand{\nncore}{\hat{n}_1}
\newcommand{\nncopt}{\hat{n}_1^*}
\newcommand{\nnopt}{n_1^*}
\newcommand{\lambdaopt}{\lambda^*}
\newcommand{\lambdaoptopt}{\lambda^{**}}
\newcommand{\mcore}{\hat{m}}
\newcommand{\hcore}{h_{n}}
\newcommand{\fcore}{f_{\textrm{core}}}
\newcommand{\fpre}{f_{\textrm{pre}}}
\newcommand{\hpre}{\hcore}
\newcommand{\Gcore}{\Gcal}
\newcommand{\nthree}{n_3}
\newcommand{\mthree}{m_3}
\newcommand{\coremthree}{\hat{m}_3}
\newcommand{\Qtwo}{Q_2}
\newcommand{\coreQtwo}{\hat{Q}_2}
\newcommand{\ntwo}{n_2}
\newcommand{\corentwo}{\hat{n}_2}
\newcommand{\ctwo}{c_2}
\newcommand{\etatwo}{\eta_2}
\newcommand{\corectwo}{{c}_2}
\newcommand{\coreetatwo}{{\eta}_2}
\newcommand{\cthree}{c_3}
\newcommand{\precthree}{\hat{c}_3}
\newcommand{\etathree}{\eta_3}
\newcommand{\mtwo}{m_2}
\newcommand{\Mtwo}{M_2}
\newcommand{\Rtwo}{R_2}
\newcommand{\Rthree}{R_3}
\newcommand{\indicator}{\mathbbm{1}}
\DeclareMathOperator{\dif}{d}
\newcommand{\ignore}[1]{}
\newcommand{\prenn}{\hat{n}_1}
\newcommand{\prennopt}{\hat{n}_1^*}
\newcommand{\prenthree}{\hat{n}_3}
\newcommand{\prenthreeopt}{\hat{n}_3^*}
\newcommand{\kzero}{k_0}
\newcommand{\prekzero}{\hat{k}_0}
\newcommand{\prekzeroopt}{\hat{k}_0^*}
\newcommand{\kone}{k_1}
\newcommand{\prekone}{\hat{k}_1}
\newcommand{\prekoneopt}{\hat{k}_1^*}
\newcommand{\ktwo}{k_2}
\newcommand{\prektwo}{\hat{k}_2}
\newcommand{\prektwoopt}{\hat{k}_2^*}
\newcommand{\ntwoequal}{n_{2}}
\newcommand{\Pthree}{P_3}
\newcommand{\prePthree}{\hat{P}_3}
\newcommand{\prePthreeopt}{\hat{P}_3^*}
\newcommand{\Ptwo}{P_2}
\newcommand{\prePtwo}{\hat{P}_2}
\newcommand{\prePtwoopt}{\hat{P}_2^*}
\newcommand{\Qthree}{Q_3}
\newcommand{\preQthree}{\hat{Q}_3}
\newcommand{\preQthreeopt}{\hat{Q}_3^*}
\newcommand{\Tthree}{T_3}
\newcommand{\preTthree}{\hat{T}_3}
\newcommand{\preTthreeopt}{\hat{T}_3^*}
\newcommand{\Ttwo}{T_2}
\newcommand{\preTtwo}{\hat{T}_2}
\newcommand{\preTtwoopt}{\hat{T}_2^*}
\newcommand{\premtwo}{\hat{m}_2}
\newcommand{\premtwoopt}{\hat{m}_2^*}
\newcommand{\premthree}{\hat{m}_3}
\newcommand{\premthreeopt}{\hat{m}_3^*}
\newcommand{\mtwoprime}{m_2^{-}}
\newcommand{\premtwoprime}{\hat{m}_2^{-}}
\newcommand{\premtwoprimeopt}{\hat{m}_2^{-*}}
\newcommand{\mpre}{\hat{m}}
\newcommand{\prex}{\hat{x}}
\newcommand{\corex}{\prex}
\newcommand{\preS}{\hat{S}}
\newcommand{\coreJ}{\hat{J}}
\newcommand{\preB}{\hat{B}}
\newcommand{\preC}{\hat{C}}
\newcommand{\xopt}{{x}^*}
\newcommand{\prexopt}{\hat{x}^*}
\newcommand{\Gpre}{\Pcal}
\title{Asymptotic enumeration of sparse connected 3-uniform
  hypergraphs}
\author{C.~M.~Sato\\University of Waterloo\\\texttt{cmsato@uwaterloo.ca}
\and N.~Wormald\thanks{This research was supported in part by the Canada Research Chairs Program,  and currently by an ARC Australian Laureate Fellowship.}
\\Monash University\\\texttt{nicholas.wormald@monash.edu}}
\date{}
\begin{document}

\maketitle
\begin{abstract}
\nickkb{We derive an asymptotic formula for the number of connected $3$-uniform
hypergraphs with vertex set $[N]$ and $M$ edges for $M=N/2+R$ as long as
$R$ satisfies $R = o(N)$ and $R=\omega(N^{1/3}\ln^{2} N)$. This almost completely fills the gap in the range of $M$ for which the formula is known. We approach the problem using an `inside-out' approach of an earlier paper of Pittel and the second author, for connected graphs. A key part of the method uses structural components of connected hypergraphs called   cores and kernels.  These are structural components of connected hypergraphs. Our results also give information on the numbers of them with a given number of vertices and edges, and hence their  typical size in random connected $3$-uniform hypergraphs with $N$ vertices and $M$ edges, for the range of $M$ we consider. }
\end{abstract}
\section{Introduction}
 The problem of counting connected graphs with given number of vertices
and edges has been intensively studied throughout the years. \nickk{One} of the best results is an
asymptotic formula by Bender, Canfield and McKay~\cite{BCM} that works
when \nickk{the {\em excess}} $m-n\to \infty$ as $n\to\infty$, where $m=m(n)$ is the number of
edges and $n$ is the number of vertices. Pittel and
\Wormald~\cite{PWb} rederived this formula with improved
 error bounds for some ranges.  \nickk{Significantly} less is known about
connected hypergraphs. Karo{\'n}ski and \Luczak~\cite{KL} derived an
asymptotic formula for the number of connected $k$-uniform \nickkb{hyper}graphs on
$[N]$ with $M$ hyperedges for  \nickk{the range of small excess where}  $M = N/(k-1)+o(\ln N/\ln \ln
N)$, which is a range with small excess. This was later extended by
Andriamampianina and Ravelomanana~\cite{AR} for $M=N/(k-1) +
o(N^{1/3})$, which still has very small excess. \nickk{Regarding results for  denser \nickkb{hyper}graphs},
Behrisch, Coja-Oghlan and Kang~\cite{BCK} provided an asymptotic
formula for the case $M = N/(k-1) + \Theta(N)$. Thus, there is a gap
between the  case $M-N/(k-1) = o(N^{1/3})$ and the linear
case $M-N/(k-1) = \Omega(N)$ in which no asymptotic formulae were
found. The case $M-N/(k-1) = \omega(N)$ is also open.

In this \nickk{paper, we  obtain} an asymptotic formula for the number of connected $3$-uniform
\nickkb{hyper}graphs with vertex set $[N]$ and $M$ edges for $M=N/2+R$ as long as
$R$ satisfies $R = o(N)$ and $R=\omega(N^{1/3}\ln^{2} N)$. This leaves \nickk{only a tiny remaining  gap, between}   $M-N/2 = o(N^{1/3})$
and $M-N/2 = \omega(N^{1/3}\ln^{2} N)$. Our technique is based \nickk{on an}
approach that Pittel and Wormald~\cite{PWb} used to the enumerate
connected graphs. With this technique, we also obtain information on the sizes of a kind of core and kernel in the graphs being counted. \nickk{We restrict ourselves to the 3-uniform case because the complexity of this approach increases for the more general case.} The results in this \nickk{paper are contained in the PhD thesis~\cite{Sato13} of the first author.}

% Giant
Behrisch, Coja-Oghlan and Kang~\cite{BCK} obtained their enumeration
result by precisely estimating the joint distribution of the number of
vertices and the number of edges in the giant component of the random
hypergraph. We remark that the distribution of the number of vertices
and edges has \nickk{recently been described independently} by \Bollobas\ and
Riordan~\cite{BR12a}, but that result does not provide point
probabilities, which would allow the enumeration result to be deduced.

\nickkb{From our results, it will be possible to determine the joint distribution of the giant component's excess, core size and kernel size in the random $k$-uniform hypergraph  with $n$ vertices and $m$ random edges,  for the range of density that we have covered in this paper. This should be straightforward along the lines of the analogous argument for the case $k=2$ in~\cite{PWb}. }

% Our result 

\section{Main result}

A \textdef{hypergraph}\index{hypergraph} is a pair $(V,\Ecal)$, where
$V$ is a finite set and $\Ecal$ is a subset of nonempty sets in $2^V$,
which is the set of all subsets of~$V$. The elements in~$V$ are called
\textdef{vertices} and the elements in $\Ecal$ are called
\textdef{hyperedges}\index{hypergraph!hyperedges}.  For any integer
$k\geq 2$, a \textdef{$k$-uniform
  hypergraph}\index{hypergraph!kuniform@$k$-uniform} is a hypergraph
where each hyperedge has size~$k$. For any hypergraph~$G$, a
\textdef{path}\index{hypergraph!path} is a (finite) sequence $v_1E_1v_2E_2\dotsc v_k$, where
$v_1,\dotsc, v_k$ are distinct vertices and $E_1,\dotsc, E_{k-1}$ are
distinct hyperedges such that $v_i, v_{i+1}\in E_i$ for all
$i\in[k-1]$. We say that a hypergraph is \textdef{connected}\index{hypergraph!connected} if, for
any vertices $u$ and $v$, there exists a path from $u$ to~$v$.

An $(N,M,k)$-hypergraph is a $k$-uniform hypergraph with $V = [N]$ and
$M$ edges. Let $\cNM$ denote the number of connected
$(N,M,3)$-hypergraphs.  Our main result is an asymptotic formula for
$\cNM$ for a sparse range of~$M$. For $k\geq 0$, define $ g_k(\lambda) = \exp(\lambda)+k$ and recall
that $\fpo{k}(\lambda) = 
\exp(\lambda)-\sum_{i=0}^{k-1} {\lambda^i}/{i!}$.\label{glo:fpo}
\begin{thm} Let $M = M(N) = N/2 + R$ be such that $R=o(N)$ and $R =
  \omega(N^{1/3}\ln^{2} N)$. Then\label{glo:thm-main}
  \label{thm:main-hyper}
  \begin{equation*}
    \cNM\sim
    \sqrt{\frac{3}{\pi  N}}
    \exp\paren[\Big]{N \phi(\nNopt)+ N\ln N-N},
  \end{equation*}
  where
  \begin{align*}
     \phi(x) =
      &-\frac{(1-x)}{2} \ln(1-x) +\frac{1-x}{2}\\
      &+\frac{2R}{N}\ln(N)
      -\paren[\big]{\ln(2)+2}\frac{R}{N}
      - \frac{1}{2}\ln(2)x
      \\
      &+\frac{R}{N}\ln\left(\frac{
           g_1(\lambdaoptopt)}{\lambdaoptopt f_1(\lambdaoptopt) }
      \right)+ \frac{1}{2}x\ln\left(\frac{f_1(\lambdaoptopt) g_1(\lambdaoptopt)}{\lambdaoptopt}\right),
     \end{align*}
          \begin{equation*}
       \nNopt = \frac{\FF(2\lambdaoptopt)}{\fpo{1}(\lambdaoptopt) g_1(\lambdaoptopt)},
     \end{equation*}
     and $\lambdaoptopt$ is the unique positive solution of
     \begin{equation}
       \label{eq:lambdaoptopt-thm}
       \lambda
       \frac{e^{2\lambda}+e^{\lambda}+1}
       {\fpo{1}(\lambda) g_1(\lambda)}
       =
       \frac{3M}{N}.
     \end{equation}
\end{thm}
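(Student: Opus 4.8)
The plan is to adapt the \emph{inside-out} method of Pittel and \Wormald~\cite{PWb}: peel a connected $3$-uniform hypergraph into nested structural layers --- the hypergraph sits outside its \emph{core}, which sits outside its \emph{kernel} --- count each layer separately, and then recombine everything into a single explicit multiple sum that is evaluated by Laplace's method.

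\emph{The decomposition.} For a connected $(N,M,3)$-hypergraph $G$ of positive excess, let $\kcoreof{G}$ be the maximal sub-hypergraph of minimum vertex-degree at least $2$, obtained by repeatedly deleting vertices of degree $\le 1$ together with any incident hyperedge, and let the \emph{kernel} be the clean structure obtained from $\kcoreof{G}$ by suppressing the vertices of degree $2$ (so that the kernel carries appropriate minimum-degree bounds). The two reductions are invertible up to an explicit multiplicity: $G$ is recovered from $\kcoreof{G}$ by rooting a spanning forest of $3$-uniform hypertrees at the core vertices, and $\kcoreof{G}$ is recovered from the kernel by distributing the suppressed degree-$2$ vertices back along the links of the kernel. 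A handful of degenerate configurations (very small excess, or a collapsing core or kernel) is isolated and shown negligible under the hypotheses on $R$. This gives an exact identity
\[
  \cNM \;=\; \sum \bigl(\text{number of kernels}\bigr)\cdot\bigl(\text{number of subdivisions}\bigr)\cdot\bigl(\text{number of spanning hyperforests}\bigr),
\]
the sum ranging over the possible numbers of vertices and hyperedges of the core and of the kernel.

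\emph{The three factors.} \textbf{(i)} The number of clean (multi-)hypergraphs realizing a prescribed degree sequence subject to the relevant minimum-degree bounds is obtained from a configuration/pairing model --- count pairings exactly, then multiply by the asymptotically explicit (exponential) probability that the outcome is simple --- and then summed over admissible degree sequences; the minimum-degree truncations together with the $3$-uniform hyperedge structure are what bring the truncated exponentials $\fpo{k}(\lambda)=e^{\lambda}-\sum_{i=0}^{k-1}\lambda^{i}/i!$ and the functions $g_k(\lambda)=e^{\lambda}+k$ into the bookkeeping. \textbf{(ii)} The number of ways of re-inserting a given number of degree-$2$ vertices into the kernel is an elementary composition count that, after Stirling, contributes smoothly to the exponent. \textbf{(iii)} The number of ways of hanging rooted $3$-uniform hypertrees off the core so as to exhaust all $N$ vertices is read off from the exponential generating function of rooted $3$-uniform hypertrees, which satisfies a functional equation of the form $T=x\exp(T^{2}/2)$; its relevant argument turns out to be $\lambdaoptopt$, while the hyperedges lying inside the forest bring in $\FF(2\lambda)=e^{2\lambda}-1-2\lambda$, which is why $\nNopt=\FF(2\lambdaoptopt)/(\fpo{1}(\lambdaoptopt)g_1(\lambdaoptopt))$ turns out to be the stationary core fraction.

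\emph{Recombination and the main obstacle.} Substituting Stirling's formula throughout turns $\ln\cNM$ into $N\ln N-N$ plus $N$ times a smooth function of the rescaled summation variables (a core-vertex fraction $x$, the kernel parameters, and the saddle-point variable $\lambda$); maximizing this function yields the stationarity equations, which collapse --- using $e^{2\lambda}+e^{\lambda}+1=(e^{3\lambda}-1)/\fpo{1}(\lambda)$ --- to the stated value of $\nNopt$ and to equation~\eqref{eq:lambdaoptopt-thm}, while the Gaussian integral over the fluctuations about the unique maximum produces the prefactor $\sqrt{3/(\pi N)}$ (the constant encoding the Hessian determinant and the $3$-uniformity), and $\phi(\nNopt)$ is the value at the maximum. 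I expect the decisive difficulty to be the \emph{uniform} error control near the lower end of the range of $R$: below a threshold of order $N^{1/3}\ln^{2}N$ the core and kernel become too small and sparse for the local limit behaviour to take hold, so one must establish sharp tail bounds on the joint law of excess, core size and kernel size, together with sufficiently precise uniform asymptotics for the kernel count, in order to keep the Laplace approximation valid all the way down to $R=\omega(N^{1/3}\ln^{2}N)$; the corresponding combinatorial explosion for larger uniformities is exactly why the paper restricts to the $3$-uniform case.
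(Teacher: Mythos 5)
Your overall strategy — the inside-out decomposition into core, kernel and rooted forest, with Laplace's method to evaluate the resulting sum — is indeed the approach taken in the paper, and your identification of the bottleneck (uniform error control near the lower end of the range of $R$, which is exactly what forces $R=\omega(N^{1/3}\ln^2 N)$) is accurate. However, there is a concrete gap at the very first step: you define the core as the standard $2$-core, the maximal sub-hypergraph of minimum vertex-degree at least $2$, obtained by repeatedly deleting degree-$\le 1$ vertices with their incident hyperedges. For $3$-uniform hypergraphs this decomposition does not produce a rooted forest outside the core. A deleted hyperedge may contain two degree-$1$ vertices and one vertex of higher degree, or other mixes, so the number of hyperedges outside the core is not determined by the number of vertices outside the core, and the ``outside'' structure need not be acyclic in the hypergraph sense. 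The paper deliberately uses a different core: the maximal induced subgraph in which every hyperedge contains at least two vertices of degree at least $2$ (so the core is allowed to retain some degree-$1$ vertices). With this definition one shows (Proposition~\ref{p:decomposition-hyper}) that each deleted hyperedge has exactly one vertex of degree $\ge 2$, hence contributes exactly two new vertices, the outside is a rooted forest with $(N-n)/2$ hyperedges, and the excess transfers cleanly via $m - n/2 = M - N/2$. Without this modification the exact identity $\cNM=\sum_n\gcacti(N,n)\gpre(n,M-(N-n)/2)$ does not hold and the bookkeeping that produces $\fpo{1}, g_1, g_2$ and $\FF(2\lambda)$ breaks down.

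A secondary omission: the paper works with \emph{pre-kernels}, i.e.\ cores with isolated cycles removed, rather than cores themselves. When combining with the forest count one must know that the core of a connected hypergraph with $R\to\infty$ is automatically a pre-kernel (isolated-cycle cores correspond to zero excess), and the asymptotic enumeration that feeds the Laplace method is of connected pre-kernels, not of cores; the core count only enters as an upper bound used to truncate the tail of the sum. Your sketch conflates these objects. Finally, your derivation of the forest factor from the functional equation $T=x\exp(T^2/2)$ is a viable alternative to the paper's exact inductive formula for $\gcacti(N,n)$, but neither the EGF nor the exact formula by itself rescues the argument without the corrected core definition.
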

Our proof basically follows \nickk{one of the two approaches}  that Pittel and
Wormald~\cite{PWb} use to the enumerate connected graphs in the
sparser range. \nickk{This involves decomposing} a connected
graph into two parts: a cyclic structure and an acyclic structure. The
cyclic structure is a pre-kernel, which is a $2$-core without isolated
cycles. The acyclic structure is a rooted forest where the roots are
the vertices of the pre-kernel. A rooted forest with roots
$r_1,\dotsc, r_t$ (that are vertices in the forest) simply is a forest
such that each component contains exactly one of the roots. The graph
can then be obtained by `gluing' these two structures together. Pittel
and Wormald obtain an asymptotic formula for the number of the cyclic structures and
combine it with a known formula for the acyclic parts to obtain an
asymptotic formula for the number of connected graphs with given
number of vertices and edges.

We will also decompose a connected $3$-uniform hypergraph into two
parts: a cyclic structure (which we will also call pre-kernel) and an
acyclic structure (a forest rooted on the vertices of the
pre-kernel). We will also obtain asymptotic formulae for these
structures and then combine them to obtain an asymptotic formula for
the number of connected $(N, M, 3)$-hypergraphs.

From now on, we will deal with $3$-uniform hypergraphs most of the
time\nickk{, so henceforth,} for convenience, we will use the word `graph'
to denote $3$-uniform hypergraphs. When we want to refer to graphs in
the usual sense, we will call them `$2$-uniform hypergraphs'.  We will
also use the word `edge' instead of `hyperedge'.

\nickk{ We often give the results of routine algebraic manipulation
  and expansions  for which we have used Maple. The interested reader
  can see~\cite[Appendix A]{Sato13} for more details of these computations.} \nickkb{Please note that we give a glossary of notation at the end of this article.}

\section{Relation to a known formula}
\label{sec:rel-hyper}
As we mentioned before, Behrisch, Coja-Oghlan and Kang~\cite{BCK}
provided an asymptotic formula for the number of connected $(N,M,
k)$-hypergraphs for the range $M = N/(k-1) + \Omega(N)$. In this
section, we show that, for $k=3$, their formula is asymptotic to ours
when $R= M-N= o(N)$. Behrisch, Coja-Oghlan and Kang obtained their
result by computing the probability that the random hypergraph $H_k(N,
M)$ with uniform distribution on all $(N,M,k)$-hypergraphs is
connected.
\begin{thm}[{\cite[Theorem 5]{BCK}}]
  Let $k\geq 2$ be a fixed integer. For any compact set $\Jcal \subset
  (k(k-1)^{-1},\infty)$, and for any $\delta > 0$ there exists $N_0
  > 0$ such that the following holds. Let $M = M(N)$ be a
  sequence of integers such that $\zeta = \zeta(N) = kM/N
  \in\Jcal$ for all $N$. Then there exists a unique number $0 < r
  =r(N) < 1$ such that
  \begin{equation}
    \label{eq:CO-r}
    r
    =
    \exp\left(
      -\zeta\frac{(1-r)(1-r^{k-1})}{1-r^k}
      \right).
  \end{equation}
  Let $\Phi(\zeta) = r^{r/(1-r)} (1-r)^{1-\zeta}
  (1-r^k)^{\zeta/k}$. Furthermore, let
  \begin{align*}
    R_2(N, M) &= \frac{1+r-\zeta r}{\sqrt{(1-r)^2 -2\zeta r}} 
    \exp\left(\frac{2\zeta r + \zeta^2 r}{2(1+r)}\right) \Phi(\zeta)^N,\quad
    \text{and set}\\
    R_k(N,M) &=
    \frac{1-r^k-(1-r)\zeta(k-1)r^{k-1}}{\sqrt{(1-r^{k}+\zeta(k-1)(r-r^{k-1}))
      (1-r^k)-\zeta k r (1-r^{k-1})^2}}\\
    &\quad\times
    \exp \left( 
      \frac{\zeta(k-1)(r-2r^k+r^{k-1})}{2(1-r^k)}
    \right) \Phi(\zeta)^N,\quad\text{ if }k>2.
  \end{align*}
  For $N>N_0$, the probability that $H_k(N, M)$ is connected
  is in $((1-\delta)R_k(N,M), (1+\delta)R_k(N,M))$.
\end{thm}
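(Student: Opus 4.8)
The plan is to derive the stated two-sided bound by obtaining a sharp asymptotic formula for $C(N,M)$, the number of connected $(N,M,k)$-hypergraphs, since $\mathbb{P}[H_k(N,M)\text{ is connected}]=C(N,M)/\binom{\binom{N}{k}}{M}$. The engine is the elementary identity that a component of a random hypergraph, conditioned on its vertex set and its edge count, is a uniformly random connected hypergraph on that set: for auxiliary parameters $N'>N$ and $M'>M$, if $X(N',M')$ denotes the number of components of $H_k(N',M')$ having exactly $N$ vertices and $M$ edges, then by linearity of expectation
\begin{equation*}
  \mathbb{E}[X(N',M')]=\binom{N'}{N}\,C(N,M)\,\frac{\binom{\binom{N'-N}{k}}{M'-M}}{\binom{\binom{N'}{k}}{M'}}.
\end{equation*}
Solving for $C(N,M)$ converts the enumeration problem into the problem of estimating $\mathbb{E}[X(N',M')]$, and hence into a question about the giant component of a random hypergraph in its typical (bulk) regime rather than in a large-deviation regime.

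First I would choose $N'=N'(N,M)$ and $M'=M'(N,M)$, with $N'/N$ and $M'/N$ bounded and $N'<2N$, so that in $H_k(N',M')$ the unique giant component $\mathcal G$ typically has $N$ vertices and $M$ edges; that is, $(N,M)$ is placed at the centre of the fluctuation window of $(|\mathcal G|,e(\mathcal G))$. The relation tying $(N',M')$ to $(N,M)$ is governed precisely by the root $r=r(\zeta)$ of \eqref{eq:CO-r}, which is the extinction probability of the Poisson branching process approximating the component exploration and which plays the role of the fraction of ``outside'' vertices; this is the point where the specific equation \eqref{eq:CO-r} and the function $\Phi(\zeta)$ enter the analysis. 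Since $N'<2N$, a component on $N$ vertices is necessarily the giant (the second-largest component of a strictly supercritical $H_k(N',M')$ is $O(\log N')$), so $\mathbb{E}[X(N',M')]=\mathbb{P}\!\left[\,|\mathcal G|=N,\ e(\mathcal G)=M\,\right]+O(N^{-\omega(1)})$.

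The technical heart is a bivariate local central limit theorem: $\mathbb{P}\!\left[\,|\mathcal G|=n,\ e(\mathcal G)=m\,\right]$ should be asymptotic to the density of a bivariate Gaussian with mean $(\bar n,\bar m)$ and covariance matrix $\Sigma=\Sigma(N')$ of order $N'$, uniformly for $(n,m)$ in an $O(\sqrt{N'})$ window around $(\bar n,\bar m)$. One route: run the edge-exposure/component-exploration process, identify $(\bar n,\bar m)$ and the associated rate function by the differential-equation method, upgrade concentration to a joint Gaussian limit via a martingale functional central limit theorem, and obtain the lattice local version by a smoothing/switching argument; the number of surplus edges inside $\mathcal G$ (equivalently, the $2$-core and kernel data) must be tracked along the way since it couples the vertex and edge counts. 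Evaluating at the peak $(n,m)=(N,M)$ then yields $\mathbb{P}\!\left[\,|\mathcal G|=N,\ e(\mathcal G)=M\,\right]\sim\bigl(2\pi\sqrt{\det\Sigma}\,\bigr)^{-1}=\Theta(1/N)$. The remaining work is purely Stirling's formula: expand $\binom{N'}{N}$, $\binom{\binom{N'}{k}}{M'}$, $\binom{\binom{N'-N}{k}}{M'-M}$ and $\binom{\binom{N}{k}}{M}$, substitute into the identity above, and simplify (a routine though lengthy computation, the sort done with Maple in this paper) to recover $\Phi(\zeta)^N$ from the exponential parts and the explicit rational-function-times-exponential prefactor $R_k(N,M)$ from the polynomial parts together with $\det\Sigma$. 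All estimates are uniform over $\zeta\in\Jcal$ by compactness, so every $(1+o(1))$ can be absorbed into an arbitrary $\delta>0$ once $N>N_0(\delta,\Jcal)$, which is exactly the form of the claimed bound.

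The main obstacle is the bivariate local limit theorem of the third step: one needs simultaneously the correct exponential rate — equivalently, the verification that \eqref{eq:CO-r} genuinely pins $(N,M)$ to the peak of the giant-size distribution — and Gaussian control of the \emph{joint} fluctuations of the giant's vertex and edge counts, which for hypergraphs requires careful handling of the exploration process and the surplus/core/kernel structure rather than a single-variable argument; matching the resulting $\det\Sigma$ and lower-order terms to the explicit $R_k$ is the second, more mechanical, difficulty. (An alternative to the whole scheme is a Bender--Canfield--McKay-style generating-function approach: write $\sum_{N,M}\binom{\binom{N}{k}}{M}y^M x^N/N!=\exp F(x,y)$, where $F$ is the mixed generating function of connected $k$-uniform hypergraphs, extract $C(N,M)$ from $F=\log(\cdot)$, and carry out a two-variable saddle-point analysis with a local-limit refinement; the saddle-point equations again reduce to \eqref{eq:CO-r}, but extracting coefficients from a logarithm accurately enough to reach the precise prefactor is itself delicate.)
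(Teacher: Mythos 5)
This theorem is not proved in the paper at all --- it is quoted verbatim from Behrisch, Coja-Oghlan and Kang \cite{BCK}, and the paper only records, in its introduction, that they obtained it by precisely estimating the joint distribution of the number of vertices and the number of edges of the giant component of the random hypergraph. Your plan --- tune $(N',M')$ so that the giant component of $H_k(N',M')$ typically has exactly $N$ vertices and $M$ edges, prove a bivariate local limit theorem for those two counts, and then unwind the expectation identity for $C(N,M)$ with Stirling's formula --- is essentially that same route, with the entire technical burden residing in the bivariate local limit theorem, which your proposal outlines but does not prove.
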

From this theorem, it is immediate that the number of connected
$(N,M,k)$-graphs is asymptotic to
\begin{equation*}
 \binom{\binom{N}{k}}{M}R_k(N,M)\eqdefinv D(N,M,k)
\end{equation*}
when $R= M-N/2 = \Omega(N)$. Next we assume $R/N=o(1)$ and do some
simplifications in $D(N,M,3)$. So suppose $R=M-N/2=o(N)$. First we
compare $r$ in~\eqref{eq:CO-r} with $\lambdaoptopt$
in~\eqref{eq:lambdaoptopt-thm}:
\begin{equation*}
  r
  =
  \exp\left(
    -\frac{3M}{N}\frac{(1-r)(1-r^2)}{1-r^3}
  \right)
  \quad\text{and}\quad
  \lambdaoptopt
  \frac{e^{2\lambdaoptopt}+e^{\lambdaoptopt}+1}
  {e^{2\lambdaoptopt}-1}
  =
  \frac{3M}{N}.
\end{equation*}
By taking the logs in both sides of the definition of $r$, it is
obvious that $r = \exp(-\lambdaoptopt)$. As we will see later,
$\lambdaoptopt \to 0$ and so $r\to 1$. \nickk{Then by expanding we find   that}
\begin{equation*}
\lim_{r\to 1}
\frac{1-r^3-2(1-r)\zeta r^{2}}{\sqrt{(1-r^{3}+2\zeta(r-r^{2}))
    (1-r^3)-3\zeta r (1-r^{2})^2}}
=
\sqrt{3}
\end{equation*}
and
\begin{equation*}
\lim_{r\to 1} \frac{\zeta (r-2r^3+r^{2})}{1-r^3}=3/2.
\end{equation*}
Thus,
\begin{equation*}
  \begin{split}
    D(N,M,k) 
    &\sim \binom{\binom{N}{3}}{M}\sqrt{3}\exp(3/2)\Phi(3M/N)^N
    \\
    &\sim
    \frac{\displaystyle\sqrt{2\pi\binom{N}{3}}\left(\frac{\binom{N}{3}}{e}\right)^{\binom{N}{3}}}
      {\displaystyle\sqrt{2\pi M}\left(\frac{M}{e}\right)^M
        \sqrt{2\pi\left(\binom{N}{3}-M\right)}\left(\frac{\binom{N}{3}-M}{e}\right)^{\binom{N}{3}-M}}
      \sqrt{3} \exp(3/2)\Phi(3M/N)^N,
  \end{split}
\end{equation*}
by Stirling's approximation. Thus, using $M= N/2+R$ and $R= o(N)$,
\begin{equation*}
  \begin{split}
    &D(N,M,k)\\
    &\sim
    \sqrt{\frac{3}{\pi N}}
    \exp\left(
    3/2+
    \binom{N}{3}\ln \binom{N}{3}
    -M\ln M
    -\left(\binom{N}{3}-M\right)\ln\left( \binom{N}{3}-M\right)
    +N\Phi(3M/N)
  \right)
  \\
  &=
    \sqrt{\frac{3}{\pi N}}
    \exp\left(
    3/2
    -\left(\binom{N}{3}-M\right)\ln\left( 1 -\frac{M}{\binom{N}{3}}\right)
    -M\ln M+M\ln \binom{N}{3}
    +N\Phi(3M/N)
  \right)
 \\
 & =
  \sqrt{\frac{3}{\pi N}}
    \exp\left(
    3/2
    -\left(\binom{N}{3}-M\right)\left(-\frac{M}{\binom{N}{3}} 
      + O\left(\frac{M^2}{\binom{N}{3}^2}\right)\right)
    -M\ln M+M\ln \binom{N}{3}
    +N\Phi(3M/N)
  \right).
\end{split}
\end{equation*}
Thus,
\begin{equation*}
  \begin{split}
&D(N,M,k)=
\sqrt{\frac{3}{\pi N}}
    \exp\left(
    3/2
    +M
    -M\ln M+M\ln \binom{N}{3}
    +N\Phi(3M/N)
    +o(1)
  \right)
\\
&=
\sqrt{\frac{3}{\pi N}}
    \exp\left(
    3/2
    +M
    -M\ln M+ M\ln \frac{N^3}{6}
    + M\ln \frac{N(N-1)(N-2)}{N^3}
    +N\Phi(3M/N)
    +o(1)
  \right)
\\
&=
\sqrt{\frac{3}{\pi N}}
    \exp\left(
    3/2
    +M
    -M\ln M+ 3M\ln N - M\ln 6
    + M\ln \left(1 - \frac{3N-2}{N^2}\right)
    +N\Phi(3M/N)
    +o(1)
  \right)
\\
&=
\sqrt{\frac{3}{\pi N}}
    \exp\left(
    3/2
    +M
    -M\ln M+ 3M\ln N - M\ln 6
    - M\frac{3N}{N^2}
    +N\Phi(3M/N)
    +o(1)
  \right)
\\
&\sim
\sqrt{\frac{3}{\pi N}}
    \exp\left(
      M
    -M\ln M+ 3M\ln N - M\ln 6
    +N\Phi(3M/N)
  \right),
\end{split}
\end{equation*}
which is exactly the same as our formula in
Theorem~\ref{thm:main-hyper} after a series of \nickk{routine algebraic simplifications.}

\section{Basic definitions and results for hypergraphs}
\label{sec:def-hyper}

In this section, we present some basic definitions for hypergraphs and
show how to decompose a hypergraph into a cyclic structure and an
acyclic structure. 

A \textdef{cycle}\index{hypergraph!cycle} in a hypergraph $G = (V,
\Ecal)$ is a (finite) sequence $(v_0,E_0,\dotsc, v_k, E_k)$ such that
$v_1,\dotsc, v_k\in V$ are distinct vertices, $E_1,\dotsc,
E_k\in\Ecal$ are distinct edges with $v_i\in E_i$ and $v_{i+1}\in
E_{i}$ for every $0\leq i\leq k$ (operations in the indices are in
$\setZ_{k+1}$). A \textdef{tree}\index{hypergraph!tree} is an acyclic
connected hypergraph and a \textdef{forest}\index{hypergraph!forest}
is an acyclic hypergraph. A \textdef{rooted
  forest}\index{hypergraph!rooted forest} $G=(V,\Ecal)$ with
set of roots $S\subseteq V$ is a forest such that each component of
the forest has exactly one vertex in $S$. See
Figure~\ref{fig:forest-hyper} for a rooted forest.

\begin{figure}
  \centering
  \begin{tikzpicture}
    \def \initialg {0};
    \def \step {1};
    \def \rad {2pt};
    \def \diag {2pt};
    \def \eps {.3};
    \def \epsm {.15};
    \def \round {5pt};    

    \coordinate (r1) at ($(\initialg,\initialg)$);
    \coordinate (r2) at ($(r1)+2*\step*(1,0)$);
    \coordinate (r3) at ($(r1)+4*\step*(1,0)$);;
    \coordinate (r4) at ($(r1)+6*\step*(1,0)$);;

    \coordinate (v1) at ($(r1)+\step*({cos(60)},{sin(60)})$);
    \coordinate (v2) at ($(r1)+\step*({cos(120)},{sin(120)})$);

    \coordinate (v3) at ($(r2)+\step*({cos(60)},{sin(60)})$);
    \coordinate (v4) at ($(r2)+\step*({cos(120)},{sin(120)})$);

    \coordinate (v5) at ($(v3)+\step*({cos(60)},{sin(60)})$);
    \coordinate (v6) at ($(v3)+\step*({cos(120)},{sin(120)})$);

    \coordinate (v7) at ($(v6)+\step*({cos(90)},{sin(90)})$);
    \coordinate (v8) at ($(v6)+\step*({cos(150)},{sin(150)})$);

    \coordinate (v9) at ($(v5)+\step*({cos(90)},{sin(90)})$);
    \coordinate (v10) at ($(v5)+\step*({cos(30)},{sin(30)})$);

    \coordinate (v11) at ($(r3)+\step*({cos(60)},{sin(60)})$);
    \coordinate (v12) at ($(r3)+\step*({cos(120)},{sin(120)})$);

    \coordinate (v13) at ($(v11)+\step*({cos(80)},{sin(80)})$);
    \coordinate (v14) at ($(v11)+\step*({cos(20)},{sin(20)})$);

    \coordinate (v15) at ($(v11)+\step*({cos(100)},{sin(100)})$);
    \coordinate (v16) at ($(v11)+\step*({cos(160)},{sin(160)})$);

    \foreach \point in {r1,r2,r3,r4} \fill [black] 
    ($(\point)-(\diag,\diag)$) rectangle ($(\point)+(\diag,\diag)$);

    \foreach \point in {v1,v2} \fill [black] (\point) circle (\rad);

    \foreach \point in {v3,v4,v5,v6} \fill [black] (\point) circle
    (\rad);
    \foreach \point in {v7,v8,v9,v10} \fill [black] (\point) circle (\rad);
    \foreach \point in {v11,v12} \fill [black] (\point) circle (\rad);
    \foreach \point in {v13,v14,v15,v16} \fill [black] (\point) circle (\rad);

    \coordinate (b1) at ($1/3*(r1)+1/3*(v1)+1/3*(v2)$);
    \coordinate (a1) at ($-0.4*(b1)+1.4*(r1)$);
    \coordinate (a2) at ($-0.4*(b1)+1.4*(v1)$);
    \coordinate (a3) at ($-0.4*(b1)+1.4*(v2)$);
    %\foreach \point in {b1,a1,a2,a3} \fill [red] (\point) circle (\rad);
    \draw[rounded corners=\round] (a1)--(a2)--(a3)--cycle; 

    \coordinate (b2) at ($1/3*(r2)+1/3*(v3)+1/3*(v4)$);
    \coordinate (c1) at ($-0.4*(b2)+1.4*(r2)$);
    \coordinate (c2) at ($-0.4*(b2)+1.4*(v3)$);
    \coordinate (c3) at ($-0.4*(b2)+1.4*(v4)$);
    %\foreach \point in {b1,a1,a2,a3} \fill [red] (\point) circle (\rad);
    \draw[rounded corners=\round] (c1)--(c2)--(c3)--cycle; 

    \coordinate (b3) at ($1/3*(v3)+1/3*(v5)+1/3*(v6)$);
    \coordinate (d1) at ($-0.4*(b3)+1.4*(v3)$);
    \coordinate (d2) at ($-0.4*(b3)+1.4*(v5)$);
    \coordinate (d3) at ($-0.4*(b3)+1.4*(v6)$);
    %\foreach \point in {b1,a1,a2,a3} \fill [red] (\point) circle (\rad);
    \draw[rounded corners=\round] (d1)--(d2)--(d3)--cycle;

    \coordinate (b4) at ($1/3*(v6)+1/3*(v7)+1/3*(v8)$);
    \coordinate (e1) at ($-0.4*(b4)+1.4*(v6)$);
    \coordinate (e2) at ($-0.4*(b4)+1.4*(v7)$);
    \coordinate (e3) at ($-0.4*(b4)+1.4*(v8)$);
    %\foreach \point in {b1,a1,a2,a3} \fill [red] (\point) circle (\rad);
    \draw[rounded corners=\round] (e1)--(e2)--(e3)--cycle;

    \coordinate (b5) at ($1/3*(v5)+1/3*(v9)+1/3*(v10)$);
    \coordinate (f1) at ($-0.4*(b5)+1.4*(v5)$);
    \coordinate (f2) at ($-0.4*(b5)+1.4*(v9)$);
    \coordinate (f3) at ($-0.4*(b5)+1.4*(v10)$);
    %\foreach \point in {b1,a1,a2,a3} \fill [red] (\point) circle (\rad);
    \draw[rounded corners=\round] (f1)--(f2)--(f3)--cycle;

    \coordinate (b6) at ($1/3*(r3)+1/3*(v11)+1/3*(v12)$);
    \coordinate (g1) at ($-0.4*(b6)+1.4*(r3)$);
    \coordinate (g2) at ($-0.4*(b6)+1.4*(v11)$);
    \coordinate (g3) at ($-0.4*(b6)+1.4*(v12)$);
    %\foreach \point in {b1,a1,a2,a3} \fill [red] (\point) circle (\rad);
    \draw[rounded corners=\round] (g1)--(g2)--(g3)--cycle;

    \coordinate (b7) at ($1/3*(v11)+1/3*(v13)+1/3*(v14)$);
    \coordinate (h1) at ($-0.4*(b7)+1.4*(v11)$);
    \coordinate (h2) at ($-0.4*(b7)+1.4*(v13)$);
    \coordinate (h3) at ($-0.4*(b7)+1.4*(v14)$);
    %\foreach \point in {b1,a1,a2,a3} \fill [red] (\point) circle (\rad);
    \draw[rounded corners=\round] (h1)--(h2)--(h3)--cycle;

    \coordinate (b8) at ($1/3*(v11)+1/3*(v15)+1/3*(v16)$);
    \coordinate (i1) at ($-0.4*(b8)+1.4*(v11)$);
    \coordinate (i2) at ($-0.4*(b8)+1.4*(v15)$);
    \coordinate (i3) at ($-0.4*(b8)+1.4*(v16)$);
    %\foreach \point in {b1,a1,a2,a3} \fill [red] (\point) circle (\rad);
    \draw[rounded corners=\round] (i1)--(i2)--(i3)--cycle;
  \end{tikzpicture}
  \caption{A rooted forest. The roots are the vertices represented by
    squares.}
  \label{fig:forest-hyper}
\end{figure}
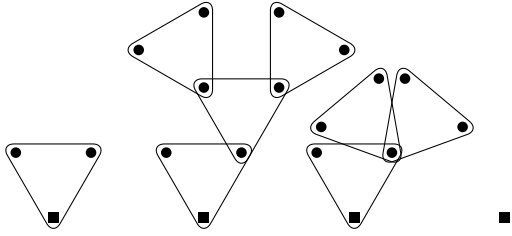

The \textdef{degree}\index{hypergraph!degree} of a vertex $v$ in a
hypergraph $G$ is the number of edges in $G$ containing~$v$. Recall
that we use the word `graph' to denote $3$-uniform hypergraphs. The
\textdef{core}\index{hypergraph!core} of a graph is its maximal
induced subgraph such that every edge contains at least two distinct
vertices of degree at least~$2$. To see that the core of a hypergraph
is unique, it suffices to notice that the union of two cores would
also be a core.  We remark that the $k$-core of a graph is usually
defined as the maximal subgraph such that every vertex has degree at
least~$k$. The core we defined contains the $2$-core of the hypergraph
and it allows some vertices of degree~$1$. We chose this definition of
core since otherwise the structure we would have to combine with the
$2$-core would not necessarily be acyclic. We also say that a graph is
a core, when its core is the graph itself. For an example of an core
see Figure~\ref{fig:core-isolated-cycles}.

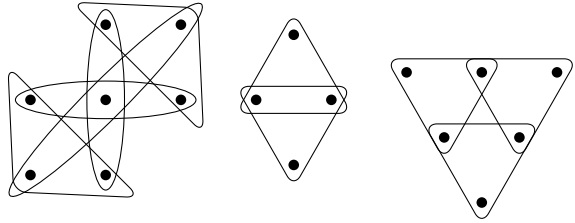
\begin{figure}
  \centering
  \begin{tikzpicture}
    \def \initialg {0};
    \def \step {1};
    \def \rad {2pt};
    \def \eps {.3};
    \def \round {5pt};

    \coordinate (1) at (\initialg,\initialg);
    \coordinate (1ul) at ($(1)+(-\eps,1.5*\eps)$);
    \coordinate (2) at (\initialg+\step,\initialg);
    \coordinate (3) at (\initialg+2*\step,\initialg);
    \coordinate (3dr) at ($(3)+(\eps,-1.5*\eps)$);
    \coordinate (4) at (\initialg+\step,\initialg+\step);
    \coordinate (4ul) at ($(4)+(-1.5*\eps,\eps)$);
    \coordinate (5) at (\initialg+\step,\initialg-1*\step);
    \coordinate (5dr) at ($(5)+(1.5*\eps,-\eps)$);
    \coordinate (6) at (\initialg,\initialg-1*\step);
    \coordinate (6dl) at ($(6)+(-0.75*\eps,-0.75*\eps)$);
    \coordinate (7) at (\initialg+2*\step,\initialg+\step);
    \coordinate (7ur) at ($(7)+(0.75*\eps,0.75*\eps)$);
    \coordinate (b1) at ($1/3*(1)+1/3*(5)+1/3*(6)$);

    \coordinate (1b) at ($(3)+(1*\step,0)$);
    \coordinate (1bdl) at ($(1b)+1.2*\eps*(-.75,-1/2)$);
    \coordinate (1bul) at ($(1b)+1.2*\eps*(-.75,1/2)$);
    \coordinate (2b) at ($(1b)+(\step,0)$);
    \coordinate (2bdr) at ($(2b)+1.2*\eps*(.75,-1/2)$);
    \coordinate (2bur) at ($(2b)+1.2*\eps*(.75,1/2)$);
    \coordinate (3b) at ($(1b)+ \step*({sin(30)},{cos(30)})$ );
    \coordinate (3bu) at ($(3b)+\eps*(0,1)$);
    \coordinate (4b) at ($(1b)+ \step*({sin(30)},-{cos(30)})$ );
    \coordinate (4bd) at ($(4b)+ \eps*(0,-1)$);

    \coordinate (1c) at ($(2b)+(1.5*\step,-.5*\step)$);
    \coordinate (1cul) at ($(1c)+1.2*\eps*(-.75,1/2)$);
    \coordinate (1cd) at ($(1c)+\eps*(0,-1)$);
    \coordinate (2c) at ($(1c)+(\step,0)$);
    \coordinate (2cur) at ($(2c)+1.2*\eps*(.75,1/2)$);
    \coordinate (2cd) at ($(2c)-(0,\eps)$);
    \coordinate (3c) at ($(1c)+ \step*({sin(30)},{cos(30)})$ );
    \coordinate (3cur) at ($(3c)+1.2*\eps*(.75,1/2)$);
    \coordinate (3cul) at ($(3c)+1.2*\eps*(-.75,1/2)$);

    \coordinate (4c) at ($(1c)+ \step*({sin(-30)},{cos(-30)})$);
    \coordinate (4cul) at ($(4c)+1.2*\eps*(-.75,1/2)$);
    \coordinate (5c) at ($(2c)+ \step*({sin(30)},{cos(30)})$ );
    \coordinate (5cur) at ($(5c)+1.2*\eps*(.75,1/2)$);
    \coordinate (6c) at ($(2c)+ \step*({sin(210)},{cos(210)})$ );
    \coordinate (6cd) at ($(6c)+\eps*(0,-1)$);

    \foreach \point in {1,2,3,4,5,6,7} \fill [black] (\point) circle (\rad); 
    \foreach \point in {1b,2b} \fill [black] (\point) circle (\rad); 
    \foreach \point in {3b,4b} \fill [black] (\point) circle (\rad); 
    \foreach \point in {1c,2c,3c} \fill [black] (\point) circle (\rad); 
    \foreach \point in {4c,5c,6c} \fill [black] (\point) circle (\rad); 
    % \foreach \point in {1cul, 6cd, 2cur} \fill [red] (\point) circle (\rad);
       
    \draw (2) ellipse ({\step*1.2} and {\step/4}); %123
    \draw (2) ellipse ({\step/4} and {\step*1.2}); %425
    \draw[rotate=-45] (2) ellipse ({\step/4} and {\step*1.8}); %627
    \draw[rounded corners=\round] (1ul)--(5dr)--(6dl)--cycle;
    \draw[rounded corners=\round] (4ul)--(3dr)--(7ur)--cycle;
    \draw[rounded corners=\round] (1bdl)--(2bdr)--(3bu)--cycle;
    \draw[rounded corners=\round] (1bul)--(2bur)--(4bd)--cycle;
    \draw[rounded corners=\round] (1cd)--(4cul)--(3cur)--cycle;
    \draw[rounded corners=\round] (2cd)--(5cur)--(3cul)--cycle;
    \draw[rounded corners=\round] (6cd)--(2cur)--(1cul)--cycle;
    
  \end{tikzpicture}
  \caption{A core with two isolated cycles. The leftmost component is
    a \prekernel.}
  \label{fig:core-isolated-cycles}
\end{figure}

Every edge in a core has either one vertex of degree~$1$, or none. We
say that an edge is a
\textdef{$2$-edge}\index{hypergraph!twoedge@$2$-edge} if it has a
vertex of degree~$1$ and that it is a
\textdef{$3$-edge}\index{hypergraph!threeedge@$3$-edge} if it has no
vertex of degree~$1$. It is easy to see that the core of graph can be
obtained by iteratively removing edges that are not $2$-edges nor
$3$-edges until all edges are $2$-edges or $3$-edges, and then
deleting all vertices of degree~$0$. See
Figure~\ref{fig:core-proc-hyper} for an example of this procedure.

\begin{figure}
  \centering
    \begin{tikzpicture}
    \def \initialg {0};
    \def \step {1};
    \def \rad {2pt};
    \def \eps {.3};
    \def \epsm {.15};
    \def \round {5pt};

    \coordinate (1) at (\initialg,\initialg);
    \coordinate (1ul) at ($(1)+(-\eps,1.5*\eps)$);
    \coordinate (2) at (\initialg+\step,\initialg);
    \coordinate (mid12) at ($.5*(2)+.5*(1)$);
    \coordinate (3) at (\initialg+2*\step,\initialg);
    \coordinate (3dr) at ($(3)+(\eps,-1.5*\eps)$);
    \coordinate (4) at (\initialg+\step,\initialg+\step);
    \coordinate (4ul) at ($(4)+(-1.5*\eps,\eps)$);
    \coordinate (5) at (\initialg+\step,\initialg-1*\step);
    \coordinate (mid52) at ($.5*(2)+.5*(5)$);
    \coordinate (5dr) at ($(5)+(1.5*\eps,-\eps)$);
    \coordinate (6) at (\initialg,\initialg-1*\step);
    \coordinate (6dl) at ($(6)+(-.75*\eps,-.75*\eps)$);
    \coordinate (7) at (\initialg+2*\step,\initialg+\step);
    \coordinate (7ur) at ($(7)+(0.75*\eps,0.75*\eps)$);
    \coordinate (b1) at ($1/3*(1)+1/3*(5)+1/3*(6)$);

    \coordinate (8) at ($(7)+\step*({cos(30)},{sin(30)})$);
    \coordinate (9) at ($(8)+\step*(0,-1)$);

   \coordinate%[label=left:\footnotesize{$10$}] 
   (10) at ($(9)+\step*({cos(60)},{sin(60)})$);
   \coordinate%[label=left:\footnotesize{$11$}] 
   (11) at ($(9)+\step*({cos(0)},{sin(0)})$);

   \coordinate%[label=left:\footnotesize{$12$}] 
   (12) at ($(9)+\step*({cos(-30)},{sin(-30)})$);
   \coordinate%[label=left:\footnotesize{$13$}] 
   (13) at ($(9)+\step*({cos(-90)},{sin(-90)})$);

    \coordinate%[label=left:\footnotesize{$14$}] 
    (14) at ($(1)+\step*({cos(150)},{sin(150)})$);
    \coordinate%[label=left:\footnotesize{$15$}] 
    (15) at ($(14)+\step*(0,-1)$);

    \coordinate%[label=left:\footnotesize{$16$}] 
    (16) at ($(15)+\step*({cos(150)},{sin(150)})$);
    \coordinate%[label=left:\footnotesize{$17$}] 
    (17) at ($(16)+\step*(0,-1)$);

     \foreach \point in {1,2,3,4,5,6,7} \fill [black] (\point) circle
    (\rad);
     \foreach \point in {8,9,10,11,12,13} \fill [black] (\point) circle
     (\rad);

     \foreach \point in {14,15,16,17} \fill [black] (\point) circle
     (\rad);

    \draw (2) ellipse ({\step*1.2} and {\step/4}); %123
    \draw (2) ellipse ({\step/4} and {\step*1.2}); %425
     \draw[rounded corners=\round] (1ul)--(5dr)--(6dl)--cycle;
     \draw[rounded corners=\round] (4ul)--(3dr)--(7ur)--cycle;
     
    \coordinate (b1) at ($1/3*(7)+1/3*(8)+1/3*(9)$);
    \coordinate (a1) at ($-0.4*(b1)+1.4*(7)$);
    \coordinate (a2) at ($-0.4*(b1)+1.4*(8)$);
    \coordinate (a3) at ($-0.4*(b1)+1.4*(9)$);
    %\foreach \point in {b1,a1,a2,a3} \fill [red] (\point) circle (\rad);
    \draw[rounded corners=\round] (a1)--(a2)--(a3)--cycle;

    \coordinate (b2) at ($1/3*(9)+1/3*(10)+1/3*(11)$);
    \coordinate (c1) at ($-0.4*(b2)+1.4*(9)$);
    \coordinate (c2) at ($-0.4*(b2)+1.4*(10)$);
    \coordinate (c3) at ($-0.4*(b2)+1.4*(11)$);
    %\foreach \point in {b1,a1,a2,a3} \fill [red] (\point) circle (\rad);
    \draw[rounded corners=\round] (c1)--(c2)--(c3)--cycle;

    \coordinate (b3) at ($1/3*(9)+1/3*(12)+1/3*(13)$);
    \coordinate (d1) at ($-0.4*(b3)+1.4*(9)$);
    \coordinate (d2) at ($-0.4*(b3)+1.4*(12)$);
    \coordinate (d3) at ($-0.4*(b3)+1.4*(13)$);
    %\foreach \point in {b1,a1,a2,a3} \fill [red] (\point) circle (\rad);
    \draw[rounded corners=\round] (d1)--(d2)--(d3)--cycle;

    \coordinate (b4) at ($1/3*(1)+1/3*(14)+1/3*(15)$);
    \coordinate (e1) at ($-0.4*(b4)+1.4*(1)$);
    \coordinate (e2) at ($-0.4*(b4)+1.4*(14)$);
    \coordinate (e3) at ($-0.4*(b4)+1.4*(15)$);
    %\foreach \point in {b1,a1,a2,a3} \fill [red] (\point) circle (\rad);
    \draw[rounded corners=\round] (e1)--(e2)--(e3)--cycle;

    \coordinate (b5) at ($1/3*(15)+1/3*(16)+1/3*(17)$);
    \coordinate (f1) at ($-0.4*(b5)+1.4*(15)$);
    \coordinate (f2) at ($-0.4*(b5)+1.4*(16)$);
    \coordinate (f3) at ($-0.4*(b5)+1.4*(17)$);
    %\foreach \point in {b1,a1,a2,a3} \fill [red] (\point) circle (\rad);
    \draw[rounded corners=\round] (f1)--(f2)--(f3)--cycle;

    \coordinate  (startarrow) at ($(12)+(\step,0)$);
    \coordinate  (endarrow) at ($(startarrow)+(\step,0)$);
    \coordinate
    (midarrow) at ($.5*(startarrow)+.5*(endarrow)$);
    \draw [->] (startarrow)--(endarrow);    

    \coordinate (1c) at ($(endarrow)+(\step,0)$);
    \coordinate (1ulc) at ($(1c)+(-\eps,1.5*\eps)$);
    \coordinate (2c) at ($(1c)+\step*(1,0)$);
    \coordinate (mid12c) at ($.5*(2c)+.5*(1c)$);
    \coordinate (3c) at ($(1c)+(2*\step,0)$);
    \coordinate (3drc) at ($(3c)+(\eps,-1.5*\eps)$);
    \coordinate (4c) at ($(1c)+(\step,\step)$);
    \coordinate (4ulc) at ($(4c)+(-1.5*\eps,\eps)$);
    \coordinate (5c) at ($(1c)+\step*(1,-1)$);
    \coordinate (mid52c) at ($.5*(2c)+.5*(5c)$);
    \coordinate (5drc) at ($(5c)+(1.5*\eps,-\eps)$);
    \coordinate (6c) at  ($(1c)+(0,-\step)$);
    \coordinate (6dlc) at ($(6c)+(-.75*\eps,-.75*\eps)$);
    \coordinate (7c) at  ($(1c)+(2*\step,\step)$);
    \coordinate (7urc) at ($(7c)+(0.75*\eps,0.75*\eps)$);
    \coordinate (b1c) at ($1/3*(1c)+1/3*(5c)+1/3*(6c)$);

     \foreach \point in {1c,2c,3c,4c,5c,6c,7c} \fill [black] (\point) circle
    (\rad);

    \draw (2c) ellipse ({\step*1.2} and {\step/4}); %123
    \draw (2c) ellipse ({\step/4} and {\step*1.2}); %425
     \draw[rounded corners=\round] (1ulc)--(5drc)--(6dlc)--cycle;
     \draw[rounded corners=\round] (4ulc)--(3drc)--(7urc)--cycle;
   \end{tikzpicture}
   \caption{Obtaining the core.}
   \label{fig:core-proc-hyper}
\end{figure}
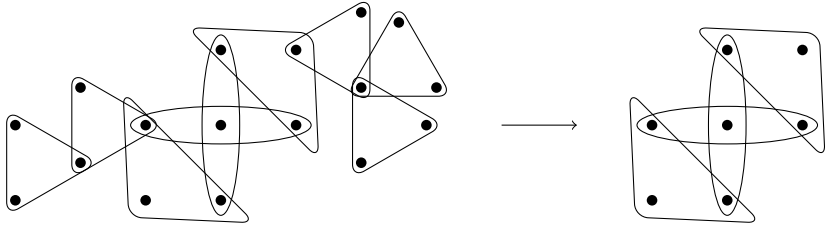

We also define cycles as graphs. We say that a graph $G=(V,\Ecal)$ is
a \textdef{cycle}\index{hypergraph!cycle} if there is an ordering $(v_0,\dotsc, v_k)$ of a
subset of $V$ and an ordering $(E_0,\dotsc,E_k)$ of $\Ecal$ such that
$(v_0,E_0,\dotsc, v_k,E_k)$ is a cycle in $G$ and every $v\in V$ is in
some $E\in\Ecal$.  Note that, if a graph is a cycle, then all edges are
actually $2$-edges.  An \textdef{isolated cycle}\index{hypergraph!cycle!isolated} in a graph is a
component that is a cycle.  A \textdef{pre-kernel}\index{hypergraph!prekernel@pre-kernel} is a core without
isolated cycles (see Figure~\ref{fig:core-isolated-cycles}). So, every
  connected core that is not just a cycle is also a pre-kernel.

The following proposition explains how to decompose a graph into its
core and a rooted forest.
\begin{prop}
  \label{p:decomposition-hyper}
  Let $G$ be a connected graph with a nonempty core.  The graph
  obtained from $G$ by deleting the edges of the core of $G$ and by
  setting all vertices in the core as roots is a rooted forest with
  $(N-n)/2$ edges, where $N$ is the number of vertices in $G$ and $n$
  is the number of vertices in the core. Moreover,  the core of $G$
  is connected.
\end{prop}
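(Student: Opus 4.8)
The plan is to show three things in turn: that the removed part is acyclic, that each component contains exactly one root (hence it is a rooted forest), that it has $(N-n)/2$ edges, and finally that the core is connected. Let $K$ denote the core of $G$, with vertex set $V(K)$ of size $n$, and let $F$ be the spanning subgraph of $G$ on vertex set $[N]$ whose edges are $\Ecal(G)\setminus\Ecal(K)$. First I would verify that $F$ is acyclic. Suppose $F$ contained a cycle $C$ (a cycle as a subgraph, in the sense defined above). Then every edge of $C$ is a $2$-edge of $C$, so in $C$ every edge has at least two vertices of degree $\ge 2$ within $C$, hence a fortiori within $G$; thus $C$ satisfies the defining condition of a core, and by maximality $\Ecal(C)\subseteq\Ecal(K)$, contradicting $C\subseteq F$. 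Hence $F$ is a forest.

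Next I would argue that every component of $G$ meets $V(K)$, so that $F$ is rooted on $V(K)$ in the required sense. Since $G$ is connected it suffices to show $V(K)\neq\emptyset$, which is given (``nonempty core''), together with the fact that each component of $F$ contains a vertex of $V(K)$: take any component $T$ of $F$; since $G$ is connected, either $T=G$ and then $T$ meets $V(K)$, or some edge $e\in\Ecal(K)$ meets $V(T)$, and then since $e\subseteq V(K)$ we get $V(T)\cap V(K)\neq\emptyset$. To get \emph{exactly one} root per component, I would use the acyclicity just proved together with a counting/connectivity argument: if a component $T$ of $F$ contained two vertices of $V(K)$, then since the whole graph $G$ is connected there is a path in $G$ between them, and one can reroute through $F\cup K$ to produce a cycle in $G$; the cleaner route, which I expect to be the main technical point, is to observe that contracting the core $K$ to a single vertex turns $G$ into a connected hypergraph whose edge set is $\Ecal(F)$ plus possibly loops, and that $F$ being a forest with one component per root is equivalent to this contraction being a tree. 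The hard part is making the ``reroute to get a cycle'' or ``contraction is a tree'' argument fully rigorous in the $3$-uniform setting, where paths and cycles involve alternating vertex/edge sequences and one must be careful that the edges used are genuinely distinct; I would handle this by induction on $|\Ecal(K)|$, peeling off core edges one at a time in the reverse of the order in which the core-extraction procedure (Figure~\ref{fig:core-proc-hyper}) would have removed non-core edges, maintaining the invariant that the current graph is connected with exactly one $F$-component per core vertex added so far.

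Once $F$ is known to be a rooted forest on the $n$ core vertices, the edge count is immediate: a forest on $N$ vertices with exactly $c$ components has, in the $3$-uniform case, $N-c$ vertices distributed so that each of its $|\Ecal(F)|$ edges is incident to $3$ vertices and the forest is acyclic and connected within components; counting incidences, each component that is a tree on $t$ vertices has $(t-1)/2$ edges (every tree edge adds two new vertices to a growing tree), so summing over the $n$ components gives $|\Ecal(F)| = (N-n)/2$. (In particular $N\equiv n \pmod 2$, which is consistent.) Finally, for connectivity of $K$: since $G$ is connected and $G$ is obtained from $K$ by attaching the forest $F$ whose components are rooted at \emph{distinct} core vertices, any path in $G$ between two core vertices, after deleting the forest ``pendant'' portions (each forest component is a tree hanging off a single core vertex, so a path entering it must return through the same root), yields a walk inside $K$; hence $K$ is connected. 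I expect the bulk of the write-up to be the careful induction establishing ``exactly one root per component,'' since acyclicity, the edge count, and connectivity of the core all follow quickly once that structural fact is in hand.
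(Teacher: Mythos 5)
Your acyclicity argument ($F$ has a cycle $\Rightarrow$ the cycle is itself core-like $\Rightarrow$ its edges lie in the core) is valid and genuinely different from the paper's, which instead tracks the deletion procedure and argues that the first cycle-edge to be removed would, at removal time, still have had two vertices of degree $\geq 2$. Your edge count via "a $3$-uniform tree on $t$ vertices has $(t-1)/2$ edges, summed over $n$ components" is also correct and, if anything, slicker than the paper's accounting of how many vertices each deletion removes. And your argument that every $F$-component meets the core is the paper's argument.

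The genuine gap is exactly where you flag it: the "exactly one root per component" step. You describe two possible routes (reroute to a cycle; or prove a contraction is a tree, by induction on peeling core edges) but do not carry either out, so the proposal as written is incomplete, and your final paragraph on connectivity of $K$ leans on this unproven claim. The paper handles this more economically by reversing the order: it proves first that the core $K$ is connected, by noting that if $K$ had two components, a path between them through $F$ (which exists since $G$ is connected) could be adjoined to $K$ to produce a strictly larger core-like induced subgraph, contradicting maximality of $K$; and it then observes that the \emph{same} union argument shows no component of $F$ can contain two core vertices (join them by a path inside that tree component and adjoin it to $K$). This is the step you should borrow: it uses the same "maximal core-like subgraph" principle you already invoked for acyclicity, requires no induction, and immediately yields both the connectivity of $K$ and the "at most one root" bound, after which your edge count goes through as written.
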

\begin{proof}
  As we already mentioned, the core of $G$ can be obtained by
  iteratively deleting edges that contain at most one vertex of degree
  at least~$2$. More precisely, start with $G' = G$ and while there is
  an edge in $G'$ containing less than $2$ vertices of degree at
  least~$2$ in $G'$, redefine $G'$ by deleting one such edge. When
  this procedure stops, $G'$ is the core of~$G$.  Let $F$ be the graph
  with vertex set $[N]$ with the deleted edges as its set of edges.
  Suppose for a contradiction that $F$ has a cycle. Such a cycle is a
  cycle in $G$ too. Let $E$ be the first edge of the cycle that was
  deleted by the procedure described above. All other edges in the
  cycle were still present in the graph $G'$ when $E$ was
  deleted. Thus, since $E$ was in the cycle, it had a least $2$
  vertices of degree at least~$2$. Hence, $E$~could not have been
  deleted at this point, which shows that $F$ has no cycles.

  Suppose for \nickc{deleted 'a'} contradiction that the core of $G$ is not
  connected. Then it has at least $2$ components that are joined by a
  path in~$G$ with all edges in~$F$ since $G$ is connected. The union
  of these $2$-components and the path is a $2$-core, which is a
  contradiction.  Thus, the core is connected. This argument also
  shows that that each component of $F$ has at most one vertex in the
  core. Every component of $F$ must have one vertex in the core,
  otherwise it is disconnected from the core and so $G$ would not be
  connected.

  Now we determine the number of edges in~$F$. As we discussed above,
  each component of $F$ has exactly one vertex in the core.  In the
  deletion procedure, for the initial $G'$ (that is, $G$), every edge
  has at least one vertex of degree at least $2$ since otherwise $G$
  would not be connected. We claim that the deletion procedure will
  only delete edges that contain exactly one vertex of degree~$2$ in
  the current $G'$.  If not, let $E$ be an edge that contained no
  vertex of degree at least~$2$ in~$G'$ in the moment it was deleted.
  Let $v_0$ be the vertex of the core in the same component of $E$
  in~$F$. Then there is a path $(v_0,E_0,\dotsc,E_{k-1} v_k)$ in $F$,
  where $E_{k-1}=E$. The edge $E_0$ cannot be $E$ since the vertex
  $v_0$ must have degree at least~$2$ the moment $E_0$ is deleted.  A
  trivial induction proof then shows that the deletion procedure
  cannot delete any of the edges $E_0,\dotsc, E_{k-2}$ before
  deleting~$E_{k-1}$, which shows that the moment $E$ was deleted the
  vertex $v_{k-1}$ still was in $2$ edges: $E$ and~$E_{k-2}$. This is
  a contradiction. Thus, the moment any edge is deleted is has exactly
  one vertex of degree at least~$2$.  This means that, for every
  deleted edge, we also delete exactly $2$ vertices that are not in
  the core. Since there $N-n$ vertices to be deleted, the number of
  edges in $F$ is $(N-n)/2$.
\end{proof}

For any graph $G$ with $N$ vertices and $M$ edges such that its core
has $n$ vertices and $m$ edges, we have that
\begin{equation}
  \label{eq:excess-transfer-hyper}
  m-n/2=
  M-(N-n)/2-n/2
  =
  M-N/2
\end{equation}
since $m=M-(N-n)/2$ by
Proposition~\ref{p:decomposition-hyper}. Intuitively speaking, this
says that the `excess' of edges $(M-N/2)$ in the graph is transferred
to its core.

Let $\gcacti(N,n)$ denote the number of forests with vertex set $[N]$
and $[n]$ as its set of roots. Let $\gpre(n,m)$ denote the number of
connected pre-kernels with vertex set $[n]$ and $m$ edges. Next, we show
how to write $\cNM$ using $\gcacti$ and $\gpre$.
\begin{prop}
  For $M = M(N)$ such that $R\eqdef M-N/2 \to \infty$, we have that
  \begin{equation}
    \label{eq:sum-decomposition-hyper}
    \cNM =  \sum_{\substack{1\leq n\leq N\\(N-n)\in 2\setZ}} \gcacti(N,n) \gpre(n,M-(N-n)/2),
  \end{equation}    
  for $N$ sufficiently large.
\end{prop}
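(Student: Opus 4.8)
The plan is to establish the identity by exhibiting, for each admissible $n$, a bijection between the connected $(N,M,3)$-hypergraphs whose core has exactly $n$ vertices and the set of pairs consisting of a rooted forest counted by $\gcacti(N,n)$ and a connected pre-kernel counted by $\gpre(n,M-(N-n)/2)$. The sum over $n$ then partitions the connected hypergraphs according to the size of their core, and the constraint $(N-n)\in 2\setZ$ comes from Proposition~\ref{p:decomposition-hyper}, which tells us the forest attached outside the core has $(N-n)/2$ edges and hence $N-n$ must be even; likewise the edge count $M-(N-n)/2$ of the pre-kernel is forced by~\eqref{eq:excess-transfer-hyper}. The hypothesis $R = M-N/2 \to\infty$ guarantees that for $N$ large the core is nonempty (a connected hypergraph with excess $R$ large cannot be a single tree or a tiny structure), so Proposition~\ref{p:decomposition-hyper} applies to every graph being counted; this is where ``$N$ sufficiently large'' is used.

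First I would describe the forward map: given a connected $(N,M,3)$-hypergraph $G$, take its core $K$, which by Proposition~\ref{p:decomposition-hyper} is connected and, after deleting isolated cycles\,---\,wait, the core of a \emph{connected} graph that is not itself a cycle has no isolated cycles, so $K$ is already a connected pre-kernel (as noted right after the definition of pre-kernel); and if $G$ is so sparse that its core is a single cycle, the excess would be $0$, contradicting $R\to\infty$, so this degenerate case does not occur for large $N$. Let $n = |V(K)|$; by Proposition~\ref{p:decomposition-hyper} the complementary structure $F$ (all vertices of $G$, with the non-core edges, rooting the $n$ core-vertices) is a rooted forest with $(N-n)/2$ edges and root set the core's vertex set, and $K$ has $M-(N-n)/2$ edges. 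To fit the counting functions $\gcacti(N,n)$ and $\gpre(n,m)$, which require the root set to be exactly $[n]$ and the pre-kernel vertex set to be exactly $[n]$, I would compose with the canonical relabelling that sends the $n$ core-vertices (in increasing order of their labels in $[N]$) to $1,\dots,n$; this is a bijection once the underlying $n$-subset of $[N]$ is recorded\,---\,but in fact the subset is \emph{not} recorded separately, because the forest $F$ on vertex set $[N]$ already encodes which vertices are roots. So the pair is: the forest $F$ on $[N]$ with some $n$-element root set (counted, up to the choice of which $n$-set, by $\gcacti(N,n)$ after relabelling the roots to $[n]$\,---\,here one checks $\gcacti$ as defined already has root set $[n]$, so one must be slightly more careful and instead sum $\binom{N}{n}\cdot(\text{forests with roots }[n])$; however $\gcacti(N,n)$ is defined as forests with vertex set $[N]$ and roots $[n]$, so the root set is a \emph{fixed} $n$-set and no binomial appears), together with the pre-kernel on $[n]$.

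The key step, then, is to verify this is a genuine bijection, i.e. to construct the inverse and check both composites are the identity. Given a forest $F$ on $[N]$ with root set $[n]$ and a connected pre-kernel $P$ on $[n]$ with $M-(N-n)/2$ edges, I would glue them by identifying the roots of $F$ with the vertices of $P$ (both being literally $[n]$) and taking the union of the edge sets; I must check that the result is a connected $(N,M,3)$-hypergraph whose core is exactly $P$. Connectivity is clear since every tree of $F$ meets $[n]=V(P)$ and $P$ is connected; the edge count is $(N-n)/2 + (M-(N-n)/2) = M$; and the vertex count is $N$. That the core of the glued graph equals $P$ requires showing the core-peeling procedure (iteratively deleting edges with at most one vertex of degree $\ge 2$) applied to $G = F\cup P$ deletes precisely the edges of $F$ and no edge of $P$: the non-deletion of $P$-edges follows because $P$, being a pre-kernel, has every edge containing $\ge 2$ vertices of degree $\ge 2$ \emph{within $P$}, and these degrees only increase in $G$; the deletion of all $F$-edges follows from the tree structure by peeling leaves inward, exactly as in the proof of Proposition~\ref{p:decomposition-hyper}. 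I expect the main obstacle to be bookkeeping the labelling conventions cleanly\,---\,ensuring the forward map lands in the exact sets counted by $\gcacti(N,n)$ (fixed root set $[n]$) and $\gpre(n,\cdot)$ (vertex set $[n]$) without spurious binomial factors and without double-counting\,---\,rather than any substantive combinatorial difficulty; once the conventions are pinned down, the bijection and hence the summation identity follow immediately from Proposition~\ref{p:decomposition-hyper} and~\eqref{eq:excess-transfer-hyper}.
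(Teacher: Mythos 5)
Your basic plan---decompose via Proposition~\ref{p:decomposition-hyper}, check that the core is nonempty and not a cycle (hence a pre-kernel) using $R\to\infty$, and glue back---is exactly the paper's argument; the paper's proof consists of the sentence that ``in view of Proposition~\ref{p:decomposition-hyper}, it suffices to show that, for any connected graph $G$ with $N$ vertices and $M$ edges, the core of $G$ is a pre-kernel,'' followed by the two exclusion cases (empty core, cycle core) that you also identify.

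Where you go wrong is in the labelling bookkeeping, and you visibly waffle on the decisive point. You write that one ``must be slightly more careful and instead sum $\binom{N}{n}\cdot(\text{forests with roots }[n])$,'' and then immediately reverse yourself with ``however $\gcacti(N,n)$ is defined as forests with vertex set $[N]$ and roots $[n]$, so the root set is a fixed $n$-set and no binomial appears.'' The second clause does not follow from the first; it argues the opposite. Because $\gcacti(N,n)$ and $\gpre(n,m)$ both anchor their structures on the \emph{fixed} label set $[n]$, while the core of a connected graph on $[N]$ may occupy \emph{any} $n$-element subset $S\subseteq[N]$, the number of connected graphs whose core has $n$ vertices is $\binom{N}{n}\,\gcacti(N,n)\,\gpre(n,M-(N-n)/2)$: one factor $\binom{N}{n}$ for the choice of $S$, and then, by relabelling symmetry, $\gcacti(N,n)$ forests with root set $S$ and $\gpre(n,m)$ pre-kernels on $S$. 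Your own observation that ``the forest $F$ on vertex set $[N]$ already encodes which vertices are roots'' points the same way: the forests on $[N]$ with an arbitrary $n$-element root set number $\binom{N}{n}\gcacti(N,n)$, not $\gcacti(N,n)$. This is also how the decomposition is actually used in the rest of the paper---the overview and Theorem~\ref{thm:formula-core-cacti-hyper} both sum $\binom{N}{n}\gcacti(N,n)\gpre(n,m)$---so the displayed proposition appears to have dropped a $\binom{N}{n}$, and your attempt to rationalize the formula as written, rather than flag the missing factor, is the genuine flaw in the argument.
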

\begin{proof}
  In view of Proposition~\ref{p:decomposition-hyper}, it suffices to
  show that, for any connected graph $G$ with $N$ vertices and $M$
  edges, the core of $G$ is a pre-kernel. If it is not, either the
  core is empty or it is a cycle. If the core is empty, then the graph
  $G$ is a forest and so $M < N/2$, which is impossible since $M =
  N/2+R$ with $R\to\infty$. If the core is a cycle, then $3
  m=2(n-m)+m$ since each edge in the core has two vertices of degree
  $2$ and one of degree $1$. Thus, in this case, we have that $m=n/2$,
  which is impossible since $m-n/2=M-N/2=R \to \infty$
  by~\eqref{eq:excess-transfer-hyper}.
\end{proof}

Basically, our approach to compute an asymptotic formula for $\cNM$
will be to analyse the summation
in~\eqref{eq:sum-decomposition-hyper}.

We will work with random graphs. More precisely, we will work with
random multihypergraphs and then deduce results for simple graphs.  A
\textdef{$k$-uniform
  multihypergraph}\index{multihypergraph!kuniform@$k$-uniform} is a
triple $G=(V, \Ecal, \Phi)$, where $V$ and $\Ecal$ are finite sets and
$\Phi: \Ecal\times [k] \to V$.  We say that $V$ is the \textdef{vertex
  set} of $G$ and $\Ecal$ is the \textdef{edge set} of $G$. From now
on, we will use the word `multigraph' to denote $3$-uniform
multihypergraphs.

Given a multigraph $G=(V,\Ecal, \Phi)$, a
\textdef{loop}\index{multihypergraph!loop} is an edge $E\in\Ecal$ such
that there exist distinct $j,j'\in\set{1,2,3}$ such that
$\Phi(E,j)=\Phi(E,j')$, a pair of \textdef{double
  edges}\index{multihypergraph!double edges} is a pair $(E,E')$ of
distinct edges in $\Ecal$ such that the collection
$\set{\Phi(E,1),\Phi(E,2),\Phi(E,3)}$ is the same as the collection
$\set{\Phi(E',1),\Phi(E',2),\Phi(E',3)}$. A multigraph $G$ with no
loops nor double edges corresponds naturally to a graph because each
edge corresponds to a unique subset of $V$ of size~$3$. In this case
we say that the multigraph is
\textdef{simple}\index{multihypergraph!simple}. Let $\Scal(n,m)$
denote the set of simple multigraphs with vertex set $[n]$ and edge
set $[m]$. We have the following relation between simple multigraphs
and graphs:
\begin{lem}
  \label{lem:number-graph-multigraph-hyper}
  For any $G = ([n],[m],\Phi) \in \Scal(n, m)$, let $s(G)$ be the
  graph with vertex set $[n]$ obtained by including one edge for each
  $i\in[m]$ incident to the vertices $\Phi(i,1)$, $\Phi(i,2)$ and
  $\Phi(i,3)$.  Let $G'$ be a graph with vertex set $[n]$ with $m$
  edges. Then $|s^{-1}(G')| = m!6^m$, that is, each graph corresponds
  to $m!6^m$ simple multigraphs.
\end{lem}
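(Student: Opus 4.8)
The plan is to show that the map $s$ is surjective onto the set of graphs with vertex set $[n]$ and $m$ edges, and that every fibre $s^{-1}(G')$ has exactly $m!\,6^m$ elements. Fix a graph $G'=([n],\Ecal')$ with $|\Ecal'|=m$, and write $\Ecal' = \{e_1,\dotsc,e_m\}$ in some order (the order is not part of the data of $G'$, but it is convenient to name the edges). A simple multigraph $G=([n],[m],\Phi)$ lies in $s^{-1}(G')$ precisely when the multiset of triples $\{\{\Phi(i,1),\Phi(i,2),\Phi(i,3)\} : i\in[m]\}$ equals the set $\Ecal'$; since $G$ is simple, these $m$ triples are pairwise distinct, so this is the same as saying that $i\mapsto \{\Phi(i,1),\Phi(i,2),\Phi(i,3)\}$ is a bijection from $[m]$ to $\Ecal'$.

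First I would count the choices. Specifying such a $\Phi$ amounts to two independent pieces of data: (i) a bijection $\sigma:[m]\to\Ecal'$, telling us which abstract edge label $i$ maps onto which $3$-set of $G'$, and (ii) for each $i\in[m]$, an ordering of the three vertices of $\sigma(i)$, i.e.\ a bijection from $[3]=\{1,2,3\}$ to the $3$-element set $\sigma(i)$, which determines $\Phi(i,1),\Phi(i,2),\Phi(i,3)$. There are $m!$ choices for (i), and for each of the $m$ edges there are $3!=6$ choices of ordering in (ii), independently, giving $6^m$ choices for (ii). Conversely, any choice of (i) and (ii) yields a $\Phi$, and the resulting multigraph is simple (no loops, since each triple $\sigma(i)$ has three distinct elements; no double edges, since $\sigma$ is a bijection and the elements of $\Ecal'$ are distinct $3$-sets) and satisfies $s(G)=G'$. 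Distinct pairs $((i),(ii))$ give distinct $\Phi$, since $\sigma$ is recovered from $\Phi$ as $i\mapsto\{\Phi(i,1),\Phi(i,2),\Phi(i,3)\}$ and then each ordering is recovered from the values $\Phi(i,1),\Phi(i,2),\Phi(i,3)$. Hence $|s^{-1}(G')| = m!\,6^m$.

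There is essentially no hard step here; the only thing to be careful about is the bookkeeping in the definition of a $3$-uniform multihypergraph as a triple $(V,\Ecal,\Phi)$ with $\Ecal=[m]$ a \emph{labelled} edge set, which is exactly what produces the factor $m!$ (relabelling the $m$ edges) on top of the $6^m$ from orienting each edge. One should also note in passing that $s$ is surjective (so that $\{s^{-1}(G')\}_{G'}$ really partitions $\Scal(n,m)$), which is clear: given $G'$, the construction above with any fixed choice of $\sigma$ and orderings produces a preimage. I would present the argument as the short counting above, perhaps with one sentence remarking that this is the hypergraph analogue of the familiar fact that a simple graph with $m$ edges on $[n]$ arises from $m!\,2^m$ labelled-and-oriented-edge multigraphs.
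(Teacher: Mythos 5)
Your proof is correct and uses essentially the same counting as the paper's: the factor $m!$ from the choice of which label maps to which edge, and $6^m$ from ordering the three positions within each edge. Your version is stated a bit more carefully (explicitly setting up the bijection with pairs $(\sigma,\text{orderings})$, checking simplicity, and noting surjectivity of $s$), whereas the paper's proof phrases it as acting on a fixed preimage by permutations, but the two arguments are the same in substance.
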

\begin{proof}
  Let $G = ([n],[m],\Phi) \in \Scal(n, m)$ be such that $s(G)=G'$. For
  any permutation $g$ of $[m]$, the multigraph $G_{g}\eqdef
  ([n],[m],\Phi')$ satisfies $s(G_g) = G'$, where $\Phi'(i,j) =
  \Phi(g(i),j)$ for each $i\in[m]$ and $j\in\set{1,2,3}$. (That is, any
  permutation of the label of the edges generates the same graph.)
  Moreover, for each $i\in[m]$ and permutation $g_i$ of $[3]$, the
  function $\Phi''(i,j) = \Phi''(i,g(j))$ satisfies $s([n],[m],\Phi'')
  = G'$. Since there are $m!$ permutations on $[m]$ and $3!$
  permutations of $[3]$, the number of graphs $G\in\Scal(n,m)$ with
  $s(G) = G'$ is $m! 3!^{m}$.
\end{proof}

We extend the definitions of path and connectedness for
multihypergraphs.  For any multihypergraph~$G=(V,\Ecal,\Phi)$, a
\textdef{path}\index{multihypergraph!path} is a (finite) sequence $v_1E_1v_2E_2\dotsc v_k$, where
$v_1,\dotsc, v_k$ are distinct vertices and $E_1,\dotsc, E_{k-1}$ are
distinct hyperedges such that $v_i, v_{i+1}\in \textrm{Im}(\Phi(E_i,
\cdot))$ for all $i\in[k-1]$. We say that a multihypergraph is
\textdef{connected}\index{multihypergraph!connected|} if, for any vertices $u$ and $v$, there exists a
path from $u$ to~$v$.
\section{Overview of proof}
\label{sec:overview-hyper}
In this section, we give an overview of our proof of the asymptotic
formula for $\cNM$ in Theorem~\ref{thm:main-hyper}.  Recall that $R=
M-N/2 =o(N)$ and $R=\omega(N^{1/3}\log^2 N)$. Our approach is to analyse $\cNM$ by
using~\eqref{eq:sum-decomposition-hyper}, which shows how to obtain
$\cNM$ from formulae for the number of rooted forests $\gcacti$ and
the number of pre-kernels $\gpre$.  The proof consists of the
following steps.
\begin{enumerate}
\item We obtain an exact formula $\gcacti(N,n)$ for the number of of
  rooted forests with set of roots $[n]$ and vertex set $[N]$. We show
  that, for even $N-n$,
  \begin{equation*}
  \gcacti(N,n)=
  {\displaystyle \frac{n}{N}\cdot\frac{(N-n)! N^{{(N-n)}/{2}}}
    {\displaystyle\left(\paren{N-n}/{2}\right)! 2^{\paren{N-n}/{2}}}},
\end{equation*}
and, for odd $N-n$, $\gcacti(N,n)=0$. The proof is in
Section~\ref{sec:cacti-hyper} and is a simple proof by induction.
\item \New{We show that the number of cores with vertex set $[n]$ and
    $m$ edges is at most the following function of $n$ and $m
$
    \begin{equation*}
      \label{eq:gcore-ineq}
      \gcore(n,m) := \alpha n\sqrt{m} \cdot n! \exp(n\fcore(\nncopt)), \text{
      for }m-n/2\to\infty,
    \end{equation*}
    where $\alpha$ is a constant, the function $\fcore$ is defined in
    Section~\ref{sec:core-hyper} and $\lambdaopt$ is the unique
    positive solution for ${\lambda
      \f(\lambda)\fgg(\lambda)}/{\FF(2\lambda)} = 3m/n$ and $\nncopt =
    {3m}/{(n\fgg(\lambdaopt))}$. The proof is in
    Section~\ref{sec:core-hyper}. } \nickcb{THis footnote should be deleted!} \Old{Let $\gcore(n,m)$
    denote the number of (simple) cores on $[n]$ with $m$ edges. We
    analyse $\gcore$ by writing it as follows:
  \begin{equation*}
    \gcore(n,m)
    =\sum_{\nn,\ds}
    \gcore(n,m,\nn,\ds),
  \end{equation*}
  where $\gcore(n,m,\nn,\ds)$ is the number of (simple) cores with $n$
  vertices, $m=n/2+R$ edges, $\nn$ vertices of degree~$1$, and degree
  sequence $\ds$ for the vertices of degree at least $2$. We use $r$
  to denote $R/n$. We show that there is a constant $\alpha$ such
  that 
  \begin{align}
    &\gcore(n,m)\leq \alpha n\sqrt{m} \cdot n! \exp(n\fcore(\nncopt)), \text{
      for }R\to\infty,\text{ and }
    \\
    &\gcore(n,m)
    \sim
    \frac{1}{2\pi n \sqrt{r}}\cdot
    n!\exp(n\fcore(\nncopt)), \text{
      for }R\to\infty \text{ and }R=o(n),
  \end{align}
  where the function $\fcore$ is defined in
  Section~\ref{sec:core-hyper} and $\lambdaopt$ is the unique positive
  solution for ${\lambda \f(\lambda)\fgg(\lambda)}/{\FF(2\lambda)}
  = 3m/n$ and $\nncopt = {3m}/{(n\fgg(\lambdaopt))}$. The proof is in
  Section~\ref{sec:core-hyper}.
}

\item We obtain an asymptotic formula for the number $\gpre(n,m)$ of
  simple connected pre-kernels with $n \to\infty$ vertices and
  $m=n/2+rn$ edges when $R=o(n)$ and $R=\omega(n^{1/2}\log^{3/2}n)$. We show that
  \begin{equation}
    \label{eq:gpre-formula-overview}
    \gpre(n,m)\sim
      \frac{\sqrt{3}}{\pi n}
      n!\exp(n\fpre(\prexopt)),
  \end{equation}
  where $\fpre$ is defined in Section~\ref{sec:pre-hyper} and
  $\prexopt\in\setR^4$ will be determined using $\lambdaopt$ as
  defined in the previous step.
\item  \New{ We define a set $I\subseteq \setZ$ such
    that~\eqref{eq:gpre-formula-overview} holds for every $n\in I$
    with $m = M-(N-n)/2$. Using~\eqref{eq:gpre-formula-overview}, we
    show that, for $n\in I$ and $m = M-(N-n)/2$,
    \begin{equation}
      \label{eq:gpre-simpler-overview}
      \gpre(n,m)\sim
      \frac{\sqrt{3}}{\pi n}\cdot
    n!\exp(n\fcore(\nncopt)),
    \end{equation}
    where $\nncopt$ is defined in Step 2.
    Using~\eqref{eq:gpre-simpler-overview}, we then show
    that \begin{equation}
      \label{eq:formula-prek-cacti-summary}
      \begin{split}
        \sum_{n\in I} \binom{N}{n}\gcacti(N,n)\gpre(n,m)
        &\sim
        \sqrt{\frac{3}{\pi  N}}\exp\paren[\big]{N t(\nNopt)+ N\ln N-N}
      \end{split}
    \end{equation}
    where $t$ is defined in Section~\ref{sec:core-cacti-hyper},
  $\lambdaoptopt$ is the unique positive solution of the equation $\lambda
  \paren{e^{2\lambda}+e^{\lambda}+1}/\paren{\fpo{1}(\lambda) g_1(\lambda)} =
  {3M}/{N}$ and $\nNopt =
  {\FF(2\lambdaoptopt)}\paren{\fpo{1}(\lambdaoptopt) g_1(\lambdaoptopt)}$.
  }
  \Old{We will at first work with cores since the
  function obtained for them is simpler than the formula for
  pre-kernels. We will find relations between the two formulae that
  justify why it is relevant to analyse the function for cores. We
  find a set $I$ for $n$ in which $n = \Theta(\sqrt{RN})$ such that
  \begin{equation}
    \label{eq:formula-core-cacti-summary}
    \sum_{n\in I} \binom{N}{n}\gcacti(N,n)\gcore(n,m)
    \sim
    \frac{\sqrt{3}}{\sqrt{\pi \lambdaoptopt N }}
    \exp\paren[\big]{N t(\nNopt)+ N\ln N-N},
  \end{equation}
  where $t$ is defined in Section~\ref{sec:core-cacti-hyper},
  $\lambdaoptopt$ is the unique positive solution of the equation $\lambda
  \paren{e^{2\lambda}+e^{\lambda}+1}/\paren{\fpo{1}(\lambda) g_1(\lambda)} =
  {3M}/{N}$ and $\nNopt =
  {\FF(2\lambdaoptopt)}\paren{\fpo{1}(\lambdaoptopt) g_1(\lambdaoptopt)}$.
  We then show that the contribution to the summation for $n$ outside
  $I$ is insignificant by using~\eqref{eq:gcore-ineq}:
  \begin{equation}
    \label{eq:insignificant-core-cacti-summary}
    \sum_{n\in [N]\setminus I} \binom{N}{n}\gcacti(N,n)\gcore(n,m)
    =
    o\left(\frac{1}{\sqrt{\pi  N}}
      \exp\paren[\big]{N t(\nNopt)+ N\ln N-N}\right).
  \end{equation}}
\item \New{Since every pre-kernel is a core and $\gcore(n,m)$ is an
    upper bound for the number of cores with vertex set $[n]$ and $m$
    edges, we have that $\gpre(n,m)\leq\gcore(n,m)$. Using this
    relation together with~\eqref{eq:gcore-ineq},
    we show that
  \begin{equation*}
    \begin{split}
      \sum_{n\in [N]\setminus I} \binom{N}{n}\gcacti(N,n)\gpre(n,m)
      &\leq
      \sum_{n\in [N]\setminus I} \binom{N}{n}\gcacti(N,n)\gcore(n,m)
      \\
      &=
      o\left(\frac{1}{\sqrt{\pi  N}}
      \exp\paren[\big]{N t(\nNopt)+ N\ln N-N}\right).
  \end{split}
  \end{equation*}
  Hence, together with~\eqref{eq:formula-prek-cacti-summary}, we have
  that
  \begin{equation*}
    \cNM
    \sim
    \frac{\sqrt{3}}{\sqrt{\pi \lambdaoptopt N }}
    \exp\paren[\big]{N t(\nNopt)+ N\ln N-N}.
  \end{equation*}
}

\Old{We use Step~2, Step~3 and~\eqref{eq:formula-core-cacti-summary}
  to show that
 \begin{equation*}
   \begin{split}
     \sum_{n\in I} \binom{N}{n}\gcacti(N,n)\gpre(n,m)
     &\sim
     2\sqrt{3r} \sum_{n\in I} \binom{N}{n}\gcacti(N,n)\gcore(n,m)\\
     &\sim
     \sqrt{\frac{3}{\pi  N}}\exp\paren[\big]{N t(\nNopt)+ N\ln N-N}
  \end{split}
  \end{equation*}
  and using the relation $\gpre(n,m)\leq\gcore(n,m)$ (since every
  pre-kernel is a core)
  and~\eqref{eq:insignificant-core-cacti-summary},
  \begin{equation*}
    \sum_{n\in [N]\setminus I} \binom{N}{n}\gcacti(N,n)\gpre(n,m)
    =
    o\left(\sum_{n\in I} \binom{N}{n}\gcacti(N,n)\gpre(n,m)\right).
  \end{equation*}}
\item The conclusion is then easily obtained by simplifying  $t(\nNopt)$.
\end{enumerate}

\section{Counting forests}
\label{sec:cacti-hyper}
In this section we prove an exact formula for rooted forests. In this
section we consider $k$-uniform hypergraphs, for any $k\geq 2$.  We
remark that this formula has also been proved in a note by
Lavault~\cite{Lavault} around the same time we obtained it. Lavault
shows a one-to-one correspondence between rooted forests and a set of
tuples whose size can be easily computed.

Recall that $\gcacti(N,n)$ is the number of rooted forests on $[N]$
with set of roots $[n]$. (See Figure~\ref{fig:forest-hyper} for a
rooted forest.)

\begin{thm} For integers $N\geq n\geq 0$ and any integer $k\geq 2$,
  \label{thm:cacti-formula}
  \begin{equation*}
    \gcacti(N,n)=
    \begin{cases}
      {\displaystyle \frac{n (N-n)! N^{\mcacti-1}}{\mcacti! (k-1)!^\mcacti}},& 
      \text{if } {\displaystyle \mcacti = \frac{N-n}{k-1}} 
      \text{ is a nonnegative integer;}\\
      0, &\text{ otherwise.}
    \end{cases}
  \end{equation*}
\end{thm}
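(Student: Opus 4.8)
The plan is to pass to exponential generating functions and apply Lagrange inversion; the whole statement then collapses to a one-line coefficient extraction, and the vanishing in the non-integral case comes for free.

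First I would record the structural reduction. A rooted forest on $[N]$ with root set $[n]$ is exactly an ordered $n$-tuple of $k$-uniform hypertrees, the $i$-th rooted at vertex $i$, which together partition $[N]$; the non-root labels $\{n+1,\dots,N\}$ are distributed arbitrarily among the $n$ trees. So if $\hat T(x)=\sum_{s\ge 0}a_s x^s/s!$ denotes the EGF, in the number $s$ of non-root vertices, for $k$-uniform hypertrees carrying one distinguished root vertex (thus $a_s$ is the number of such hypertrees on a root together with $s$ further labelled vertices), then the labelled-product rule gives
\[
  \gcacti(N,n)\;=\;(N-n)!\,[x^{\,N-n}]\,\hat T(x)^{\,n},
\]
with \emph{no} factor $1/n!$, since the roots $1,\dots,n$ break all symmetry among the trees. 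Writing $T(x):=x\hat T(x)$, the EGF by total number of vertices, this becomes $\gcacti(N,n)=(N-n)!\,[x^{N}]\,T(x)^{n}$.

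Next I would derive the functional equation for $T$. A root-distinguished hypertree is its root vertex together with an unordered set of edges at the root, each such edge being the root plus an unordered set of $k-1$ further root-distinguished sub-hypertrees (rooted at the other $k-1$ vertices of the edge); linearity of hypertrees (no two edges share two vertices) makes these sub-hypertrees live on disjoint vertex sets, so the decomposition is exact. In EGF terms this is
\[
  T(x)\;=\;x\exp\!\Bigl(\frac{T(x)^{\,k-1}}{(k-1)!}\Bigr),
\]
that is $T=x\phi(T)$ with $\phi(u)=\exp\!\bigl(u^{k-1}/(k-1)!\bigr)$ and $\phi(0)=1\neq 0$. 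Lagrange inversion applied to $H(u)=u^{n}$ now yields
\[
  [x^{N}]T(x)^{n}\;=\;\frac{n}{N}\,[u^{\,N-n}]\,\phi(u)^{N}
  \;=\;\frac{n}{N}\,[u^{\,N-n}]\exp\!\Bigl(\frac{N u^{\,k-1}}{(k-1)!}\Bigr)
  \;=\;\frac{n}{N}\sum_{j\ge 0}\frac{N^{j}}{j!\,(k-1)!^{\,j}}\,[u^{\,N-n}]\,u^{\,j(k-1)}.
\]
The inner coefficient is nonzero only when $\mcacti:=(N-n)/(k-1)$ is a nonnegative integer, in which case only $j=\mcacti$ survives and contributes $N^{\mcacti}/(\mcacti!\,(k-1)!^{\mcacti})$. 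Multiplying by $(N-n)!$ gives $\gcacti(N,n)=n(N-n)!\,N^{\mcacti-1}/(\mcacti!\,(k-1)!^{\mcacti})$ when $\mcacti\in\setZ_{\ge 0}$ and $0$ otherwise, which is the claim; the degenerate case $N=n=0$ is handled directly since $\hat T(x)^{0}=1$ forces $\gcacti(0,0)=1$.

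Essentially nothing here is an obstacle: the labelled-product identity, the species equation, and the form of Lagrange inversion are all standard. The only place to be careful is the EGF bookkeeping — keeping straight "by total vertices" versus "by non-root vertices" (the factor $x$ relating $T$ and $\hat T$, which is also what produces the $1/N$, equivalently the exponent $\mcacti-1$), and the absence of a $1/n!$. If one prefers a purely combinatorial argument closer to the paper's promised induction, the natural route is to induct on $\mcacti$ (equivalently on $N-n$): peel off the hypertree component containing a fixed root to get $\gcacti(N,n)=\sum_{a}\binom{N-n}{a}\gcacti(a+1,1)\gcacti(N-1-a,n-1)$, and establish the single-tree counts $\gcacti(\cdot,1)$ by a further induction on the number of edges at the root; there the only real nuisance is organising the sum over the multiplicity of edges at the root, which is precisely what the $\exp$ in the functional equation packages automatically.
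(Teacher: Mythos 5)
Your proof is correct, but it takes a genuinely different route from the paper. The paper proves the formula by induction on $N$: it conditions on the number $j$ of edges containing the root vertex $1$, counts the ways to choose and partition the $(k-1)j$ non-root vertices filling those edges, deletes vertex $1$ and promotes the exposed vertices to roots, and then verifies the guessed closed form against the resulting recurrence using the binomial theorem and its derivative. You instead encode the structure once and for all: a rooted forest with root set $[n]$ is an ordered $n$-tuple of rooted hypertrees on disjoint label sets, so $\gcacti(N,n)=(N-n)!\,[x^N]T(x)^n$ with no $1/n!$, the root-decomposition of a hypertree gives $T=x\exp\bigl(T^{k-1}/(k-1)!\bigr)$, and Lagrange inversion plus one coefficient extraction yields the formula, with the vanishing when $(k-1)\nmid(N-n)$ and the case $n=0$, $N\geq 1$ coming out automatically. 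Your version derives the formula rather than verifying it and is shorter; the paper's is elementary and self-contained, needing no generating-function machinery. The structural facts each argument leans on are comparable in weight: the paper implicitly uses that the non-root vertices of the edges at root $1$ become legitimate new roots of a smaller forest, while you need the branch decomposition at the root to be a bijection (in particular that reconstruction from a set of edge-gadgets of $k-1$ disjoint rooted sub-hypertrees is again acyclic — your appeal to linearity covers the forward direction, and the reverse is routine but deserves the one sentence you more or less give it). Your bookkeeping of the $x$-shift between $\hat T$ and $T$ (the source of the $n/N$, i.e.\ the exponent $\mcacti-1$) is right, Lagrange inversion is applied in its valid range $N\geq 1$, and the degenerate case $N=n=0$ is correctly split off.
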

\begin{proof}
  A connected $k$-uniform hypergraph is a tree if and only if, by
  iteratively deleting edges that have at least $k-1$ vertices of
  degree $1$, we delete all edges. It then is obvious that $\mcacti$
  is the number of edges in the forest.  We remark that the a tree can
  be seen as a $2$-uniform hypergraph where each block is a clique on
  $[n-1]$ vertices (which is known as a clique tree).

  The proof is by induction on $N$.  We have that $\gcacti(1,1) = 1 =
  1(1-1)! 1^{0-1} / (0! (k-1)!^0) = 1$ and the formula also works for
  $\gcacti(N,0) = 0$. So assume that $N > 1$ and $n\geq 1$. We will
  show how to obtain a recurrence relation for $\gcacti(N,n)$. Suppose
  that the vertex $1$ is in $j$ edges, where $0\leq j\leq m'$. We
  choose $(k-1)j$ other vertices to be in these $j$ edges. There are
  $\binom{N-n}{(k-1)j}$ ways to choose these vertices. The number of
  ways we can split the vertices into the edges  is
\begin{equation*}
  \binom{(k-1)j}{k-1}
  \binom{(k-1)j-(k-1)}{k-1}
  \cdots
  \binom{k-1}{k-1}
  \frac{1}{j!}
  =
  \frac{\paren[\big]{(k-1)j}!}{(k-1)!^j j!}.
\end{equation*}
We can build the rooted forest by first choosing the edges containing
$1$ and then deleting $1$ and considering the other $(k-1)j$ vertices
in these edges as new roots. This gives us the following recurrence:
\begin{equation*}
  \gcacti(N,n)
  =
  \sum_{j=0}^\mcacti
  \binom{N-n}{(k-1)j} \frac{\paren[\big]{(k-1)j}!}{(k-1)!^j j!}
  \gcacti\paren[\big]{N-1,n-1+(k-1)j}.
\end{equation*}
Note that $0 \leq n-1+(k-1)j\leq N-1$ since $j\in[0, \mcacti]$.  The new
number of edges is $m'' = \frac{1}{k-1}((N-1) - (n - 1 +(k-1)j))
= \mcacti - j$. So, by induction hypothesis,
\begin{equation*}
  \begin{split}
    &\gcacti(N,n)
    =
    \sum_{j=0}^\mcacti
    \binom{N-n}{(k-1)j} 
    \frac{\paren[\big]{(k-1)j}!}{(k-1)!^j j!}\cdot
    \frac{(n-1+(k-1)j) (N-n-(k-1)j)! (N-1)^{\mcacti-j-1}}{(\mcacti-j)! (k-1)!^{\mcacti-j}}
    \\
    &=
    \frac{(N-n)!}{(N-1)(k-1)!^\mcacti}
    \sum_{j=0}^\mcacti
    \frac{(n-1+(k-1)j)(N-1)^{\mcacti-j}}{j!(\mcacti-j)!}
    \\
    &=
    \frac{(N-n)!}{\mcacti!(N-1)(k-1)!^\mcacti}
    \sum_{j=0}^\mcacti
    \binom{\mcacti}{j}(n-1+(k-1)j)(N-1)^{\mcacti-j}
    \\
    &=
   \frac{(N-n)!}{\mcacti!(N-1)(k-1)!^\mcacti}
   \left(
     (n-1)  \sum_{j=0}^\mcacti\binom{\mcacti}{j}(N-1)^{\mcacti-j}
     +
     (k-1) \sum_{j=0}^\mcacti \binom{\mcacti}{j}j(N-1)^{\mcacti-j}
   \right).
 \end{split}
\end{equation*}
Using the Binomial Theorem,
\begin{equation*}
  \sum_{j=0}^\mcacti\binom{\mcacti}{j}(N-1)^{\mcacti-j} = N^\mcacti  
\end{equation*}
and by differentiating both sides with respect to $N$,
\begin{equation*}
  \sum_{j=0}^\mcacti\binom{\mcacti}{j}(\mcacti-j)(N-1)^{\mcacti-j-1} = \mcacti N^{\mcacti-1},  
\end{equation*}
and so
\begin{equation*}
  \begin{split}
    \sum_{j=0}^\mcacti\binom{\mcacti}{j}j(N-1)^{\mcacti-j} 
    &=
    \mcacti\sum_{j=0}^\mcacti\binom{\mcacti}{j}(N-1)^{\mcacti-j} -
    \mcacti N^{\mcacti-1}(N-1)
    \\
    &=
    \mcacti N^\mcacti - \mcacti N^{\mcacti-1}(N-1)  =
    \mcacti N^{\mcacti-1}.  
  \end{split}
\end{equation*}
Hence,
\begin{equation*}
  \begin{split}
    \gcacti(N,n)
    &=
    \frac{(N-n)!}{\mcacti!(N-1)(k-1)!^\mcacti}
    \left(
      (n-1)N^\mcacti
      +
      (k-1)\mcacti N^{\mcacti-1}
    \right)
    \\
    &=\frac{(N-n)! N^{\mcacti-1}}{\mcacti!(N-1)(k-1)!^\mcacti}
    \left(
      N(n-1)
      +
      N-n
    \right)
    \\
    &=
    \frac{n(N-n)!N^{\mcacti-1}}{\mcacti!(k-1)!^\mcacti},
  \end{split}
\end{equation*}
and we are done.
\end{proof}

%%%%%%%%%%%%%%%%%%%%%%%%%%%%%%%%%%%%%%%%%%%%%%%%%%%%%%%%%%%%%%%%
% Tools
%%%%%%%%%%%%%%%%%%%%%%%%%%%%%%%%%%%%%%%%%%%%%%%%%%%%%%%%%%%%%%%%
\section{Tools}
In this section, we include some \New{definitions and }computations
that will be used a number of times throughout the proofs.

\New{
Given a nonnegative real number $\lambda$ and a nonnegative integer
$k$, we say that a random variable $Y$ is a \textdef{truncated
  Poisson}\index{truncated Poisson} with parameters~$(k,\lambda)$ if,
for every $j\in\setN$,
\begin{equation}
\label{eq:truncated-poisson-prelim}
\prob{Y=j} =
\begin{cases}
{\displaystyle \frac{\lambda^j}{j! \fpo{k}(\lambda)}}, &\text{ if }j\geq k;\\
0,&\text{ otherwise}.
\end{cases}
\end{equation}
where 
\begin{equation}
\label{eq:fk-def-prelim}
  \fpo{k}(\lambda) \eqdef
  e^{\lambda}-\sum_{i=0}^{k-1} \frac{\lambda^i}{i!}.
\end{equation}
We use $\tpo{k}{\lambda}$ to denote the distribution of a truncated
Poisson random variable with parameters $(k,\lambda)$. Throughout this
paper, we often use properties of truncated Poisson random variables
proved by Pittel and Wormald~\cite{PWa}.}

\New{Pittel and Wormald~\cite[Lemma 1]{PWa} showed that, for every $c
  > k$, there exists a unique positive real $\lambda$ such that
  \begin{equation}
    \label{eq:def:lambdakc-prelim}
    \frac{\lambda \fpo{k-1}(\lambda)}{\fpo{k}(\lambda)}
    = 
    c.
  \end{equation}
  Note that this implies that, for any $c > k$, there exists $\lambda
  >0$ such that the expectation of a random variable with distribution
  $\tpoisson{k}{\lambda}$ is $c$.}

\New{The first derivative of $\lambda
  \fpo{k-1}(\lambda)/\fpo{k}(\lambda)$ is obviously a continuous
  function and it is positive for $\lambda >0$ (as compute in the
  proof of~\cite[Lemma 1]{PWa}). From this, one obtains the following
  lemma:
\begin{lem}
  \label{lem:lambda-close-hyper}
  Let $\gamma$ and $k$ be positive integer constants with $\gamma >
  k$.  Let $\alpha(n), \beta(n)$ be function such that $k< \alpha(n)<
  \beta(n)< \gamma$ and $|\alpha(n)-\beta(n)| = o(\phi)$ where
  $\phi=o(1)$. Then $\abs{\lambdakcparam{k}{\alpha}
    -\lambdakcparam{k}{\beta}} = o(\phi)$.
\end{lem}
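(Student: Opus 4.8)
The plan is to reduce the statement to a quantitative version of uniform continuity for the inverse of the map $\lambda \mapsto c(\lambda) := \lambda \fpo{k-1}(\lambda)/\fpo{k}(\lambda)$ on a compact parameter interval. First I would invoke the cited fact from Pittel and Wormald that $c(\lambda)$ is strictly increasing on $(0,\infty)$ with continuous, strictly positive derivative $c'(\lambda)$, and that $c(\lambda)\to k^+$ as $\lambda \to 0^+$ while $c(\lambda)\to\infty$ as $\lambda\to\infty$. Hence $\lambdakcparam{k}{c}$ is well-defined for $c>k$ and is the inverse function of $c(\cdot)$, so it too is continuously differentiable with derivative $1/c'(\lambdakcparam{k}{c})$.

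Next I would pin down a compact range. Since $k < \alpha(n) < \beta(n) < \gamma$ with $k,\gamma$ fixed constants, both $\alpha(n)$ and $\beta(n)$ lie in the fixed interval $(k,\gamma)$; but the inverse $\lambdakcparam{k}{\cdot}$ blows up as its argument approaches $k$ from above, so I cannot naively bound the derivative of the inverse on all of $(k,\gamma)$. The key point is that $|\alpha(n)-\beta(n)| = o(\phi)$ with $\phi = o(1)$ only gives a statement about the \emph{difference}, so I need a bound on $|\lambdakcparam{k}{\alpha}-\lambdakcparam{k}{\beta}|$ that degrades gracefully even when $\alpha(n),\beta(n)$ both tend to $k$. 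I would handle this by the mean value theorem applied to $c(\cdot)$ directly rather than to its inverse: writing $\lambda_\alpha = \lambdakcparam{k}{\alpha}$ and $\lambda_\beta = \lambdakcparam{k}{\beta}$, we have $\alpha(n)-\beta(n) = c(\lambda_\alpha) - c(\lambda_\beta) = c'(\xi)(\lambda_\alpha - \lambda_\beta)$ for some $\xi$ between $\lambda_\alpha$ and $\lambda_\beta$, whence
\begin{equation*}
  |\lambda_\alpha - \lambda_\beta| = \frac{|\alpha(n)-\beta(n)|}{c'(\xi)}.
\end{equation*}
So it suffices to show $c'(\xi)$ is bounded away from $0$ along the relevant sequence, or more precisely that $|\alpha(n)-\beta(n)|/c'(\xi) = o(\phi)$.

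The mild obstacle — and the step I expect to require the most care — is exactly this lower bound on $c'(\xi)$. Since $\alpha(n),\beta(n) < \gamma$ we get $\lambda_\alpha, \lambda_\beta$, and hence $\xi$, all bounded above by $\Lambda := \lambdakcparam{k}{\gamma}$, a fixed constant; on the compact interval $(0,\Lambda]$ the continuous positive function $c'$ has a positive infimum $c_0 := \min_{\lambda\in[\varepsilon,\Lambda]} c'(\lambda) > 0$ for any fixed $\varepsilon>0$, but near $0$ one should check the behaviour of $c'$. A short computation (which one can extract from the proof of \cite[Lemma 1]{PWa}, or do directly using the series expansions of $\fpo{k-1}$ and $\fpo{k}$ near $0$) shows that $c(\lambda) = k + \frac{\lambda}{k+1} + O(\lambda^2)$ as $\lambda\to 0$, so $c'(\lambda)$ stays bounded away from zero on a neighbourhood of $0$ as well; therefore $c'$ is bounded below by a positive constant $c_0$ on all of $(0,\Lambda]$. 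Plugging this in gives $|\lambda_\alpha - \lambda_\beta| \le |\alpha(n)-\beta(n)|/c_0 = o(\phi)$, which is the claim. If one prefers to avoid the expansion near $0$, an alternative is to note that the lemma is only applied in contexts where the relevant arguments are in fact bounded away from $k$ as well (the quantities $\alpha,\beta$ here arise as normalised edge counts of cores), but the cleaner route is the uniform lower bound on $c'$ over $(0,\Lambda]$, so I would present it that way.
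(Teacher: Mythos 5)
Your proof is correct and matches the paper's (implicit) argument: the paper simply remarks that the derivative of $c(\lambda)=\lambda f_{k-1}(\lambda)/f_k(\lambda)$ is continuous and positive for $\lambda>0$ and says the lemma follows. You supply the detail the paper glosses over — that one must bound $c'$ away from zero on the entire relevant range $(0,\Lambda]$, including near $\lambda=0^+$ where $(0,\Lambda]$ fails to be compact — and your Taylor expansion $c(\lambda)=k+\lambda/(k+1)+O(\lambda^2)$, hence $c'(0^+)=1/(k+1)>0$, is exactly the right way to close that gap; the mean value theorem then finishes the argument as you describe.
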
}

Let $k$ be a positive integer. Let $c: \setR\to \setR$ so that $c(y) >
k$ for all $y\in\setR$. Let $\lambda(y)$ be defined by
\begin{equation*}
  \frac{\lambda(y) \fpo{k-1}(\lambda(y))}{\fpo{k}(\lambda(y))}
  =c(y).
\end{equation*}
The existence and uniqueness of $\lambda(y)$ follow
from~\textred{\cite[Lemma 1]{PWa}}. We compute the derivative
$\lambda'$ of $\lambda(y)$ by implicit differentiation. Assuming that
$c$ is differentiable with derivative $c'$:
\begin{equation}
  \label{eq:lambda-diff-hyper}
  \lambda'
  \frac{\fpo{k-1}(\lambda(y))}{\fpo{k}(\lambda(y)))}
  \left(1
    +\frac{\lambda(y) \fpo{k-2}(\lambda(y))}{\fpo{k-1}(\lambda(y))}
  -\frac{\lambda(y) \fpo{k-1}(\lambda(y))}{\fpo{k}(\lambda(y))}\right)
  =
  c'.
\end{equation}

Let $T, t: \setR\to \setR$ be differentiable functions be such that
$T(y)/t(y)>k$ for all $y\in \setR$. Let $t'$ and $T'$ denote the derivatives of $t$ and
$T$, resp. We will compute the derivative of $t(y)\log
\fpo{k}(\lambda(y)) - T(y)\log(\lambda(y))$. For $c(y) =T(y)/t(y)=
{\lambda(y) \fpo{k-1}(\lambda(y))}/{\fpo{k}(\lambda(y))}$ and
$\eta(y) = {\lambda(y)
  \fpo{k-2}(\lambda(y))}/{\fpo{k-1}(\lambda(y))}$, and
using~\eqref{eq:lambda-diff-hyper},
\begin{equation}
  \label{eq:difdeg}
  \begin{split}
    &\frac{\dif\paren[\Big]{t(y)\log f_k(\lambda(y)) - T(y)\log(\lambda(y))}}{\dif y}=
     \\
     &=
     t'\log f_{k}(\lambda(y)) 
     +
     \lambda'
     \frac{t(y)f_{k-1}(\lambda(y))}{f_{k}(\lambda(y))}
     -
     T' \log\lambda(y)
     - \lambda'\frac{T(y)}{\lambda(y)}
     \\
     &=
     t' \log f_{k}(\lambda(y))
     +
     t(\lambda)\frac{f_{k-1}(\lambda(y))}{f_{k}(\lambda(y))}
     \frac{\lambda(y) c'}
     {c(y)(1+\eta(y)-c(y))}
     \\
     &\quad- 
     T' \log\lambda(y) 
     - \frac{T(y)}{\lambda(y)}
     \frac{\lambda(y) c'}
     {c(y)(1+\eta(y)-c(y))}
     \\
     &=
     t'\log f_{k}(\lambda(y))
     +
     \frac{t(y)c'}{1+\eta(y)-c(y)}
     -
     T'\log\lambda(y) -
     \frac{t(y)c'}{1+\eta(y)-c(y)}
     \\
     &=
     t' \log f_{k}(\lambda(y))
     - T' \log\lambda(y).
   \end{split}
 \end{equation}

 The following lemma is an application of standard results concerning
 Gaussian functions and the definition of Riemann integral.
 \begin{lem}
   \label{lem:integral-tools}
   Let $\phi(n) \to 0$, $\psi(n) \to 0$, $T_n \to \infty$ and $s_n \to
   \infty$.  Let $f_n = \exp(-\alpha x^2 + \beta x + \phi x^2 +\psi
   x)$ with constants $\alpha > 0$ and $\beta$. Let $\Pcal_n =
   z+\setZ$, where $z\in\setR$. Then
 \begin{equation*}
   \frac{1}{s_n}\sum_{\substack{x\in \Pcal_n/s_n\\ |x|\leq T_n}}
     f_n(x)
     \sim
     \exp\paren[\Big]{\frac{\beta^2}{4\alpha}}\sqrt{\frac{\pi}{\alpha}}.
   \end{equation*}
 \end{lem}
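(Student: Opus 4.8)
The plan is to read the left-hand side as a Riemann sum of a slightly perturbed Gaussian and compare it with the full Gaussian integral. By completing the square one has $\int_{\setR}\exp(-\alpha x^2+\beta x)\,\dif x = \exp(\beta^2/(4\alpha))\sqrt{\pi/\alpha} =: I$, which is exactly the claimed limit. Write $g(x)=\exp(-\alpha x^2+\beta x)$, so that $f_n(x)=g(x)\exp(\phi x^2+\psi x)$, and observe that $\Pcal_n/s_n$ is a translate of $s_n^{-1}\setZ$, i.e.\ a lattice of mesh $1/s_n\to0$ (so the claim does not depend on $z$ at all). I would fix a large constant $L$ and, for $n$ large enough that $T_n>L$, split the range of summation into a \textbf{bulk} part $|x|\le L$ and a \textbf{tail} part $L<|x|\le T_n$.

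For the bulk part I would first replace $f_n$ by $g$ and then recognise a Riemann sum. On $[-L,L]$ we have $|\phi|L^2+|\psi|L=o(1)$ and $g$ is bounded, so $|f_n(x)-g(x)|=g(x)\,|e^{\phi x^2+\psi x}-1|=o(1)$ uniformly in $x\in[-L,L]$; since the lattice meets $[-L,L]$ in at most $2Ls_n+1$ points, dividing by $s_n$ kills this error. As $g$ is continuous on the compact interval $[-L,L]$ and the mesh $1/s_n\to0$, the sum $s_n^{-1}\sum_{x\in(\Pcal_n/s_n)\cap[-L,L]}g(x)$ converges to $\int_{-L}^{L}g$ by the definition of the Riemann integral. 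Hence $s_n^{-1}\sum_{x\in\Pcal_n/s_n,\,|x|\le L}f_n(x)\to\int_{-L}^{L}g(x)\,\dif x$. For the tail part I would fix $L_0=L_0(\alpha,\beta)$ large enough that $h(x):=\exp(-\tfrac34\alpha x^2+(|\beta|+1)|x|)$ is monotone on each of $(-\infty,-L_0]$ and $[L_0,\infty)$; once $n$ is large enough that $|\phi|\le\alpha/4$ and $|\psi|\le1$ one has $f_n\le h$ everywhere. Then for any $L\ge L_0+1$, comparing each lattice term $s_n^{-1}h(x)$ with the integral of $h$ over the adjacent mesh interval (where $h$ is monotone) gives $s_n^{-1}\sum_{x\in\Pcal_n/s_n,\,L<|x|\le T_n}f_n(x)\le s_n^{-1}\sum_{x\in\Pcal_n/s_n,\,|x|>L}h(x)\le\int_{|t|>L-1}h(t)\,\dif t$, a bound independent of $n$ that tends to $0$ as $L\to\infty$.

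To finish, given $\eps>0$ I pick $L\ge L_0+1$ with $\int_{|t|>L-1}h<\eps$ and $\int_{|t|>L}g<\eps$. For all large $n$ the bulk estimate gives $S_n\ge s_n^{-1}\sum_{|x|\le L}f_n(x)\ge\int_{-L}^{L}g-\eps\ge I-2\eps$, while $S_n$ equals the bulk sum plus the tail sum, which is at most $\int_{-L}^{L}g+\eps+\eps\le I+2\eps$; as $\eps$ was arbitrary, $S_n\to I$. The only delicate point is that the hypotheses do \emph{not} force $\phi\,T_n^2\to0$, so the perturbation $\phi x^2$ need not be uniformly small over the whole summation window; the argument sidesteps this by using only $|\phi|<\alpha/4$ eventually, which suffices for one fixed Gaussian $h$ to dominate every $f_n$, so that the tail can be discarded (by choosing $L$ large) before letting $n\to\infty$. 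Everything else is routine Riemann-sum bookkeeping.
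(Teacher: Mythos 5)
Your proof is correct and follows the same overall strategy as the paper's: recognize the Gaussian integral $\int_{\setR}e^{-\alpha x^2+\beta x}\,\dif x$ as the limit, treat the sum as a Riemann sum near the peak, and discard the Gaussian tails. The one place where your bookkeeping genuinely diverges is worth noting: the paper squeezes $f_n$ between $f^{\pm}(x)=\exp(-\alpha x^2+\beta x\pm\eps x^2\pm\eps x)$ pointwise over all $x$ and sends $\eps\to0$ at the end, but that two-sided bound as written is not quite right near the origin (for $x<0$ one has $\psi x>\eps x$ whenever $\psi<\eps$, so $f_n\le f^{+}$ can fail; the paper should really use $\eps|x|$). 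You avoid this by splitting into a fixed compact bulk $[-L,L]$, where $\phi x^2+\psi x\to0$ \emph{uniformly} so $f_n\to g$ uniformly and the Riemann sum converges, plus a tail $|x|>L$, where a single $n$-independent majorant $h(x)=\exp(-\tfrac34\alpha x^2+(|\beta|+1)|x|)$ dominates every $f_n$ once $|\phi|\le\alpha/4$ and $|\psi|\le1$, and monotonicity of $h$ lets you compare the tail sum to $\int_{|t|>L-1}h$. That decomposition is cleaner and slightly more robust than the paper's, while buying the same conclusion; your tail comparison and the final $\eps$–$L$–$n$ ordering are all in order.
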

 \begin{proof}
   Let $\eps \in (0,\min(\alpha,\beta))$ and let $f^+(x) = \exp(-\alpha x^2 + \beta x +\eps
   x^2+\eps x)$ and $f^-(x) = \exp(-\alpha x^2 + \beta x -\eps x^2-\eps x)$. Since
   $\phi = o(1)$ and $\psi=o(1)$, we may assume $f^-(x)\leq f_n(x) \leq
   f^+(x)$. We will show that
   \begin{equation}
     \label{eq:upper-riemann-tools}
     \frac{1}{s_n}\sum_{\substack{x\in \Pcal_n/s_n\\ |x|\leq T_n}}
     f^+(x)
     \sim
     \exp\paren[\Big]{\frac{(\beta+\eps)^2}{4(\alpha+\eps)}}\sqrt{\frac{\pi}{\alpha+\eps}}
   \end{equation}
   and
   \begin{equation}
     \label{eq:lower-riemann-tools}
     \frac{1}{s_n}\sum_{\substack{x\in \Pcal_n/s_n\\ |x|\leq T_n}}
     f^-(x)
     \sim
     \exp\paren[\Big]{\frac{(\beta-\eps)^2}{4(\alpha-\eps)}}\sqrt{\frac{\pi}{\alpha-\eps}}.
   \end{equation}
   Since we can choose $\eps$ arbitrarily close to zero, this proves
   the lemma. We will only show the proof
   for~\eqref{eq:upper-riemann-tools} since the proof
   for~\eqref{eq:lower-riemann-tools} is very similar.  We have that
   \begin{equation*}
     \int_{-\infty}^{\infty} f^+(x) dx
     =
     \lim_{C\to\infty}  \int_{-C}^{C} f^+(x) dx
   \end{equation*}
   and
   \begin{equation*}
     \lim_{-\infty}^{\infty} f^+(x) dx
     =
     e^{\frac{(\beta+\eps)^2}{4(\alpha+\eps)}}\sqrt{\frac{\pi}{\alpha+\eps}}.
   \end{equation*}
   So it suffices to show that, 
   \begin{equation*}
     {\Bigg|}
     \lim_{n\to\infty}  \frac{1}{s_n}\sum_{\substack{x\in \Pcal_n/s_n\\ |x|\leq T_n}}
        f^+(x)
        -
        \lim_{C\to\infty}  \int_{-C}^{C} f^+(x) dx
      {\Bigg|}
      = 0.
   \end{equation*}
   We have that
   \begin{equation*}
     \begin{split}
       &{\Bigg|}
     \lim_{n\to\infty}  \frac{1}{s_n}\sum_{\substack{x\in \Pcal_n/s_n\\ |x|\leq T_n}}
        f^+(x)
        -
        \lim_{C\to\infty}  \int_{-C}^{C} f^+(x) dx
      {\Bigg|}
      \leq
      \\
      &\qquad{\Bigg|}
      \lim_{n\to\infty}  \frac{1}{s_n}\sum_{\substack{x\in \Pcal_n/s_n\\ |x|\leq T_n}}
        f^+(x)
        -
        \lim_{C\to\infty}  
        \lim_{n\to\infty}  \frac{1}{s_n}\sum_{\substack{x\in \Pcal_n/s_n\\ |x|\leq C}}
        f^+(x)
      {\Bigg|}
      \\
      & \qquad+
      {\Bigg|}
        \lim_{C\to\infty}  
        \lim_{n\to\infty}  \frac{1}{s_n}\sum_{\substack{x\in \Pcal_n/s_n\\ |x|\leq C}}
        f^+(x)
        -
        \lim_{C\to\infty}  \int_{-C}^{C} f^+(x) dx
      {\Bigg|}
     \end{split}
   \end{equation*}
   the last term goes to is zero by the definition of Riemann
   integral. It is known that the tail for Gaussian functions is very
   small. More precisely, for each $\eps' >0$ there exists $n_0$ such
   that, for each $n\geq n_0$,
   \begin{equation*}
     {\Bigg|}
     \frac{1}{s_n}\sum_{\substack{x\in \Pcal_n/s_n\\ |x|\leq T_n}}
     f^+(x)
     -
     \lim_{n\to\infty}  \frac{1}{s_n}\sum_{\substack{x\in \Pcal_n/s_n\\ |x|\leq T_{n_0}}}
     f^+(x)
     {\Bigg|}
\leq \eps'.
   \end{equation*}
   Since $C\to\infty$, $C$ is eventually bigger than $T_{n_0}$. And we
   are done since we can choose $\eps'>0$ arbitrarily small.
 \end{proof}

%%%%%%%%%%%%%%%%%%%%%%%%%%%%%%%%%%%%%%%%%%%%%%%%%%%%%%%%%%%%%%%%
% Cores
%%%%%%%%%%%%%%%%%%%%%%%%%%%%%%%%%%%%%%%%%%%%%%%%%%%%%%%%%%%%%%%%
\section{Counting cores}
\label{sec:core-hyper}

\Old{In this section we obtain an asymptotic formula for the number of
  cores (not necessarily connected) with vertex set $[n]$ and
  $m=n/2+R$ edges, when $R= \omega(\log n)$ and $R=o(n)$. We also
  obtain an upper bound for the number of such cores when $R\to
  \infty$.} \New{In this section we obtain an upper bound for the
  number of cores with vertex set $[n]$ and $m=n/2+R$ edges, when
  $R\to \infty$.} We remark that the asymptotics in this section are
for $n\to \infty$. We will always use $r$ to denote $R/n$.

For $\nn\in\setR$, define
\begin{align*}
  &\ntwo(\nn) = n -\nn,
  % \quad&\corentwo(n) = \ntwo(n)/n;
  \\
  &\mthree(\nn) = m -\nn,
  % \quad&\coremthree(\nncore) = \mthree(\nn)/n;
  \\
  &\Qtwo(\nn) = 3m-\nn,
  %\quad&\coreQtwo(\nncore) = \Qtwo(\nn)/n;
  \\
  &\ctwo(\nn) = \Qtwo(\nn)/\ntwo(\nn) = (3m-\nn)/(n-\nn).
\end{align*}
For any symbol $y$ in this section, we use $\hat y$ to denote $y/n$.

We will use $\nn$ as the number of vertices of degree~$1$ in the
core. Then  $\ntwo(\nn)$ is the number of vertices of degree at
least~$2$, $\mthree(\nn)$ is the number of $3$-edges, $\Qtwo(\nn)$ is
the sum of degrees of vertices of degree at least~$2$, and
$\ctwo(\nn)$ is the average degree of the vertices of degree at
least~$2$. We omit the argument $\nn$ when it is obvious from the
context.

Let $J_m$ denote the set of reals $\nn$ such that
$\max\set{0,2n-3m}\leq \nn\leq \min\set{n,m}$.  The lower bound
$2n-3m$ is used to ensure that $\ctwo(\nn) \geq 2$ for $\nn\in J_m$.
Let $\coreJ_m = \set{x/n: x\in J_m}$, that is, $\coreJ_m$ is a scaled
version of~$J_m$.  For $\nn\in J_m\setminus\set{2n-3m}$, let
$\lambda_{\nn}$ be the unique positive solution of
\begin{equation}
  \label{eq:lambdacore-hyper}
  \frac{\lambda \f(\lambda)}{\ff(\lambda)}
  =
  \ctwo(\nn).
\end{equation}
Such a solution exists and is unique since $\ctwo(\nn) =
(3m-\nn)/(n-\nn)$ and $\nn> 2n-3m$ ensures that $3m-\nn> 2(n-\nn)$
(see~\textred{\cite[Lemma 1]{PWa}}). By continuity reasons, we define
$\lambda_{2n-3m} = 0$.

Let
\begin{equation}
  \label{eq:etatwo-def-hyper}
  \etatwo(\nn)=
  \frac{\lambda_{\nn}\exp(\lambda_{\nn})}{\f(\lambda_{\nn}).}  
\end{equation}

Let $\hcore(x) = x\ln(xn)-x$ and define, for $\nncore$ in the interior
of $\coreJ_m$,
\begin{equation}
  \label{eq:fcore-hyper}
  \begin{split}
    \fcore(\nncore)
    =
    &\hcore(\coreQtwo(\nn))
    - \hcore(\corentwo(\nn))
    - \hcore(\nncore)
    - \hcore(\coremthree(\nn))
    \\
    &- \nncore \ln(2)
    - \coremthree(\nn)\ln(6)
    \\
    &+ \corentwo(\nn)\ln(\ff(\lambda_{\nn}))
    -\coreQtwo(\nn)\ln(\lambda_{\nn}),
  \end{split}
\end{equation}
We extend the definition $\fcore$ to $\coreJ_m$ by setting the
$\fcore(\nncore)$ to be the limit of $\fcore(x)$ as $x\to \nncore$,
for the points $\nncore\in \coreJ_m\cap \set{0,1,\mcore,
  2-3\mcore}$. For all points in
$\coreJ_m\cap\set{0,1,\mcore,2-3\mcore}$ except $2-3\mcore$, this only
means that $0\log 0$ should be interpreted as~$1$. For $\nncore =
2-3\mcore$, as we already mentioned, $\lambda_{2n-3m}=0$ by continuity
reasons. But then $\corentwo(\nn)\ln(\ff(\lambda_{\nn}))
-\coreQtwo(\nn)\ln(\lambda_{\nn})$ is not defined (and note that
$\corentwo(\nn)\ln(\ff(\lambda_{\nn}))$ and
$\coreQtwo(\nn)\ln(\lambda_{\nn})$ appear in the definition of
$\fcore$). For $\nncore = 2-3\mcore$,
\begin{equation*}
  \begin{split}
    &\lim_{\lambda\to 0}
    \left( \corentwo(\nn)\ln(\ff(\lambda))
      -\coreQtwo(\nn)\ln(\lambda)\right)
    =
    \corentwo(\nn)\lim_{\lambda\to 0}
    \left(\ln(\ff(\lambda))
      -2\ln(\lambda)\right)
    \\
    &=
    \corentwo(\nn)
    \lim_{\lambda\to 0}
    \left(\ln\left(\frac{\exp(\lambda)-1-\lambda}{\lambda^2}\right)\right)
    =
    \corentwo(\nn)\ln \left(\frac{1}{2}\right).
  \end{split}
\end{equation*}
Thus,
\begin{equation}
  \label{eq:val-extreme-fcore-hyper}
  \fcore(2-3\mcore)
  =
    \hcore(\coreQtwo)
    - \hcore(\corentwo)
    - \hcore(\nncore)
    - \hcore(\coremthree)
    - \nncore \ln(2)
    - \coremthree\ln(6)
    - \corentwo \ln 2.
\end{equation}

We will show that the $n! \exp(n\fcore(\nncore))$ approximates the
exponential part of the number of cores with $\nn$ vertices of degree
$1$. \Old{Recall that $\gcore(n,m)$ is the number of cores with vertex set
$[n]$ and $m$ edges. We obtain the following result for $\gcore$:}
\New{We obtain an upper bound for the number of cores with
  vertex set $[n]$ and $m$ edges.}
\begin{thm}
  \label{thm:corevalue-hyper}
  Let $m(n) = n/2+R$ with $R \to \infty$. There exists a constant
  $\alpha$ such that, for $n\geq 1$, \New{the number of cores with vertex
  set  $[n]$ and $m$ edges is at most}
  \begin{equation}
    \label{eq:upper-formula-core-hyper}
    \gcore(n,m) \Old{\leq}\New{:=}
    \alpha n\sqrt{m}\cdot n!
    \exp\paren[\Big]{n\fcore(\nncopt)}.
  \end{equation}
\Old{ If $R = o(n)$ and $R =\omega(\log n)$, we have that
  \begin{equation}
    \label{eq:formula-core-hyper}
    \gcore(n,m) \sim
          \frac{1}{2\pi n \sqrt{r}}
          \cdot n!
      \exp\paren[\Big]{
        n\fcore(\nncopt)},
  \end{equation}}
  where $\nncopt = {3m}/\paren{n\fgg(\lambdaopt)}$ and $\lambdaopt$ is
  the unique positive solution of
  \begin{align}
    \label{eq:m-sol-core}&
    \frac{\lambda \f(\lambda)\fgg(\lambda)}{\FF(2\lambda)}
    = 
    \frac{3m}{n}.
  \end{align}
\end{thm}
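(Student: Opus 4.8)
The plan is to split the count according to the number $\nn$ of degree-$1$ vertices of the core, to bound the number of cores with a prescribed $\nn$ using the pairing model together with the generating-function/truncated-Poisson machinery, to recognise that bound (via Stirling) as $n!\exp(n\fcore(\nn/n))$ up to a polynomial factor, and finally to show that the sum over $\nn\in J_m$ is dominated by the single term at $\nn\approx n\nncopt$. For the set-up, note that in a core every degree-$1$ vertex lies in exactly one edge, that edge is a $2$-edge, and no two degree-$1$ vertices share an edge; hence a core with $\nn$ degree-$1$ vertices has exactly $\nn$ $2$-edges, $\mthree(\nn)=m-\nn$ $3$-edges, and its $\ntwo(\nn)=n-\nn$ vertices of degree at least $2$ have degrees summing to $\Qtwo(\nn)=3m-\nn$; in particular $\nn\in J_m$. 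Using Lemma~\ref{lem:number-graph-multigraph-hyper} to pass to multigraphs, the number of such cores is at most $\tfrac{1}{m!6^m}$ times the number of maps $\Phi\colon[m]\times[3]\to[n]$ in which $\nn$ prescribed vertices are each hit once, no edge meets two of those vertices, and every other vertex is hit at least twice. Choosing the degree-$1$ set, then the $\nn$ edges carrying a degree-$1$ vertex together with the slot and the matching inside them, and finally the values and placement of the remaining $\Qtwo$ slots, I would get that the number of cores with vertex set $[n]$, $m$ edges and $\nn$ degree-$1$ vertices is at most
\begin{equation*}
\frac{1}{m!\,6^m}\binom{n}{\nn}\binom{m}{\nn}3^{\nn}\,\nn!\sum_{\substack{d_v\ge 2,\ v\in[\ntwo]\\ \sum_v d_v=\Qtwo}}\frac{\Qtwo!}{\prod_v d_v!}.
\end{equation*}

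Next I would bound the inner sum. It equals $\Qtwo!\,[x^{\Qtwo}]\ff(x)^{\ntwo}$, so for every $\lambda>0$ it is at most $\Qtwo!\,\ff(\lambda)^{\ntwo}\lambda^{-\Qtwo}$; equivalently, writing $Y_1,\dots,Y_{\ntwo}$ for i.i.d.\ variables of distribution $\tpo{2}{\lambda}$, the sum equals $\Qtwo!\,\ff(\lambda)^{\ntwo}\lambda^{-\Qtwo}\,\prob{\sum_i Y_i=\Qtwo}$. I would take $\lambda=\lambda_{\nn}$ from~\eqref{eq:lambdacore-hyper}, so that $\mean{\sum_i Y_i}=\ntwo\ctwo(\nn)=\Qtwo$; this choice minimises $\ff(\lambda)^{\ntwo}\lambda^{-\Qtwo}$ over $\lambda$ (which is why $\lambda_{\nn}$ occurs in $\fcore$), and a local central limit estimate then gives $\prob{\sum_i Y_i=\Qtwo}=O(\ntwo^{-1/2})$, with the trivial bound $\le 1$ available when $\ntwo$ stays bounded. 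Applying Stirling's formula to $\Qtwo!,\ntwo!,\nn!,\mthree!$ and using $\binom{n}{\nn}\nn!=n!/\ntwo!$ together with $n\hcore(x/n)=x\ln x-x$, the displayed bound becomes $n!\exp\paren[\big]{n\fcore(\nn/n)}$ times a factor polynomial in $n$; the way $\fcore$ is written in~\eqref{eq:fcore-hyper} is exactly what makes the exponential orders match, and the cases $\nn\in\{0,1\}$ and $\nn=2n-3m$ are covered by the conventions and limits recorded after~\eqref{eq:fcore-hyper}, with Stirling replaced by direct estimates there.

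Finally I would carry out the optimisation and the summation. I would show that $\fcore$ attains its maximum over $\coreJ_m$ at $\nncopt$: differentiating $\fcore(\nncore)$ in $\nncore$, the two terms $\corentwo\ln\ff(\lambda_{\nn})-\coreQtwo\ln\lambda_{\nn}$ contribute only through the explicit $\nncore$-dependence of $\corentwo$ and $\coreQtwo$, by the cancellation computed in~\eqref{eq:difdeg} (applied with $t=\corentwo$, $T=\coreQtwo$, $c=\ctwo$, $k=2$); setting the derivative to zero and simplifying with identities among $\f,\ff,\fg,\fgg$ yields equation~\eqref{eq:m-sol-core} for $\lambda_{\nncopt}$ together with $\nncopt=3m/\paren{n\fgg(\lambdaopt)}$, and a sign/second-derivative check identifies this stationary point as the unique maximiser, lying in the interior of $\coreJ_m$; here $R\to\infty$ is what guarantees that~\eqref{eq:m-sol-core} has a positive solution $\lambdaopt$ and that $\nncopt$ is interior. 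Then
\begin{equation*}
\gcore(n,m)\ \le\ \sum_{\nn\in J_m\cap\setZ}\text{(poly factor)}\cdot n!\exp\paren[\big]{n\fcore(\nn/n)}\ \le\ \text{(poly factor)}\cdot\abs{J_m\cap\setZ}\cdot n!\exp\paren[\big]{n\fcore(\nncopt)},
\end{equation*}
and since $\abs{J_m\cap\setZ}=O(n)$ while the polynomial prefactors sum to $O(n\sqrt m)$ over $\nn$ (they are much smaller in the bulk and are controlled near the ends of $J_m$ by the direct estimates above), this is at most $\alpha n\sqrt m\cdot n!\exp\paren[\big]{n\fcore(\nncopt)}$ for a suitable constant $\alpha$, enlarging $\alpha$ to absorb the finitely many small $n$.

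The step I expect to be the main obstacle is the optimisation: putting $\partial\fcore/\partial\nncore$ into a tractable closed form (which is exactly what~\eqref{eq:difdeg} is designed to do), checking that the critical point defined by~\eqref{eq:m-sol-core} is a genuine interior global maximum and not a boundary point, and verifying that $\lambdaopt$ and $\nncopt$ are well defined for every $m=n/2+R$ with $R\to\infty$. A secondary, more tedious point is the uniform control of the polynomial prefactors over all $\nn\in J_m$, especially near the endpoints of $J_m$ where some of $\nn,\mthree,\ntwo$ may fail to tend to infinity and Stirling's approximation has to be replaced by direct bounds.
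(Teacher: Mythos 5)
Your proposal is correct and follows essentially the same route as the paper: you split by the number $\nn$ of degree-$1$ vertices, derive the same pairing-model bound that appears in Proposition~\ref{prop:magic-core-hyper} (your display is exactly $\wcore(\nn)$ after the $\binom{n}{\nn}\binom{m}{\nn}$ and $3^{\nn}/6^m$ cancellations), pass to the exponent $n\fcore(\nncore)$ via Stirling, locate the maximiser by the cancellation~\eqref{eq:difdeg} as in Lemma~\ref{lem:max-core}, and absorb the $O(n)$ values of $\nn$ and the $O(\sqrt m)$ prefactor into the constant. The only difference is presentational — you rederive the configuration-model count inline rather than invoking Proposition~\ref{prop:magic-core-hyper} and the $\wcore$ notation.
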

 We will show that the
point $\nncopt$ maximizes $\fcore$ in $\coreJ_m$. \Old{The result
in~\eqref{eq:formula-core-hyper} will then obtained by expanding the
summation around $\nncopt$.}

For all subsections of this section, let $\Sigma_{\nn}$ denote the event
that a random vector $\Ys=(Y_1,\dotsc,Y_{n-\nn})$ satisfies
$\sum_{i} Y_i = 3m-\nn$, where the $Y_i$'s are independent random
variables with truncated Poisson distribution
$\tpoisson{2}{\lambda_{\nn}}$. Also, whenever symbols $y$ and
$\hat y$ appear in the same computation, $\hat y$ denotes $y/n$.

\subsection{Random cores}
\label{sec:random-core-hyper}
\Old{Recall that our aim in Section~\ref{sec:core-hyper} is to find an
  asymptotic formula for $\gcore(n,m)$.}\New{Recall that our aim in
  Section~\ref{sec:core-hyper} is to obtain the upper bound
  $\gcore(n,m)$ for the number of cores with vertex set $[n]$ and $m$
  edges.} Note that up
to this point there is no random graph involved in the problem.
However, \Old{similarly to Chapter~\ref{chap:2c}
(Section~\ref{sec:magic-2c}),} we show how to reduce the asymptotic
enumeration problem to approximating the expectation, in a probability
space of random sequences $\Ys$, of the probability that a certain
type of random multigraph with given degree sequence $\Ys$ is simple.

For integer $\nn \in J_{m}$, let $\Dcal_{\nn}$ be the set of all
$\ds\in(\setN\setminus\set{0,1})^{n-\nn}$ with $\sum_{i=1}^{n-\nn} d_i
= 3m-\nn$. For $\nn \in J_m\cap \setZ$ and $\ds\in\Dcal_{\nn}$, let
$\Gcore (\nn,\ds) = \Gcore_{n,m}(\nn, \ds)$ be the multigraph obtained
by the following procedure.\label{glo:gcore} We will start by creating
for each edge one bin/set with $3$ points inside it. These bins are
called \textdef{edge-bins}. We also create one bin for each vertex
with the number of points inside it equal to the degree of the
vertex. These bins are called \textdef{vertex-bins}. Each point in a
vertex-bin will be matched to a point in an edge-bin with some
constraints.  The multigraph can then be obtained by creating one edge
for each edge-bin $i$ such that the vertices incident to the edge are
the vertices with points matched to the edge-bin of $i$. We describe
the procedure in detail now. In the following, in each step, every
choice is made \uar\ among all possible choices satisfying the stated
constraints:
\begin{enumerate}
\item\textit{(Edge-bins)} For each $i\in [m]$, create an edge-bin $i$
  with $3$ points labelled $1,2$ and~$3$.
\item\textit{(Vertex-bins)} Choose a set $V_1$ of $\nn$ vertices in
  $[n]$ to be the vertices of degree~$1$. For each $v\in V_1$, create
  one vertex-bin $v$ with one point inside each.  Let $v_1<\dotsm<
  v_{n-\nn}$ be an enumeration of the vertices in $[n]\setminus
  V_1$. For each $i\in [n-\nn]$ create a vertex-bin $v_i$ with $d_i$
  points.
\item\textit{(Matching)} Match the points from the vertex-bins to the
  points in edge-bins so that each edge-bin has at most one point
  being matched to a point in a vertex-bin of size~$1$. This matching
  is called a \textdef{configuration}.
\item{\textit{(Multigraph)}} $\Gcore(\nn,\ds) = ([n],[m],\Phi)$, where
  $\Phi(i,j) = v$, where $v$ is the vertex-bin containing the point
  matched to~$j$.
\end{enumerate}
See Figure~\ref{fig:core-bins-hyper}, for an example for the procedure
described above.

\begin{figure}
  \centering
  \begin{tikzpicture}
    \def \initialg {0};
    \def \step {1};
    \def \rad {2pt};
    \def \eps {.3};
    \def \epsm {.15};
    \def \round {5pt};

    \coordinate[label={[label distance=1.5pt]-90:\footnotesize{$5$}}] (1) at (\initialg,\initialg);
    \coordinate (1ul) at ($(1)+(-\eps,1.5*\eps)$);
    \coordinate[label={[label distance=-1.5pt]0:\footnotesize{$7$}}]
    (2) at (\initialg+\step,\initialg);
    \coordinate[label={[label distance=4pt]90:\footnotesize{\textit{3}}}]
    (mid12) at ($.5*(2)+.5*(1)$);
    \coordinate[label={[label distance=1.5pt]90:\footnotesize{$3$}}] (3) at (\initialg+2*\step,\initialg);
    \coordinate (3dr) at ($(3)+(\eps,-1.5*\eps)$);
    \coordinate[label= {[label distance=1.5pt]0:\footnotesize{$4$}}] (4) at (\initialg+\step,\initialg+\step);
    \coordinate (4ul) at ($(4)+(-1.5*\eps,\eps)$);
    \coordinate[label={[label distance=1.5pt]180:\footnotesize{$6$}}]
    (5) at (\initialg+\step,\initialg-1*\step);
    \coordinate[label={[label distance=3.5pt]0:\footnotesize{\textit{4}}}]
    (mid52) at ($.5*(2)+.5*(5)$);
    \coordinate (5dr) at ($(5)+(1.5*\eps,-\eps)$);
    \coordinate[label=right:\footnotesize{$2$}] (6) at (\initialg,\initialg-1*\step);
    \coordinate[label=below:\footnotesize{\textit{2}}] (6dl) at ($(6)+(-.75*\eps,-.75*\eps)$);
    \coordinate[label=left:\footnotesize{$1$}] (7) at (\initialg+2*\step,\initialg+\step);
    \coordinate[label=above:\footnotesize{\textit{1}}] (7ur) at ($(7)+(0.75*\eps,0.75*\eps)$);
    \coordinate (b1) at ($1/3*(1)+1/3*(5)+1/3*(6)$);

     \foreach \point in {1,2,3,4,5,6,7} \fill [black] (\point) circle
    (\rad);

    \draw (2) ellipse ({\step*1.2} and {\step/4}); %123
    \draw (2) ellipse ({\step/4} and {\step*1.2}); %425
     \draw[rounded corners=\round] (1ul)--(5dr)--(6dl)--cycle;
     \draw[rounded corners=\round] (4ul)--(3dr)--(7ur)--cycle;
     
    \coordinate  (startarrow) at ($(1)+(-2*\step,0)$);
    \coordinate  (endarrow) at ($(1)+(-\step,0)$);
    \draw [->] (startarrow)--(endarrow);

    \coordinate (edgeorigin) at ($(startarrow)-(\step,0)$);

    \coordinate (1ed1) at ($(edgeorigin)-(0,2*\step)$);
    \coordinate (2ed1) at ($(1ed1)+(0,0.3*\step)$);
    \coordinate (3ed1) at ($(2ed1)+(0,0.3*\step)$);

    \coordinate (1ed2) at ($(3ed1)+(0,0.6*\step)$);
    \coordinate (2ed2) at ($(1ed2)+(0,0.3*\step)$);
    \coordinate (3ed2) at ($(2ed2)+(0,0.3*\step)$);

    \coordinate (1ed3) at ($(3ed2)+(0,0.6*\step)$);
    \coordinate (2ed3) at ($(1ed3)+(0,0.3*\step)$);
    \coordinate (3ed3) at ($(2ed3)+(0,0.3*\step)$);

    \coordinate (1ed4) at ($(3ed3)+(0,0.6*\step)$);
    \coordinate (2ed4) at ($(1ed4)+(0,0.3*\step)$);
    \coordinate (3ed4) at ($(2ed4)+(0,0.3*\step)$);

    \coordinate (1v1) at ($(3ed4)-(1.3\step,0)$);

    \coordinate (1v2) at ($(1v1)-(0,0.6*\step)$);

    \coordinate (1v3) at ($(1v2)-(0,0.6*\step)$);
    \coordinate (2v3) at ($(1v3)-(0,0.3*\step)$);

    \coordinate (1v4) at ($(2v3)-(0,0.6*\step)$);
    \coordinate (2v4) at ($(1v4)-(0,0.3*\step)$);

    \coordinate (1v5) at ($(2v4)-(0,0.6*\step)$);
    \coordinate (2v5) at ($(1v5)-(0,0.3*\step)$);

    \coordinate (1v6) at ($(2v5)-(0,0.6*\step)$);
    \coordinate (2v6) at ($(1v6)-(0,0.3*\step)$);

    \coordinate (1v7) at ($(2v6)-(0,0.6*\step)$);
    \coordinate (2v7) at ($(1v7)-(0,0.3*\step)$);
  
%Matching
\draw (1v1)--(3ed4);
\draw (1v2)--(3ed3);
\draw (1v3)--(2ed4);
\draw (1v4)--(1ed4);
\draw (1v5)--(2ed3);
\draw (1v6)--(1ed3);
\draw (2v3)--(3ed2);
\draw (2v4)--(3ed1);
\draw (2v5)--(2ed2);
\draw (2v6)--(2ed1);
\draw (1v7)--(1ed2);
\draw (2v7)--(1ed1);

\foreach \point in {1ed1, 2ed1, 3ed1} \fill [black] (\point) circle
(\rad);
\foreach \point in {1ed2, 2ed2, 3ed2} \fill [black] (\point) circle
(\rad);

\foreach \point in {1ed3, 2ed3, 3ed3} \fill [black] (\point) circle
(\rad);
\foreach \point in {1ed4, 2ed4, 3ed4} \fill [black] (\point) circle
(\rad);

\foreach \point in {1v1,1v2,1v3,2v3,1v4,2v4,1v5,2v5,1v6,2v6,1v7,2v7} \fill [black] (\point) circle
(\rad);
       
\draw (1v1) ellipse ({\epsm} and {\epsm}); 
\draw (1v2) ellipse ({\epsm} and {\epsm}); 

\draw ($.5*(1v3)+.5*(2v3)$) ellipse ({\epsm} and {0.15+\epsm}); 
\draw ($.5*(1v4)+.5*(2v4)$) ellipse ({\epsm} and {0.15+\epsm}); 
\draw ($.5*(1v5)+.5*(2v5)$) ellipse ({\epsm} and {0.15+\epsm}); 
\draw ($.5*(1v6)+.5*(2v6)$) ellipse ({\epsm} and {0.15+\epsm}); 
\draw ($.5*(1v7)+.5*(2v7)$) ellipse ({\epsm} and {0.15+\epsm}); 

\draw (2ed1) ellipse ({\epsm} and {0.3+\epsm}); 
\draw (2ed2) ellipse ({\epsm} and {0.3+\epsm}); 
\draw (2ed3) ellipse ({\epsm} and {0.3+\epsm}); 
\draw (2ed4) ellipse ({\epsm} and {0.3+\epsm}); 

% labels
\coordinate[label=left:\footnotesize{vertex-bins}] (vbins) at ($(1v1)+\step*(0,.5)$);
\coordinate[label=right:\footnotesize{edge-bins}] (ebins) at ($(3ed4)+\step*(0,.5)$);

\coordinate[label=left:\footnotesize{$1$}] (1v1l) at ($(1v1)+\step*(-0.2,0)$);
\coordinate[label=left:\footnotesize{$2$}] (1v2l) at ($(1v2)+\step*(-0.2,0)$);
\coordinate [label=left:\footnotesize{$3$}] (v3l) at ($.5*(1v3)+.5*(2v3)-\step*(0.2,0)$);
\coordinate [label=left:\footnotesize{$4$}] (v4l) at ($.5*(1v4)+.5*(2v4)-\step*(0.2,0)$);
\coordinate [label=left:\footnotesize{$5$}] (v5l) at ($.5*(1v5)+.5*(2v5)-\step*(0.2,0)$);
\coordinate [label=left:\footnotesize{$6$}] (v6l) at ($.5*(1v6)+.5*(2v6)-\step*(0.2,0)$);
\coordinate [label=left:\footnotesize{$7$}] (v7l) at ($.5*(1v7)+.5*(2v7)-\step*(0.2,0)$);

\coordinate[label=right:\footnotesize{\textit{4}}] (e1l) at ($(2ed1) +\step*(0.2,0)$);
\coordinate[label=right:\footnotesize{\textit{3}}] (e2l) at ($(2ed2)+\step*(0.2,0)$);
\coordinate [label=right:\footnotesize{\textit{2}}] (e3l) at ($(2ed3)+\step*(0.2,0)$);
\coordinate [label=right:\footnotesize{\textit{1}}] (e4l) at ($(2ed4)+\step*(0.2,0)$);

  \end{tikzpicture}
  \caption{A core generated with vertex-bins and edge-bins}
\label{fig:core-bins-hyper}
\end{figure}
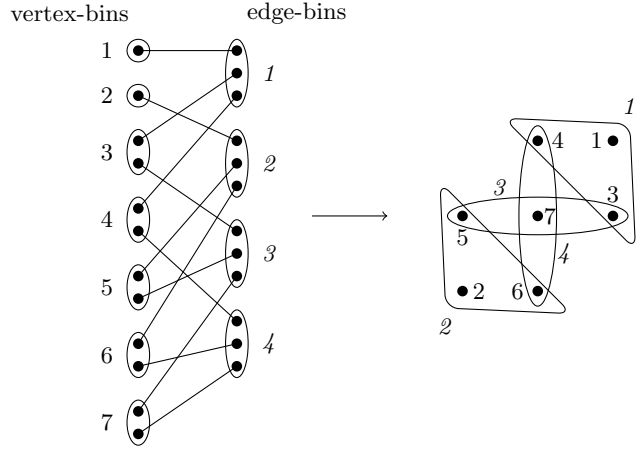

Let $\gcore(n,m,\nn)$ denote the number of cores with vertex-set $[n]$
with $m$ edges and $\nn$ vertices of degree~$1$, and let
$\gcore(n,m,\nn,\ds)$ denote the number of such cores with the
additional constraint that $\ds\in\setN^{n-\nn}$ is such that, given
the set $V_1$ of vertices of degree~$1$ and an enumeration
$v_1<\dotsc< v_{n-\nn}$ of the vertices in $[n]\setminus V_1$, the
degree of $v_i$ is $d_i$. We say that $\ds$ is the degree sequence for
the vertices of degree at least~$2$, although $\ds$ is not indexed by
the set of vertices of degree at least~$2$.  The following proposition
relates $\gcore(n,m,\nn)$ and $\gcore(n,m,\nn,\ds)$ to
$\Gcore_{n,m}(\nn, \ds)$ and $\Ys$. Recall that
$\Scal(n,m)$ is defined in Section~\ref{sec:def-hyper} as the set of
multigraphs with vertex set $[n]$ and $m$ edges corresponding to
simple graphs.  Let $U(\nn,\ds)$ denote the probability that
$\Gcore_{n,m}(\nn,\ds)\in\Scal(n,m)$.
\begin{prop}
  \label{prop:magic-core-hyper}
    We have that, for any integer $\nn\in J_m$
    \begin{equation}
      \label{eq:gcore-prob-hyper}
      \gcore(n,m,\nn,\ds)
      =
      n! \frac{\Qtwo(\nn)!}
      {\ntwo(\nn)!\nn!\mthree(\nn)! 2^{\nn} 6^{\mthree(\nn)}}
     \frac{1}{\prod_i d_i!}
      \prob[\big]{U(\nn,\ds)},
    \end{equation}
    and, for any integer $\nn\in J_m\setminus\set{2n-3m}$,
    \begin{equation}
      \label{eq:magic-hyper}
      \gcore (n,m,\nn) = n!  \frac{\Qtwo(\nn)!
        \ff(\lambda_{\nn})^{\ntwo(\nn)}} {\ntwo(\nn)!\nn!\mthree(\nn)!
        2^{\nn} 6^{\mthree(\nn)}\lambda_{\nn}^{\Qtwo(\nn)}}
      \meancond[\Big]{U(\nn,\Ys)}{\Sigma_{\nn}}
      \prob[\Big]{\Sigma_{\nn}},
    \end{equation}
    where $\Sigma_{\nn}$ is the event that a random vector
    $\Ys=(Y_1,\dotsc,Y_{n-\nn})$ satisfies $\sum_{i} Y_i = 3m-\nn$
    and the $Y_i$'s are independent random variables with truncated
    Poisson distribution $\tpoisson{2}{\lambda_{\nn}}$.
\end{prop}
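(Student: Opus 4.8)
The plan is a configuration-model (``pairing'') count: first express $\gcore(n,m,\nn,\ds)$ in terms of the size of the sample space generating $\Gcore(\nn,\ds)$ and the probability that this multigraph is simple, then, when summing over degree sequences, recognise the truncated-Poisson weights. Fix $\nn\in J_m$ and $\ds\in\Dcal_{\nn}$, and count the equally likely outcomes of the procedure defining $\Gcore(\nn,\ds)$. Choosing $V_1$ contributes $\binom{n}{\nn}$ options; once $V_1$ and the induced degree assignment are fixed, the number of admissible matchings is
\[
3^{\nn}\,\frac{m!}{\mthree(\nn)!}\cdot\Qtwo(\nn)!,
\]
the first two factors counting the ways to send the $\nn$ points of the size-$1$ vertex-bins injectively into $\nn$ distinct edge-bins (choosing one of three slots in each such bin), and the last factor the bijections between the remaining $\Qtwo(\nn)$ vertex-bin points and the remaining $\Qtwo(\nn)$ edge-bin slots; this number is the same for every $V_1$, so the procedure has $\binom{n}{\nn}\,3^{\nn}\,(m!/\mthree(\nn)!)\,\Qtwo(\nn)!$ equally likely outcomes.

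Next I would count the outcomes realising a fixed simple core $C$ with parameters $(\nn,\ds)$. The rule that each edge-bin receives at most one point from a size-$1$ vertex-bin ensures that whenever $\Gcore(\nn,\ds)$ is simple it is a core having exactly $\nn$ degree-$1$ vertices (those of $V_1$) and degrees $d_i\ge 2$ on the others (simplicity forces the degree of a vertex to equal the number of points in its bin, so the realised degree sequence is the prescribed $\ds$), with every edge meeting at most one degree-$1$ vertex; conversely every simple core with these parameters arises this way. By Lemma~\ref{lem:number-graph-multigraph-hyper}, $C$ corresponds to exactly $m!\,6^m$ simple multigraphs, and a fixed simple multigraph $([n],[m],\Phi)$ is produced by exactly $\prod_i d_i!$ matchings (permuting the points inside each vertex-bin; the edge-bin slots are recorded by $\Phi$, and all these matchings respect the size-$1$ constraint). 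Hence the number of outcomes yielding simple cores is $\gcore(n,m,\nn,\ds)\,m!\,6^m\,\prod_i d_i!$; equating this with $\prob{U(\nn,\ds)}$ times the number of outcomes, using $\binom{n}{\nn}=n!/(\ntwo(\nn)!\,\nn!)$ and $6^m=3^{\nn}2^{\nn}6^{\mthree(\nn)}$, and rearranging gives \eqref{eq:gcore-prob-hyper}.

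For \eqref{eq:magic-hyper} I would sum \eqref{eq:gcore-prob-hyper} over $\ds\in\Dcal_{\nn}$, where now $\nn\ne 2n-3m$ so $\lambda_{\nn}>0$. If $Y_1,\dots,Y_{\ntwo(\nn)}$ are independent with distribution $\tpoisson{2}{\lambda_{\nn}}$, then for every $\ds\in\Dcal_{\nn}$ — the support condition $d_i\ge 2$ matching exactly the definition of $\Dcal_{\nn}$, together with $\sum_i d_i=\Qtwo(\nn)$ — one has $\prob{\Ys=\ds}=\lambda_{\nn}^{\Qtwo(\nn)}/\bigl(\ff(\lambda_{\nn})^{\ntwo(\nn)}\prod_i d_i!\bigr)$. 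Substituting $1/\prod_i d_i!=\ff(\lambda_{\nn})^{\ntwo(\nn)}\lambda_{\nn}^{-\Qtwo(\nn)}\prob{\Ys=\ds}$ turns the sum into $\ff(\lambda_{\nn})^{\ntwo(\nn)}\lambda_{\nn}^{-\Qtwo(\nn)}\sum_{\ds\in\Dcal_{\nn}}\prob{\Ys=\ds}\prob{U(\nn,\ds)}$, and since $\Sigma_{\nn}$ is precisely the event $\sum_i Y_i=3m-\nn=\Qtwo(\nn)$, the remaining sum equals $\prob{\Sigma_{\nn}}\,\meancond{U(\nn,\Ys)}{\Sigma_{\nn}}$. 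Collecting the factors yields \eqref{eq:magic-hyper}.

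The calculation is routine; the delicate part is keeping the overcounting straight. One must cleanly separate the symmetries of a configuration into (i) relabelling the $m$ edges together with the three slots inside each edge — absorbed by $m!\,6^m$ via Lemma~\ref{lem:number-graph-multigraph-hyper}, which is legitimate exactly because a simple multigraph has no loop or repeated edge to break those symmetries — and (ii) relabelling the points inside each vertex-bin, giving $\prod_i d_i!$; and one must check that the ``at most one degree-$1$ point per edge-bin'' rule makes the correspondence between admissible simple outcomes and simple cores exact (not merely an inequality), including that each size-$1$ vertex-bin really becomes a degree-$1$ vertex and each size-$\ge 2$ bin a vertex of the prescribed degree $d_i$.
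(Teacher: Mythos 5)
Your proof is correct and follows essentially the same configuration-model argument as the paper: count all equally likely outcomes of the bin-matching procedure, divide by the overcount factor $m!\,6^m\prod_i d_i!$ coming from Lemma~\ref{lem:number-graph-multigraph-hyper} and from permuting points within vertex-bins, simplify using $6^m = 2^{\nn}3^{\nn}6^{\mthree(\nn)}$, and then sum over $\ds\in\Dcal_{\nn}$ by recognising the truncated-Poisson weights $\lambda_{\nn}^{\Qtwo}/\bigl(\ff(\lambda_{\nn})^{\ntwo}\prod_i d_i!\bigr)$. Your factor $3^{\nn}\,m!/\mthree(\nn)!$ is the same as the paper's $\binom{m}{\nn}3^{\nn}\nn!$, just grouped differently, so the two proofs coincide step for step.
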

\begin{proof}
  First we compute the total number of configurations that can be
  generated. There are $\binom{n}{\nn}$ ways of choosing the vertices
  of degree~$1$ in Step~2. We can split Step~3 by first choosing the
  $\nn$ edge-bins and one point in each of these edge-bins to be matched to
  the points inside vertex-bins of size~$1$.  There are
  $\binom{m}{\nn}3^{\nn}$ possible choices for these edge-bins and the
  points inside them. There are $\nn!$ ways of matching these points
  to the points in vertex-bins of size~$1$ and there are $\Qtwo(\nn)!$
  ways of matching the remaining points in the edge-bins to the
  vertex-bins of size at least $2$. Thus, the total number of
  configurations is
\begin{equation}
  \label{eq:total-config-core-hyper}
  \binom{n}{\nn}
 \binom{m}{\nn}3^{\nn}\nn!
 \Qtwo(\nn)!\eqdefinv \beta.
\end{equation}
It is straightforward to see that every multigraph with degree
sequence $\ds$ for the vertices of degree at least~$2$ is generated by
$\prod_{i=1}^{n-\nn} d_i!$ configurations. Together with
Lemma~\ref{lem:number-graph-multigraph-hyper}, this implies that every
graph with degree sequence $\ds$ for the vertices of degree at
least~$2$ is generated by
\begin{equation}
  \label{eq:prop-magic-aux-2-hyper}
 \alpha = m!6^m \prod_{i=1}^{n-\nn} d_i!
\end{equation}
configurations. Thus, since each configuration is generated with the
same probability, 
\begin{equation}
  \label{eq:prop-magic-aux-hyper}
  \gcore(n,m,\nn,\ds)
  =\frac{\beta}{\alpha}
  U(\nn,\ds).
\end{equation}
Together with~\eqref{eq:total-config-core-hyper}
and~\eqref{eq:prop-magic-aux-2-hyper}, and trivial simplifications,
this implies~\eqref{eq:gcore-prob-hyper}.

We now prove~\eqref{eq:magic-hyper}. \Old{The
proof is very similar to the proofs of
Propositions~\ref{prop:enum-pairing-2c}
and~\ref{prop:enum-kernel-config-2c} in Chapter~\ref{chap:2c} (which
in turn are very similar to the proof
of~\cite[Equation~(13)]{PWa}).}\New{The proof is very similar to~\cite[Equation~(13)]{PWa}.}
Recall that $\Dcal_{\nn}$ be the set of all
$\ds\in(\setN\setminus\set{0,1})^{\nthree(\nn)}$ with $\sum_{i} d_i =
\Qtwo(\nn)$. We have that
\begin{equation*}
  \begin{split}
    \gcore(n,m,\nn)
    &\eqdef
    \sum_{\ds\in\Dcal_{\nn}}
    \gcore(n,m,\nn,\ds)
    \\
    &=
    n! \sum_{\ds\in\Dcal_{\nn}} 
     \frac{\Qtwo(\nn)!}
      {\ntwo(\nn)!\nn!\mthree(\nn)! 2^{\nn} 6^{\mthree(\nn)}}
     \frac{1}{\prod_{i=1}^{\nthree(\nn)} d_i!}
     U(\nn,\ds)
    \\
    &=
    n! 
     \frac{\Qtwo(\nn)!}
      {\ntwo(\nn)!\nn!\mthree(\nn)! 2^{\nn} 6^{\mthree(\nn)}}
      \frac{\ff(\lambda_{\nn})^{\ntwo(\nn)}}{\lambda_{\nn}^{\Qtwo(\nn)}}
      \sum_{\ds\in\Dcal_{\nn}} U(\nn,\ds) 
      \prod_{i=1}^{\nthree(\nn)} \frac{ \lambda_{\nn}^{d_i}}
      {d_i! \ff(\lambda_{\nn})}
   \\
   &=
   n! 
   \frac{\Qtwo(\nn)!}
   {\ntwo(\nn)!\nn!\mthree(\nn)! 2^{\nn} 6^{\mthree(\nn)}}
   \frac{\ff(\lambda_{\nn})^{\ntwo(\nn)}}{\lambda_{\nn}^{\Qtwo(\nn)}}
   \sum_{\ds\in\Dcal_{\nn}} U(\nn,\ds)\prob{\Ys=\ds}
   \\
   &=
   n! 
   \frac{\Qtwo(\nn)!}
   {\ntwo(\nn)!\nn!\mthree(\nn)! 2^{\nn} 6^{\mthree(\nn)}}
   \frac{\ff(\lambda_{\nn})^{\ntwo(\nn)}}{\lambda_{\nn}^{\Qtwo(\nn)}}    
   \meancond{U(\nn,\Ys)}{\Sigma_{\nn}}\prob{\Sigma_{\nn}},
  \end{split}
\end{equation*}
which proves~\eqref{eq:magic-hyper}. We remark that the only reason
why the above proof does not work for $\nn = 2n-3m$ (and so for the
whole $J_m\cap \setZ$) is that $\lambda_{2n-3m}=0$ (by continuity), which would
cause a division by zero in~\eqref{eq:magic-hyper}.
\end{proof}

\Old{The next lemma gives conditions which are sufficient for the
expectation in~\eqref{eq:prop-magic-aux-hyper} to be asymptotic
to~$1$.
\begin{lem}
  \label{lem:simple-core-hyper}
  Let $m(n) = n/2+R$ with $R\to \infty$ and $R = o(n)$. Uniformly for
  $\nn=\nn(n)\in J_m\cap \setZ$, we have that 
  \begin{equation*}
    \meancond[\Big]{U(\nn,\Ys)}{\Sigma_{\nn}}\sim
    1.
  \end{equation*}
\end{lem}
\begin{proof}
  Recall that $\Dcal_{\nn}$ is the set of all
  $\ds\in(\setN\setminus\set{0,1})^{\nthree(\nn)}$ with $\sum_{i} d_i
  = \Qtwo(\nn)$. For a constant $\eps\in (0,1/6)$, let $\Dcal_{\nn}'$
  be the subset of $\Dcal_{\nn}$ such that $\ds\in\Dcal_{\nn}'$ if
  $d_i\leq n^{\eps}$ for every $i$.  We will show that $\probcond{\Ys
    \in\Dcal_{\nn}'}{\Sigma_{\nn}}=1+o(1)$, that is, $\Dcal_{\nn}'$
  contains all `typical' degree sequences. We then show that the
  degree sequences not in $\Dcal_{\nn}'$ have no significant
  contribution to the expectation.
% Let  $\Dcal''=\Dcal_{\nn}'' = \Dcal_{\nn}\setminus \Dcal_{\nn}'$.

  First we consider the case $\nn = 2n-3m = n/2-3R$. In this case the
  only degree sequence in $\Dcal_{\nn}$ is the sequence of all $2$'s,
  which is in $\Dcal_{\nn}'$. So suppose $\nn > 2n-3m$.  Since $R=o(n)$ and
  $\nn\leq m = n/2+R$, we have that $3m-\nn\sim 2(n-\nn)$ and so
  $\lambda_{\nn}\to 0$, Thus, for any $j \to \infty$, by computing the
  series of $\ff(\lambda_{\nn})$ with $\lambda_{\nn}\to 0$,
  \begin{equation*}
    \prob{Y_i\geq j}
    =
    \frac{\lambda_{\nn}^j}{j! \ff(\lambda)}
    (1+O(\lambda_{\nn}))
    =
    \frac{2\lambda_{\nn}^{j-2}}{j!}(1+O(\lambda_{\nn}))
    =
    o\paren[\Big]{
      \exp\paren[\big]{- \alpha j\log j}
    },
  \end{equation*}
  for a positive constant $\alpha$. Thus, by the union bound
  \begin{equation*}
    \prob{\max_i Y_i \geq n^\eps}
    \leq
    \ntwo
    \cdot 
    o\paren[\Big]{
      \exp\paren[\big]{- \alpha\eps n^\eps\log n}
    }
    =
    o\paren[\Big]{
      \exp\paren[\big]{- \alpha' n^\eps\log n}
    },
  \end{equation*}
  for a positive constant $\alpha'$. 

  We estimate the probability of $\Sigma_{\nn}$. Let $\Rtwo =
  (3m-\nn)-2(n-\nn) = \nn-(2n-3m)$. If $\Rtwo = o({n_2}^{1/3})$,
  then, by~\textred{\cite[Theorem 4]{PWa}},
  \begin{equation*}
    \prob{\Sigma_{\nn}}
    \sim
    \frac{e^{-\Rtwo} \Rtwo^{\Rtwo}}{\Rtwo!}
    =
    \Omega\left(\frac{1}{\sqrt{\Rtwo}}\right)
  \end{equation*}
  by Stirling's approximation\crisc{Removed reference to appendix}. If
  $\Rtwo \to \infty$, by~\textred{\cite[Theorem 4]{PWa}},
  \begin{equation*}
    \prob{\Sigma_{\nn}}
    \sim
    \frac{1}{\sqrt{2\pi \ntwo \ctwo (1+\etatwo-\ctwo)}}
    =
    \Omega\left(
      \frac{1}{\sqrt{\ntwo}}
    \right)
  \end{equation*}
  where $\etatwo = \lambda_{\nn}
  \exp(\lambda_{\nn})/\f(\lambda_{\nn})$ and since $\ctwo
  (1+\etatwo-\ctwo)\sim \ctwo-2$ by~\textred{\cite[Lemma 2]{PWa}}. Thus, for $\Dcal_{\nn}'' \eqdef
  \Dcal_{\nn}\setminus \Dcal_{\nn}'$.
\begin{equation}
\label{eq:prob-typical-core-hyper}
  \probcond{\Ys\in\Dcal_{\nn}''}{\Sigma_{\nn}}
  \leq
  \frac{\prob{\Ys\in\Dcal_{\nn}''}}{\prob{\Sigma_{\nn}}}
  =
  O\left(\sqrt{n} \exp\paren[\big]{- \alpha' n^\eps\log n}\right)
  =
  o(1).
\end{equation}
Now we show that
\begin{equation}
  \label{eq:probsimple_core-hyper} 
 \prob{\Gcore(\nn,\ds) \text{ simple}}=1+o(1),
\end{equation}
for $\ds\in\Dcal_{\nn}$. We have to compute the probability that there
are no loops and no double edges in $\Gcore(\nn,\ds)$.

A loop arises from the edge-bins that have at least two points matched
to points in the same vertex-bin.  The expected number of loops is
\begin{equation*}
  \sum_{i}\binom{d_i}{2}\frac{(\Qtwo-2)!}{\Qtwo!}
  =
  O\left(
    \frac{mn^{\eps}}{n^2}
  \right)
  = o(1)
\end{equation*}
since $\Qtwo = 3m-\nn \geq n/2+3R = \Omega(n)$ and
$\ds\in\Dcal_{\nn}'$ (and $\eps < 1/6)$.  Double edges arise from
pairs edge-bins that the points in each of them are mapped to the same
vertex-bins with the same multiplicities.  Using that $\max_i d_i \leq
n^\eps$ for $\ds\in\Dcal_{\nn}'$, we have that the expected number of
double edges (not involving loops) is at most
\begin{equation*}
  \binom{\mthree}{2}
  \ntwo^3 (n^\eps)^6
  6!
  \frac{(\Qtwo-6)!}{\Qtwo!}
  =
  O\left(
    \frac{n^{5+6\eps}}{n^6}
  \right)
  =
  o(1),
\end{equation*}
since $\eps < 1/6$. (Note that $2$-edges cannot be double edges
because of the vertices of degree~$1$.) Thus, Markov's inequality
implies~\eqref{eq:probsimple_core-hyper}.

Since $\Dcal_{\nn}'$ is a finite set for each $n$
and~\eqref{eq:probsimple_core-hyper}, by Lemma~\ref{lem:unif-deg-seq},
we have that there exists $f(n)=1-o(1)$ such that
$\prob{\Gcore(\nn,\ds) \text{ simple}}\geq f$ for all
$\ds\in\Dcal_{\nn}'$. Thus,
\begin{equation*}
  \begin{split}
      \meancond{U(\Ys)}{\Sigma_{\nn}}
      &\leq
      \sum_{\ds\in\Dcal_{\nn}'} \prob{\Gcore(\nn,\ds)\text{ simple
        }}\prob{\Ys=\ds}
      + \probcond{\Ys\in\Dcal_{\nn}''}{\Sigma_{\nn}}
      \\
      &\geq
      \probcond{\Ys\in\Dcal_{\nn}'}{\Sigma_{\nn}}f(n)
      \probcond{\Ys\in\Dcal_{\nn}''}{\Sigma_{\nn}}
      =1-o(1),
  \end{split}
\end{equation*}
by~\eqref{eq:prob-typical-core-hyper}.
\end{proof}
}

\subsection{Proof of Theorem~\ref{thm:corevalue-hyper}}
\label{sec:thm-proof-cores-hyper}

\Old{In this section we present the proof of the asymptotic formula in
Theorem~\ref{thm:corevalue-hyper} for the number $\gcore(n,m)$ of cores
(not necessarily connected) with vertex set $[n]$ with $m=n/2+R$
edges, when $R= \omega(\log n)$ and $R=o(n)$: 
\begin{equation}  
  \label{eq:formula-core-aux-hyper}
  \gcore(n,m) \sim
  \frac{1}{2\pi n \sqrt{r}}
  \cdot n!
  \exp\paren[\Big]{
    n\fcore(\nncopt)},
\end{equation}
where $\nncopt = {3\mpre}/{\fgg(\lambdaopt)}$ and $\lambdaopt$ is
the unique positive solution for
\begin{equation}
  \label{eq:lambdaopt-def-aux-hyper}
  \frac{\lambda \f(\lambda)\fgg(\lambda)}{\FF(2\lambda)}
  = 
  \frac{3m}{n}.
\end{equation}
We also show the upper bound in Theorem~\ref{thm:corevalue-hyper} for
$\gcore(n,m)$ that holds as long as $R\to \infty$.} \New{In this
section we present the proof that the number of cores with vertex set
$[n]$ and $m$ edges is at most $\gcore(n,m)$, which is defined in
Theorem~\ref{thm:corevalue-hyper} as $\alpha n\sqrt{m}\cdot n!
\exp\paren[\Big]{n\fcore(\nncopt)}$, where $\alpha$ is a constant,
$\nncopt = {3m}/\paren{n\fgg(\lambdaopt)}$ and $\lambdaopt$ is the
unique positive solution of
\begin{equation}
  \label{eq:lambdaopt-def-aux-hyper}
  \frac{\lambda \f(\lambda)\fgg(\lambda)}{\FF(2\lambda)}
  = 
  \frac{3m}{n}.
\end{equation}
} First we show that
$\lambdaopt$ is well-defined.
 \begin{lem}
  \label{lem:unique-pre-tools-hyper}
  The equation ${\lambda \f(\lambda) \fgg(\lambda)}/{\FF(2\lambda)}
  = \alpha$ has a unique positive solution $\lambdaopt_\alpha$
  for any $\alpha>3/2$. Moreover, for any positive constant $\eps$,
  there exists a positive constant $\eps'$, such that, if $\alpha,\beta\in
  (0,\eps)$, then $|\lambdaopt_\alpha-\lambdaopt_\beta| \leq
  \eps'|\alpha-\beta|$.
\end{lem}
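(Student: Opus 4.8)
The plan is to study the single real function
$h(\lambda):=\lambda\f(\lambda)\fgg(\lambda)/\FF(2\lambda)$ on $(0,\infty)$ and to show that it is a strictly increasing bijection onto $(3/2,\infty)$ which, moreover, extends to a function analytic on a neighbourhood of $[0,\infty)$ with $h(0)=3/2$ and $h'(0)=1/4$. Granting this, both claims of the lemma are immediate: existence and uniqueness of $\lambdaopt_\alpha$ for $\alpha>3/2$ follow from strict monotonicity of $h$ together with the one-sided limits at $0$ and $\infty$, and the Lipschitz estimate follows from the inverse function theorem applied on a compact interval on which $h'$ is bounded away from~$0$.

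First I would derive a truncated-Poisson representation of $h$. Expanding in powers of $\lambda$,
\begin{equation*}
  \lambda\f(\lambda)\fgg(\lambda)=\lambda\bigl(e^{2\lambda}+e^{\lambda}-2\bigr)=\sum_{j\ge 2}\frac{2^{\,j-1}+1}{(j-1)!}\,\lambda^{j},\qquad\FF(2\lambda)=\sum_{j\ge 2}\frac{2^{\,j}}{j!}\,\lambda^{j}.
\end{equation*}
Hence, if $J$ denotes a random variable with distribution $\tpoisson{2}{2\lambda}$ and $w_j:=\tfrac{j}{2}\bigl(1+2^{\,1-j}\bigr)$, then $h(\lambda)=\mean{w_J}$. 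A one-line calculation gives $w_{j+1}-w_j=\tfrac12-(j-1)2^{-j-1}\ge\tfrac38$ for all $j\ge 2$, so $w$ is strictly increasing on the support $\{2,3,\dots\}$ of~$J$.

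Next I would prove strict monotonicity. Writing $A(x)=\sum_{j\ge2}w_jx^{j}/j!$ and $B(x)=\FF(x)$, so that $h(\lambda)=(A/B)(2\lambda)$, and using $xA'(x)=\sum_{j\ge2}jw_jx^{j}/j!$ and $xB'(x)=\sum_{j\ge2}jx^{j}/j!$, one obtains
\begin{equation*}
  x\,\frac{\dif}{\dif x}\!\left(\frac{A}{B}\right)=\mean{J\,w_J}-\mean{J}\,\mean{w_J}=\tfrac12\,\mean[\big]{(J-J')(w_J-w_{J'})},
\end{equation*}
where $J'$ is an independent copy of $J$. Because $w$ is strictly increasing and $J$ is non-degenerate for $x>0$, the final expectation is strictly positive, whence $h'(\lambda)>0$ for all $\lambda>0$. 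Together with $\lim_{\lambda\to0^{+}}h(\lambda)=3/2$ (immediate from the expansions above, since numerator and denominator both vanish to order exactly two) and $\lim_{\lambda\to\infty}h(\lambda)=\infty$ (since $h(\lambda)\sim\lambda$), the intermediate value theorem and injectivity yield a unique positive solution $\lambdaopt_\alpha$ of $h(\lambda)=\alpha$ for every $\alpha>3/2$ (and no positive solution for $\alpha\le3/2$).

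For the Lipschitz statement: the numerator $N(\lambda):=\lambda\f(\lambda)\fgg(\lambda)$ and denominator $D(\lambda):=\FF(2\lambda)$ are entire and vanish to order exactly two at $0$, while $D(\lambda)=e^{2\lambda}-1-2\lambda>0$ for $\lambda\ne0$, so $h=N/D$ is analytic on a neighbourhood of $[0,\infty)$ with $h(0)=3/2$ and (by expanding) $h'(0)=1/4>0$. Thus $h'$ is continuous and strictly positive on $[0,\infty)$, hence bounded below by some constant $c>0$ on every interval $[0,L]$. Given $\eps>0$, choose $L$ with $h(L)\ge\eps$; then $\lambdaopt_{(\cdot)}=h^{-1}$ is $C^{1}$ on its domain with $(h^{-1})'(\alpha)=1/h'\bigl(h^{-1}(\alpha)\bigr)\le1/c$ whenever $h^{-1}(\alpha)\le L$, so the mean value theorem gives $|\lambdaopt_\alpha-\lambdaopt_\beta|\le c^{-1}|\alpha-\beta|$ for all $\alpha,\beta$ in the domain of $\lambdaopt_{(\cdot)}$ with $\alpha,\beta<\eps$; one takes $\eps'=c^{-1}$. (Recall $\lambdaopt_\alpha\to0$ as $\alpha\to(3/2)^{+}$, which is the regime in which this bound will be applied.) I expect the only genuinely delicate point to be the strict positivity of $h'$ on all of $(0,\infty)$; the truncated-Poisson coupling above makes this transparent and avoids a direct sign analysis of $N'D-ND'$.
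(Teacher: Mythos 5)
Your proof is correct but takes a genuinely different route from the paper's for the key monotonicity step. The paper works directly with the numerator $F(\lambda)$ of $f'(\lambda)$ and establishes $F(\lambda)>0$ by repeated differentiation (checking $F^{(i-1)}(0)\ge 0$ at each stage and dividing out $e^{\lambda}$ when it appears in every term) until reaching an expression that is manifestly positive, with the algebra delegated to Maple. You instead rewrite $h(\lambda)=\mean{w_J}$ with $J\sim\tpo{2}{2\lambda}$ and $w_j=\tfrac{j}{2}(1+2^{1-j})$, verify $w$ is strictly increasing on the support via $w_{j+1}-w_j=\tfrac12-(j-1)2^{-j-1}\ge\tfrac38$, and deduce $h'>0$ from the covariance identity
\begin{equation*}
  x\,\frac{\dif}{\dif x}\!\left(\frac{A}{B}\right)=\mean{J\,w_J}-\mean{J}\,\mean{w_J}=\tfrac12\,\mean[\big]{(J-J')(w_J-w_{J'})}>0,
\end{equation*}
which is a Chebyshev-type correlation inequality: $J$ and $w_J$ are comonotone. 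Your argument is more conceptual and computer-free, and it makes transparent \emph{why} the sign is what it is; the paper's method uses only calculus but is opaque in structure. For the one-sided limit $h(0^{+})=3/2$, the asymptotics $h(\lambda)\sim\lambda$ at infinity, and the Lipschitz estimate (via $h'(0)=1/4>0$ together with continuity and positivity of $h'$ on compacts containing $0$), you and the paper proceed essentially identically.
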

\begin{proof}
  It suffices to show that $f(\lambda) \eqdef {\lambda \f(\lambda)
    \fgg(\lambda)}/{\FF(2\lambda)}$ is a strictly increasing function
  of $\lambda$ with $\lambda >0$ and $\lim_{\lambda \to 0^+}
  f(\lambda) = 3/2$. \nickca{deleted: See Section~
  for a Maple spreadsheet.} By computing the series of $f(\lambda)$
  with $\lambda\to 0$, we obtain
  \begin{equation*}
    f(\lambda)= \frac{3}{2} +\frac{\lambda}{4} + O(\lambda^2).
  \end{equation*}
  The derivative of $f$ is
  \begin{equation*}
    \frac{\dif f(\lambda)}{\dif \lambda}
    =
    \frac{
      2+2e^{2\lambda}\lambda
      -e^{\lambda}\lambda
      -4e^{2\lambda}\lambda^2
      -e^{3\lambda}\lambda
      -2e^{\lambda}\lambda^2
      +e^{4\lambda}
      +e^{3\lambda}
      -3e^{2\lambda}
      -e^{\lambda}}
    {\FF(2\lambda)^2},
  \end{equation*}
  which we want to show that is positive for any $\lambda >0$. Let
  $F(\lambda)$ denote the numerator in the above. It suffices to show
  that $F(\lambda)$ is positive for $\lambda > 0$. Let $F^{(0)}= F$.
  We will use the following strategy: starting with $i=1$, we check
  that $F^{(i-1)}(0)\geq 0$ and compute the derivative $F^{(i)}$ of
  $F^{(i-1)}$. If for some $i$ we can show that $F^{(i)}(\lambda) > 0$
  for any $\lambda>0$, then we obtain $F(\lambda) > 0$ for
  $\lambda>0$. Otherwise, we try to simplify the derivative.  If
  $\exp(\lambda)$ appears in every term of $F^{(i)}$, we redefine
  $F^{(i)}$ by dividing it by $\exp(\lambda)$. Eventually, we obtain
  \begin{equation*}
    216 e^{2\lambda}
    -24 \lambda e^{\lambda}
    -44 e^{\lambda}
    -16\lambda
    -52,
  \end{equation*}
  which is trivially positive since $\exp(2x)\geq \exp(x) \geq 1+x$
  for $x\geq 0$ and the sum of the coefficients of the negative terms
  is less than $216$. 

  The proof of the second statement in the lemma follows trivially
  from the fact that the first derivative is always positive and, with
  $\lambda\to0$,
  \begin{equation*}
    \frac{\dif f(\lambda)}{\dif \lambda}
    =
    \frac{\lambda^4 + O(\lambda^5)}{4\lambda^4 + O(\lambda^5)}
    \to
    \frac{1}{4}>0.
  \end{equation*}
  \end{proof}
  Since $3\mpre = 3/2+3r$, for $r=o(1)$ we have that $\lambdaopt$ is
  well-defined and $\lambdaopt \to 0$, by
  Lemma~\ref{lem:unique-pre-tools-hyper}.
  
  Let
  \begin{equation}
    \label{eq:wcore-def-hyper}
    \wcore(\nn)
    =
    \begin{cases}
      n! {\displaystyle\frac{\Qtwo(\nn)! \ff(\lambda_{\nn})^{\ntwo(\nn)}}
        {\ntwo(\nn)!\nn!\mthree(\nn)! 2^{\nn}
          6^{\mthree(\nn)}\lambda_{\nn}^{\Qtwo(\nn)}}},& \text{if
      }\nn\in J_m\setminus\set{2n-3m};\\
      n! {\displaystyle\frac{\Qtwo(\nn)!}
       {\ntwo(\nn)!\nn!\mthree(\nn)! 2^{n}
         6^{\mthree(\nn)}}},&  \text{if }\nn = 2n-3m\in J_m.
  \end{cases}
\end{equation}
Then, Proposition~\ref{prop:magic-core-hyper} implies that
\begin{equation*}
  \begin{split}
    \gcore(n,m)
    &=
    \sum_{\nn \in J_m\setminus\set{2n-3m}}
    \wcore(\nn)
    \meancond{\Gcore(n,m,\nn,\Ys)\text{ simple}}{\Sigma_{\nn}}
    \prob{\Sigma_{\nn}}
    \\
    &+ \indicator_{2n-3m\in J_m} \wcore(2n-3m)
    \prob{\Gcore(n,m,\nn,\twos)\text{ simple}},
  \end{split}
\end{equation*}
where the last term comes from $\nn=2n-3m$ and
$\Dcal_{2n-3m}=\set{\twos}$.

Recall that $\hcore(x) = x\ln(xn)-x$ and 
\begin{equation*}
  \begin{split}
\fcore(\nncore)
    =
    &\hcore(\coreQtwo)
    - \hcore(\corentwo)
    - \hcore(\nncore)
    - \hcore(\coremthree)
    \\
    &- \nncore \ln(2)
    - \coremthree\ln(6)
    \\
    &+ \corentwo\ln(\ff(\lambda_{\nn}))
    -\coreQtwo\ln(\lambda_{\nn}).
  \end{split}
\end{equation*}
The function $n!\exp(n \fcore(\nncore))$ is an approximation for the
exponential part of $\wcore(\nn)$. We will analyse $\fcore$ and use it
to draw conclusions about $\wcore$. It will be useful to know the
asymptotic values of $\nncopt$ and some functions of it. In
Equation~\eqref{eq:lambdaopt-def-aux-hyper}, the RHS is $3m/n=3/2+3r$
and so we can write $r$ in terms of $\lambdaopt$.  Since $\nncopt$ is
defined as ${3\mpre}/{\fgg(\lambdaopt)}$, we can also write it in
terms of $\lambdaopt$ and so we can write $\Qtwo(\nnopt)$,
$\ntwo(\nnopt)$ and $\mthree(\nnopt)$ in terms of $\lambdaopt$ (and
$n$). As we have mentioned before, by
Lemma~\ref{lem:unique-pre-tools-hyper}, we have that $\lambdaopt\to
0$. By computing the series with $\lambdaopt \to 0$, we have that
\begin{equation}
  \label{eq:optcore-r-hyper}
  \begin{split}
    &\lambdaopt = 12r + O(r^2);
    \\
    &\nncopt = 1/2-r+O(r^2);
    \\
    &\Qtwo(\nnopt) = 3m-\nnopt = n+4R+o(R);
    \\
    &\ntwo(\nnopt) = n-\nnopt = n/2 + R +o(R);
    \\
    &\mthree(\nnopt) = m-\nnopt = 2R+o(R).
  \end{split}
\end{equation}

Next, we state the main lemmas for the proof of
Theorem~\ref{thm:corevalue-hyper}.\Old{We defer their proofs to
Section~\ref{sec:lemmas-core-hyper}.} First we show that $\nncopt$
achieves the maximum value for $\fcore$ in $\coreJ_m$.

\begin{lem}
  \label{lem:max-core} 
  The point $\nncopt$ is the unique maximum of the function
  $\fcore(\nncore)$ for $\nncore\in \coreJ_m$. Moreover, we have
  that $\fcore'(\nncopt) = 0$, and $\fcore'(\nncore)>0$ for $\nncore< \nncopt$
  and $\fcore'(\nncore) <0$ for $\nncore > \nncopt$.
\end{lem}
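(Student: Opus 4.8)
The plan is to compute $\fcore'$ explicitly on the interior of $\coreJ_m$, show it is strictly decreasing there, and show it vanishes exactly at $\nncopt$; this yields the displayed sign statement and hence that $\nncopt$ is the unique maximum. Throughout I treat $\fcore$ as a function of $\nncore$ (equivalently of $\nn=n\nncore$), and note that on the interior of $\coreJ_m$ one has $\ctwo(\nn)=(3m-\nn)/(n-\nn)>2$, so $\lambda_{\nn}$ is a well‑defined positive solution of~\eqref{eq:lambdacore-hyper} and the derivative formula~\eqref{eq:difdeg} is available. To differentiate, recall $\hcore(x)=x\ln(xn)-x$ has $\hcore'(x)=\ln(xn)$, and $\corentwo,\coreQtwo,\coremthree,\nncore$ are affine in $\nncore$, so the $\hcore$‑terms and the linear terms differentiate elementarily; the only nontrivial piece is $\frac{\dif}{\dif\nncore}\bigl(\corentwo\ln\ff(\lambda_{\nn})-\coreQtwo\ln\lambda_{\nn}\bigr)$. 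Applying~\eqref{eq:difdeg} with $k=2$, $t=\corentwo$, $T=\coreQtwo$ (so $c=T/t=\ctwo$ is exactly the quantity defining $\lambda_{\nn}$, with $f_{k-1}=\f$, $f_k=\ff$), this piece has derivative $t'\ln\ff(\lambda_{\nn})-T'\ln\lambda_{\nn}=-\ln\ff(\lambda_{\nn})+\ln\lambda_{\nn}$. Collecting everything,
\begin{equation*}
  \fcore'(\nncore)=\ln\left(\frac{3\,(n-\nn)\,(m-\nn)\,\lambda_{\nn}}{(3m-\nn)\,\nn\,\ff(\lambda_{\nn})}\right).
\end{equation*}

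Next I would prove that $\fcore'$ is strictly decreasing. Rewrite
\begin{equation*}
  \fcore'(\nncore)=\ln 3+\ln(n-\nn)-\ln\nn+\ln\frac{m-\nn}{3m-\nn}+\ln\frac{\lambda_{\nn}}{\ff(\lambda_{\nn})},
\end{equation*}
and check each summand is strictly decreasing in $\nn$ (equivalently $\nncore$): $\ln(n-\nn)$ and $-\ln\nn$ are obvious; $\frac{m-\nn}{3m-\nn}$ has derivative $-2m/(3m-\nn)^2<0$; and for the last term, $\ctwo(\nn)$ has derivative $(3m-n)/(n-\nn)^2>0$ (since $3m>n$ in our range), so $\lambda_{\nn}$ is strictly increasing in $\nn$ because $\lambda\mapsto\lambda\f(\lambda)/\ff(\lambda)$ is strictly increasing by~\cite[Lemma 1]{PWa}, while $\lambda\mapsto\lambda/\ff(\lambda)$ is strictly decreasing since the numerator of its derivative equals $\ff(\lambda)-\lambda\f(\lambda)=e^{\lambda}(1-\lambda)-1<0$ for $\lambda>0$. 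Hence $\fcore'$ is strictly decreasing on the interior of $\coreJ_m$ and so has at most one zero there.

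Then I would verify $\fcore'(\nncopt)=0$, i.e.\ that the argument of the logarithm equals $1$ at $\nncopt$. Writing $g=\fgg(\lambdaopt)=e^{\lambdaopt}+2$ and using $\nnopt=3m/g$ gives $n-\nnopt=(ng-3m)/g$, $m-\nnopt=m\,\f(\lambdaopt)/g$ and $3m-\nnopt=3m(e^{\lambdaopt}+1)/g$, while the defining relation $\lambdaopt\f(\lambdaopt)\fgg(\lambdaopt)/\FF(2\lambdaopt)=3m/n$ from~\eqref{eq:m-sol-core} gives $ng-3m=ng\bigl(\FF(2\lambdaopt)-\lambdaopt\f(\lambdaopt)\bigr)/\FF(2\lambdaopt)$. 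Substituting these (and also checking $\ctwo(\nnopt)=\lambdaopt\f(\lambdaopt)/\ff(\lambdaopt)$, so that $\lambda_{\nnopt}=\lambdaopt$ by uniqueness), both equalities reduce after the cancellations to the single elementary identity $\FF(2\lambdaopt)-\lambdaopt\f(\lambdaopt)=(e^{\lambdaopt}+1)\,\ff(\lambdaopt)$, which follows from $(e^{\lambda}-1)(e^{\lambda}+1)=e^{2\lambda}-1$. Combining this with the previous step and the fact (from~\eqref{eq:optcore-r-hyper}, using $r=o(1)$) that $\nncopt=1/2-r+O(r^2)$ lies strictly inside $\coreJ_m=[\,2-3\mcore,\ \mcore\,]$ for $n$ large: $\fcore'$ is positive to the left of $\nncopt$ and negative to the right, so $\fcore$ strictly increases up to $\nncopt$ and strictly decreases afterwards, and the continuous extension to the two endpoints cannot create a competing maximum. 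Thus $\nncopt$ is the unique maximum of $\fcore$ on $\coreJ_m$, $\fcore'(\nncopt)=0$, and $\fcore'$ has the stated sign on either side.

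The part I expect to be most delicate is the monotonicity of $\fcore'$: the summand $-\ln(3m-\nn)$ is \emph{increasing} in $\nn$, so strict decrease of $\fcore'$ is not term‑by‑term automatic, and the fix is precisely to pair it with $\ln(m-\nn)$ into $\ln\frac{m-\nn}{3m-\nn}$. The algebraic verification that $\nncopt$ is the critical point is routine but hinges on the defining relation for $\lambdaopt$ together with the identity $(e^{\lambda}-1)(e^{\lambda}+1)=e^{2\lambda}-1$, and on noticing that both required equalities collapse to the same identity.
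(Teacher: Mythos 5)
Your proof is correct and follows essentially the same approach as the paper: compute $\fcore'$ (your formula is exactly the paper's~\eqref{eq:diffcore}, rewritten as a single logarithm), show it is strictly decreasing on the interior of $\coreJ_m$, and verify that it vanishes at $\nncopt$. The paper establishes strict decrease of $\fcore'$ by computing $\fcore''$ directly and observing $1/\coreQtwo<1/\coremthree$ together with $1+\coreetatwo-\corectwo>0$, whereas you get the same conclusion by pairing $\ln(m-\nn)$ with $-\ln(3m-\nn)$ and tracking the monotonicity of $\lambda_{\nn}$ and of $\lambda/\ff(\lambda)$ term by term; these are the same observations in slightly different packaging. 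You also spell out the algebraic identity $(e^{\lambda}+1)\ff(\lambda)=\FF(2\lambda)-\lambda\f(\lambda)$ behind the assertion that setting~\eqref{eq:diffcore} to zero yields~\eqref{eq:m-sol-core}, which the paper states without showing the cancellation.
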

\New{
\begin{proof}[Proof of Lemma~\ref{lem:max-core}]
Using~\eqref{eq:difdeg} with $T = \coreQtwo$ and $t = \corentwo$, the
derivative of $\fcore(\nncore)$ is
\begin{equation}
  \label{eq:diffcore}
  -\ln(\coreQtwo)+\ln(\corentwo)-\ln(\nncore)+\ln(\coremthree)
  +\ln(3)- \ln \ff(\lambda) +\ln \lambda.
\end{equation}
The second derivative is
\begin{equation}
  \label{eq:ddiffcore}
  \frac{1}{\coreQtwo}-\frac{1}{\corentwo}-\frac{1}{\nncore}
  -\frac{1}{\coremthree}- \frac{(1-\corectwo)^2}{\coreQtwo(1+\coreetatwo-\corectwo)} < 0,
\end{equation} 
because $1/\coreQtwo < 1/\coremthree$.

  By setting the derivative of $\fcore$ in~\eqref{eq:diffcore} to $0$
  and using the definition of $\lambda_{\nn}$
  in~\eqref{eq:lambdacore-hyper}, we obtain the
  Equation~\eqref{eq:m-sol-core}, which has a unique positive solution
  $\lambdaopt$ by Lemma~\ref{lem:unique-pre-tools-hyper}
  The second derivative computation in~\eqref{eq:ddiffcore} implies
  that $\fcore$ is strictly concave and so $\lambdaopt$ is the unique
  maximum. 
\end{proof} 

}

\Old{Then we expand the summation around the maximum and approximate it
by an integral, which we compute, obtaining the following: 
\begin{lem}
  \label{lem:approx-core-hyper}
  Suppose that $\delta=o(r)$ and $\delta^2 =\omega(r/n)$, with
  $r=o(1)$.  We have that
  \begin{equation*}
    \sum_{\substack{x\in[-\delta n,\delta n]\\\nnopt +x\in\setZ}} \exp(n\fcore(\nncopt+\corex)
    \sim
    \sqrt{2\pi rn} \exp(n\fcore(\nncopt)).
  \end{equation*}
\end{lem}
 
Finally, we show that points far from the maximum do not contribute
significantly to the summation:
\begin{lem}
  \label{lem:tail-core-hyper}
  Suppose that $\delta=o(r)$ and $\delta^2 = \omega(r\log n/n)$ with
  $r=o(1)$. Then
  \begin{equation*}
    \sum_{\substack{\nn \in J_m\\ |\nn-\nnopt|>\delta n}} \wcore(\nn)
    =
    o\left(\frac{n!}{n\sqrt{r}}\exp(n\fcore(\nncopt))\right).
  \end{equation*}
\end{lem}
}

We are now ready to prove Theorem~\ref{thm:corevalue-hyper}. \Old{First we
prove~\eqref{eq:upper-formula-core-hyper}.} We discuss the relation of
$\wcore$ and $\fcore$ more precisely here.  The function
$n!\exp(n\fcore(\nncore))$ can be obtained from the definition of
$\wcore(\nn)$ in~\eqref{eq:wcore-def-hyper} as follows: replace
$\Qtwo(n_1)!$ by $\exp(\hcore(\coreQtwo))$, and do the same for
$\nn!$, $\ntwo(\nn)!$, and $\mthree(\nn)!$. That is,
$n!\exp(n\fcore(\nncore))$ can be obtained from $\wcore(\nn)$ be
replacing each factorial involving $\nn$ by its Stirling approximation
(but ignoring the polynomials terms).  By Stirling's
approximation\crisc{Removed reference to appendix}, there exists constants
$\alpha_1$ and $\alpha_2$ such that, for every $x\in\setN$, 
\begin{equation}
\label{eq:error-Stirling-core-hyper}
  \alpha_1\sqrt{x} \paren[\bigg]{\frac{x}{e}}^x 
  \leq x!\leq 
  \alpha_2\sqrt{x} \paren[\bigg]{\frac{x}{e}}^x,
\end{equation}
and so, there exists a constant $\alpha$ such that
\begin{equation*}
  \wcore(\nn)
  \leq
  \alpha\sqrt{m}
  \exp(n\fcore(\nncore)).
\end{equation*}
Together with Lemma~\ref{lem:max-core}, this immediately
implies~\eqref{eq:upper-formula-core-hyper}.

\Old{Now we will prove~\eqref{eq:formula-core-hyper}. So assume that $R =
o(n)$. In order to use Lemma~\ref{lem:approx-core-hyper} and
Lemma~\ref{lem:tail-core-hyper}, we need to choose $\delta =
\delta(n)$ that satisfies $\delta = o(r)$ and $\delta^2 = \omega(r
\log n / n)$. This is possible if and only if $r^2 = \omega(r\log n/
n)$. That is, if and only if $R=\omega(\log n)$, which is one of the
hypotheses of the theorem. Thus, let $\delta$ be such that $\delta =
o(r)$ and $\delta^2 = \omega(r\log n /n)$.

Let $J(\delta) = [\nnopt-\delta n,\nnopt+\delta n]\cap\setZ$. We have
that $2n-3m = n/2-3R$ is not in $J(\delta)$ because $\nnopt =
n/2-R+o(R)$ by~\eqref{eq:optcore-r-hyper} and $\delta n = o(R)$. By
Proposition~\ref{prop:magic-core-hyper} and
Lemma~\ref{lem:simple-core-hyper}, for $\nn(n)\in J(\delta)$,
\begin{equation}
  \label{eq:corevalue-aux1-hyper}
  \gcore(n,m,\nn)
  \sim
  \wcore(\nn)
  \prob{\Sigma_{\nn}}. 
\end{equation}
For any $\nn(n)\in J(\delta)$, we have that $\Qtwo(\nn)$, $\mthree(\nn)$,
$\ntwo(\nn)$ are all $\Omega(R)$ by~\eqref{eq:optcore-r-hyper} and
$\delta n = o(R)$. Thus, by~\eqref{eq:corevalue-aux1-hyper},
Stirling's approximation and the definition of $\fcore$, for $\nn(n)\in J(\delta)$,
\begin{equation*}
  \gcore(n,m,\nn)
  =
  \frac{1}{2\pi}
  \sqrt{\frac{\Qtwo(\nn)}
              {\ntwo(\nn) \nn \mthree(\nn)}
   }
   \cdot \prob{\Sigma_{\nn}}\cdot 
  n!\exp\paren[\Big]{n\fcore(\nncore)};
\end{equation*}
 Using~\eqref{eq:optcore-r-hyper} and
$\delta n = o(R)$, we obtain
\begin{equation}
  \frac{1}{2\pi}
  \sqrt{\frac{\Qtwo(\nn)}
    {\ntwo(\nn) \nn \mthree(\nn)}
  }
  =
  \frac{1}{2\pi}
  \sqrt{\frac{n+4R+o(R)}
    {(n/2+R+o(R)) (n/2-R+o(R)) (2R+o(R))}
  }
  \sim
\frac{1}{\pi n \sqrt{2 r}}
\end{equation}
and
\begin{equation*}
  \frac{\Qtwo(\nn)}{\ntwo(\nn)}
  =
  \frac{\Qtwo(\nnopt)}{\ntwo(\nnopt)}
  (1+o(r)),
\end{equation*}
for $\nn(n)\in J(\delta)$.  By Lemma~\ref{lem:lambda-close-hyper}, this
implies that $\lambda_{\nn} \sim \lambda_{\nnopt}$ for $\nn\in
J(\delta)$. Moreover, we have that $\Qtwo(\nn)-2\ntwo(\nn) = 2R+o(R)
\to \infty$ by~\eqref{eq:optcore-r-hyper} and so, by~\textred{\cite[Theorem 4]{PWa}},
\begin{equation}
  \begin{split}
    \prob{\Sigma_{\nn}}
    &\sim
    \left(2\pi \Qtwo(\nn) (1+\etatwo(\nn)-\ctwo(\nn))\right)^{-1/2}
    \\
    &=
    \left(2\pi (n+4R+o(R)) 
        \left(
          1  
          + \lambda_{\nn} \frac{\exp(\lambda_{\nn})}{\f(\lambda_{\nn})}
          - \lambda_{\nn} \frac{\f(\lambda_{\nn})}{\ff(\lambda_{\nn})}
        \right)\right)^{-1/2}
    \\
    &=
    \left(2\pi (n+4R+o(R)) 
        \left(
          1  
          +
          \frac{1+\lambda_{\nn}+O(\lambda_{\nn}^2)}
               {1+\lambda_{\nn}/2 + O(\lambda_{\nn}^2)}
          - \frac{1+\lambda_{\nn}/2 + O(\lambda_{\nn}^2)}
                 {1/2+\lambda_{\nn}/6 + O(\lambda_{\nn}^2)}
        \right)\right)^{-1/2}
   \\      
   &=
   \left(2\pi (n+4R+o(R)) 
     \left(\frac{\lambda_{\nn}}{6} + O(\lambda_{\nn}^2) \right)
   \right)^{-1/2}
   \\
   &=
   \left(2\pi (n+4R+o(R)) 
     \left(2r + O(r^2) \right)
   \right)^{-1/2}
   \sim
   \frac{1}{\sqrt{4\pi nr}},
    \end{split}
\end{equation}
 for $\nn(n)\in J(\delta)$. Thus, 
 \begin{equation}
   \label{eq:aux-main-core-hyper}
   \gcore(n,m,\nn) = (1+o(1))
   \frac{1}{\pi n \sqrt{2 r}}
    \cdot
    \frac{1}{\sqrt{4\pi nr}}\cdot
        n!\exp\paren[\Big]{n\fcore(\nncore)},
 \end{equation}
 for $\nn(n)\in J(\delta)$. Since $J(\delta)$ is a finite set for each
 $n$, by Lemma~\ref{lem:unif-deg-seq} there exists a function
 $q(n)=o(1)$ such that the $o(1)$ in~\eqref{eq:aux-main-core-hyper} is
 bounded in absolute value by $q(n)$. Thus, by
 Lemma~\ref{lem:approx-core-hyper},
\begin{equation*}
  \begin{split}
    \gcore(n,m)&=
    \sum_{\nn\in J(\delta)} \gcore(n,m,\nn)
    \sim
    \frac{1}{\pi n \sqrt{2 r}}
    \cdot
    \frac{1}{\sqrt{4\pi nr}}
    \sum_{\nn\in J(\delta)} 
    n!\exp\paren[\Big]{n\fcore(\nncore)}
    \\
    &\sim
    \frac{1}{\pi n \sqrt{2 r}}
    \cdot
    \frac{1}{\sqrt{4\pi nr}}
    \cdot
    \sqrt{2\pi rn} \exp(n\fcore(\nncopt))
    \\
    &\sim
    \frac{1}{2\pi n \sqrt{r}}
    \exp(n\fcore(\nncopt)),
\end{split}
\end{equation*}
which together with Lemma~\ref{lem:tail-core-hyper} finishes the proof
of Theorem~\ref{thm:corevalue-hyper}.
}

\Old{\subsection{Proof of lemmas in Section~\ref{sec:thm-proof-cores-hyper}}
\label{sec:lemmas-core-hyper}

In this section, we prove
Lemmas~\ref{lem:max-core},~\ref{lem:approx-core-hyper},
and~\ref{lem:tail-core-hyper}. \nickca{Deleted: See Section~ for a Maple
spreadsheet with some computations in this section.} \nickka{Maple
was used for several computations in this section.}

Using~\eqref{eq:difdeg} with $T = \coreQtwo$ and $t = \corentwo$, the
derivative of $\fcore(\nncore)$ is
\begin{equation}
  \label{eq:diffcore}
  -\ln(\coreQtwo)+\ln(\corentwo)-\ln(\nncore)+\ln(\coremthree)
  +\ln(3)- \ln \ff(\lambda) +\ln \lambda.
\end{equation}
The second derivative is
\begin{equation}
  \label{eq:ddiffcore}
  \frac{1}{\coreQtwo}-\frac{1}{\corentwo}-\frac{1}{\nncore}
  -\frac{1}{\coremthree}- \frac{(1-\corectwo)^2}{\coreQtwo(1+\coreetatwo-\corectwo)} < 0,
\end{equation} 
because $1/\coreQtwo < 1/\coremthree$. The third derivative is
\begin{equation}
  \begin{split}
    \label{eq:dddiffcore}
    &\frac{1}{\coreQtwo^2}-\frac{1}{\corentwo^2}+\frac{1}{\nncore^2}
    -\frac{1}{\coremthree^2}
    \\
    &-\frac{\dif\frac{(1-\corectwo)^2}{\coreQtwo}}
    {\dif \nncore}\frac{1}{(1+\coreetatwo-\corectwo)}
    +\frac{(1-\corectwo)^2}{\coreQtwo(1+\coreetatwo-\corectwo)^2}
    \frac{\dif(1+\coreetatwo-\corectwo)}{\dif \nncore}.
  \end{split}
\end{equation}

In order to approximate the value of $\fcore$ around the maximum by
using Taylor's approximation, we will bound the third derivative.
\begin{lem}
  \label{third-derivative-core-hyper} 
  Let $\delta = o(r)$ and $\nncore \in
  [\nncopt-\delta,\nncopt+\delta]$. Then the third derivative of
  $\fcore$ at $\nncore$ is $O(1/r^2)$.
\end{lem}
\begin{proof}
  We will bound each term in~\eqref{eq:dddiffcore}.
  By~\eqref{eq:optcore-r-hyper}, 
 \begin{equation*}
    \begin{split}
      &\frac{1}{\coreQtwo^2} =    
      \frac{1}{(1+4r+o(r))^2}\sim 1
      \\
      &\frac{1}{\corentwo^2}=\frac{1}{(1/2+O(r))^2}\sim 4 
      \\
      &\frac{1}{\nncore^2}=\frac{1}{(1/2+O(r))^2}\sim 4 
      \\
      &\frac{1}{\coremthree^2}
      =\frac{1}{(2r+o(r))^2}\sim \frac{1}{4r^2}.
   \end{split}
  \end{equation*}
  We have that $\lambda\eqdef \lambda_{\nn} = \lambdaopt + o(r) = o(1)$ by
  Lemma~\ref{lem:lambda-close-hyper}, and so $1+\coreetatwo-\corectwo
  = \lambda/6 + O(\lambda^2)\sim \lambdaopt/6\sim 2r$. Thus,
  by~\eqref{eq:optcore-r-hyper},
  \begin{equation*}
    \frac{\dif\frac{(1-\corectwo)^2}{\corectwo}}
    {\dif \nncore}\frac{1}{(1+\coreetatwo-\corectwo)}
    =
    \frac{(3\mcore-1)^2(6\mcore-3\nncore+1)}{\corentwo^3\coreQtwo^2}
    \frac{1}{(1+\coreetatwo-\corectwo)}
    = \Theta\left(\frac{1}{r}\right).
  \end{equation*}

  We now bound the last term in the third derivative.  The previous
  computations imply
  \begin{equation*}
    \frac{(1-\corectwo)^2}{\coreQtwo(1+\coreetatwo-\corectwo)^2}
    \sim
    \frac{1}{4r^2},
  \end{equation*}
  since $1-\corectwo =\coreQtwo/\corentwo \sim 2$
  by~\eqref{eq:optcore-r-hyper}.  So we need to bound
\begin{equation*}
  \frac{\dif(1+\coreetatwo-\corectwo)}{\dif \nncore}.
\end{equation*}

We have that 
\begin{equation*}
  \frac{\dif \corectwo}{\dif \nncore}=\frac{3\mcore-1}{(1-\nncore)^2}=\Theta(1)
\end{equation*}
and, by using~\eqref{eq:lambda-diff-hyper},
\begin{equation*}
  \frac{\dif\coreetatwo}{\dif \nncore}
=
\lambda
\frac{\dif \corectwo}{\dif \nncore}
\frac{1}{\corectwo(1+\coreetatwo-\corectwo)}
\left(\frac{\exp(\lambda)}{\fpo{1}(\lambda)}
  +\frac{\lambda\exp(\lambda)}{\fpo{1}(\lambda)}
- \frac{\lambda\exp(\lambda)^2}{\fpo{1}(\lambda)^2}\right)
=
\Theta(1),
\end{equation*}
because
\begin{equation*}
  \lambda \left(\frac{\exp(\lambda)}{\fpo{1}(\lambda)}
  +\frac{\lambda\exp(\lambda)}{\fpo{1}(\lambda)}
- \frac{\lambda\exp(\lambda)^2}{\fpo{1}(\lambda)^2}\right)
=
\frac{\lambda}{2}+O(\lambda^2).
\end{equation*}
Thus the last term has contribution $O(1/r^2)$ to the third
derivative.
\end{proof}

We now present the proofs for Lemmas~\ref{lem:max-core},~\ref{lem:approx-core-hyper},
and~\ref{lem:tail-core-hyper}.
\begin{proof}[Proof of Lemma~\ref{lem:max-core}]
  By setting the derivative of $\fcore$ in~\eqref{eq:diffcore} to $0$
  and using the definition of $\lambda_{\nn}$
  in~\eqref{eq:lambdacore-hyper}, we obtain the
  Equation~\eqref{eq:m-sol-core}, which has a unique positive solution
  $\lambdaopt$ by Lemma~\ref{lem:unique-pre-tools-hyper}
  The second derivative computation in~\eqref{eq:ddiffcore} implies
  that $\fcore$ is strictly concave and so $\lambdaopt$ is the unique
  maximum. 
\end{proof} 

\begin{proof}[Proof of Lemma~\ref{lem:approx-core-hyper}]
  Using Taylor's approximation and
  Lemma~\ref{third-derivative-core-hyper}, for any
  $\nncore\in[\nncopt-\delta, \nncopt+\delta]$,
  \begin{equation}
    \label{eq:taylor-core-hyper}
  \exp\left(n\fcore(\nncore)\right)  
  =
  \exp\left(n\fcore(\nncopt)+\frac{n\fcore''(\nncopt)|\nncopt-\nncore|^2}{2}+O(\delta^3/r^2)
    \right).
\end{equation}
Since $\delta^3/r^2=o(r^3/r^2)=o(1)$, this implies that
\begin{equation*}
  \sum_{\substack{x\in[-\delta n,\delta n]\\\nncopt +x\in\setZ}}
  \exp\paren[\Big]{n\fcore(\nncopt+\corex)}
  \sim
  \sum_{\substack{x\in[-\delta n,\delta n]\\\nncopt +x\in\setZ}}
    \exp\left(n\fcore(\nncopt)+\frac{\fcore''(\nncopt)x^2}{2n}
    \right).
\end{equation*}
By changing the variable in summation below to
$y=\sqrt{\frac{|\fcore''(\nncopt)|}{n}}x$,
\begin{equation*}
  \sum_{\substack{x\in[-\delta n,\delta n]\\\nncopt +x\in\setZ}}
  \exp\left(\frac{\fcore''(\nncopt)x^2}{2n}
  \right)
  =
  \sum_{\substack{y\in[-T_n,T_n]\\ 
      y\in\Pcal_n/s_n}}
  \exp\left(-\frac{y^2}{2}
  \right),
\end{equation*}
where $T_n\eqdef\delta \sqrt{n|\fcore''(\nncopt)|}$, $\Pcal_n \eqdef -
\nnopt+\setZ$, and $s_n \eqdef \sqrt{n/\abs{\fcore(\nncopt)}}$. By
using~\eqref{eq:ddiffcore} and~\eqref{eq:optcore-r-hyper}, we 
 approximated $\fcore''(\nncopt)$:
\begin{equation}
  \label{eq:second-dif-core-hyper}
  \fcore''(\nncopt) \sim -\frac{1}{r},
\end{equation}
and, since $\delta^2 =\omega(r/n)$ and $rn=R\to\infty$, we have
that $T_n\to\infty$ and $s_n\to\infty$. Thus, by Lemma~\ref{lem:integral-tools},
\begin{equation*}
  \sum_{\substack{y\in[-T_n,T_n]
      \\y\in\Pcal_n/s_n}}
  \exp\left(-\frac{y^2}{2}
  \right)
  \sim
  \sqrt{2\pi}  s_n
  \sim
  \sqrt{2\pi}
  \sqrt{rn},
\end{equation*}
which finished the proof of Lemma~\ref{lem:approx-core-hyper}.
\begin{comment}
  By Riemann sums, we have that
  \begin{equation*}
  \sum_{\substack{x\in[-\delta n,\delta n]\\\nncopt +x\in\setZ}}
    \exp\left(\frac{\fcore''(\nncopt)x^2}{2n}
    \right)
    \sim
    \int_{x=-\delta n}^{\delta n}
    \exp\left (\frac{\fcore''(\nncopt)x^2}{2n}\right)
    \dif x.
\end{equation*}

By changing $x$ to $y=\sqrt{\frac{|\fcore''(\nncopt)|}{n}}x$, we get
\begin{equation*}
  \int_{x=-\delta n}^{\delta n}
  \exp\left (\frac{\fcore''(\nncopt)x^2}{2n}\right)
  \dif x
  =
  \sqrt{\frac{n}{|\fcore''(\nncopt)|}}
  \int_{y=-\delta \sqrt{n|\fcore''(\nncopt)|}}^{\delta \sqrt{n|\fcore''(\nncopt)|}}
  \exp\left (\frac{-y^2}{2}\right)
  \dif y.
\end{equation*}

By computing the series of $\fcore''(\nncopt)$, we have that
\begin{equation*}
  \fcore''(\nncopt) \sim -\frac{1}{r},
\end{equation*}
\end{comment}
\end{proof}

\begin{proof}[Proof of Lemma~\ref{lem:tail-core-hyper}]
  Instead of working directly with $\wcore$, we will prove an upper
  bound for $\wcore$ using $\fcore$ and then bound the summation using
  this upper bound.

  By Stirling's approximation \crisc{removed reference to appendix}and
  the definitions of $\wcore$ and $\fcore$, for $\nn\in J_m$,
\begin{equation}
  \label{eq:bound-stirling-core-hyper}
  \wcore(\nn)
  \leq n^\beta n! \exp\paren[\big]{n\fcore(\nncore)},
\end{equation}
for some constant $\beta$. First we bound the summation for the tail
$\nn\leq \nnopt-\delta n$. By
Lemma~\ref{lem:max-core},~\eqref{eq:taylor-core-hyper}
and~\eqref{eq:second-dif-core-hyper}
\begin{equation*}
  \begin{split}
    \sum_{ \substack{\nn \leq \nnopt-\delta n\\ \nn\in\setZ}}  &\wcore(\nn)
    \leq n^{\beta+1} n!\exp\paren[\Big]{n\fcore(\nncopt-\delta)}
    \\
    &\leq n!\exp\paren[\Big]{n\fcore(\nncopt)}
    \exp\paren[\Big]{n\fcore''(\nncopt)\delta^2/2+(\beta+1)\ln n +o(1)}
    \\
    &= O\left(\frac{n!\exp(n\fcore(\nncopt))}
      {\exp(n\delta^2/(2r)-(\beta+1)\ln n +o(1))}\right)
  \end{split}
\end{equation*}
and we are done since $n\delta^2/r=\omega(\ln n)$. The proof
for $\nncore\geq \nncopt+\delta$ is similar.
\end{proof}
}
%%%%%%%%%%%%%%%%%%%%%%%%%%%%%%%%%%%%%%%%%%%%%%%%%%%%%%%%%%%%%%%%
% Pre-kernels
%%%%%%%%%%%%%%%%%%%%%%%%%%%%%%%%%%%%%%%%%%%%%%%%%%%%%%%%%%%%%%%%
\section{Counting pre-kernels}
\label{sec:pre-hyper}

In this section we obtain an asymptotic formula for the number of
\prekernels\ with vertex set~$[n]$ with $m=n/2+R$ edges, when $R=
\omega(n^{1/2}\log^{3/2} n)$ and $R=o(n)$. We remark that the
asymptotics in this section are for $n\to \infty$. We will always use
$r$ to denote $R/n$.

% Define weak feasible region
For $x=(\nn,\kzero,\kone,\ktwo)\in\setR^4$, let
\begin{equation}
\label{eq:pre-param-def}
  \begin{split}
      &\ntwoequal(x) =\kzero +\kone +\ktwo,
%  \quad&\prentwoequal(\prex) = \ntwoequal(x)/n;
\\
  &\nthree(x) = n-\nn-\ntwoequal(x) = n-\nn-\kzero -\kone -\ktwo,
 % \quad&\prenthree(\prex) = \nthree(x)/n;
\\
  &\mtwo(x) = \nn,
  %\quad&\premtwo(\prex)=\mtwo(x)/n;
\\
  &\mtwoprime(x) = \nn-\kzero,
 % \quad&\premtwoprime(\prex)=\mtwoprime(x)/n;
\\
  &\Ptwo(x)   = 2\mtwoprime(x) = 2\nn-2\kzero,
 % \quad& \prePtwo(\prex) = \Ptwo(x)/n;
\\
  &\mthree(x) = m-\nn,
 % \quad& \premthree(\prex) = \mthree(x)/n;
\\
  &\Pthree(x) = 3\mthree(x) = 3m-3\nn,
 % \quad& \prePthree(\prex) = \Pthree(x)/n;
\\
  &\Qthree(x) = 3m-\nn-2\ntwoequal(x) = 3m-\nn-2\kzero-2\kone-2\ktwo,
%  \quad& \preQthree(\prex) = \Qthree(x)/n;
\\
  &\Tthree(x) = \Pthree(x)-\kone-2\ktwo = 3m-3\nn-\kone-2\ktwo,
%  \quad& \preTthree(\prex) = \Tthree(x)/n;
\\
&\Ttwo(x) = \Ptwo(x)-\kone = 2\nn-2\kzero-\kone,
 % \quad& \preTtwo(\prex) = \Ttwo(x)/n;\\
  \end{split}
\end{equation}
For any symbol $y$ in this section (and following subsections), we use
$\hat y$ to denote $y/n$.

We will have $\nn$ as the number of vertices of degree~$1$, $\kzero$
as the number of vertices of degree~$2$ such that the two edges
incident to it are $2$-edges, $\ktwo$ as the number of vertices of
degree~$2$ such that the two edges incident to it are $3$-edges and
$\kone$ as the remaining vertices of degree~$2$. Then it is clear that
$\ntwoequal$ is the number of vertices of degree~2, $\nthree$ is the
number of vertices of degree at least~$3$, $\Qthree$ is the sum of
degrees of vertices of degree at least~$3$, $\mthree$ is the number of
$3$-edges, $\mtwo$~is the number of $2$-edges, and $\mtwoprime$ is the
number of $2$-edges that contain exactly two vertices of
degree~$2$. We omit the argument $x$ when it is obvious from the
context.

For $x\in \setR^4$, let 
\begin{equation*}
  \cthree(x) = \frac{\Qthree(x)}{\nthree(x)} 
  = \frac{3m-\nn-2\ntwoequal(x)}{n-\nn-\ntwoequal(x)},  
\end{equation*}
that is, $\cthree$ is the average degree of the vertices of degree at
least~$3$. Note that $\cthree(x) = \preQthree(x)/\prenthree(x) =
\precthree(x)$. For $x\in \setR^4$ such that $\preQthree(x) > 3
\prenthree(x) > 0$, let $\lambda = \lambda(x)$ be the unique positive
solution~of
\begin{equation}
  \label{eq:lambda-def-pre-hyper}  \frac{\lambda \ff(\lambda)}{\fff(\lambda)} 
  = \cthree(x).
\end{equation}
Such $\lambda(x)$ always exists and is unique by
Lemma~\textred{\cite[Lemma 1]{PWa}}. By continuity reasons, we define
$\lambda(x)=0$ when $\cthree(x)=3.$

Let $S_m$ be the region of $\setR^4$ such that
$x=(\nn,\kzero,\kone,\ktwo)\in S_m$ if all of the following conditions
hold:
\begin{itemize}
\item $\nn,\kzero,\kone,\ktwo \in [0,n]$;
\item $\Qthree(x) \geq 3\nthree(x) \geq 0$, and $\Qthree(x)=0$
  whenever $\nthree(x) = 0$;
\item $\mthree(x), \mtwo(x),\mtwoprime(x), \Tthree(x),\Ttwo(x) \geq 0$.
\end{itemize}
We will work with \prekernels\ with $\nn$ vertices of degree~$1$ and
$k_i$ vertices of degree~$2$ incident to exactly~$i$ $3$-edges, for
$i=0,1,2$. We say that such \prekernels\ have parameters
$(\nn,\kzero,\kone,\ktwo)$.  The region $S_m$ is defined so that all
tuples $(\nn,\kzero,\kone,\ktwo)$ for which it there exists a
\prekernel\ with such parameters are included. Let $\preS_m =
\set{x/n: x\in S_m}$ denote the scaled version of $S_m$. The set $S_m$
is not closed because $\Qthree(x)=0$ whenever $\nthree(x) = 0$. This
constraint is added because $\Qthree(x)$ should be the sum of the
degrees of vertices of degree at least~$3$ and $\nthree(x)$ should be
the number of vertices of degree at least~$3$.

% Introducing fpre
For $\prex = (\prenn,\prekzero,\prekone,\prektwo)$  in
the interior of $\preS_m$, define
\begin{equation}
\label{eq:fpre-def-hyper}
  \begin{split}
    \fpre(\prenn,\prekzero,\prekone,\prektwo)
    =
    &
    \ \hpre(\prePthree)
    +\hpre(\prePtwo)
    +\hpre(\preQthree)
    +\hpre(\mtwo)
    \\
    &-\hpre(\prekzero)
    -\hpre(\prekone)
    -\hpre(\prektwo)
    -\hpre(\prenthree)
    -\hpre(\premthree)
    \\
    &-\hpre(\prePthree-\prekone-2\prektwo)
    -\hpre(\prePtwo-\prekone)
    -2\hpre(\premtwoprime)
    \\
    &
    -\prektwo \ln 2
    - \premtwoprime \ln 2
    - \premthree \ln 6
    \\
    &+\prenthree\ln f_3(\lambda)
    -\preQthree\ln\lambda,
  \end{split}
\end{equation}
where $\lambda = \lambda(x)$. As we will see later, for
$x=(\nn,\kzero,\kone,\ktwo)\in S_m\cap Z^4$, we have that $n!
\exp(n\fpre(\prex))$ approximates the exponential part of the number
of pre-kernels with parameters $(\nn,\kzero,\kone,\ktwo)$.

We extend the definition of $\fpre$ for points $\prex\in \preS_m$ that
are in the boundary of $\preS_m$ as the limit of $\fpre(x^{(i)})$ on
any sequence of points $(x^{(i)})_{i\in \setN}$ in the interior of
$\preS_m$ with $x^{(i)}\to \prex$.  One of the reasons the points $x$
with $\Qthree(x) > \cthree(x) = 0$ are not allowed is that
$\fpre(x_i)$ does not necessarily converge on a sequence of points
$(x^{(i)})_{i\in \setN}$ converging to~$x$.  For the points in the
boundary where $\Qthree(x) > \cthree(x)$, this only means that $0\log
0$ should be interpreted as~$1$.  For $\prex\in \preS_m$ such that
$\preQthree(\prex) = 3\prenthree(\prex)$, we have that
$\lambda(x)=0$. This means that $\prenthree(x)\ln f_3(\lambda(x))
-\preQthree(x)\ln\lambda(x)$ is not defined (and note that
$\prenthree(x)\ln f_3(\lambda(x))$ and $-\preQthree(x)\ln\lambda(x)$
are the last two terms in the definition of $\fpre(\prex)$). We
compute $\lim_{\lambda\to 0} (\prenthree(x)\ln f_3(\lambda)
-\preQthree(x)\ln\lambda)$. We have that $\lim_{\lambda\to 0}{(\ln
  f_3(\lambda) -3\ln\lambda)} = -\ln 6$. Thus, $\lim_{\lambda\to 0}
(\prenthree(x)\ln f_3(\lambda) -\preQthree(x)\ln\lambda) =
-\prenthree(x)\ln 6$ and
  \begin{equation}
    \label{eq:fpre-extreme}
    \begin{split}
      \fpre(\prex)=
      &\hpre(\prePthree)
    +\hpre(\prePtwo)
    +\hpre(\preQthree)
    +\hpre(\mtwo)
    \\
    &-\hpre(\prekzero)
    -\hpre(\prekone)
    -\hpre(\prektwo)
    -\hpre(\prenthree)
    -\hpre(\premthree)
    \\
    &-\hpre(\prePthree-\prekone-2\prektwo)
    -\hpre(\prePtwo-\prekone)
    -2\hpre(\premtwoprime)
    \\
    &
    -\prektwo \ln 2
    - \premtwoprime \ln 2
    - \premthree \ln 6
    \\
    & -\prenthree\ln 6.
  \end{split}
  \end{equation}

% Main result
We obtain the following asymptotic formula for the number of
\prekernels\ with $n$ vertices and $m = m(n)$ edges.
\begin{thm}
  \label{thm:formula-pre-hyper}
  Let $m=m(n)=n/2+R$ such that $R=o(n)$ and
  $R=\omega(n^{1/2}\log^{3/2}n)$. Then
  \begin{equation*}
    \gpre(n,m)\sim
      \frac{\sqrt{3}}{\pi n}
      n!\exp(n\fpre(\prexopt)),
  \end{equation*}
   where $\prexopt$ is defined as
  $(\prennopt,\prekzeroopt,\prekoneopt,\prektwoopt)$ with
\begin{equation}
  \label{eq:pre-param-opt}
  \begin{split}
    \prennopt &= \frac{3\mpre}{g_2(\lambdaopt)},\\
  \prekzeroopt &= \frac{3\mpre}{g_2(\lambdaopt)}
                  \frac{2\lambdaopt}{f_1(\lambdaopt)g_1(\lambdaopt)},\\
  \prekoneopt  &= \frac{3\mpre}{g_2(\lambdaopt)}
                  \frac{2\lambdaopt}{g_1(\lambdaopt)},\\
  \prektwoopt &=\frac{3\mpre}{g_2(\lambdaopt)}
                \frac{\lambdaopt f_1(\lambdaopt)}{2g_1(\lambdaopt)},
  \end{split}
\end{equation}
and $\lambdaopt = \lambdaopt(n)$ is the unique nonnegative solution for
the equation
\begin{equation}
  \label{pre-m-opt}
  \frac{\lambda \f(\lambda) \fgg(\lambda)}{\FF(2\lambda)}
  =
  3\mpre.
\end{equation}
\end{thm}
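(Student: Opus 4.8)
The plan is to mimic, at a finer level of bookkeeping, the argument used for cores in Section~\ref{sec:core-hyper}. Write $\gpre(n,m)=\sum_{x\in S_m\cap\setZ^4}\gpre(n,m,x)$, where $\gpre(n,m,x)$ counts the connected pre-kernels on $[n]$ with $m$ edges having exactly $\nn$ vertices of degree~$1$ and, for $i=0,1,2$, exactly $k_i$ vertices of degree~$2$ lying in precisely $i$ edges that are $3$-edges, so $x=(\nn,\kzero,\kone,\ktwo)$. The first step is a configuration-model identity in the spirit of Proposition~\ref{prop:magic-core-hyper}: create an edge-bin with three points for each of the $\mthree(x)$ $3$-edges and each of the $\mtwo(x)=\nn$ $2$-edges (one point of every $2$-edge being a distinguished ``degree-$1$ slot''), create vertex-bins of sizes $1$, $2$ and $\ge3$ in the prescribed numbers, and match vertex-bin points to edge-bin points subject to the constraints that every $2$-edge receives exactly one degree-$1$ vertex and that each degree-$2$ vertex meets the prescribed number of $2$- and of $3$-edges. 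Counting configurations and dividing out the overcounting factors ($\mthree(x)!\,6^{\mthree(x)}$ and $\mtwo(x)!$ for edge relabellings, $\prod_i d_i!$ inside the degree-$\ge3$ vertex-bins, and the powers of $2$ recorded by $\kzero,\kone,\ktwo,\mtwoprime$), then summing over the degree sequence $\ds$ of the vertices of degree $\ge3$ and introducing independent $\tpoisson{3}{\lambda(x)}$ variables $Y_1,Y_2,\dots$ conditioned on $\sum_i Y_i=\Qthree(x)$, one obtains
\begin{equation*}
  \gpre(n,m,x)=\wpre(x)\,\meancond[\big]{U(x,\Ys)}{\Sigma_x}\,\prob{\Sigma_x},
\end{equation*}
where $\Sigma_x$ is that conditioning event, $U(x,\Ys)$ is the probability that the resulting multigraph is simple, connected and free of isolated cycles, and $\wpre(x)$ is a ratio of factorials and powers whose leading exponential behaviour is exactly $n!\exp(n\fpre(\prex))$ --- indeed $\fpre$ in~\eqref{eq:fpre-def-hyper} was built so that $n!\exp(n\fpre(\prex))$ is what results from replacing every factorial in $\wpre(x)$ by $(\cdot/e)^{(\cdot)}$ and discarding polynomial factors.

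Second, I would optimise $\fpre$ over $\preS_m$. Differentiating the two ``analytic'' terms $\prenthree\ln f_3(\lambda)-\preQthree\ln\lambda$ via the identity~\eqref{eq:difdeg} (treating $\lambda=\lambda(x)$ implicitly through~\eqref{eq:lambda-def-pre-hyper}, exactly as for $\fcore$ in the proof of Lemma~\ref{lem:max-core}) turns the four partial derivatives of $\fpre$ into explicit combinations of logarithms; setting them to zero and using~\eqref{eq:lambda-def-pre-hyper} collapses the critical-point system to the single equation~\eqref{pre-m-opt}, which has a unique positive root $\lambdaopt$ by Lemma~\ref{lem:unique-pre-tools-hyper} (and $\lambdaopt\to0$, since $3\mpre=3/2+3r$ with $r=o(1)$). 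Back-substitution yields the closed forms~\eqref{eq:pre-param-opt} for $\prexopt$, and the small-$\lambdaopt$ expansions give $\prenthree$, $\premthree$, $\prekzero$, $\prekone$, $\prektwo$ at $\prexopt$ as explicit functions of $r$. One also checks that the $4\times4$ Hessian $\fpre''(\prexopt)$ is negative definite (each factorial contributes a strictly negative diagonal term and the $\lambda$-dependent part contributes, via~\eqref{eq:lambda-diff-hyper}, a controlled correction), so $\prexopt$ is the unique interior maximum, and one records $\abs{\det\fpre''(\prexopt)}$ as an explicit function of $r$ for use in the final integral.

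Third come the probabilistic inputs, uniformly for $x$ in a neighbourhood of $\prexopt$. (i)~As in the random-core analysis, restrict to degree sequences with $\max_i Y_i\le n^{\eps}$ (which carry all but $o(1)$ of the conditional mass since $\lambda(x)\to0$) and bound the expected numbers of loops and of double edges by $o(1)$; here the dominant bad events also include a repeated pair of edges linked by a short chain of degree-$2$ vertices, and controlling their expected number is precisely what forces the hypothesis $R=\omega(n^{1/2}\log^{3/2}n)$. (ii)~For $x$ near $\prexopt$ the degree-$\ge3$ part has $\Theta(R)$ vertices and excess $\Theta(R)\to\infty$, so the associated random multigraph is connected with probability $1-o(1)$ by a first-moment bound on the number of ways to split it into two nonempty parts; subdividing edges by degree-$2$ vertices cannot disconnect it, and the expected number of isolated cycles is $o(1)$. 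Hence $\meancond[\big]{U(x,\Ys)}{\Sigma_x}\sim1$. (iii)~Since $\Qthree(x)-3\nthree(x)=\Theta(R)\to\infty$, \textred{\cite[Theorem~4]{PWa}} gives $\prob{\Sigma_x}\sim\big(2\pi\Qthree(x)(1+\etathree(x)-\cthree(x))\big)^{-1/2}$, and expanding $1+\etathree-\cthree$ for small $\lambda$ turns this into $\prob{\Sigma_x}\sim c/\sqrt{Rn}$ for an explicit $c$, with $\prob{\Sigma_x}\sim\prob{\Sigma_{\prexopt}}$ over the relevant range.

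Finally I would assemble everything by the Laplace method. Choosing $\delta=o(r)$ with $\delta^2=\omega(r\log n/n)$ (possible because $R=\omega(\log n)$), Taylor-expand $n\fpre$ to second order on the box $\{x:\abs{x-\prexopt}\le\delta n\}$, with the third-order error controlled by a bound on the third derivatives of $\fpre$ analogous to the one used for $\fcore$; the sum over integer points of the box is then, up to $1+o(1)$, a four-dimensional Gaussian sum which by a four-variable version of Lemma~\ref{lem:integral-tools} equals $(2\pi n)^{2}\abs{\det\fpre''(\prexopt)}^{-1/2}\exp(n\fpre(\prexopt))$. Multiplying by the polynomial prefactor of $\wpre(\prexopt)$ and by $\prob{\Sigma_{\prexopt}}$, all explicit $r$-dependences cancel and leave precisely $\sqrt{3}/(\pi n)$, giving the claimed formula; the contribution of $x$ outside the box is negligible by the same reasoning as in the tail estimate for cores, using $\wpre(x)\le n^{O(1)}\,n!\exp(n\fpre(\prex))$ (Stirling), strict concavity of $\fpre$, and $n\delta^2/r=\omega(\log n)$. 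The main obstacle is part~(iii) together with~(ii): establishing, uniformly over the four-dimensional band of near-optimal parameters, that the random pre-kernel is simultaneously simple, connected and free of isolated cycles with probability $1-o(1)$; it is the interplay of these estimates --- in particular bad subgraphs threaded through chains of degree-$2$ vertices --- that pins down the threshold $R=\omega(n^{1/2}\log^{3/2}n)$, stronger than the $R=\omega(\log n)$ sufficing for the core formula. A secondary difficulty is simply that the optimisation and the Gaussian integral are genuinely four-dimensional, so the Hessian must be shown nonsingular and its determinant computed explicitly in order to fix the constant.
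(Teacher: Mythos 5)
Your proposal follows the paper's overall strategy: a configuration-model identity reducing $\gpre$ to $\wpre(x)\cdot\meancond{U(x,\Ys)}{\Sigma_x}\cdot\prob{\Sigma_x}$, optimisation of $\fpre$, probabilistic $1-o(1)$ bounds, and a four-dimensional Laplace argument. But there are two real gaps.

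First, you misidentify what forces the hypothesis $R=\omega(n^{1/2}\log^{3/2}n)$. It does not come from bounding ``repeated pairs of edges linked by chains of degree-$2$ vertices.'' The simplicity lemma (Lemma~\ref{lem:simple-pre-hyper}) needs only $R=o(n)$; in fact the paper's construction goes through the kernel, and once $2$-edges are split and assigned distinct degree-$1$ vertices, double edges involving $2$-edges cannot survive, so those events are handled for free. What actually pins down the threshold is the Laplace step: the Hessian of $\fpre$ at $\prexopt$ has the form $(-1/r^2)H_0-(1/r)T+O(J)$ with $H_0$ possessing a null vector $z_1=(1,1,-3,0)$ (Lemma~\ref{lem:hessian-hyper}). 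Along $z_1$ the quadratic form is only $\Theta(n/r)$, while in the transverse directions it is $\Theta(n/r^2)$, so the Laplace box must be anisotropic, with $|\gamma_1|\le\delta_1 n$ along $z_1$ and $|\gamma_i|\le\delta n$ otherwise. The requirements $\delta^3=o(r^4/n)$ (third-order error control in directions where parameters such as $\prektwo$, $\preTthree$ and $\Qthree-3\nthree$ are only $\Theta(r^2n)$) and $\delta^2=\omega(r^2\log n/n)$ (tail kill) are simultaneously satisfiable iff $r^2=\omega(\log^3 n/n)$, i.e.\ $R=\omega(n^{1/2}\log^{3/2}n)$. Your single choice $\delta=o(r)$ with $\delta^2=\omega(r\log n/n)$ --- the box that works for cores --- is too coarse here and the Taylor error would not be $o(1)$.

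Second, your direct configuration model (edge-bins for every edge, degree-$1$ vertex-bins matched with an ``exactly one per $2$-edge'' constraint) would force you to handle connectivity and the ``no isolated cycles'' condition on a random structure riddled with degree-$1$ and degree-$2$ vertices, which is substantially harder than the first-moment splitting bound you gesture at. The paper instead constructs a random kernel, proves connectivity for a bipartite ``left-bin/right-bin/connector'' abstraction in which all bins have size $\ge3$ or act as transparent connectors (Lemma~\ref{lem:connected-new-hyper}), and then obtains the pre-kernel by splitting $2$-edges and attaching degree-$1$ vertices, a step that cannot disconnect the graph and automatically precludes isolated cycles. Without this reduction your item~(ii) does not follow, and the ``no isolated cycles'' clause in your item~(iii) is an extra obligation the paper sidesteps by construction.
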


We discussed the existence and uniqueness of $\lambdaopt$ in
Section~\ref{sec:core-hyper}. Also, note that~\eqref{pre-m-opt}, implies
\begin{equation}
  \label{eq:pre-r-opt}
    r = \frac{1}{3}\frac{\lambdaopt 
    f_1(\lambdaopt) g_2(\lambdaopt)}{\FF(2\lambdaopt)}
  -\frac{1}{2}.
\end{equation}

We will show that the point $\prexopt$ maximizes $\fpre$ in a region
that contains all points $(\prenn,\prekzero,\prekone,\prektwo)$ for
which there exists a pre-kernel with parameters
$(\nn,\kzero,\kone,\ktwo)$. The result is then obtained basically by
expanding the summation around $\prexopt$ in a region such that each
term in~\eqref{eq:pre-param-def} are nonnegative and $\cthree\geq
3$. This approach is similar to the one in
Section~\ref{sec:core-hyper} in which we analyse cores, but it will
require much more work since we are now dealing with a $4$-dimensional
space.  We remark that $\lambda^* = \lambda(\xopt)$, that is,
$\lambdaopt \fpo{2}(\lambdaopt)/\fpo{3}(\lambdaopt) = \cthree(\xopt)$.

Similarly to Section~\ref{sec:core-hyper} that deals with cores, it
will be useful to know approximations for some parameters at the
 point $\prexopt = (\prennopt, \prekzeroopt,\prekoneopt,
\prektwoopt)$ that achieves the maximum. For $r = o(1)$, we proved in Lemma~\ref{lem:max-core}
that $\lambdaopt =o(1)$. From~\eqref{pre-m-opt}, we can write $r$ in
terms of $\lambdaopt$ and so we can write $\prennopt$, $\prekzeroopt$,
$\prekoneopt$ and $\prektwoopt$ in terms of $\lambdaopt$. Thus,
using~\eqref{eq:pre-param-opt}, and computing the series of each
function in~\eqref{eq:pre-param-def} as $\lambdaopt\to 0$, we have
\begin{equation}
  \label{eq:optpre-rel-hyper}
  \begin{array}{rlcrl}
    \vspace{5pt}
    r 
    &= \frac{1}{12}\lambdaopt
    +\frac{1}{36}(\lambdaopt)^2
    +O((\lambdaopt)^3)
    &\quad&
    \preQthreeopt 
    &= \frac{1}{2} \lambdaopt
    +\frac{1}{12} (\lambdaopt)^2
    +O((\lambdaopt)^3)
    \\ \vspace{5pt}
    \prennopt 
    &=
    \frac{1}{2}
    -\frac{1}{12}\lambdaopt
    -\frac{1}{36}(\lambdaopt)^2
    +O((\lambdaopt)^3)
    &\quad&
    \premthreeopt 
    &= 
    \frac{1}{6} \lambdaopt
    +\frac{1}{18} (\lambdaopt)^2
    +O((\lambdaopt)^3)
    \\ \vspace{5pt}
    \prekzeroopt
    & =\frac{1}{2}
    -\frac{7}{12} \lambdaopt
    +\frac{2}{9} (\lambdaopt)^2
    +O((\lambdaopt)^3)
    &\quad&
    \premtwoprimeopt 
    &= \frac{1}{2} \lambdaopt
    -\frac{1}{4} (\lambdaopt)^2
    +O((\lambdaopt)^3)
    \\ \vspace{5pt}
    \prekoneopt 
    &= \frac{1}{2} \lambdaopt
    -\frac{1}{3} (\lambdaopt)^2
    +O((\lambdaopt)^3)
    &\quad&
    \preTtwoopt 
    &= \frac{1}{2} \lambdaopt
    -\frac{1}{6} (\lambdaopt)^2
    +O((\lambdaopt)^3)    
    \\\vspace{5pt}
    \prektwoopt 
    &= 
    \frac{1}{8} (\lambdaopt)^2
    +O((\lambdaopt)^3)
    &\quad&
    \preTthreeopt 
    &= \frac{1}{4} (\lambdaopt)^2
    +O((\lambdaopt)^3)
    \\ \vspace{5pt}
    \prenthreeopt 
    &= \frac{1}{6} \lambdaopt
    +\frac{1}{72} (\lambdaopt)^2
    +O((\lambdaopt)^3).
    &\quad& &\\
  \end{array}
\end{equation}

In the following subsections, we will use $\Ys = (Y_1,\dotsc,
Y_{\nthree})$ to denote a vector of independent random variables
$Y_1,\dotsc, Y_{\nthree}$ such that each $Y_i$ has truncated Poisson
distribution with parameters $(3, \lambda(x))$ and $\Sigma(x)$ to
denote the event $\sum_i Y_i= \Qthree$.

\subsection{Kernels}
In this section, we define the notion of kernels of \prekernels, which
will be useful to study properties of \prekernels\ and to generate
random \prekernels.

Recall that the \prekernel\ is a core with no isolated cycles. Let the
\textdef{kernel}\index{multihypergraph!kernel}\index{hypergraph!loop}
of a pre-kernel $G$ be the multihypergraph obtained as follows. Start
by obtaining $G'$ from $G$ by deleting all vertices of degree~$1$ and
replacing each edge containing a vertex of degree $1$ by a new edge of
size $2$ incident to the other two vertices (and note that the
multihypergraph is not necessarily uniform anymore).  While there is a
vertex $v$ of degree $2$ in~$G'$ such that the two edges incident to
$v$ have size~$2$, update $G'$ by deleting both edges, and adding a
new edge of size $2$ containing the vertices other than~$v$ that were
in the deleted edges. When this procedure is finished, delete all
vertices of degree less than~$2$. The final multihypergraph is the
kernel of $G$. This procedure obviously produces a unique
multihypergraph (disregarding edge labels). See
Figure~\ref{fig:prek-proc-hyper} for an example of the procedure
above.

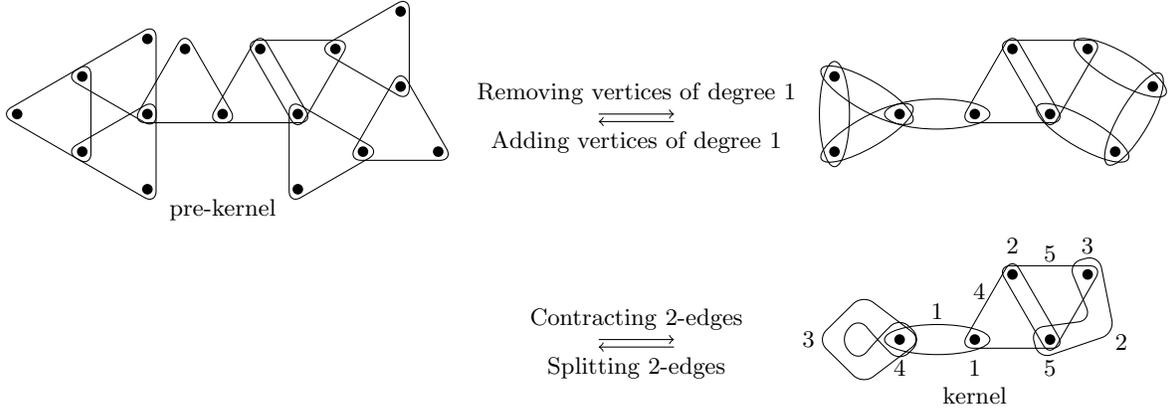
\begin{figure}
  \centering
  \begin{tikzpicture}
    \def \initialg {0};
    \def \step {1};
    \def \rad {2pt};
    \def \eps {.3};
    \def \epsm {.15};
    \def \epsg {.2};
    \def \round {5pt};
    \def \scalep {0.3}

    % Kernel
    % Vertices
    \coordinate[label={[label distance=4pt]-90:\footnotesize{$4$}}]
    (4) at ($(\initialg,\initialg)$);
    \coordinate[label={[label distance=4pt]-90:\footnotesize{$1$}}]
    (1) at ($(4)+\step*(1,0)$);
    \coordinate[label={[label distance=4pt]-90:\footnotesize{$5$}}]
    (5) at ($(1)+\step*(1,0)$);
    \coordinate[label={[label distance=4pt]90:\footnotesize{$2$}}]
    (2) at ($(1)+\step*({sin(30)},{cos(30)})$);
    \coordinate[label={[label distance=4pt]90:\footnotesize{$3$}}]
    (3) at ($(2)+\step*(1,0)$);

    \coordinate[label={[label distance=0pt]-90:\footnotesize{kernel}}]
    (ker) at ($(1)-(0,0.5*\step)$);
    \foreach \point in {1,2,3,4,5} 
    \fill [black] (\point) circle (\rad);

    % Edges
    \coordinate[label={[label distance=4pt]90:\footnotesize{$1$}}] 
    (b0) at ($.5*(1)+.5*(4)$);
    \draw (b0) ellipse ({.5*\step+\epsg} and {\epsg}); 
  
    \coordinate[label={[label distance=-1pt]above left:\footnotesize{$4$}}] 
    (b12) at ($.5*(1)+.5*(2)$);
    \coordinate (b1) at ($1/3*(1)+1/3*(2)+1/3*(5)$);
    \coordinate (a1) at ($-0.4*(b1)+1.4*(2)$);
    \coordinate (a2) at ($-0.4*(b1)+1.4*(1)$);
    \coordinate (a3) at ($-0.4*(b1)+1.4*(5)$);
    %\foreach \point in {b1,a1,a2,a3} \fill [red] (\point) circle (\rad);
    \draw[rounded corners=\round] (a1)--(a2)--(a3)--cycle;
    
    \coordinate[label={[label distance=1pt]above:\footnotesize{$5$}}] 
    (b23) at ($.5*(3)+.5*(2)$);
    \coordinate (b2) at ($1/3*(3)+1/3*(2)+1/3*(5)$);
    \coordinate (c1) at ($-0.4*(b2)+1.4*(2)$);
    \coordinate (c2) at ($-0.4*(b2)+1.4*(3)$);
    \coordinate (c3) at ($-0.4*(b2)+1.4*(5)$);
    %\foreach \point in {b2,c1,c2,c3} \fill [red] (\point) circle (\rad);
    \draw[rounded corners=\round] (c1)--(c2)--(c3)--cycle;

    \coordinate (b3) at ($.5*(5)+.5*(3)$);
    \coordinate (d1) at ($(b3)+.35*({sin(120)},{cos(120)})$);
    \coordinate[label={[label distance=-5pt]below right:\footnotesize{$2$}}]  
    (d2) at ($(b3)+.7*({sin(120)},{cos(120)})$);
    \coordinate (d3) at ($(3)+.3*({sin(30)},{cos(30)})$);
    \coordinate (d4) at ($(5)-.3*({sin(30)},{cos(30)})$);
    \coordinate (d5) at ($(3)+.3*({sin(-60)},{cos(-60)})$);
    \coordinate (d6) at ($(5)+.3*({sin(-60)},{cos(-60)})$);
    %\foreach \point in {b3,d1,d2,d3,d4,d5,d6} \fill [red] (\point) circle (\rad);
    \draw[rounded corners=\round]
    (d1)--(d5)--(d3)--(d2)--(d4)--(d6)--cycle;

    \coordinate (b4) at (4);
    \coordinate (e1) at ($(4)+0.3*(0,1)$);
    \coordinate (e2) at ($(4)+0.3*(1,0)$);
    \coordinate (e3) at ($(4)-0.8*(0,1)$);
    \coordinate[label={[label distance=-3pt]180:\footnotesize{$3$}}]
    (e4) at ($(b4)-1.1*(1,0)$);
    \coordinate (e7) at ($(4)-0.3*(0,1)$);
    \coordinate (e8) at ($(4)-0.3*(1,0)$);
    \coordinate (e10) at ($(e4)+0.3*(1,0)$);    
    \coordinate (e9) at ($0.5*(e8)+0.5*(e10)+0.3*(0,1)$);
    \coordinate (e11) at ($0.5*(e8)+0.5*(e10)-0.3*(0,1)$);    
    \coordinate (e5) at ($(e9)+0.3*(0.2,1)$);
    \coordinate (e3) at ($(e11)-0.3*(-0.2,1)$);

    %\foreach \point in {e1,e2,e3,e4,e5,e7,e8,e9,e10,e11}
    %\fill [red] (\point) circle (\rad);
    \draw[rounded corners=\round]
    (e1)--(e2)--(e3)--(e4)--(e5)--(e2)--(e7)--(e8)--(e9)--(e10)--(e11)--(e8)--cycle;

    \coordinate  (startarrow) at ($(4)-(4*\step,0)$);
    \coordinate  (endarrow) at ($(startarrow)+(\step,0)$);
    \coordinate  (startarrowb) at ($(startarrow)+(0,-0.1)$);
    \coordinate  (endarrowb) at ($(startarrowb)+(\step,0)$);
    
    \coordinate[label=above:\footnotesize{Contracting $2$-edges}]
    (midarrow) at ($.5*(startarrow)+.5*(endarrow)$);
    \draw [->] (startarrow)--(endarrow);
    \coordinate[label=below:\footnotesize{Splitting $2$-edges}]
    (midarrowb) at ($.5*(startarrowb)+.5*(endarrowb)$);
    \draw [<-] (startarrowb)--(endarrowb);
    
    % Split Kernel
    % Vertices
    \coordinate (4s) at ($(4)+3*\step*(0,1)$);
    \coordinate (1s) at ($(4s)+\step*(1,0)$);
    \coordinate (5s) at ($(1s)+\step*(1,0)$);
    \coordinate (2s) at ($(1s)+\step*({sin(30)},{cos(30)})$);
    \coordinate (3s) at ($(2s)+\step*(1,0)$);
    \coordinate (6s) at ($(4s)+\step*({sin(-120)},{cos(-120)})$);
    \coordinate (7s) at ($(6s)+\step*({cos(90)},{sin(90)})$);
    \coordinate (8s) at ($(3s)+\step*({cos(-30)},{sin(-30)})$);
    \coordinate (9s) at ($(5s)+\step*({cos(-30)},{sin(-30)})$);

    \foreach \point in {1s,2s,3s,4s,5s,6s,7s,8s,9s} 
    \fill [black] (\point) circle (\rad);
    % Edges
    \coordinate (b0s) at ($.5*(1s)+.5*(4s)$);
    \draw (b0s) ellipse ({.5*\step+\epsg} and {\epsg}); 
  
    \coordinate (b12s) at ($.5*(1s)+.5*(2s)$);
    \coordinate (b1s) at ($1/3*(1s)+1/3*(2s)+1/3*(5s)$);
    \coordinate (a1s) at ($-0.4*(b1s)+1.4*(2s)$);
    \coordinate (a2s) at ($-0.4*(b1s)+1.4*(1s)$);
    \coordinate (a3s) at ($-0.4*(b1s)+1.4*(5s)$);
    %\foreach \point in {b1,a1,a2,a3} \fill [red] (\point) circle (\rad);
    \draw[rounded corners=\round] (a1s)--(a2s)--(a3s)--cycle;
    
    \coordinate (b23s) at ($.5*(3s)+.5*(2s)$);
    \coordinate (b2s) at ($1/3*(3s)+1/3*(2s)+1/3*(5s)$);
    \coordinate (c1s) at ($-0.4*(b2s)+1.4*(2s)$);
    \coordinate (c2s) at ($-0.4*(b2s)+1.4*(3s)$);
    \coordinate (c3s) at ($-0.4*(b2s)+1.4*(5s)$);
    %\foreach \point in {b2,c1,c2,c3} \fill [red] (\point) circle (\rad);
    \draw[rounded corners=\round] (c1s)--(c2s)--(c3s)--cycle;

    \coordinate (b67s) at ($.5*(6s)+.5*(7s)$);
    \draw (b67s) ellipse ({\epsg} and {.5*\step+\epsg}); 

    \coordinate (b46s) at ($.5*(6s)+.5*(4s)$);
    \draw[rotate=120] (b46s) ellipse ({\epsg} and {.5*\step+\epsg}); 

    \coordinate (b47s) at ($.5*(4s)+.5*(7s)$);
    \draw[rotate=-120] (b47s) ellipse ({\epsg} and {.5*\step+\epsg}); 

    \coordinate (b38s) at ($.5*(3s)+.5*(8s)$);
    \draw[rotate=-120] (b38s) ellipse ({\epsg} and {.5*\step+\epsg}); 

    \coordinate (b59s) at ($.5*(5s)+.5*(9s)$);
    \draw[rotate=-120] (b59s) ellipse ({\epsg} and {.5*\step+\epsg}); 

    \coordinate (b89s) at ($.5*(8s)+.5*(9s)$);
    \draw[rotate=-30] (b89s) ellipse ({\epsg} and {.5*\step+\epsg}); 

    % Prekernel
    % Vertices
    \coordinate (4p) at ($(4s)-10*\step*(1,0)$);
    \coordinate (1p) at ($(4p)+\step*(1,0)$);
    \coordinate (5p) at ($(1p)+\step*(1,0)$);
    \coordinate (2p) at ($(1p)+\step*({sin(30)},{cos(30)})$);
    \coordinate (3p) at ($(2p)+\step*(1,0)$);
   
    \coordinate (6p) at ($(4p)+\step*({sin(-120)},{cos(-120)})$);
    \coordinate (7p) at ($(6p)+\step*({cos(90)},{sin(90)})$);
    
    \coordinate (8p) at ($(3p)+\step*({cos(-30)},{sin(-30)})$);
    \coordinate (9p) at ($(5p)+\step*({cos(-30)},{sin(-30)})$);
    
    \coordinate (10p) at ($(3p)+\step*({cos(30)},{sin(30)})$);
    \coordinate (11p) at ($(5p)+\step*({cos(-90)},{sin(-90)})$);
    \coordinate (12p) at ($(9p)+\step*({cos(0)},{sin(0)})$);

    \coordinate (13p) at ($(4p)+\step*({cos(60)},{sin(60)})$);

    \coordinate (14p) at ($(4p)+\step*({cos(90)},{sin(90)})$);
    \coordinate (15p) at ($(7p)+\step*({cos(-150)},{sin(-150)})$);
    \coordinate (16p) at ($(4p)+\step*({cos(-90)},{sin(-90)})$);

    \coordinate[label={[label distance=0pt]-90:\footnotesize{pre-kernel}}]
    (prek) at ($(1p)-(0,1*\step)$);

    \foreach \point in {1p,2p,3p,4p,5p,6p,7p,8p,9p,10p,11p,12p,13p,14p,15p,16p} 
    \fill [black] (\point) circle (\rad);

    % edges
    \coordinate (b1p) at ($1/3*(1p)+1/3*(2p)+1/3*(5p)$);
    \coordinate (a1p) at ($-0.4*(b1p)+1.4*(2p)$);
    \coordinate (a2p) at ($-0.4*(b1p)+1.4*(1p)$);
    \coordinate (a3p) at ($-0.4*(b1p)+1.4*(5p)$);
    %\foreach \point in {b1,a1,a2,a3} \fill [red] (\point) circle (\rad);
    \draw[rounded corners=\round] (a1p)--(a2p)--(a3p)--cycle;
    
    \coordinate (b2p) at ($1/3*(3p)+1/3*(2p)+1/3*(5p)$);
    \coordinate (c1p) at ($-0.4*(b2p)+1.4*(2p)$);
    \coordinate (c2p) at ($-0.4*(b2p)+1.4*(3p)$);
    \coordinate (c3p) at ($-0.4*(b2p)+1.4*(5p)$);
    %\foreach \point in {b2,c1,c2,c3} \fill [red] (\point) circle (\rad);
    \draw[rounded corners=\round] (c1p)--(c2p)--(c3p)--cycle;

    \coordinate (b3p) at ($1/3*(3p)+1/3*(8p)+1/3*(10p)$);
    \coordinate (d1p) at ($-0.4*(b3p)+1.4*(8p)$);
    \coordinate (d2p) at ($-0.4*(b3p)+1.4*(3p)$);
    \coordinate (d3p) at ($-0.4*(b3p)+1.4*(10p)$);
    %\foreach \point in {b2,c1,c2,c3} \fill [red] (\point) circle (\rad);
    \draw[rounded corners=\round] (d1p)--(d2p)--(d3p)--cycle;

    \coordinate (b4p) at ($1/3*(5p)+1/3*(9p)+1/3*(11p)$);
    \coordinate (e1p) at ($-0.4*(b4p)+1.4*(9p)$);
    \coordinate (e2p) at ($-0.4*(b4p)+1.4*(5p)$);
    \coordinate (e3p) at ($-0.4*(b4p)+1.4*(11p)$);
    %\foreach \point in {b2,c1,c2,c3} \fill [red] (\point) circle (\rad);
    \draw[rounded corners=\round] (e1p)--(e2p)--(e3p)--cycle;

    \coordinate (b5p) at ($1/3*(9p)+1/3*(8p)+1/3*(12p)$);
    \coordinate (f1p) at ($-0.4*(b5p)+1.4*(8p)$);
    \coordinate (f2p) at ($-0.4*(b5p)+1.4*(9p)$);
    \coordinate (f3p) at ($-0.4*(b5p)+1.4*(12p)$);
    %\foreach \point in {b2,c1,c2,c3} \fill [red] (\point) circle (\rad);
    \draw[rounded corners=\round] (f1p)--(f2p)--(f3p)--cycle;

    \coordinate (b6p) at ($1/3*(4p)+1/3*(1p)+1/3*(13p)$);
    \coordinate (g1p) at ($-0.4*(b6p)+1.4*(1p)$);
    \coordinate (g2p) at ($-0.4*(b6p)+1.4*(4p)$);
    \coordinate (g3p) at ($-0.4*(b6p)+1.4*(13p)$);
    %\foreach \point in {b2,c1,c2,c3} \fill [red] (\point) circle (\rad);
    \draw[rounded corners=\round] (g1p)--(g2p)--(g3p)--cycle;

    \coordinate (b7p) at ($1/3*(4p)+1/3*(7p)+1/3*(14p)$);
    \coordinate (h1p) at ($-0.4*(b7p)+1.4*(7p)$);
    \coordinate (h2p) at ($-0.4*(b7p)+1.4*(4p)$);
    \coordinate (h3p) at ($-0.4*(b7p)+1.4*(14p)$);
    %\foreach \point in {b2,c1,c2,c3} \fill [red] (\point) circle (\rad);
    \draw[rounded corners=\round] (h1p)--(h2p)--(h3p)--cycle;

    \coordinate (b8p) at ($1/3*(4p)+1/3*(6p)+1/3*(16p)$);
    \coordinate (i1p) at ($-0.4*(b8p)+1.4*(6p)$);
    \coordinate (i2p) at ($-0.4*(b8p)+1.4*(4p)$);
    \coordinate (i3p) at ($-0.4*(b8p)+1.4*(16p)$);
    %\foreach \point in {b2,c1,c2,c3} \fill [red] (\point) circle (\rad);
    \draw[rounded corners=\round] (i1p)--(i2p)--(i3p)--cycle;

    \coordinate (b9p) at ($1/3*(7p)+1/3*(6p)+1/3*(15p)$);
    \coordinate (j1p) at ($-0.4*(b9p)+1.4*(6p)$);
    \coordinate (j2p) at ($-0.4*(b9p)+1.4*(7p)$);
    \coordinate (j3p) at ($-0.4*(b9p)+1.4*(15p)$);
    %\foreach \point in {b2,c1,c2,c3} \fill [red] (\point) circle (\rad);
    \draw[rounded corners=\round] (j1p)--(j2p)--(j3p)--cycle;

    \coordinate  (startarrow2) at ($(4s)+(-4*\step,0)$);
    \coordinate  (endarrow2) at ($(startarrow2)+(\step,0)$);
    \coordinate  (startarrow2b) at ($(startarrow2)+(0,-0.1)$);
    \coordinate  (endarrow2b) at ($(startarrow2b)+(\step,0)$);
    \coordinate[label=above:\footnotesize{Removing vertices of degree
    $1$}]
    (midarrow2) at ($.5*(startarrow2)+.5*(endarrow2)$);
    \coordinate[label=below:\footnotesize{Adding vertices of degree
      $1$}]
    (midarrow2b) at ($.5*(startarrow2b)+.5*(endarrow2b)$);
    \draw [->] (startarrow2)--(endarrow2);
    \draw [<-] (startarrow2b)--(endarrow2b);
      \end{tikzpicture}
  \caption{Obtaining a kernel from a pre-kernel, and vice-versa.}
  \label{fig:prek-proc-hyper}
\end{figure}

This procedure is similar to the one for obtaining kernels of
$2$-uniform hypergraphs described by Pittel and Wormald~\cite{PWb}:
the kernel of a pre-kernel is obtained by repeatedly replacing edges
$uv$ and~$vw$, where $v$ is a vertex of degree~$2$, by a new edge $uw$
until no vertex of degree~$2$ remains, and then deleting all isolated
vertices.  Note that in our procedure there may be vertices of
degree~$2$ in the kernel while there is no vertex of degree~$2$ in the
kernel of a $2$-uniform hypergraph.

In the kernel all edges have size $2$ or~$3$. We call these edges
$2$-edges and $3$-edges in the kernel, resp.  It is trivial from the
description above that in the kernel every degree~$2$ vertex is
contained in at least one $3$-edge. We say that any multihypergraph in
which all edges have size~$2$ or $3$, there are no vertices of
degree~$1$, and every vertex of degree~$2$ is in at least one edge of
size~$3$, is a kernel.  The reason for this is that given such a
multihypergraph, one can create a \prekernel\ following the procedure
we discuss next. Consider the following operation: we \textdef{split}
one $2$-edge with vertices $u$ and $v$ by deleting the edge and adding
a new vertex $w$ and two new $2$-edges, one containing $u$ and $w$ and
the other containing $v$ and $w$.  Given the
kernel of a \prekernel, one can split edges from the kernel in a way
that it reverses the steps of the procedure for finding the
kernel. After including the vertices of degree $1$ in the $2$-edges,
the resulting graph is the \prekernel. Note that, by replacing
$2$-edges in the kernel by splitting $2$-edges and adding new vertices
(of degree~$1$) to all $2$-edges, the resulting multigraph does not
have any isolated cycle. Thus, whenever the resulting multigraph is
simple, it is a \prekernel.

\subsection{Random kernels and pre-kernels}
\label{sec:random-prek-hyper}

Recall that our aim in Section~\ref{sec:pre-hyper} is to find an
asymptotic formula for $\gpre(n,m)$, the number of connected
\prekernels\ with vertex set $[n]$ and $m$ edges. Similarly to
Section~\ref{sec:random-core-hyper} about random cores, we show how to
reduce the enumeration problem for \prekernels\ to approximating the
expectation, in a probability space of random degree sequences, of the
probability that a random graph with given degree sequence is
connected and simple.

We will describe a procedure to generate pre-kernels. 
% Describe random model
For $x=(\nn,\kzero,\kone,\ktwo)\in S_m\cap\setZ^4$, let
$\Dcal(x)\subseteq \setN^{\nthree(x)}$\label{Dcalx} be such that $\ds\in\Dcal(x)$
if $d_i\geq 3$ for all $i$ and $\sum_{i=1}^{\nthree(x)} d_i =
\Qthree(x)$.  Our strategy to generate a random \prekernel\ is the
following. We start by choosing the vertices and $3$-edges that will
be in the kernel. We then generate a random kernel with degree
sequence $\ds$ for the vertices of degree at least~$3$, $k_1$+$k_2$
vertices of degree~$2$, $\mtwoprime$ $2$-edges and $\mthree$ $3$-edges
so that $k_i$ vertices of degree $2$ are contained in exactly $i$
$3$-edges for $i=1,2$. The \prekernel\ is then obtained by splitting
$2$-edges $\kzero$ times and assigning the vertices of degree~$1$.

The kernel is generated in a way similar to the random cores in
Section~\ref{sec:core-hyper} but with different restrictions. For each
vertex, we create a bin/set with the number of points inside it equal
to the degree of the vertex. These bins are called
\textdef{vertex-bins}. For each edge, we create one bin/set with $2$
or $3$ points inside it, depending on whether it is a $2$-edge or a
$3$-edge. These bins are called \textdef{edge-bins}. Each point in a
vertex-bin will be matched to a point in an edge-bin with some
constraints.  The kernel can then be obtained by creating one edge for
each edge-bin $i$ such that the vertices incident to it are the
vertices with points matched to point in the edge-bin~$i$. We describe
how to generate a random kernel $\Kcal(V,M_3,\kone,\ktwo,\ds)$\label{glo:kernel} where
$V\subseteq[n]$ is a set of size $\kone+\ktwo+\nthree$ and
$M_3\subseteq[m]$ is a set of size $\mthree$. In each step, every
choice is made \uar\ among all possible choices satisfying the stated
conditions:
\begin{enumerate}
\item\textit{(Vertex-bins)} Choose a set $V_3$ of $\nthree$ vertices
  in $V$ to be the vertices of degree at least~$3$. Let $v_1<\dotsm <
  v_{\nthree}$ be an enumeration of $V_3$. For each $i\in [\nthree]$,
  create a vertex-bin $v_i$ with points \textred{labelled} $1,\dotsc, d_i$ inside~it. For
  each $v\in V\setminus V_3$, create a vertex-bin $v$ with points
  \textred{labelled} $1$
  and $2$ inside it\label{mod6}.
  \item\textit{(Edge-bins)} For each $i\in M_3$, create one edge-bin
    with points \textred{labelled} $1$, $2$, and $3$ inside it. Let $M_2 = \set{(i,0):
      i\in[\mtwoprime]}$. For each $i\in M_2$, create one edge-bin
    with points \textred{labelled} $1$ and $2$ inside it.
  \item\textit{(Matching)} Match the points from the vertex-bins to
    the points in edge-bins so that, for $i=1,2$, $k_i$ vertex-bins
    with two points have exactly $i$ points being matched to an
    edge-bin of size~$3$. This matching is called a
    \textdef{kernel-configuration} with parameters
    $(V,M_3,\kone,\ktwo,\ds)$.
  \item\textit{(Kernel)} The kernel $\Kcal(V,M_2,\kone,\ktwo)=
    (V,M_2\cup M_3,\Phi)$ is the multihypergraph such that for each
    $E\in M_2\cup M_3$, we have that $\Phi(E,i) = v$, where $v$ is the
    vertex corresponding to the vertex-bin containing $j$ and $j$ is
    the point matched to point $i$ in the edge-bin $E$ in the previous
    step.
  \end{enumerate}
  See Figure~\ref{fig:kernel-proc-hyper} for an example of this
  procedure. The constraints in Step~3 ensures that each vertex of
  degree~$2$ is contained by at least one $3$-edge and so the
  procedure above always generates a kernel. It is also trivial that
  all kernels (with edges $M_3 \cup M_2$) can be generated by this
  procedure.
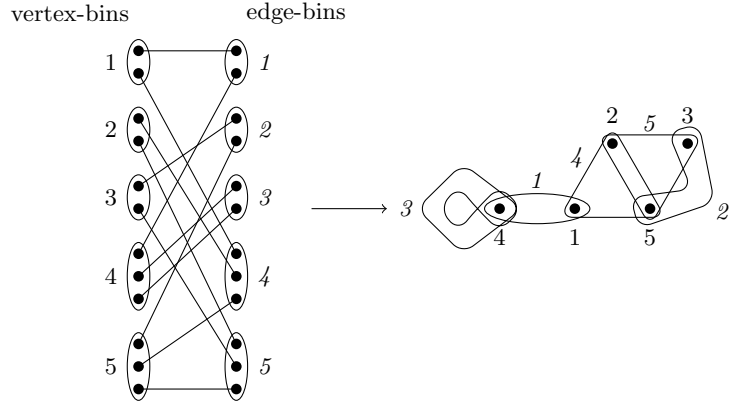
\begin{figure}
  \centering
  \begin{tikzpicture}
    \def \initialg {0};
    \def \step {1};
    \def \rad {2pt};
    \def \eps {.3};
    \def \epsm {.15};
    \def \epsg {.2};
    \def \round {5pt};
    \def \scalep {0.3}

    % Bins
    % Vertex-bins
    \coordinate (1v1) at ($(\initialg,\initialg)$);
    \coordinate (1v2) at ($(1v1)+\scalep*\step*(0,-1)$);

    \coordinate (2v1) at ($(1v2)+\scalep*\step*2*(0,-1)$);
    \coordinate (2v2) at ($(2v1)+\scalep*\step*(0,-1)$);

    \coordinate (3v1) at ($(2v2)+\scalep*\step*2*(0,-1)$);
    \coordinate (3v2) at ($(3v1)+\scalep*\step*(0,-1)$);

    \coordinate (4v1) at ($(3v2)+\scalep*\step*2*(0,-1)$);
    \coordinate (4v2) at ($(4v1)+\scalep*\step*(0,-1)$);
    \coordinate (4v3) at ($(4v2)+\scalep*\step*(0,-1)$);

    \coordinate (5v1) at ($(4v3)+\scalep*\step*2*(0,-1)$);
    \coordinate (5v2) at ($(5v1)+\scalep*\step*(0,-1)$);
    \coordinate (5v3) at ($(5v2)+\scalep*\step*(0,-1)$);
       
    \foreach \point in {1v1, 1v2} \fill [black] (\point) circle (\rad);
    \foreach \point in {2v1, 2v2} \fill [black] (\point) circle (\rad);
    \foreach \point in {3v1, 3v2} \fill [black] (\point) circle (\rad);
    \foreach \point in {4v1, 4v2, 4v3} \fill [black] (\point) circle (\rad);
    \foreach \point in {5v1, 5v2, 5v3} \fill [black] (\point) circle (\rad);

    % Edge-bins
    \coordinate (1e1) at ($(1v1)+1.3*\step*(1,0)$);
    \coordinate (1e2) at ($(1e1)+\scalep*\step*(0,-1)$);

    \coordinate (2e1) at ($(1e2)+\scalep*\step*2*(0,-1)$);
    \coordinate (2e2) at ($(2e1)+\scalep*\step*(0,-1)$);

    \coordinate (3e1) at ($(2e2)+\scalep*\step*2*(0,-1)$);
    \coordinate (3e2) at ($(3e1)+\scalep*\step*(0,-1)$);

    \coordinate (4e1) at ($(3e2)+\scalep*\step*2*(0,-1)$);
    \coordinate (4e2) at ($(4e1)+\scalep*\step*(0,-1)$);
    \coordinate (4e3) at ($(4e2)+\scalep*\step*(0,-1)$);

    \coordinate (5e1) at ($(4e3)+\scalep*\step*2*(0,-1)$);
    \coordinate (5e2) at ($(5e1)+\scalep*\step*(0,-1)$);
    \coordinate (5e3) at ($(5e2)+\scalep*\step*(0,-1)$);

    \foreach \point in {1e1, 1e2} \fill [black] (\point) circle (\rad);
    \foreach \point in {2e1, 2e2} \fill [black] (\point) circle (\rad);
    \foreach \point in {3e1, 3e2} \fill [black] (\point) circle (\rad);
    \foreach \point in {4e1, 4e2, 4e3} \fill [black] (\point) circle (\rad);
    \foreach \point in {5e1, 5e2, 5e3} \fill [black] (\point) circle (\rad);

    \coordinate[label=left:\footnotesize{vertex-bins}] (vbins) at ($(1v1)+\step*(0,.5)$);
    \coordinate[label=right:\footnotesize{edge-bins}] (ebins) at ($(1e1)+\step*(0,.5)$);
    % Vertex-bins ellipse
    \draw ($.5*(1v1)+.5*(1v2)$) ellipse ({\epsm} and {0.15+\epsm}); 
    \draw ($.5*(2v1)+.5*(2v2)$) ellipse ({\epsm} and {0.15+\epsm}); 
    \draw ($.5*(3v1)+.5*(3v2)$) ellipse ({\epsm} and {0.15+\epsm}); 
    \draw (4v2) ellipse ({\epsm} and {0.3+\epsm}); 
    \draw (5v2) ellipse ({\epsm} and {0.3+\epsm}); 

    % Edge-bins ellipse
    \draw ($.5*(1e1)+.5*(1e2)$) ellipse ({\epsm} and {0.15+\epsm}); 
    \draw ($.5*(2e1)+.5*(2e2)$) ellipse ({\epsm} and {0.15+\epsm}); 
    \draw ($.5*(3e1)+.5*(3e2)$) ellipse ({\epsm} and {0.15+\epsm}); 
    \draw (4e2) ellipse ({\epsm} and {0.3+\epsm}); 
    \draw (5e2) ellipse ({\epsm} and {0.3+\epsm}); 

    % Vertex labels
    \coordinate[label={left:\footnotesize{$1$}}] 
    (1vL) at ($.5*(1v1)+.5*(1v2)-(\epsm,0)$);
    \coordinate[label={left:\footnotesize{$2$}}] 
    (1vL) at ($.5*(2v1)+.5*(2v2)-(\epsm,0)$);
    \coordinate[label={left:\footnotesize{$3$}}] 
    (1vL) at ($.5*(3v1)+.5*(3v2)-(\epsm,0)$);
    \coordinate[label={left:\footnotesize{$4$}}] 
    (4vL) at ($(4v2)-(\epsm,0)$);
    \coordinate[label={left:\footnotesize{$5$}}] 
    (5vL) at ($(5v2)-(\epsm,0)$);

    % Edge labels
    \coordinate[label={right:\footnotesize{\textit{1}}}] 
    (1eL) at ($.5*(1e1)+.5*(1e2)+(\epsm,0)$);
    \coordinate[label={right:\footnotesize{\textit{2}}}] 
    (1eL) at ($.5*(2e1)+.5*(2e2)+(\epsm,0)$);
    \coordinate[label={right:\footnotesize{\textit{3}}}] 
    (1eL) at ($.5*(3e1)+.5*(3e2)+(\epsm,0)$);
    \coordinate[label={right:\footnotesize{\textit{4}}}] 
    (4eL) at ($(4e2)+(\epsm,0)$);
    \coordinate[label={right:\footnotesize{\textit{5}}}] 
    (5eL) at ($(5e2)+(\epsm,0)$);

    % Matching
    \draw (1v1)--(1e1);
    \draw (1v2)--(4e1);
    \draw (2v1)--(4e2);
    \draw (2v2)--(5e1);
    \draw (3v1)--(2e1);
    \draw (3v2)--(5e2);
    \draw (4v1)--(1e2);
    \draw (4v2)--(3e1);
    \draw (4v3)--(3e2);
    \draw (5v1)--(2e2);
    \draw (5v2)--(4e3);
    \draw (5v3)--(5e3);

    \coordinate  (startarrow) at ($(3e2)+(\step,0)$);
    \coordinate  (endarrow) at ($(startarrow)+(\step,0)$);
    \draw [->] (startarrow)--(endarrow);

    % Kernel
    % Vertices
    \coordinate[label={[label distance=4pt]-90:\footnotesize{$4$}}]
    (4) at ($(endarrow)+\step*(1.5,0)$);
    \coordinate[label={[label distance=4pt]-90:\footnotesize{$1$}}]
    (1) at ($(4)+\step*(1,0)$);
    \coordinate[label={[label distance=4pt]-90:\footnotesize{$5$}}]
    (5) at ($(1)+\step*(1,0)$);
    \coordinate[label={[label distance=4pt]90:\footnotesize{$2$}}]
    (2) at ($(1)+\step*({sin(30)},{cos(30)})$);
    \coordinate[label={[label distance=4pt]90:\footnotesize{$3$}}]
    (3) at ($(2)+\step*(1,0)$);

    \foreach \point in {1,2,3,4,5} 
    \fill [black] (\point) circle (\rad);

    % Edges
    \coordinate[label={[label distance=4pt]90:\footnotesize{\textit{1}}}] 
    (b0) at ($.5*(1)+.5*(4)$);
    \draw (b0) ellipse ({.5*\step+\epsg} and {\epsg}); 
  
    \coordinate[label={[label distance=-1pt]above left:\footnotesize{\textit{4}}}] 
    (b12) at ($.5*(1)+.5*(2)$);
    \coordinate (b1) at ($1/3*(1)+1/3*(2)+1/3*(5)$);
    \coordinate (a1) at ($-0.4*(b1)+1.4*(2)$);
    \coordinate (a2) at ($-0.4*(b1)+1.4*(1)$);
    \coordinate (a3) at ($-0.4*(b1)+1.4*(5)$);
    %\foreach \point in {b1,a1,a2,a3} \fill [red] (\point) circle (\rad);
    \draw[rounded corners=\round] (a1)--(a2)--(a3)--cycle;
    
    \coordinate[label={[label distance=1pt]above:\footnotesize{\textit{5}}}] 
    (b23) at ($.5*(3)+.5*(2)$);
    \coordinate (b2) at ($1/3*(3)+1/3*(2)+1/3*(5)$);
    \coordinate (c1) at ($-0.4*(b2)+1.4*(2)$);
    \coordinate (c2) at ($-0.4*(b2)+1.4*(3)$);
    \coordinate (c3) at ($-0.4*(b2)+1.4*(5)$);
    %\foreach \point in {b2,c1,c2,c3} \fill [red] (\point) circle (\rad);
    \draw[rounded corners=\round] (c1)--(c2)--(c3)--cycle;

    \coordinate (b3) at ($.5*(5)+.5*(3)$);
    \coordinate (d1) at ($(b3)+.35*({sin(120)},{cos(120)})$);
    \coordinate[label={[label distance=-5pt]below right:\footnotesize{\textit{2}}}]  
    (d2) at ($(b3)+.7*({sin(120)},{cos(120)})$);
    \coordinate (d3) at ($(3)+.3*({sin(30)},{cos(30)})$);
    \coordinate (d4) at ($(5)-.3*({sin(30)},{cos(30)})$);
    \coordinate (d5) at ($(3)+.3*({sin(-60)},{cos(-60)})$);
    \coordinate (d6) at ($(5)+.3*({sin(-60)},{cos(-60)})$);
    %\foreach \point in {b3,d1,d2,d3,d4,d5,d6} \fill [red] (\point) circle (\rad);
    \draw[rounded corners=\round]
    (d1)--(d5)--(d3)--(d2)--(d4)--(d6)--cycle;

    \coordinate (b4) at (4);
    \coordinate (e1) at ($(4)+0.3*(0,1)$);
    \coordinate (e2) at ($(4)+0.3*(1,0)$);
    \coordinate (e3) at ($(4)-0.8*(0,1)$);
    \coordinate[label={[label distance=-3pt]180:\footnotesize{\textit{3}}}]
    (e4) at ($(b4)-1.1*(1,0)$);
    \coordinate (e7) at ($(4)-0.3*(0,1)$);
    \coordinate (e8) at ($(4)-0.3*(1,0)$);
    \coordinate (e10) at ($(e4)+0.3*(1,0)$);    
    \coordinate (e9) at ($0.5*(e8)+0.5*(e10)+0.3*(0,1)$);
    \coordinate (e11) at ($0.5*(e8)+0.5*(e10)-0.3*(0,1)$);    
    \coordinate (e5) at ($(e9)+0.3*(0.2,1)$);
    \coordinate (e3) at ($(e11)-0.3*(-0.2,1)$);

    %\foreach \point in {e1,e2,e3,e4,e5,e7,e8,e9,e10,e11}
    %\fill [red] (\point) circle (\rad);
    \draw[rounded corners=\round]
    (e1)--(e2)--(e3)--(e4)--(e5)--(e2)--(e7)--(e8)--(e9)--(e10)--(e11)--(e8)--cycle;
  \end{tikzpicture}
  \caption{A kernel generated with vertex and edge-bins}
  \label{fig:kernel-proc-hyper}
\end{figure}

We now describe the \prekernel\ model precisely. For
$x=(\nn,\kzero,\kone,\ktwo)\in S_m\cap\setZ^4$ and $\ds\in\Dcal(x)$,
let $\Gpre(x,\ds)=\Gpre_{n,m}(x,\ds)$ be the random graph generated as
follows. In each step, every choice is made \uar\ among all possible
choices satisfying the stated conditions:\label{glo:gpre}

 \begin{enumerate}
 \item\textit{(Kernel)} Let $V$ be a subset of $[n]$ of size
   $n-\nn-\kzero$ and $M_3$ be a subset of $[m]$ of size
   $\mthree(x)$. Let $\Kcal=(V,M_\Kcal,\Phi_\Kcal)$ be the random kernel
   $\Kcal(V,M_3,\kone,\ktwo,\ds)$.
 \item\textit{(Splitting edges)}Let $V_{\kzero}$ be a subset of
   $[n]\setminus$ of size $\kzero$. This set will be the set of
   vertices of degree at $2$ contained by two $2$-edges. Let
   $v_1,\dotsc, v_{\kzero}$ be an enumeration of the vertices
   in~$V_{\kzero}$.  Let $P = \Kcal$. For $i=1$ to $\kzero$, do the
   following operation: split a $2$-edge of $P$ with new vertex $v_i$
   and update $P$.
 \item\textit{(Assigning $2$-edges and vertices of degree $1$)} Let
   $V_1$ be a subset of size $\nn$ in $[n]\setminus V$. These will be
   the vertices of degree~$1$ in the multigraph. Assign for each
   $2$-edge $E$ of $P$ a (unique) edge~$E'$ from $[m]\setminus M_3$
   and a (unique) vertex $u$ in $V_1$. Place a perfect matching $M_{E'}$
   between the collection $\set{\Phi_\Kcal(E,1),\Phi_\Kcal(E,2), u}$
   and $\set{1,2,3}$.  We call this matching together with the
   sequence of splittings in the previous step a
   \textdef{splitting-configuration}.
 \item\textit{(\Prekernel)} Let $\Gpre(\nn,\kzero,\kone,\ktwo,\ds) =
   ([n],[m],\Phi)$, where $\Phi(E,\cdot) = \Phi_\Kcal(E,\cdot)$ if
   $E\in M_3$ and, otherwise, $\Phi(E,i)=v$, where $v$ is the vertex
   matched to~$i$ in~$M_E$.
\end{enumerate}

When the procedure above results in a (simple) graph, it is a
\prekernel\ since it is obtained by splitting the $2$-edges of a
kernel and assigning vertices of degree~$1$ to the $2$-edges. It is
trivial all \prekernels\ are generated since all kernels and the ways
of splitting the edges are considered. 

% Describe magic trick
For $(\nn,\kzero,\kone,\ktwo)\in S_m$, let
$\gpre(n,m,\nn,\kzero,\kone,\ktwo)$ denote the number of connected
(simple) pre-kernels with vertex set $[n]$ and $m$ edges such that $\nn$
vertices have degree~$1$, and $\kzero+\kone+\ktwo$ vertices have
degree~2 so that $k_i$ of the degree~$2$ vertices are incident to
exactly $i$ $3$-edges for $i=0,1,2$. For
$\ds\in\Dcal(\nn,\kzero,\kone,\ktwo)$, let
$\gpre(n,m,\nn,\kzero,\kone,\ktwo,\ds)$ denote the number of such
\prekernels\ with the additional constraint that $\ds$ is the degree
sequence of the vertices of degree at least~$3$.

In order to analyse $\gpre(n,m,\nn,\kzero,\kone,\ktwo,\ds)$ it will be
useful to know the number of kernel-configurations.
\begin{lem}
\label{lem:number-kernel-hyper}
Let $x=(\nn,\kzero,\kone,\ktwo)\in S_m\cap\setZ^4$ and $\ds\in\Dcal(x)$. The
number of kernel-configurations with parameters
$(V,M_3,\kone,\ktwo,\ds)$, where $V$ is a set of size
$\kone+\ktwo+\nthree$ and $M_3$ is a set of size $\mthree$, is
\begin{equation*}
  \begin{split}
    \binom{\kone+\ktwo+\nthree}{\nthree}
    \binom{\kone+\ktwo}{\kone}2^{\kone}\binom{\Pthree}{\kone+2\ktwo}(\kone+2\ktwo)!\binom{\Ptwo}{\kone}\kone!\Qthree!=
    \frac{(\kone+\ktwo+\nthree)! \Pthree!\Ptwo!\Qthree!
      2^{\kone}}{\nthree!\kone!\ktwo!\Tthree!\Ttwo!}.
\end{split}
\end{equation*}
Moreover, each kernel with parameters
  $(V,M_3,\kone,\ktwo,\ds)$ is generated by
  exactly  $2^{\kone+\ktwo}\prod_{i=1}^{\nthree}d_i!$ kernel-configurations.
\end{lem}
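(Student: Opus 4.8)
The plan is to count kernel-configurations with fixed parameters $(V,M_3,\kone,\ktwo,\ds)$ by resolving the freedom in the generation procedure in a fixed order, so that each resulting factor matches one factor of the claimed product. First I would observe that Step~2 contributes nothing once $V$, $M_3$ and $\ds$ are given (all edge-bins are determined), and that in Step~1 the only choice is the set $V_3$ of vertices of degree at least~$3$, since the degrees $d_1,\dots,d_{\nthree}$ are then attached canonically to the enumeration $v_1<\dots<v_{\nthree}$ of $V_3$. This gives the factor $\binom{\kone+\ktwo+\nthree}{\nthree}$.

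Next I would decompose the matching of Step~3 into five independent sub-choices. Among the $\kone+\ktwo$ size-$2$ vertex-bins, choose which $\kone$ are ``type~$1$'' (exactly one point matched into a $3$-edge-bin) and which $\ktwo$ are ``type~$2$'' (both points matched into $3$-edge-bins): $\binom{\kone+\ktwo}{\kone}$ ways. For each type~$1$ vertex-bin choose which of its two labelled points goes into a $3$-edge-bin: $2^{\kone}$ ways. Now the set of size-$2$-vertex-bin points that must go into $3$-edge-bins is a distinguishable set of size $\kone+2\ktwo$, matched injectively into the $\Pthree$ points of the $3$-edge-bins: $\binom{\Pthree}{\kone+2\ktwo}(\kone+2\ktwo)!$ ways. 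The $\kone$ remaining type~$1$ points are matched injectively into the $\Ptwo$ points of the $2$-edge-bins: $\binom{\Ptwo}{\kone}\kone!$ ways. Finally, using the identity $\Qthree=\Tthree+\Ttwo$, exactly $\Qthree$ edge-bin points remain, and the $\Qthree$ points of the degree-$\geq 3$ vertex-bins are matched bijectively onto them: $\Qthree!$ ways. Multiplying gives the left-hand side of the claim, and the closed form follows by the cancellations $\binom{a+b}{a}=\tfrac{(a+b)!}{a!b!}$, $\binom{a}{b}b!=\tfrac{a!}{(a-b)!}$ together with $\Pthree-\kone-2\ktwo=\Tthree$ and $\Ptwo-\kone=\Ttwo$. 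I would also remark why this is a bijection onto the set of valid matchings: a matching recovers the type~$1$/type~$2$ classification and, for each type~$1$ vertex-bin, its distinguished point, so distinct tuples of sub-choices give distinct matchings; conversely every matching satisfying the Step~$3$ constraint (which is precisely ``$\kone$ size-$2$ vertex-bins of type~$1$, $\ktwo$ of type~$2$'', no restriction on larger vertex-bins) arises this way.

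For the final sentence, I would argue that the map from kernel-configurations to their kernels $\Kcal=(V,M_2\cup M_3,\Phi)$ forgets exactly the internal labelling of the points inside each vertex-bin: two configurations induce the same $\Phi$ iff every edge-bin slot $(E,i)$ is matched to a point of the same vertex-bin in both, hence iff one is obtained from the other by permuting, for each vertex $v$, the $\deg(v)$ labelled points of vertex-bin~$v$ (and re-matching accordingly). Such a permutation preserves the Step~$3$ constraint, since the number of a size-$2$ vertex-bin's points lying in $3$-edge-bins is invariant under swapping its two points, and distinct permutations give distinct configurations. The number of these permutations is $\prod_{i=1}^{\nthree}d_i!$ for the $\nthree$ vertex-bins of degree at least~$3$ and $2$ for each of the $\kone+\ktwo$ size-$2$ vertex-bins, i.e.\ $2^{\kone+\ktwo}\prod_{i=1}^{\nthree}d_i!$ in total, which is the claimed fibre size.

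The only delicate point is the first part: verifying that the five sub-choices really are mutually independent and jointly exhaustive, and that each injection/bijection count is applied to a correctly-sized set of \emph{distinguishable} points (in particular that within a type~$2$ vertex-bin the two points stay distinguishable). Once that bookkeeping is nailed down, the algebraic simplification and the fibre-counting for the second assertion are routine.
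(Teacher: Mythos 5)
Your proposal is correct and follows essentially the same decomposition as the paper's own proof: choose $V_3$, then classify the size-$2$ vertex-bins and one distinguished point per type-$1$ bin (matching the paper's choice of $\kone$ bins and the set $U$), then place those points injectively into $3$-edge-bins and $2$-edge-bins, and finally pair off the remaining $\Qthree$ points. You are slightly more careful than the paper in spelling out the bijectivity of this decomposition and in verifying that vertex-bin-internal permutations preserve the Step~$3$ constraint, but the underlying argument and the factor-by-factor bookkeeping are identical.
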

\begin{proof}
   There are $\binom{\kone+\ktwo+\nthree}{\nthree}$ ways of
  choosing the vertices of degree at least~$3$ in the first step. The
  step where the kernel-configuration is created can be described in
  the following more detailed way:
  \begin{enumerate}
  \item Choose $\kone$ vertex-bins of size $2$. Let $U$ be a set
    containing exactly one point of each of these vertex-bins and let
    $D$ be the set consisting of all points in  vertex-bins of size~$2$
    that are not in~$U$.
  \item Choose $\kone+2\ktwo$ points inside edges-bins of size~$3$ and
    match them to points in $D$.
  \item Choose $\kone$ points inside edges-bins of size~$2$ and match
    them to points in $U$.
  \item Match the remaining unmatched $\Qthree$ points from the vertex-bins to
    the unmatched points in the edge-bins.
  \end{enumerate}
  In Step~1, there are $\binom{\kone+\ktwo}{\kone}$ choices for the
   vertex-bins of size~$2$ and $2^{\kone}$ choices for $U$.  There are
  $\binom{\Pthree}{\kone+2\ktwo}(\kone+2\ktwo)!$ choices for Step~2,
  $\binom{\Ptwo}{\kone}\kone!$ for Step~3 and $\Qthree!$ choices for
  Step~4. The first part of the lemma then follows trivially.

  Each kernel with parameters $(V,M_3,\kone,\ktwo,\ds)$ is generated
  by $2^{\kone+\ktwo}\prod_{i=1}^{\nthree}d_i!$ distinct
  kernel-configurations, because any permutation of the points inside
  vertex-bins can be done without changing the resulting kernel.
\end{proof}

The following proposition relates
$\gpre(n,m,\nn,\kzero,\kone,\ktwo,\ds)$ and
$\gpre(n,m,\nn,\kzero,\kone,\ktwo)$ to the random \prekernels\
$\Pcal(x,\ds)$ and random degree sequences. The proof is similar to
the proof of Proposition~\ref{prop:magic-core-hyper}. We include it
here for completeness.
\begin{prop}
  \label{prop:magic-pre-hyper}
 For $x=(\nn,\kzero,\kone,\ktwo)\in S_m\cap\setZ^4$ and $\ds\in\Dcal(x)$,
\begin{equation*}
  \begin{split}
    &\gpre(n,m,\nn,\kzero,\kone,\ktwo,\ds)
    \\
    &=
    n!
    \frac{\Pthree!\Ptwo!\Qthree!(\mtwo-1)! \prob{\Gpre(\nn,
        \kzero,\kone,\ktwo,\ds)\text{ simple and connected}}}
    {\kzero!\kone!\ktwo!\nthree!\mthree!
      \Tthree!\Ttwo! 
      (\mtwoprime-1)!\mtwoprime!2^{\ktwo}  2^{\mtwoprime}
      6^{\mthree}
      \prod_{i} d_i!}
\end{split}
\end{equation*}  
and, if $\Qthree(x) > \nthree(x)$, then
\begin{equation}
  \label{eq:magic-pre-hyper}
  \begin{split}
    &\gpre(n,m,\nn,\kzero,\kone,\ktwo)
    \\
    &=
    n!
    \frac{\Pthree!\Ptwo!\Qthree!(\mtwo-1)! }
    {\kzero!\kone!\ktwo!\nthree!\mthree!
      (\Pthree-\kone-2\ktwo)!(\Ptwo- \kone)! 
      (\mtwoprime-1)!\mtwoprime!2^{\ktwo}  2^{\mtwoprime}
      6^{\mthree}
    }
    \\
    &\qquad\cdot
    \frac{f_3(\lambda)^{\nthree}}{\lambda^{\Qthree}}
    \meancond[\Big]{\prob[\big]{\Gpre(\nn,
        \kzero,\kone,\ktwo,\Ys)\text{ simple and
          connected}}}{\Sigma(x)} 
    \prob[\big]{\Sigma(x)},
\end{split}
\end{equation}
where $\Ys = (Y_1,\dotsc, Y_{\nthree})$ is a vector of independent
random variables $Y_1,\dotsc, Y_{\nthree}$ such that each $Y_i$ has
truncated Poisson distribution with parameters $(3, \lambda(x))$ and
$\Sigma(x)$ denotes the event $\sum_i Y_i= \Qthree$.
\end{prop}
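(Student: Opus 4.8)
The plan is to follow the same strategy as in the proof of Proposition~\ref{prop:magic-core-hyper}: express $\gpre(n,m,\nn,\kzero,\kone,\ktwo,\ds)$ as a ratio of two configuration counts times a probability, and then sum over degree sequences to bring in the truncated Poisson. The only genuinely new feature is that the random object is now built in two stages (first a random kernel, then a sequence of edge--splittings together with the assignment of $2$-edges and degree-one vertices). First I would count the total number $N$ of outcomes of the procedure defining $\Gpre(\nn,\kzero,\kone,\ktwo,\ds)$: this is the product of the number of ways of splitting $[n]$ into the kernel-vertex set $V$ of size $n-\nn-\kzero$, the split-vertex set of size $\kzero$, and the degree-one set $V_1$ of size $\nn$; the number $\binom{m}{\mthree}$ of choices of $M_3$; the number of kernel-configurations on $(V,M_3,\kone,\ktwo,\ds)$, given by Lemma~\ref{lem:number-kernel-hyper}; the number $\prod_{i=0}^{\kzero-1}(\mtwoprime+i)=(\mtwo-1)!/(\mtwoprime-1)!$ of admissible splitting sequences in Step~2; and the number of edge/vertex assignments and per-edge $3$-point matchings in Step~3. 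Since every choice is made uniformly at random, each of these $N$ outcomes is equally likely.

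Next I would compute, for a fixed simple connected \prekernel\ $G$ with parameters $(\nn,\kzero,\kone,\ktwo)$ and degree sequence $\ds$, the number $N_G$ of outcomes that the procedure sends to a multigraph representing $G$; by the symmetry of the construction $N_G$ does not depend on $G$. It is a product of over-counting factors: the analogue of Lemma~\ref{lem:number-graph-multigraph-hyper} for this model (each graph corresponds to several labelled multigraphs, now with $2$- and $3$-edges drawing their labels from the disjoint sets $[m]\setminus M_3$ and $M_3$); the factor $2^{\kone+\ktwo}\prod_i d_i!$ from Lemma~\ref{lem:number-kernel-hyper}, since the kernel of $G$ is determined by $G$ and each kernel arises from that many kernel-configurations; the $\mtwoprime!$ ways to label the $2$-edges of that kernel; and the number of splitting sequences and assignments that rebuild $G$ from its kernel. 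Dividing and carrying out a routine simplification of $N/N_G$ then gives
\begin{equation*}
  \gpre(n,m,\nn,\kzero,\kone,\ktwo,\ds)
  =
  \frac{N}{N_G}\,
  \prob[\big]{\Gpre(\nn,\kzero,\kone,\ktwo,\ds)\text{ simple and connected}},
\end{equation*}
which after simplification is exactly the first displayed identity of the proposition.

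To obtain \eqref{eq:magic-pre-hyper} I would then sum the first identity over $\ds\in\Dcal(x)$. The only factors depending on $\ds$ are $\prod_i d_i!$ in the denominator and the probability factor, so, multiplying and dividing by $f_3(\lambda)^{\nthree}\lambda^{-\Qthree}$ and using that $\sum_i d_i=\Qthree$ for every $\ds\in\Dcal(x)$, each summand acquires the factor $\prod_i \lambda^{d_i}/(d_i!\,f_3(\lambda))=\prob{\Ys=\ds}$, where $\Ys$ has independent $\tpoisson{3}{\lambda}$ coordinates. Since $\Dcal(x)$ is exactly the support of $\Ys$ conditioned on $\Sigma(x)$, this yields $\sum_{\ds}(\cdots)\prob{\Ys=\ds}=\meancond{\prob[\big]{\Gpre(\nn,\kzero,\kone,\ktwo,\Ys)\text{ simple and connected}}}{\Sigma(x)}\,\prob{\Sigma(x)}$, which is \eqref{eq:magic-pre-hyper}; exactly as in the core case, this manipulation is valid in the non-degenerate regime singled out by the hypothesis on $x$, since $\lambda(x)=0$ would cause a division by zero in $f_3(\lambda)^{\nthree}\lambda^{-\Qthree}$.

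The last step is routine, being essentially identical to the core computation. The main obstacle is the bookkeeping in the first two paragraphs: getting every over-counting factor of the two-stage construction exactly right --- in particular the count of ordered splitting sequences that rebuild a given path of $2$-edges from a single kernel $2$-edge (the source of the $(\mtwo-1)!/(\mtwoprime-1)!$ and $\mtwoprime!$ terms) and the precise form of the Lemma~\ref{lem:number-graph-multigraph-hyper} analogue for a model in which $2$- and $3$-edges carry labels from disjoint sets (the source of the $\mthree!$, $6^{\mthree}$ and $2^{\mtwoprime}$ terms) --- together with the verification that every simple connected multigraph produced by the procedure really is a \prekernel\ with exactly the prescribed parameters, so that the correspondence used to define $N_G$ is a genuine bijection onto the relevant outcomes.
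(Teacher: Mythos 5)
Your proposal takes essentially the same route as the paper: count the total number of outcomes of the two-stage (kernel, then split-and-assign) construction, count how many outcomes produce a fixed pre-kernel with parameters $(x,\ds)$, divide, and then sum over $\ds$ by inserting the truncated-Poisson weights $\lambda^{d_i}/(d_i!\,f_3(\lambda))$ exactly as in Proposition~\ref{prop:magic-core-hyper}. One small bookkeeping remark: the paper does not re-prove an ``analogue'' of Lemma~\ref{lem:number-graph-multigraph-hyper} with labels split between $2$- and $3$-edges; it invokes the lemma as stated to get the factor $m!\,6^m$ in $\alpha$, the extra $2^{\mtwoprime}$ in $\alpha$ comes instead from permuting the two points inside each $2$-edge edge-bin of the kernel-configuration, and the $\mthree!$ and $6^{\mthree}$ in the final formula arise only after simplifying $\beta/\alpha$ (from $\binom{m}{\mthree}$ vs.\ $m!$ and $6^{\mtwo}$ vs.\ $6^m$) — so the attributions in your last paragraph are slightly off, but the overall argument is the correct one.
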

\begin{proof}
   Any multigraph obtained by
  the process for $\Pcal(x,\ds)$ is generated by
  $2^{\kone+\ktwo} (\prod_{i=1}^{\nthree}d_i!) \mtwoprime!
  2^{\mtwoprime}$ combinations of kernel-configurations and
  splitting-configuration.  This is because each kernel is generated
  by $2^{\kone+\ktwo} \prod_{i=1}^{\nthree}d_i!$ kernel-configurations
  by Lemma~\ref{lem:number-kernel-hyper} and permuting the labels and
  points inside each of the $2$-edges in the kernel do not change the
  resulting multigraph. Thus, by
  Lemma~\ref{lem:number-graph-multigraph-hyper}, each \prekernel\ with
  parameters $(x,\ds)$ is generated by
  \begin{equation}
  \alpha\eqdef   
  2^{\kone+\ktwo}\paren[\Big]{\prod_{i=1}^{\nthree}d_i!} 
  \mtwoprime! 2^{\mtwoprime} 
  m! 6^m 
\end{equation}
combinations of kernel-configurations and
splitting-configurations. Next we compute the total number of such
combinations. In Step~1 in which we generate the kernel, there are
$\binom{n}{\kone+\ktwo+\nthree}$ ways of choosing $V$ and
$\binom{m}{\mthree}$ ways of choosing $M_3$. The number of ways of
generating the kernel is
\begin{equation*}
  \begin{split}
    \frac{(\kone+\ktwo+\nthree)! \Pthree!\Ptwo!\Qthree!
      2^{\kone}}{\nthree!\kone!\ktwo!\Tthree!\Ttwo!}
\end{split}
\end{equation*}
by Lemma~\ref{lem:number-kernel-hyper}.  In Step~2, there are
$\binom{\nn+\kzero}{\kzero}$ ways of choosing $V_{\kzero}$ and
$\mtwoprime(\mtwoprime+1)\cdots(\mtwoprime+\kzero-1) =
(\mtwo-1)!/(\mtwoprime-1)!$ ways of splitting the edges. In Step~$3$,
that are $(\mtwo!)^2$ ways of assigning the $2$-edges and vertices of
degree~$1$ and $6^{\mtwo}$ ways of placing the matchings. Thus, the
total number of combinations of kernel-configurations and
splitting-configurations is
\begin{equation*}
  \binom{n}{\kone+\ktwo+\nthree}
  \binom{m}{\mthree} 
  \frac{(\kone+\ktwo+\nthree)! \Pthree!\Ptwo!\Qthree!
    2^{\kone}}{\nthree!\kone!\ktwo!\Tthree!\Ttwo!}
  \binom{\nn+\kzero}{\kzero}\frac{(\mtwo-1)!}{(\mtwoprime-1)!}
  (\mtwo!)^2 6^{\mtwo}\eqdefinv \beta.
\end{equation*}
Hence, since each combination is
generated with the same probability, we have that
\begin{equation}
  \label{eq:prop-magic-pre-aux-hyper}
  \gpre(\nn,\kzero,\kone,\ktwo,\ds)
  =\frac{\beta}{\alpha}
  \prob{\Gpre(\nn,\kzero,\kone,\ktwo,\ds)\text{ simple and connected}},
\end{equation}
where $\beta$ is the total number of configurations.
which together with~\eqref{eq:prop-magic-pre-aux-hyper} and trivial
simplifications implies~\eqref{prop:magic-pre-hyper}.

We now prove~\eqref{eq:magic-pre-hyper}. Again, the proof is very
similar to\Old{ the proofs of Proposition~\ref{prop:enum-pairing-2c} and
Proposition~\ref{prop:enum-kernel-config-2c} in
Chapter~\ref{chap:2c}}\New{~\cite[Equation~(13)]{PWa}}. For $x=(\nn,\kzero,\kone,\ktwo)$, let $U(x,\ds)$
denote the probability that $\Gpre_{n,m}(\nn,\kzero,\kone,\ktwo,\ds)$
is simple and connected. For $x=(\nn,\kzero,\kone,\ktwo)$,
\begin{equation*}
  \begin{split}
    &\gpre(n,m,\nn,\kzero,\kone,\ktwo)
    \eqdef
    \sum_{\ds\in\Dcal(x)}
    \gpre(n,m,\nn,\kzero,\kone,\ktwo,\ds)
    \\
    &=
    n! \sum_{\ds\in\Dcal(x)}
    \frac{\Pthree!\Ptwo!\Qthree!(\mtwo-1)! }
    {\kzero!\kone!\ktwo!\nthree!\mthree!
      \Tthree!\Ttwo! 
      (\mtwoprime-1)!\mtwoprime!2^{\ktwo}  2^{\mtwoprime}
      6^{\mthree}
      \prod_{i} d_i!}
     U(\nn,\kzero,\kone,\ktwo,\ds)
     \\
     &=
     n! \frac{\Pthree!\Ptwo!\Qthree!(\mtwo-1)!}
     {\kzero!\kone!\ktwo!\nthree!\mthree!
       \Tthree!\Ttwo! 
      (\mtwoprime-1)!\mtwoprime!2^{\ktwo}  2^{\mtwoprime}
      6^{\mthree}}
    \frac{\fff(\lambda(x))^{\nthree}}{\lambda(x)^{\Qthree}}
       \sum_{\ds\in\Dcal(x)}
       \prod_{i} \frac{ \lambda(x)^{d_i}}
     {d_i! \fff(\lambda(x))}
     U(x,\ds) 
     \\
     &=
     n! \frac{\Pthree!\Ptwo!\Qthree!(\mtwo-1)!}
     {\kzero!\kone!\ktwo!\nthree!\mthree!
       \Tthree!\Ttwo! 
      (\mtwoprime-1)!\mtwoprime!2^{\ktwo}  2^{\mtwoprime}
      6^{\mthree}}
    \frac{\fff(\lambda(x))^{\nthree}}{\lambda(x)^{\Qthree}}
       \sum_{\ds\in\Dcal(x)}
       U(x,\ds) 
       \prob{\Ys=\ds}
     \\
     &=
   n! \frac{\Pthree!\Ptwo!\Qthree!(\mtwo-1)!}
     {\kzero!\kone!\ktwo!\nthree!\mthree!
       \Tthree!\Ttwo! 
      (\mtwoprime-1)!\mtwoprime!2^{\ktwo}  2^{\mtwoprime}
      6^{\mthree}}
    \frac{\fff(\lambda(x))^{\nthree}}{\lambda(x)^{\Qthree}}
    \meancond{ U(x,\Ys) }{\Sigma(x)}
    \prob{\Sigma(x)}
  \end{split}
\end{equation*}
which proves~\eqref{eq:magic-pre-hyper}. 
\end{proof}

The goal of the next lemmas is to show that the expectation
in~\eqref{eq:magic-pre-hyper} goes to~$1$ for points $x$ close to
$\xopt$. For $x\in S_m\cap\setZ^4$ and $\phi = \phi(n) > 0$, let
\begin{equation*}
  \tD_{\phi}(x)
  =
  \set[\big]{\ds\in\Dcal(x): |\eta(\ds)-\mean{\eta(\Ys)}| \leq R\phi}
\end{equation*}
where $\eta(\ds) \eqdef \sum_{i=1}^{\nthree} {d_i(d_i-1)}/\paren{2m}$,
and recall that $R = m-n/2$. We will show that, for some function
$\phi = o(1)$, conditioned upon~$\Sigma(x)$, the probability that
$\Ys$ is in $\tD_{\phi}(x)$ goes to~$1$. Intuitively, this means that
the set $\tD_{\phi}(x)$ contains all `typical' degree sequences for
points $x\in S$ that are close to~$\xopt$. For $\psi = \psi(n) =
o(1)$, let
\begin{equation}
\label{eq:Spsi}
  \begin{split}
    S_{\psi}^*  
    =
    {\bigg\{}
    x &= (\nn,\kzero,\kone,\ktwo) \in S:
    \abs[\Big]{\prenn - \frac{1}{2}} \leq \psi r;\\
    &\abs[\Big]{\prekzero - \frac{1}{2}} \leq \psi r;\
    \abs[\Big]{\prekone - 6r} \leq \psi r;\\
    &\abs[\Big]{\prektwo - 18 r^2} \leq \psi r^2;\
    \abs[\Big]{\prenthree - 2r}\leq \psi r;\\
    &\abs[\Big]{\preQthree - 6r}\leq \psi r;\
    \abs[\Big]{\premthree - 2r} \leq \psi r;\\
    &\abs[\Big]{\premtwoprime - 6r}\leq \psi r;\
    \abs[\Big]{\preTtwo - 6r}\leq \psi r;\\
    &\abs[\Big]{\preTthree - 36 r^2}\leq \psi r^2
    {\bigg \} }.
  \end{split}
\end{equation}
We define $S_{\psi}^*$ this way so that all points in it are close to
$\xopt$, where we are using~\eqref{eq:optpre-rel-hyper} to find around
which values each of the functions in the definition of $S_{\psi}^*$
should be concentrated. The idea is to define $\psi$ later in a way
that it is small enough so that we can approximate the summation of
$n!\exp(n\fpre(\prex))$ in the integer points $x$ in $S_{\psi}^*$, but
large enough so that what is not included do not significant effect in
the summation $\sum_{x\in S_m\cap\setZ^4} n!\exp(n\fpre(\prex))$.

% Lemma for typical degree sequences
Next we show that for points in $x\in S_{\psi}^*$ with $\psi=o(1)$ the set
$\tD_{\phi}(x)$ (for some $\phi=o(1)$) is a set of `typical' degree sequences.
\begin{lem}
\label{lem:typical-pre-hyper}
Let $\psi = o(1)$. There exists $\phi = o(1)$ such that, for every
integer point $x\in S_{\psi}^*$, we have that
$\probcond{\Ys\in\tD_{\phi(x)}}{\Sigma(x)} = 1-o(1)$.
\end{lem}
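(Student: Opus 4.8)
The plan is to derive the lemma from the stronger \emph{unconditional} estimate
\[
  \prob[\big]{\Ys\notin\tD_{\phi}(x)}=o\paren[\big]{\prob{\Sigma(x)}}
\]
holding uniformly over integer $x\in S_{\psi}^{*}$, since $\probcond{\Ys\notin\tD_{\phi}(x)}{\Sigma(x)}\le\prob{\Ys\notin\tD_{\phi}(x)}/\prob{\Sigma(x)}$. Since $\eta(\ds)=\frac{1}{2m}\sum_{i=1}^{\nthree}d_i(d_i-1)\ge0$, I only need to bound the \emph{upper} tail of $\eta(\Ys)$, together with a lower bound on $\prob{\Sigma(x)}$.

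First I would collect uniform estimates on $S_{\psi}^{*}$ for $\psi=o(1)$. There $\preQthree=(6+o(1))r$ and $\prenthree=(2+o(1))r$, so $\cthree(x)=\Qthree(x)/\nthree(x)=3+o(1)$ uniformly; as $\lambda\mapsto\lambda\ff(\lambda)/\fff(\lambda)$ is continuous, strictly increasing and tends to $3$ as $\lambda\to0^{+}$ (with $\lambda\ff(\lambda)/\fff(\lambda)=3+\lambda/4+O(\lambda^{2})$; see \cite[Lemma 1]{PWa}), it follows that $\lambda(x)=o(1)$ uniformly over integer $x\in S_{\psi}^{*}$. Consequently, for $Y\sim\tpo{3}{\lambda(x)}$ we have $\mean{Y(Y-1)}=\lambda(x)^{2}\f(\lambda(x))/\fff(\lambda(x))\to6$, and since $\nthree=\Theta(R)$ and $m=\Theta(n)$ on $S_{\psi}^{*}$, the unconditional mean is $\mean{\eta(\Ys)}=\tfrac{\nthree}{2m}\mean{Y(Y-1)}=\Theta(r)$, again uniformly.

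Next I would bound the upper tail by a first-moment estimate. For any $\phi=\omega(1/n)$ we have $\mean{\eta(\Ys)}=\Theta(R/n)=o(R\phi)$, so on the event $\{\eta(\Ys)<\tfrac12R\phi\}$ we get $|\eta(\Ys)-\mean{\eta(\Ys)}|<R\phi$, i.e.\ $\Ys\in\tD_{\phi}(x)$. Markov's inequality gives
\[
  \prob[\big]{\Ys\notin\tD_{\phi}(x)}\le\prob[\big]{\eta(\Ys)\ge\tfrac12R\phi}\le\frac{\mean{\eta(\Ys)}}{\tfrac12R\phi}=O\paren[\Big]{\frac{r}{R\phi}}=O\paren[\Big]{\frac1{n\phi}},
\]
uniformly over integer $x\in S_{\psi}^{*}$. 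For the lower bound on $\prob{\Sigma(x)}$: if $\Qthree(x)=3\nthree(x)$ then $\Dcal(x)=\{\threes\}$, so $\eta$ is constant on $\Dcal(x)$ and $\tD_{\phi}(x)=\Dcal(x)$, making the lemma trivial; otherwise $\Qthree(x)-3\nthree(x)\ge1$ and, $\Ys$ conditioned on $\Sigma(x)$ being a sum of $\nthree$ i.i.d.\ truncated Poissons conditioned on its value, \cite[Theorem 4]{PWa} yields $\prob{\Sigma(x)}=\Omega(\nthree^{-1/2})=\Omega(n^{-1/2})$ with an absolute constant. Combining,
\[
  \probcond[\big]{\Ys\notin\tD_{\phi}(x)}{\Sigma(x)}=O\paren[\Big]{\frac1{\sqrt{n}\,\phi}}
\]
uniformly; taking $\phi=n^{-1/4}$ (any $\phi$ with $\phi=o(1)$ and $\phi=\omega(n^{-1/2})$ works) gives $\probcond{\Ys\in\tD_{\phi}(x)}{\Sigma(x)}=1-O(n^{-1/4})=1-o(1)$, as required.

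The only genuinely delicate step is the uniform lower bound on $\prob{\Sigma(x)}$: one must check that \cite[Theorem 4]{PWa} applies with constants not depending on $x\in S_{\psi}^{*}$ in both of its regimes (according to whether $\Qthree(x)-3\nthree(x)$ is $o(\nthree^{1/3})$ or grows faster), using that $\cthree(x)\to3$ uniformly and that $1+\eta_3(x)-\cthree(x)\to0$, so that the Gaussian formula is bounded below by $\Omega(\Qthree(x)^{-1/2})=\Omega(n^{-1/2})$, while when $\Qthree(x)-3\nthree(x)=O(1)$ the bound only improves to $\Omega(1)$ and the degenerate case is handled separately. If a sharper $\phi$ is ever needed, Chebyshev's inequality with $\var{Y(Y-1)}=\Theta(\lambda(x))=\Theta(r)$ in place of Markov's inequality would let $\phi$ tend to $0$ arbitrarily slowly subject to $\phi=\omega(n^{-1/2})$, but the crude bound already gives the statement.
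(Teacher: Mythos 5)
The key shortcut in your proposal---replacing Chebyshev by Markov via non-negativity of $\eta$---hinges on reading the displayed definition $\eta(\ds)=\frac{1}{2m}\sum_i d_i(d_i-1)$ literally, which makes $\mean{\eta(\Ys)}=\Theta(r)=o(R\phi)$ and lets $\{\eta(\Ys)<R\phi/2\}$ imply $\Ys\in\tD_\phi(x)$. But that $1/(2m)$ normalisation appears to be a typo: the paper's own proof of this lemma writes $\var{\eta(\Ys)}=\nthree\Theta(\lambda)$ (consistent only with the unnormalised sum, since $\var{Y_i(Y_i-1)}=\Theta(\lambda)$), and the proof of Lemma~\ref{lem:simple-pre-hyper} uses $\sum_i\binom{d_i}{2}=\eta(\ds)$ and $\mean{\eta(\Ys)}=\nthree\mean{Y_1(Y_1-1)}\sim 6\nthree$. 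With the intended $\eta$, one has $\eta(\Ys)\geq 6\nthree$ deterministically and $\mean{\eta(\Ys)}=\Theta(R)=\omega(R\phi)$ for any $\phi=o(1)$, so your event $\{\eta(\Ys)<R\phi/2\}$ is actually empty and Markov applied to $\eta$ returns the vacuous bound $\Theta(1/\phi)=\omega(1)$.

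The natural repair---Markov on the non-negative shifted variable $\eta(\Ys)-6\nthree$, whose mean is $\nthree\Theta(\lambda)=\Theta(\Rthree)$ with $\Rthree:=\Qthree-3\nthree$---still falls short of the required uniformity. On $S_\psi^*$ one only controls $\Rthree\leq 4\psi R$, so Markov gives $\prob{\eta(\Ys)-6\nthree>R\phi/2}=O(\Rthree/(R\phi))=O(\psi/\phi)$; dividing by $\prob{\Sigma(x)}=\Omega(\nthree^{-1/2})=\Omega(R^{-1/2})$ leaves a conditional failure probability $O(\psi R^{1/2}/\phi)$, which forces $\phi=\omega(\psi R^{1/2})$. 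This is incompatible with $\phi=o(1)$ as soon as $\psi$ tends to zero more slowly than $R^{-1/2}$ (e.g.\ $\psi=1/\log n$), and the lemma must cover all such $\psi=o(1)$. The second moment is therefore genuinely needed: Chebyshev replaces the denominator $R\phi$ by $(R\phi)^2$, yielding $O(\Rthree/(R\phi)^2)$ and hence conditionally $O(\psi/(R^{1/2}\phi^2))=o(1)$ once $\phi^2=\omega(R^{-1/2})$, which is exactly the paper's choice of $\phi$.

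The rest of your argument is sound and matches the paper: the reduction $\probcond{\Ys\notin\tD_\phi}{\Sigma}\leq\prob{\Ys\notin\tD_\phi}/\prob{\Sigma}$, the uniform lower bound $\prob{\Sigma(x)}=\Omega(\nthree^{-1/2})$ from the two regimes of \cite[Theorem 4]{PWa}, and the separate handling of the degenerate case $\Qthree=3\nthree$. The missing ingredient is the variance estimate $\var{Y_1(Y_1-1)}=\Theta(\lambda)$ from \cite[Lemma 2]{PWa} that powers the Chebyshev bound.
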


We then show that for $x=x(n)\in S_{\psi}^*\cap\setZ^4$ and $\ds\in \tD_{\phi(x)}$,
the random \prekernel\ $\Pcal(x,\ds)$ is connected and simple \aas
% Lemma for prob of simple
\begin{lem}
  \label{lem:simple-pre-hyper}
  Assume $R=o(n)$. Let $\psi, \phi = o(1)$. Let $x=x(n)\in S_{\psi}^*$
  be an integer point and
  $\ds=\ds(n)\in\tD_{\phi}(x)$. Then $\Gpre(x, \ds)$ is simple \aas
\end{lem}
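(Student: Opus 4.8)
The plan is a first-moment argument over the internal randomness used to build $\Gpre(x,\ds)$: the random kernel-configuration producing the kernel $\Kcal$, the random sequence of $\kzero$ splittings of $2$-edges, and the random assignment of the $\nn$ degree-$1$ vertices. First I would reduce non-simpleness of $\Gpre(x,\ds)$ to a short list of bad events inside $\Kcal$. The $3$-edges of $\Gpre$ coming from $M_3$ are exactly the $3$-edges of $\Kcal$, so they contain a repeated vertex, or coincide with one another, only if $\Kcal$ already has a loop $3$-edge or two $3$-edges on a common vertex-triple. Every other edge of $\Gpre$ comes from a $2$-edge of the split pre-kernel by inserting a fresh degree-$1$ vertex; any two of these have distinct vertex sets (their degree-$1$ vertices differ, and a degree-$1$ vertex lies in only one edge), none coincides with an $M_3$-edge (no degree-$1$ vertex there), and such an edge is a loop only when the two non-degree-$1$ endpoints of its underlying $2$-edge coincide — which can happen only for a loop $2$-edge of $\Kcal$ that was never split (splitting a loop $2$-edge $\{a,a\}$ gives the double $2$-edge $\{a,w\},\{a,w\}$, harmless after adding distinct degree-$1$ vertices). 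Hence it suffices to bound the probabilities that: (a) $\Kcal$ has a loop $3$-edge; (b) $\Kcal$ has two $3$-edges on the same vertex-triple; (c) some loop $2$-edge of $\Kcal$ is never selected in the $\kzero$ splittings.

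For the estimates I would use two inputs. Since $x\in S_\psi^*$ with $\psi=o(1)$, the expansions~\eqref{eq:optpre-rel-hyper} pin down all parameter orders: $\lambda(x)=\Theta(r)=o(1)$, the vertex-bins of $\Kcal$ contain $\Theta(R)=o(n)$ points, $\mthree,\mtwoprime,\kone,\nthree,\Qthree,\Ptwo,\Ttwo=\Theta(R)$, $\ktwo,\Tthree=\Theta(rR)$, and $\kzero,\nn=\Theta(n)$. And for $\ds\in\tD_\phi(x)$ we have $\sum_i d_i(d_i-1)=2m\,\eta(\ds)=2m\bigl(\mean{\eta(\Ys)}+O(R\phi)\bigr)$; since the $Y_i$ are truncated Poisson with parameters $(3,\lambda(x))$ and $\lambda(x)\to0$, a standard expansion gives $\mean{Y_i(Y_i-1)}=\lambda(x)^2\f(\lambda(x))/\fff(\lambda(x))=6+O(\lambda(x))$, so $\mean{\eta(\Ys)}=\Theta(r)$; taking $\phi$ small enough that $R\phi=O(r)$ — which is all the sequel uses, and in particular holds for the $\phi$ provided by Lemma~\ref{lem:typical-pre-hyper} — we get $\sum_i d_i(d_i-1)=O(R)$ and $\max_i d_i=O(\sqrt R)=o(n)$.

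Now I would estimate the first moments of (a),(b),(c), uniformly over admissible $x,\ds$. The key point for (a) is the constraint in the matching step of the kernel construction: a degree-$2$ vertex has its two points in two distinct edges (otherwise its degree would be $1$), so it cannot turn a $3$-edge into a loop, and exactly $\kone+2\ktwo=\Pthree-\Tthree$ of the $\Pthree$ size-$3$ edge-points are matched to degree-$2$ vertices, so a given size-$3$ edge-point reaches a vertex of degree $\ge3$ only with probability $\Tthree/\Pthree=\Theta(r)$; together with $\sum_i d_i(d_i-1)=O(R)$ and $\Qthree=\Theta(R)$ this yields $\esp{\#\text{loop }3\text{-edges of }\Kcal}=o(1)$. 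For (b), two $3$-edges on a common triple place six edge-points on three vertex-bins, and a routine count gives $\esp{\#\text{such pairs}}=O(1/R)=o(1)$. For (c), the expected number of loop $2$-edges of $\Kcal$ is only $O(1)$; but the splitting phase is iterated $\kzero=\Theta(n)$ times on an initially $\mtwoprime=\Theta(R)$-edge set, and — because that phase is run independently of the kernel matching and treats the $\mtwoprime$ original $2$-edges symmetrically — each of them is uncut with probability $\prod_{j=0}^{\kzero-1}\bigl(1-1/(\mtwoprime+j)\bigr)=(\mtwoprime-1)/(\mtwo-1)=\Theta(r)$, so the expected number of uncut loop $2$-edges is $O(1)\cdot\Theta(r)=o(1)$. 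Markov's inequality and the union bound then give $\prob{\Gpre(x,\ds)\text{ not simple}}=o(1)$.

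The main obstacle is that the kernel here is sparse — it has only $\Theta(R)=o(n)$ points, unlike the core of Section~\ref{sec:core-hyper}, which has $\Theta(n)$ points — so the crude first-moment bounds for loops and multi-edges are merely $O(1)$, not $o(1)$. The argument therefore has to extract two genuine gains: the $(\kone,\ktwo)$-partition of the degree-$2$ vertices, which forces all but a $\Theta(r)$-fraction of the size-$3$ edge-points onto vertices that cannot self-loop; and the splitting phase with $\kzero\gg\mtwoprime$, which dismantles all but an expected $o(1)$ number of the kernel's loop $2$-edges. Making these precise — in particular checking that the relevant conditionings decouple and that the $o(1)$'s are uniform over $x\in S_\psi^*$ and $\ds\in\tD_\phi(x)$ — is the technical heart of the proof.
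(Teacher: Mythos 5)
Your proposal follows the same structure as the paper's proof: reduce non-simpleness of $\Gpre(x,\ds)$ to (a) loop $3$-edges of the kernel, (b) double $3$-edges, (c) loop $2$-edges that are never split, then bound each by a first-moment argument using the parameter estimates coming from $x\in S_{\psi}^*$ and $\ds\in\tD_{\phi}(x)$. Your treatment of (c) — exploiting independence of the splitting phase from the kernel matching and symmetry over the $\mtwoprime$ original $2$-edges to get an expected $O(1)\cdot(\mtwoprime-1)/(\mtwo-1)=O(r)=o(1)$ unsplit loop $2$-edges directly — is in fact a slightly slicker packaging than the paper's two-step version (bound the number of loop $2$-edges a.a.s., then multiply by the uncut probability).

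However, there is a genuine error in (a). You assert that ``a degree-$2$ vertex has its two points in two distinct edges (otherwise its degree would be $1$), so it cannot turn a $3$-edge into a loop,'' and you use this to restrict the loop bound to vertices of degree at least~$3$. That is not a constraint of the kernel model. Step~3 of the kernel construction only requires that, of the two-point vertex-bins, $k_2$ of them have \emph{both} points matched to $3$-edge-bins; nothing forces those to be \emph{distinct} $3$-edge-bins, and ``degree $2$'' here counts points (incidences) in the vertex-bin, not distinct edges. So a $k_2$ vertex-bin can perfectly well have both points land in the same $3$-edge-bin, creating a loop. The paper handles this case explicitly, bounding the expected number of loops on degree-$2$ vertices by $O(\ktwo\mthree/\Pthree^2)=O(r)=o(1)$. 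Your argument as written skips this case; the missing term is also $o(1)$, so the conclusion survives, but the reduction you state does not follow from the model and the proof needs this extra estimate to be complete.
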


% Lemma for prob of connected
\begin{lem}
  \label{lem:connected-pre-hyper}
  Assume $R=o(n)$. Let $\psi, \phi = o(1)$. Let $x\in S_{\psi}^*$ be
  an integer point and
  $\ds = \ds(n)\in\tD_{\phi}(x)$. Then $\Gpre(x, \ds)$ is connected
  \aas
\end{lem}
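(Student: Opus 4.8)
The plan is to reduce connectivity of $\Gpre(x,\ds)$ to connectivity of the underlying random kernel, and then to bound the probability that a random kernel configuration is disconnected by a first-moment argument over separating vertex sets.

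First I would observe that the only operations used to pass from the random kernel $\Kcal=\Kcal(V,M_3,\kone,\ktwo,\ds)$ of Step~1 to $\Gpre(x,\ds)$ are the splitting of $2$-edges (Step~2) and the attachment of the degree-$1$ vertices (Step~3), and neither can change the number of connected components: replacing a $2$-edge $\{u,v\}$ by $\{u,w\},\{w,v\}$ with $w$ new keeps $u$ and $v$ in the same component, and replacing a $2$-edge $\{u,v\}$ of the partially built graph by $\{u,v,t\}$ with $t$ a new degree-$1$ vertex does likewise. Hence $\Gpre(x,\ds)$ is connected if and only if $\Kcal$ is connected, and it suffices to prove that the random kernel configuration is connected \aas

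Second, I would estimate $\prob{\Kcal\text{ disconnected}}$ by a union bound over potential ``isolated'' vertex sets. By Lemma~\ref{lem:number-kernel-hyper} a random kernel configuration is a uniformly random matching of the $Q:=\Qthree+2(\kone+\ktwo)$ vertex-points to the $2\mtwoprime+3\mthree=Q$ edge-points, subject to the constraint that exactly $k_i$ of the two-point vertex-bins send exactly $i$ points to $3$-edge-bins ($i=1,2$). The kernel is disconnected iff its vertex set splits as $W\sqcup\overline W$ with both parts nonempty and every edge-bin matched entirely into $W$ or entirely into $\overline W$. For a fixed candidate $W$, the probability of the corresponding event is a ratio of products of factorials: writing $D_W$ for the total vertex-degree in $W$ and letting $W$ contain $a$ of the $2$-edges and $b$ of the $3$-edges entirely inside it (so $2a+3b=D_W$), this probability is at most $\binom{\mtwoprime}{a}\binom{\mthree}{b}\,D_W!\,(Q-D_W)!/Q!$, up to the bookkeeping forced by the degree-$2$ constraints. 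Parametrising $W$ by its number of vertices of each kernel type (degree-$2$ vertices lying in one or in two $3$-edges, and degree-$\ge 3$ vertices together with their degrees drawn from $\ds$), and using that $\ds\in\tD_{\phi}(x)$ forces $\sum_i d_i^2=O(R)$ (so $\max_i d_i=O(\sqrt R)$, the high degrees are negligible in aggregate, and $N_K:=\nthree+\kone+\ktwo=\Theta(R)$ by~\eqref{eq:optpre-rel-hyper} together with $x\in S_{\psi}^*$), I would apply Stirling's formula to turn the sum over $W$ into an explicit sum over $|W|$ and $D_W$.

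Third, I would split that sum into two ranges. For $|W|$ a constant fraction of $N_K$ (taking, WLOG, the smaller part, $|W|\le N_K/2$), the number of choices of $W$ is $e^{O(N_K)}=e^{O(R)}$ while the factorial ratio is $e^{-\Theta(R)}$ with exponent bounded away from $0$; so this range contributes $e^{-\Omega(R)}=o(1)$. The delicate range is small $W$, say $1\le|W|\le\eps R$, where the exponential saving degrades. Here I would use the structural fact that in any kernel every component has minimum degree $2$ with every degree-$2$ vertex meeting a $3$-edge, so an isolated set of $\ell$ kernel-vertices must support $\Omega(\ell)$ edges; this makes each term polynomially small in $R$, and the expected number of components of any fixed small shape (the worst being the minimal kernel components on a bounded number of vertices, which I would enumerate explicitly in the spirit of~\cite{PWb}) is $O(R^{-c})$ for some $c>0$, so the whole small-$W$ contribution is $o(1)$ as $R\to\infty$, which holds under the standing assumptions of this section. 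I expect this last piece — pinning down which minimal sub-multihypergraphs can occur as a component of a kernel and checking that each contributes $o(1)$ to the expected count — to be the main obstacle; the large-$W$ estimate is routine once the Stirling bounds are in place, and the whole argument closely parallels the corresponding connectivity lemma for the $2$-uniform pre-kernel model of Pittel and Wormald~\cite{PWb}.
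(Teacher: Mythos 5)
Your high-level plan is the same as the paper's: reduce to connectivity of the random kernel, then bound the probability of a separating set by a first-moment argument. The first reduction step is exactly as you describe. Where your plan underestimates the difficulty is in the two places you gloss over, and one of them hides a structural insight that the whole argument hinges on.

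The phrase ``up to the bookkeeping forced by the degree-$2$ constraints'' is carrying all the weight. The kernel matching is not a uniform configuration: each type-$k_1$ vertex-bin must send exactly one point to a $2$-edge-bin and one to a $3$-edge-bin, each type-$k_2$ must send both to $3$-edge-bins, and the degree-$\ge 3$ vertex-bins split their points between the two edge types. The probability that a fixed $W$ spans a union of components therefore does not factor through $|W|$ and $D_W$; it depends on how $W$ decomposes into vertex types and on how many $2$-edges vs.~$3$-edges lie inside. The paper handles this by first \emph{transforming} the kernel-configuration into a cleaner two-sided model $B(\ts,\ts',L,L')$: big vertex-bins are cut into pieces of size $3$--$5$, $2$-edge-bins whose two points go to one degree-$2$ and one degree-$\ge 3$ vertex are collapsed into direct across-edges, and $2$-edge-bins joining two degree-$2$ vertices become new size-$2$ ``right-connector'' bins. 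After this, the only bin types are left-bins/right-bins of size in $\{3,4,5\}$ and two families of size-$2$ connectors, and Lemma~\ref{lem:connected-new-hyper} carries out the moment bound with a clean case analysis on $P,P',Q-P,Q'-P'$ against a threshold $C$ --- no explicit enumeration of minimal shapes is needed. Your ``enumerate the minimal kernel components'' step is doing real work in your plan but is replaced in the paper by this uniform estimate.

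More importantly, you do not identify the necessary bipartite condition. After the transformation, the model is near-bipartite: left-bins (degree-$\ge 3$ vertices) connect to each other via left-connectors ($2$-edges), right-bins ($3$-edges) connect to each other via right-connectors (degree-$2$ vertices), and the two sides are joined only by $K = \Tthree + \mtwoprime(1)$ across-edges. If $K$ stays bounded, the two sides split with positive probability and no first-moment bound will save you. Under only $R=o(n)$ (which is all the lemma assumes), $\Tthree\sim 36r^2n$ need not tend to infinity, so the paper must separately show that $\mtwoprime(1)\to\infty$ a.a.s.; this is a Chebyshev argument on the indicators that a given $2$-edge-bin has exactly one endpoint of degree $2$, using $\mtwoprime\sim 6R\to\infty$. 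Your plan, which never isolates $K$ as the controlling quantity, would bound the term $W=$ (all degree-$\ge 3$ vertices) only by implicit luck; pinning down $K\to\infty$ is the step that actually makes that term, and the whole large-$|W|$ regime, go to $0$.
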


The proofs for
Lemmas~\ref{lem:typical-pre-hyper},~\ref{lem:simple-pre-hyper},
and~\ref{lem:connected-pre-hyper} are presented in
Sections~\ref{sec:typical-pre-hyper}, ~\ref{sec:simple-pre-hyper},
and~\ref{sec:connected-pre-hyper}, respectively. We now show how to
prove that the expectation in~\eqref{eq:magic-pre-hyper} goes to $1$
assuming Lemmas~\ref{lem:typical-pre-hyper},~\ref{lem:simple-pre-hyper},
and~\ref{lem:connected-pre-hyper}.
% Corollary of lemmas
\begin{cor}
  \label{cor:expectation-pre-hyper}
  Let $\psi=o(1)$ and let $x=(\nn,\kzero,\kone,\ktwo)\in S_{\psi}^*\cap\setZ^4$.
  Then
  \begin{equation*}
    \meancond[\Big]{\prob[\big]{\Gpre(\nn,
        \kzero,\kone,\ktwo,\Ys)\text{ simple and
          connected}}}{\Sigma(x)}
    \sim 1.
  \end{equation*}
\end{cor}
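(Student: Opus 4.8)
The plan is to obtain Corollary~\ref{cor:expectation-pre-hyper} as an immediate consequence of Lemmas~\ref{lem:typical-pre-hyper}, \ref{lem:simple-pre-hyper} and~\ref{lem:connected-pre-hyper}, following the same pattern used for cores in Section~\ref{sec:core-hyper}. Since $\prob{\Gpre(\nn,\kzero,\kone,\ktwo,\ds)\text{ simple and connected}}\le 1$ for every admissible $\ds$, the conditional expectation is automatically at most $1$, so only the lower bound $1-o(1)$ requires an argument.

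First I would fix the function $\phi=o(1)$ supplied by Lemma~\ref{lem:typical-pre-hyper} and split the conditional expectation according to whether the random degree sequence $\Ys$ lies in $\tD_{\phi}(x)$. Discarding the (nonnegative) contribution of the sequences outside $\tD_{\phi}(x)$ and using nonnegativity of the integrand gives
\begin{equation*}
  \meancond[\Big]{\prob[\big]{\Gpre(\nn,\kzero,\kone,\ktwo,\Ys)\text{ simple and connected}}}{\Sigma(x)}
  \ge
  \Bigl(\min_{\ds\in\tD_{\phi}(x)}\prob[\big]{\Gpre(x,\ds)\text{ simple and connected}}\Bigr)\,\probcond{\Ys\in\tD_{\phi}(x)}{\Sigma(x)},
\end{equation*}
and Lemma~\ref{lem:typical-pre-hyper} makes the final factor $1-o(1)$. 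It therefore suffices to bound $\prob{\Gpre(x,\ds)\text{ simple and connected}}$ below by $1-o(1)$, uniformly over $\ds\in\tD_{\phi}(x)$ and over integer points $x\in S_{\psi}^*$.

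For an individual sequence $\ds=\ds(n)\in\tD_{\phi}(x)$, Lemma~\ref{lem:simple-pre-hyper} gives that $\Gpre(x,\ds)$ is simple \aas\ and Lemma~\ref{lem:connected-pre-hyper} gives that it is connected \aas; a union bound then shows the failure probability is $o(1)$. To make this error term uniform over the family $\tD_{\phi}(x)$, which is finite for each fixed $n$, I would invoke Lemma~\ref{lem:unif-deg-seq} (used for exactly this purpose in the core estimates): there is a single function $q(n)=o(1)$ with $\prob{\Gpre(x,\ds)\text{ simple and connected}}\ge 1-q(n)$ for all integer $x\in S_{\psi}^*$ and all $\ds\in\tD_{\phi}(x)$. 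Plugging this into the displayed inequality yields $\meancond{\cdots}{\Sigma(x)}\ge(1-q(n))(1-o(1))=1-o(1)$, which together with the trivial upper bound proves the corollary.

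The corollary itself is short; the real work is concentrated in its three input lemmas, in particular Lemma~\ref{lem:connected-pre-hyper} (showing that a configuration-model pre-kernel with a mildly restricted degree sequence is \aas\ connected) and Lemma~\ref{lem:typical-pre-hyper} (concentration of $\eta(\Ys)=\sum_{i=1}^{\nthree}Y_i(Y_i-1)/(2m)$ under the conditioning $\Sigma(x)$). Within the proof of the corollary the only delicate point is the uniformity step above: Lemmas~\ref{lem:simple-pre-hyper} and~\ref{lem:connected-pre-hyper} are phrased for arbitrary sequences $\ds=\ds(n)$, and one must pass from such pointwise statements to a single $o(1)$ error valid simultaneously over the growing (but finite) set $\tD_{\phi}(x)$, which is precisely what the diagonal argument behind Lemma~\ref{lem:unif-deg-seq} accomplishes.
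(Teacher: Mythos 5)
Your proof is correct and follows the same route as the paper: split the conditional expectation over $\tD_{\phi}(x)$ using Lemma~\ref{lem:typical-pre-hyper}, apply Lemmas~\ref{lem:simple-pre-hyper} and~\ref{lem:connected-pre-hyper} to each $\ds$, and upgrade the pointwise $o(1)$ bounds to a uniform $q(n)=o(1)$ over the finite set $\tD_{\phi}(x)$. The only cosmetic issue is that the uniformity lemma you cite by label was removed from this version of the paper (the paper now simply states the uniformity as a consequence of finiteness), but the underlying diagonal argument is exactly what the paper relies on.
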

\begin{proof}
  Let $U(\Ys)$ denote the probability that $\Gpre(x,\Ys)$ is connected and
  simple. Let $\phi = o(1)$ be given by
  Lemma~\ref{lem:typical-pre-hyper}. We have that
  \begin{equation*}
    \begin{split}
      \meancond[\Big]{U(\Ys)}{\Sigma(x)}
      &\geq
      \sum_{\ds\in\tD_{\phi}(x)}
      \prob{U(\ds)}
      \probcond{\Ys=\ds}{\Sigma(x)}.
    \end{split}
  \end{equation*}
  By Lemmas~\ref{lem:simple-pre-hyper}
  and~\ref{lem:connected-pre-hyper}, we have that
  $\prob{U(\ds)}=1-o(1)$ for every $\ds=\ds(n)\in\tD_{\phi}(x)$. Since
  $\tD_{\phi}(x)$ is a finite set for each $n$, this implies that
  there exists a function $q(n) = o(1)$ such that $\prob{U(\ds)}\geq
  1-q(n)$ for every $\ds\in\tD_{\phi}(x)$\crisc{Removed reference to a
    uniformity lemma in the appendix}. Thus,
  \begin{equation*}
    \begin{split}
      \meancond[\Big]{u(\Ys)}{\Sigma(x)}
      \geq
      (1-q(n))\prob{\Ys\in\tD_{\phi}(x)}
      = 1-o(1).
    \end{split}
  \end{equation*}
  by Lemma~\ref{lem:typical-pre-hyper}.
\end{proof}

\subsection{Typical degree sequences}
\label{sec:typical-pre-hyper}
In this section, given an integer point $x\in S_m$ `close' to the
point $\xopt$ (more precisely $x\in S_{\psi}^*$ and $\psi=o(1)$), we
show that, for a random vector of $\Ys = (Y_1,\dotsc, Y_{\nthree(x)})$
of independent truncated Poisson random variables with parameters
$(3,\lambda(x))$ conditioned upon the event $\Sigma(x)$ that
$\sum_{i=1}^{\nthree(x)} Y_i = \Qthree(x)$, the value of
$\sum_{i=1}^{\nthree(x)}\binom{Y_i}{2}$ is concentrated around its
expected value. More specifically, we present the proof for
Lemma~\ref{lem:typical-pre-hyper}. Recall that
\begin{equation*}
  \tD_{\phi}(x)
  =
  \set[\big]{\ds\in\Dcal(x): |\eta(\ds)-\mean{\eta(\Ys)}| \leq R\phi}
\end{equation*}
where $\eta(\ds) = \sum_{i=1}^{\nthree} {d_i(d_i-1)}/\paren{2m}$. We
want to show that, given $x\in S_{\psi}^*\cap\setZ^4$ with $\psi=o(1)$, there
exists $\phi = o(1)$ such that
$\probcond{\Ys\in\tD_{\phi(x)}}{\Sigma(x)} > 1-\phi$, where $\Ys =
(Y_1,\dotsc,Y_{\nthree})$ is a vector of independent random variables
with distribution $\tpoisson{3}{\lambda(x)}$.

Recall that $\nthree \sim 2rn = 2R \to\infty$, and $\Qthree/\nthree
\sim 6r/(2r) = 3$ for $x\in S_{\psi}^*$. Thus, by the definition of
$\lambda(x)$ (in~\eqref{eq:lambda-def-pre-hyper}) and~\textred{\cite[Lemma
1]{PWa}}, we must have $\lambda(x) = o(1)$. Then by~\textred{\cite[Lemma 2]{PWa}}, $\var{Y_i(Y_i-1)} = \Theta(\lambda)$.
Thus, by Chebyshev's inequality,
    \begin{equation*}
      \prob[\Big]{|\eta(\Ys)-\mean{\eta(\Ys)}|\geq R\phi}
      \leq
      \frac{\var{\eta(\Ys)}}{R^2\phi^2}
      =
      \frac{\nthree\Theta(\lambda)}{R^2\phi^2}
      =
      o\left(\frac{\nthree}{R^2\phi^2}\right).
    \end{equation*}
    If $\Rthree\eqdef \Qthree-3\nthree \leq \log \nthree$, by~\textred{\cite[Theorem 4]{PWa}}
     and Stirling's approximation
    \crisc{removed reference to appendix}
    \begin{equation*}
      \prob{\Sigma(x)} = (1+o(1))
      e^{-\Rthree}\frac{\Rthree^{\Rthree}}{\Rthree!}
      =
      \Omega\left(
      \frac{1}{\sqrt{\Rthree}}\right)
      =
      \Omega\left(
      \frac{1}{\sqrt{\log \nthree}}\right).
    \end{equation*}
    If $\Qthree-3\nthree \geq \log \nthree$, by~\textred{\cite[Theorem 4]{PWa}},
    %Theorem~\ref{thm:thm4-PWa-prelim},
  \begin{equation*}
    \prob{\Sigma(x)}
    \sim
    \frac{1}{\sqrt{2\pi \nthree \cthree(1+\etathree-\cthree)}}
    =
    \Omega\left(\frac{1}{\sqrt{\nthree}}\right),
  \end{equation*}
  where $\cthree = \Qthree/\nthree$ and $\etathree =
  \lambda(x)\f(\lambda(x))/\ff(\lambda(x))$, and we
  used~\textred{\cite[Lemma 2]{PWa}}. Thus,
  \begin{equation*}
    \prob[\Big]{|\eta(\Ys)-\mean{\eta(\Ys)}|\geq R\phi | \Sigma}
    =
    O\left(\frac{\nthree}{R^2\phi^2}\sqrt{\nthree}\right)
    =
    O\left(\frac{1}{R^{1/2}\phi^2}\right)
 \end{equation*}
 since $\nthree \sim  2R$ and so it is suffices to choose
 $\phi^2 = \omega(\sqrt{1/R})$. This finishes the proof of
 Lemma~\ref{lem:typical-pre-hyper}.

\subsection{Simple pre-kernels}
\label{sec:simple-pre-hyper} 
In this section, given an integer point $x\in S_m$ `close' to the
point $\xopt$ and $\ds\in\setN^{\nthree}$ with some constraints (more
precisely $x\in S_{\psi}^*$ and $\ds\in\tilde\Dcal_{\phi}(x)$ with
$\psi,\phi=o(1)$), we show that the random multigraph $\Pcal(x,\ds)$
defined in Section~\ref{sec:random-prek-hyper} is simple \aas, thus
proving Lemma~\ref{lem:simple-pre-hyper}. Recall that a multigraph is
simple if it has no loops and no double edges (as defined in
Section~\ref{sec:def-hyper}). Any loop (or double edge) involving only
$3$-edges in the kernel remains a loop (or double edge) in the
\prekernel. Any double edge involving $2$-edges in the kernel will not
be a double edge in the \prekernel, because each $2$-edge will be
assigned a unique vertex of degree~$1$ in the procedure that creates
the \prekernel\ from the kernel. A loop in the kernel that is an
$2$-edge will cease to be a loop in the \prekernel\ if it is split at
least once. Note that, if a $2$-edge that is a loop in the kernel is
split exactly once, the two $2$-edges created will not form a double
edge in the final multigraph since the assignment of vertices of
degree $1$ to the $2$-edges eliminates all double edges involving
$2$-edges. It is clear that no other loops or double edges can be
created. We rewrite these conditions for the kernel-configuration: the
pre-kernel $\Pcal = \Pcal(x,\ds)$ is simple if and only if
  \begin{enumerate}
  \item[(A)] (\textit{No loops in $3$-edges}) No edge-bin of size~$3$
    has at least $2$ points matched to points from the same
    vertex-bin.
  \item[(B)] (\textit{No double $3$-edges}) Assuming no loops in
    $3$-edges, no pair of edges-bins of size~$3$ has their points
    matched to points in the same $3$ vertices.
  \item[(C)] (\textit{No loops in $2$-edges}) For every edge-bin of
    size~$2$, its points are matched to points from distinct
    vertex-bins or the $2$-edge corresponding to this edge-bin is
    split at least once in the process that obtains the \prekernel\
    from the kernel.
  \end{enumerate}

  We will show that, for $x\in S_{\psi}^*$ and
  $\ds\in\tilde\Dcal_{\phi}(x)$ with $\psi,\phi=o(1)$, the random
  multigraph $\Pcal(x,\ds)$ is simple \aas, which proves
  Lemma~\ref{lem:simple-pre-hyper}. We need to show that each of the
  conditions (A), (B) and (C) holds \aas{} We will use the detailed
  procedure for obtaining kernel-configurations described in the proof
  of Lemma~\ref{lem:number-kernel-hyper}.  We work in the probability
  space conditioned upon the vertices of degree~$3$ and the points in
  $U$ being already chosen, since the particular choices of these
  vertices and points do not affect the probability of loops or double
  edges in the kernel.

  First we prove (A) holds \aas{} Consider the case that the loop is
  on a vertex of degree $2$. There are $\ktwo$ possible choices for
  the vertex-bin. There are $\mthree$ choices for the edge-bin of
  size~$3$ and $3\cdot 2$ choices for the points inside of the
  edge-bin to be matched to the points in the vertex-bin of
  size~$2$. Thus, we have $6\ktwo \mthree$ choices.  Following the
  proof of Lemma~\ref{lem:number-kernel-hyper}, after the vertices of
  degree~$3$ and $U$ are chosen, there are
  \begin{equation}
    \label{eq:kernel-comp-hyper}
    \binom{\Pthree}{\kone+2\ktwo}({\kone+2\ktwo})!
    \binom{\Ptwo}{\kone}\kone!
    \Qthree!
  \end{equation}
  ways of completing the kernel-configuration. The number of
  completions of kernel-configurations containing a given matching
  that matches $2$ points in a vertex-bin of size~$2$ to $2$ points in
  an edge-bin of size~$3$ is then
  \begin{equation*}
    \binom{\Pthree-2}{\kone+2\ktwo-2}(\kone+2\ktwo-2)!
    \binom{\Ptwo}{\kone}\kone!
    \Qthree!
  \end{equation*}
  Thus, using the definition of $S_{\phi}^*$, the probability that
  there is a loop on a vertex of degree $2$ in a $3$-edge is at most
  \begin{equation*}
    6\ktwo\mthree 
    \frac{1}{\Pthree (\Pthree-1)}
    =
    O\left(\frac{\ktwo\mthree}{\Pthree^2}\right)
    =
    O\left(\frac{(r^2n)(r n)}{(r n)^2}\right)
    =O(r) = o(1).
  \end{equation*}
  Now consider the case that the loop is on a vertex of degree at least~$3$.
  There are $\sum_{i=1}^{\nthree} \binom{d_i}{2} = \eta(\ds)$ possible
  choices for the vertex-bin and $2$ points inside it. Since
  $\ds\in\Dcal(x)$ and $\mean{\eta(\Ys)} = \nthree \mean{Y_1(Y_1-1)}
  \sim 6 \nthree = \Theta(R)$,
  \begin{equation*}
    \eta(\ds) = 
    \Theta(\nthree).    
  \end{equation*}
  There are $\mthree$ choices for the edge-bin of size~$3$ and $3\cdot
  2$ choices for the points inside of the edge-bin to be matched to
  the chosen points in the vertex-bin. Thus, we have
  $O(\nthree\mthree)$ choices. The number of completions of
  kernel-configurations containing one given matching that matches $2$
  points in a vertex-bin of size at least~$3$ and $2$ points in a
  edge-bin of size~$3$ is
  \begin{equation*}
    \binom{\Pthree-2}{\kone+2\ktwo}({\kone+2\ktwo})!
    \binom{\Ptwo}{\kone}\kone!
    (\Qthree-2)!
  \end{equation*}
  Thus, using~\eqref{eq:kernel-comp-hyper}, the probability that there
  is a loop on a vertex-bin of size at least $3$ in edge-bin of
  size~$3$ is
  \begin{equation*}
    O\left(\nthree\mthree
      \cdot\frac{(\Pthree-2)!}{\Pthree!}
      \frac{(\Qthree-2)!}{\Qthree!}
      \frac{(\Tthree)!}{(\Tthree-2)!}
    \right)
    =
    O\left(\frac{\nthree\mthree\Tthree^2}{\Pthree^2\Qthree^2}\right)
    =
    O\left(\frac{(r n)(r n)(r^2 n)^2}{(r n)^2(r n)^2}\right)
    =O(r^2) = o(1).
  \end{equation*}
  This finishes the proof that Condition (A) holds \aas{} Now we prove
  that Condition (B) holds \aas\ We consider $4$ cases:

\begin{itemize}
\item[(B1)] The edge-bins corresponding to the double edge have their
  points matched to points in $3$ vertex-bins all of size $2$.
\item[(B2)] The edge-bins corresponding to the double edge have their
  points matched to points in $2$ vertex-bins of size $2$ and $1$
  vertex-bin of size at least~$3$.
\item[(B3)] The edge-bins corresponding to the double edge have their
  points matched to points in $1$ vertex-bin of size $2$ and $2$
  vertex-bins of size at least~$3$.
\item[(B4)] The edge-bins corresponding to the double edge have none of their
  points matched to points in vertex-bins of size $2$.
\end{itemize}
Let us start with (B1).  We have $O(\ktwo^3\mthree^2)$ choices for the
$3$ vertex-bins and $2$ edge-bins involved. There are $O(1)$ matchings
between the points $6$ in these vertex-bins and the $6$ points in
these edge-bins that creates a double edge.  The number of completions
for the kernel-configurations containing a giving matching creating
such a double edge is
\begin{equation*}
  \binom{\Pthree-6}{\kone+2\ktwo-6}({\kone+2\ktwo-6})!
  \binom{\Ptwo}{\kone}\kone!
  \Qthree!,
\end{equation*}
where we are following the proof of
Lemma~\ref{lem:number-kernel-hyper}, after the vertices of degree~$3$
and $U$ are chosen.  Thus, using~\eqref{eq:kernel-comp-hyper}, the 
expected number of double edges as in (B1) is at most
\begin{equation*}
  O\left(\ktwo^3 \mthree^2\right)
  \frac{(\Pthree-6)!}{\Pthree!}   
  =
  O\left(\frac{\ktwo^3 \mthree^2}{\Pthree^6}\right)
  =
  O\left(\frac{(r^2n)^3 (r n)^2}{(r n)^6}\right)
  = 
  O\left(\frac{r^2}{n}\right).
\end{equation*}

Now let us consider (B2). We have $O(\ktwo^2 \eta(\ds) \mthree^2 )$
choices for the 2 vertex-bins of size $2$ and the points inside them,
the vertex-bin of size at least $3$ and the points inside them, and the
$2$ edge-bins of size~$3$ involved in the double edge. We match $4$
points from the $2$ vertex-bins of size $2$ to the $4$ points in the
edge-bins of size~$3$ and $2$ points from the vertex-bin of size at least
$3$ to $2$ points in the edges-bins of size~$3$.  The number of
completions for the kernel-configurations containing a giving matching
creating such a double edge is
\begin{equation*}
 \binom{\Pthree-6}{\kone+2\ktwo-4}({\kone+2\ktwo-4})!
  \binom{\Ptwo}{\kone}\kone!
  (\Qthree-2)!
\end{equation*}
Thus, using~\eqref{eq:kernel-comp-hyper} and the definition of
$S_{\psi}^*$,  the 
expected number of double edges as in (B2) is at most
\begin{equation*}
  \begin{split}
    O\left({\ktwo^2 \nthree \mthree^2}\right)
  \frac{(\Pthree-6)!}{\Pthree!}
  \frac{(\Qthree-2)!}{\Qthree!}
  \frac{\Tthree!}{(\Tthree-2)!}
  &=
  O\left(\frac{\ktwo^2 \nthree \mthree^2 \Tthree^2}{\Pthree^6 \Qthree^2}\right)
  \\
  &=
  O\left(\frac{(r^2 n)^2(r n)(r n)^2 (r^2 n)^2}
    {(r n)^6 (r n)^2} \right)
  = O\left(\frac{r^3}{n}\right).
  \end{split}
\end{equation*}
We analyse (B3) now. There are $2$ vertex-bins of size at least~$3$ involved.  We
have $O(\ktwo \eta(\ds)^2 \mthree^2)$ choices for the vertex-bin of
size $2$, the $2$ vertex-bins of size at least $3$ and the points
inside them, and the $2$ edge-bins involved. There are $O(1)$ matchings
between the $6$ points in the vertex-bins ($2$ in the vertex-bin of
size $2$ and $4$ in the other vertex-bins) and the $6$ points in the
edge-bins creating a double edge. The number of completions
for the kernel-configurations containing a giving matching creating
such a double edge is
\begin{equation*}
 \binom{\Pthree-6}{\kone+2\ktwo-2}({\kone+2\ktwo-2})!
  \binom{\Ptwo}{\kone}\kone!
  (\Qthree-4)!
\end{equation*}
Thus, using~\eqref{eq:kernel-comp-hyper} and the definition of
$S_{\psi}^*$,  the 
expected number of double edges as in (B3) is at most
\begin{equation*}
  \begin{split}
    O\left({\ktwo \nthree^{2} \mthree^2}\right)
    \frac{(\Pthree-6)!}{\Pthree!}
    \frac{(\Qthree-4)!}{\Qthree!}
    \frac{\Tthree!}{(\Tthree-4)!}
    &=
    O\left(\frac{\ktwo \nthree^{2} \mthree^2\Tthree^4}{\Pthree^6 \Qthree^4}\right)
    \\
    &=
    O\left(\frac{(r^2 n) (r n)^{2} (r n)^2(r^2 n)^4}
      {(r n)^6 (r n)^4}\right)
    = O\left(\frac{r^4}{n}\right).
  \end{split}
\end{equation*}
We analyse (B4) now.  We have $O(\eta(\ds)^3 \mthree^2)$ choices for
the $3$ vertex-bins of size at least $3$ and the points inside them
and the $2$ edge-bins involved. There are $O(1)$ matchings between the
$6$ points in the vertex-bins and the $6$ points in the edge-bins
creating a double edge. The number of completions for the
kernel-configurations containing a giving matching creating such a
double edge is
\begin{equation*}
 \binom{\Pthree-6}{\kone+2\ktwo}({\kone+2\ktwo})!
  \binom{\Ptwo}{\kone}\kone!
  (\Qthree-6)!
\end{equation*}
Thus, using~\eqref{eq:kernel-comp-hyper} and the definition of
$S_{\psi}^*$, the expected number of double edges as in (B4) is at most
\begin{equation*}
  \begin{split}
    O\left({\nthree^3 \mthree^2}\right)
    \frac{(\Pthree-6)!}{\Pthree!}
    \frac{(\Qthree -6)!}{\Qthree!}
    \frac{\Tthree!}{(\Tthree -6)!}
    &=
    O\left(\frac{\nthree^3 \mthree^2 \Tthree^6}{\Pthree^6\Qthree^6}\right)
    \\
    &=
    O\left(\frac{(r n)^3 (r n)^2 (r^2 n)^6}
      {(r n)^6(r n)^6}\right) 
    = O\left(\frac{r^5}{n}\right).
  \end{split}
\end{equation*}
This finishes the proof of that Condition (B) holds \aas

Now consider the event in case (C). First we will bound the expected
number of edge-bins of size $2$ with points matched to points from the
same vertex-bin (and so corresponding to loops in the kernel). Since
every vertex-bin of size $2$ has at least one point being matched to a
point in an edge-bin of size~$3$, if an edge-bin of size~$2$ has
points matched to the same vertex-bin, such vertex-bin must have size
at least~$3$.  Thus, we have $\eta(\ds)=\Theta(\nthree)$ choices for
such vertex-bin and the two points inside it that will be matched to
the points in the $2$-edge, and $\mtwoprime$ choices for the edge-bin
of size~$2$ (and $2$ choices for the matching of these points).  The
number of completions for the kernel-configurations containing a
giving matching creating such a loop is
\begin{equation*}
 \binom{\Pthree}{\kone+2\ktwo}({\kone+2\ktwo})!
 \binom{\Ptwo-2}{\kone}\kone!
 (\Qthree-2)!
\end{equation*}
Thus, using~\eqref{eq:kernel-comp-hyper} and the definition of
$S_{\psi}^*$, the expected number of loops as in (C) is at most
\begin{equation*}
  \begin{split}
    \frac{(\Ptwo-2)!}{\Ptwo!}
    \frac{(\Qthree -2)!}{\Qthree!}
    \frac{\Ttwo!}{(\Ttwo -2)!}
    O\left(
      {\nthree\mtwoprime}
    \right)
    &=
    O\left(
      \frac{\nthree\mtwoprime \Ttwo^2}{\Ptwo^2\Qthree^2}
    \right)
    \\
    &=
    O\left(
      \frac{(r n)(r n) (r n)^2}{(r n)^2(r n)^2}
    \right)
    = O(1).(C)
  \end{split}
\end{equation*}
So let $\alpha(n)\to \infty$ such that $\alpha r \to 0$. Then the
number of edge-bins corresponding to $2$-edges that are loops in the
kernel is less than $\alpha$ \aas\ For any $2$-edge in the kernel, let
$A_i$ be the event that it is not split by the $i$-th splitting
operation performed when creating the \prekernel\ from the
kernel. Then
\begin{equation*}
  \begin{split}
    \prob[\bigg]{\bigcap_{i=1}^{\kzero} A_i}
    &=
    \prod_{i=1}^{\kzero}
    \probcond[\bigg]{A_i}{\bigcap_{j=1}^{j-1} A_j}
    =
    \frac{\mtwoprime -1}{\mtwoprime}
    \frac{\mtwoprime}{\mtwoprime +1}
    \cdot\cdots\cdot
    \frac{\mtwoprime +\kzero -2}{\mtwoprime +\kzero-1}
    \\
    &=\frac{\nn-\kzero-1}{\nn-1}
    \sim\frac{6r n}{(1/2)n}
    \sim{12 r}.
  \end{split}
\end{equation*}
This together with the fact the expected number of $2$-edges that are
loops in the kernel is less than $\alpha$ \aas\ implies that the
probability there is a $2$-edge that is a loop in the pre-kernel is $O(\alpha r)+o(1)
= o(1)$.  This finishes the proof of Lemma~\ref{lem:simple-pre-hyper}.

\subsection{Connected pre-kernels}
\label{sec:connected-pre-hyper}
  In
this section, we analyse the probability that the random multigraph
$\Pcal(x,\ds)$ is connected for $x$ `close' to $\xopt$ and
$\ds\in\setN^{\nthree}$ with some constraints (more precisely $x\in
S_{\psi}^*$ and $\ds\in\tilde\Dcal_{\phi}(x)$ with
$\psi,\phi=o(1)$). We will show that $\Pcal(x,\ds)$ is connected \aas,
proving Lemma~\ref{lem:connected-pre-hyper}. Our strategy has
  some similarities with the proof by \Luczak\cite{LucA} for connected
  random $2$-uniform hypergraphs with given degree sequence and
  minimum degree at least~$3$. The main difference is that, in our
  case, we have some vertices of degree~$2$ and the matching on the
  set of points in the bins has some constraints because of these
  vertices. This makes it more difficult to compute the probability of
  connectedness.

A \prekernel\ is connected if and only of its kernel is connected,
since the \prekernel\ is obtained by splitting $2$-edges of the kernel
and assigning vertices of degree~$1$. Thus, we only need to analyse
the connectivity of the kernel. Let $\ds$ denote the degree sequence
of the vertices of degree at least~$3$, $k_i$ the number of vertices
of degree $2$ that are in exactly $i$ $3$-edges (for $i=1,2$),
$\mtwoprime$ the number of $2$-edges and $\mthree$ the number of
$3$-edges.

We say that a kernel-configuration is \textdef{connected} if the
$2$-uniform multigraph described as follows is connected: contract
each vertex-bin and each edge-bin into a single vertex and add one
edge $uv$ for each edge $ij$ of the matching in the
kernel-configuration such that $i$ is in the bin corresponding to~$u$
and $j$ is in the bin corresponding to $v$.  Given a
kernel-configuration with matching $M$, perform the following
operations:
\begin{enumerate}
\item For each vertex-bin $v$ with more than $6$ points, partition the
  points of $v$ into new vertex-bins so that each of the new
  vertex-bins has $3$, $4$ or $5$ points. Delete $v$ and keep $M$
  unchanged. See Figure~\ref{fig:splitting-vertex-hyper}.
\item For each edge-bin $e$ of size~$2$ such that exactly one of its
  points, say $p_e$, is matched to a point, say $p_v$, in a vertex-bin $v$ of
  size~$2$, do the following. Let $p_e'$ be the point in $e$ other
  than $p_e$ and let $p_v'$ be the point in $v$ other than $p_v$. Let
  $i$ be the point matched to $p_e'$ in $M$ and let $j$ be the point
  matched to $p_v'$ in $M$. Delete $v$ and $e$ from the
  kernel-configuration. Add a new edge to $M$ connecting $i$ and~$j$.
  See Figure~\ref{fig:accross-hyper}.
\item For each edge-bin $e$ of size~$2$ such that both of its points
  $p_e$ and $p_e'$ are matched to points $p_v$ and $p_{w}$ in
  vertex-bins $v$ and $w$ of size~$2$, do the following. Let $p_v'$ be
  the point in $v$ other than $p_v$ and let $p_w'$ be the point in $w$
  other than $p_w$. Let $i$ be the point matched to $p_v'$ in $M$ and
  let $j$ be the point matched to $p_w'$ in $M$. Delete $v$, $w$ and
  $e$ from the kernel-configuration. Create a new vertex-bin of size
  $2$ with points $p_v'$ and $p_w'$ and add the edges $p_v'i$ and
  $p_w'j$ to~$M$. See Figure~\ref{fig:left-bins-hyper}.
\end{enumerate}
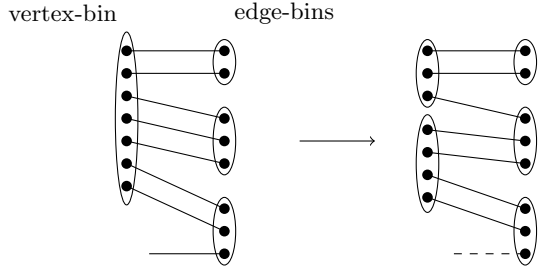
\begin{figure}
  \centering
  \begin{tikzpicture}
    \def \initialg {0};
    \def \step {1};
    \def \rad {2pt};
    \def \eps {.3};
    \def \epsm {.15};
    \def \epsg {.2};
    \def \round {5pt};
    \def \scalep {0.3}

    % Bins
    % Vertex-bins
    \coordinate (1v1) at ($(\initialg,\initialg)$);
    \coordinate (1v2) at ($(1v1)+\scalep*\step*(0,-1)$);
    \coordinate (1v3) at ($(1v2)+\scalep*\step*(0,-1)$);
    \coordinate (1v4) at ($(1v3)+\scalep*\step*(0,-1)$);
    \coordinate (1v5) at ($(1v4)+\scalep*\step*(0,-1)$);
    \coordinate (1v6) at ($(1v5)+\scalep*\step*(0,-1)$);
    \coordinate (1v7) at ($(1v6)+\scalep*\step*(0,-1)$);

    \foreach \point in {1v1, 1v2,1v3,1v4,1v5,1v6,1v7} \fill [black] (\point) circle (\rad);
    
    % Edge-bins
    \coordinate (1e1) at ($(1v1)+1.3*\step*(1,0)$);
    \coordinate (1e2) at ($(1e1)+\scalep*\step*(0,-1)$);

    \coordinate (2e1) at ($(1e2)+\scalep*\step*2*(0,-1)$);
    \coordinate (2e2) at ($(2e1)+\scalep*\step*(0,-1)$);
    \coordinate (2e3) at ($(2e2)+\scalep*\step*(0,-1)$);

    \coordinate (3e1) at ($(2e3)+\scalep*\step*2*(0,-1)$);
    \coordinate (3e2) at ($(3e1)+\scalep*\step*(0,-1)$);
    \coordinate (3e3) at ($(3e2)+\scalep*\step*(0,-1)$);

    \foreach \point in {1e1, 1e2} \fill [black] (\point) circle (\rad);
    \foreach \point in {2e1, 2e2, 2e3} \fill [black] (\point) circle (\rad);
    \foreach \point in {3e1, 3e2, 3e3} \fill [black] (\point) circle (\rad);
    
    \coordinate[label=left:\footnotesize{vertex-bin}] (vbins) at ($(1v1)+\step*(0,.5)$);
    \coordinate[label=right:\footnotesize{edge-bins}] (ebins) at ($(1e1)+\step*(0,.5)$);
    % Vertex-bins ellipse
    \draw (1v4) ellipse ({\epsm} and {\step+\epsm}); 

    % Edge-bins ellipse
    \draw ($.5*(1e1)+.5*(1e2)$) ellipse ({\epsm} and {0.15+\epsm}); 
    \draw (2e2) ellipse ({\epsm} and {0.3+\epsm}); 
    \draw (3e2) ellipse ({\epsm} and {0.3+\epsm}); 

    % Matching
    \draw (1v1)--(1e1);
    \draw (1v2)--(1e2);
    \draw (1v3)--(2e1);
    \draw (1v4)--(2e2);
    \draw (1v5)--(2e3);
    \draw (1v6)--(3e1);
    \draw (1v7)--(3e2);
    \coordinate (p3e3) at ($(3e3)+\step*({cos(180)},{sin(180)})$);
    \draw (3e3)--(p3e3);

    \coordinate  (startarrow) at ($(2e2)+(\step,0)$);
    \coordinate  (endarrow) at ($(startarrow)+(\step,0)$);
    \draw [->] (startarrow)--(endarrow);

    % Bins
    % Vertex-bins
    \coordinate (1v1) at ($(\initialg+4*\step,\initialg)$);
    \coordinate (1v2) at ($(1v1)+\scalep*\step*(0,-1)$);
    \coordinate (1v3) at ($(1v2)+\scalep*\step*(0,-1)$);

    \coordinate (2v1) at ($(1v2)+\scalep*\step*2.5*(0,-1)$);
    \coordinate (2v2) at ($(2v1)+\scalep*\step*(0,-1)$);
    \coordinate (2v3) at ($(2v2)+\scalep*\step*(0,-1)$);
    \coordinate (2v4) at ($(2v3)+\scalep*\step*(0,-1)$);

    \foreach \point in {1v1, 1v2,1v3} \fill [black] (\point) circle (\rad);
    \foreach \point in {2v1, 2v2, 2v3, 2v4} \fill [black] (\point) circle (\rad);
    
    % Edge-bins
    \coordinate (1e1) at ($(1v1)+1.3*\step*(1,0)$);
    \coordinate (1e2) at ($(1e1)+\scalep*\step*(0,-1)$);

    \coordinate (2e1) at ($(1e2)+\scalep*\step*2*(0,-1)$);
    \coordinate (2e2) at ($(2e1)+\scalep*\step*(0,-1)$);
    \coordinate (2e3) at ($(2e2)+\scalep*\step*(0,-1)$);

    \coordinate (3e1) at ($(2e3)+\scalep*\step*2*(0,-1)$);
    \coordinate (3e2) at ($(3e1)+\scalep*\step*(0,-1)$);
    \coordinate (3e3) at ($(3e2)+\scalep*\step*(0,-1)$);

    \foreach \point in {1e1, 1e2} \fill [black] (\point) circle (\rad);
    \foreach \point in {2e1, 2e2, 2e3} \fill [black] (\point) circle (\rad);
    \foreach \point in {3e1, 3e2, 3e3} \fill [black] (\point) circle (\rad);
    
    % Vertex-bins ellipse
    \draw (1v2) ellipse ({\epsm} and {0.3*\step+\epsm}); 
    \draw ($1/2*(2v2)+1/2*(2v3)$) ellipse ({\epsm} and {0.5*\step+\epsm}); 

    % Edge-bins ellipse
    \draw ($.5*(1e1)+.5*(1e2)$) ellipse ({\epsm} and {0.15+\epsm}); 
    \draw (2e2) ellipse ({\epsm} and {0.3+\epsm}); 
    \draw (3e2) ellipse ({\epsm} and {0.3+\epsm}); 

    % Matching
    \draw (1v1)--(1e1);
    \draw (1v2)--(1e2);
    \draw (1v3)--(2e1);
    \draw (2v1)--(2e2);
    \draw (2v2)--(2e3);
    \draw (2v3)--(3e1);
    \draw (2v4)--(3e2);
    \coordinate (p3e3) at ($(3e3)+\step*({cos(180)},{sin(180)})$);
    \draw[dashed] (3e3)--(p3e3);

  \end{tikzpicture}
  \caption{Breaking a vertex-bin into smaller pieces.}
  \label{fig:splitting-vertex-hyper}
\end{figure}

\begin{figure}
  \centering
  \begin{tikzpicture}
    \def \initialg {0};
    \def \step {1};
    \def \rad {2pt};
    \def \eps {.3};
    \def \epsm {.15};
    \def \epsg {.2};
    \def \round {5pt};
    \def \scalep {0.3}

    % Bins
    % Vertex-bins
    \coordinate (1v1) at ($(\initialg,\initialg)$);
    \coordinate (1v2) at ($(1v1)+\scalep*\step*(0,-1)$);
    
    \coordinate (2v1) at ($(1v2)+\scalep*\step*2*(0,-1)$);
    \coordinate (2v2) at ($(2v1)+\scalep*\step*(0,-1)$);
    \coordinate (2v3) at ($(2v2)+\scalep*\step*(0,-1)$);
    
    \foreach \point in {1v1, 1v2} \fill [black] (\point) circle (\rad);
    \foreach \point in {2v1, 2v2, 2v3} \fill [black] (\point) circle (\rad);
    
    % Edge-bins
    \coordinate (1e1) at ($(1v1)+1.3*\step*(1,0)$);
    \coordinate (1e2) at ($(1e1)+\scalep*\step*(0,-1)$);

    \coordinate (2e1) at ($(1e2)+\scalep*\step*2*(0,-1)$);
    \coordinate (2e2) at ($(2e1)+\scalep*\step*(0,-1)$);
    \coordinate (2e3) at ($(2e2)+\scalep*\step*(0,-1)$);

    \foreach \point in {1e1, 1e2} \fill [black] (\point) circle (\rad);
    \foreach \point in {2e1, 2e2, 2e3} \fill [black] (\point) circle (\rad);
    
    \coordinate[label=left:\footnotesize{vertex-bins}] (vbins) at ($(1v1)+\step*(0,.5)$);
    \coordinate[label=right:\footnotesize{edge-bins}] (ebins) at ($(1e1)+\step*(0,.5)$);
    % Vertex-bins ellipse
    \draw ($.5*(1v1)+.5*(1v2)$) ellipse ({\epsm} and {0.15+\epsm}); 
    \draw (2v2) ellipse ({\epsm} and {0.3+\epsm}); 

    % Edge-bins ellipse
    \draw ($.5*(1e1)+.5*(1e2)$) ellipse ({\epsm} and {0.15+\epsm}); 
    \draw (2e2) ellipse ({\epsm} and {0.3+\epsm}); 
    
    % Matching
    \draw (1v1)--(1e1);
    \draw (1v2)--(2e1);
    \draw (2v1)--(1e2);
     \coordinate (p2v2) at ($(2v2)+\step*0.5*({cos(0)},{sin(0)})$);
    \draw[dashed] (2v2)--(p2v2);
     \coordinate (p2v3) at ($(2v3)+\step*0.5*({cos(-30)},{sin(-30)})$);
    \draw[dashed] (2v3)--(p2v3);
     \coordinate (p2e2) at ($(2e2)+\step*0.5*({cos(180)},{sin(180)})$);
    \draw[dashed] (2e2)--(p2e2);
     \coordinate (p2e3) at ($(2e3)+\step*0.5*({cos(210)},{sin(210)})$);
    \draw[dashed] (2e3)--(p2e3);

    \coordinate  (startarrow) at ($(2e1)+(\step,0)$);
    \coordinate  (endarrow) at ($(startarrow)+(\step,0)$);
    \draw [->] (startarrow)--(endarrow);

    % Bins
    % Vertex-bins
    \coordinate (1v1) at ($(\initialg+\step*4,\initialg)$);
    \coordinate (1v2) at ($(1v1)+\scalep*\step*(0,-1)$);
    
    \coordinate (2v1) at ($(1v2)+\scalep*\step*2*(0,-1)$);
    \coordinate (2v2) at ($(2v1)+\scalep*\step*(0,-1)$);
    \coordinate (2v3) at ($(2v2)+\scalep*\step*(0,-1)$);
    
    %\foreach \point in {1v1, 1v2} \fill [black] (\point) circle (\rad);
    \foreach \point in {2v1, 2v2, 2v3} \fill [black] (\point) circle (\rad);
    
    % Edge-bins
    \coordinate (1e1) at ($(1v1)+1.3*\step*(1,0)$);
    \coordinate (1e2) at ($(1e1)+\scalep*\step*(0,-1)$);

    \coordinate (2e1) at ($(1e2)+\scalep*\step*2*(0,-1)$);
    \coordinate (2e2) at ($(2e1)+\scalep*\step*(0,-1)$);
    \coordinate (2e3) at ($(2e2)+\scalep*\step*(0,-1)$);

    %\foreach \point in {1e1, 1e2} \fill [black] (\point) circle (\rad);
    \foreach \point in {2e1, 2e2, 2e3} \fill [black] (\point) circle (\rad);
    
    % Vertex-bins ellipse
    %\draw ($.5*(1v1)+.5*(1v2)$) ellipse ({\epsm} and {0.15+\epsm}); 
    \draw (2v2) ellipse ({\epsm} and {0.3+\epsm}); 

    % Edge-bins ellipse
    %\draw ($.5*(1e1)+.5*(1e2)$) ellipse ({\epsm} and {0.15+\epsm}); 
    \draw (2e2) ellipse ({\epsm} and {0.3+\epsm}); 
    
    % Matching
    %\draw (1v1)--(1e1);
    %\draw (1v2)--(2e1);
    %\draw (2v1)--(1e2);
    \draw[thick] (2v1)--(2e1);
     \coordinate (p2v2) at ($(2v2)+\step*0.5*({cos(0)},{sin(0)})$);
    \draw[dashed] (2v2)--(p2v2);
     \coordinate (p2v3) at ($(2v3)+\step*0.5*({cos(-30)},{sin(-30)})$);
    \draw[dashed] (2v3)--(p2v3);
     \coordinate (p2e2) at ($(2e2)+\step*0.5*({cos(180)},{sin(180)})$);
    \draw[dashed] (2e2)--(p2e2);
     \coordinate (p2e3) at ($(2e3)+\step*0.5*({cos(210)},{sin(210)})$);
    \draw[dashed] (2e3)--(p2e3);
  \end{tikzpicture}
  \caption{Transforming an edge-bin of size~$2$ matched to a
    vertex-bin of size~$2$ into an edge of the matching}
  \label{fig:accross-hyper}
\end{figure}

\begin{figure}
  \centering
  \begin{tikzpicture}
    \def \initialg {0};
    \def \step {1};
    \def \rad {2pt};
    \def \eps {.3};
    \def \epsm {.15};
    \def \epsg {.2};
    \def \round {5pt};
    \def \scalep {0.3}

    % Bins
    % Vertex-bins
    \coordinate (1v1) at ($(\initialg,\initialg)$);
    \coordinate (1v2) at ($(1v1)+\scalep*\step*(0,-1)$);
    
    \coordinate (2v1) at ($(1v2)+\scalep*\step*2*(0,-1)$);
    \coordinate (2v2) at ($(2v1)+\scalep*\step*(0,-1)$);
        
    \foreach \point in {1v1, 1v2} \fill [black] (\point) circle (\rad);
    \foreach \point in {2v1, 2v2} \fill [black] (\point) circle (\rad);
    
    % Edge-bins
    \coordinate (1e1) at ($(1v1)+1.3*\step*(1,0)$);
    \coordinate (1e2) at ($(1e1)+\scalep*\step*(0,-1)$);

    \coordinate (2e1) at ($(1e2)+\scalep*\step*2*(0,-1)$);
    \coordinate (2e2) at ($(2e1)+\scalep*\step*(0,-1)$);
    \coordinate (2e3) at ($(2e2)+\scalep*\step*(0,-1)$);

    \coordinate (3e1) at ($(2e3)+\scalep*\step*2*(0,-1)$);
    \coordinate (3e2) at ($(3e1)+\scalep*\step*(0,-1)$);
    \coordinate (3e3) at ($(3e2)+\scalep*\step*(0,-1)$);

    \foreach \point in {1e1, 1e2} \fill [black] (\point) circle (\rad);
    \foreach \point in {2e1, 2e2, 2e3} \fill [black] (\point) circle (\rad);
    \foreach \point in {3e1, 3e2, 3e3} \fill [black] (\point) circle (\rad);
    
    \coordinate[label=left:\footnotesize{vertex-bins}] (vbins) at ($(1v1)+\step*(0,.5)$);
    \coordinate[label=right:\footnotesize{edge-bins}] (ebins) at ($(1e1)+\step*(0,.5)$);
    % Vertex-bins ellipse
    \draw ($.5*(1v1)+.5*(1v2)$) ellipse ({\epsm} and {0.15+\epsm}); 
    \draw ($.5*(2v1)+.5*(2v2)$) ellipse ({\epsm} and {0.15+\epsm}); 
    
    % Edge-bins ellipse
    \draw ($.5*(1e1)+.5*(1e2)$) ellipse ({\epsm} and {0.15+\epsm}); 
    \draw (2e2) ellipse ({\epsm} and {0.3+\epsm}); 
    \draw (3e2) ellipse ({\epsm} and {0.3+\epsm}); 
    
    % Matching
    \draw (1v1)--(1e1);
    \draw (1v2)--(2e1);
    \draw (2v1)--(1e2);
    \draw (2v2)--(3e1);
    \coordinate (p2e2) at ($(2e2)+\step*0.5*({cos(180)},{sin(180)})$);
    \draw[dashed] (2e2)--(p2e2);
    \coordinate (p2e3) at ($(2e3)+\step*0.5*({cos(180)},{sin(180)})$);
    \draw[dashed] (2e3)--(p2e3);
    \coordinate (p3e2) at ($(3e2)+\step*0.5*({cos(180)},{sin(180)})$);
    \draw[dashed] (3e2)--(p3e2);
    \coordinate (p3e3) at ($(3e3)+\step*0.5*({cos(180)},{sin(180)})$);
    \draw[dashed] (3e3)--(p3e3);

    \coordinate  (startarrow) at ($(2e1)+(\step,0)$);
    \coordinate  (endarrow) at ($(startarrow)+(\step,0)$);
    \draw [->] (startarrow)--(endarrow);

    % Bins
    % Vertex-bins
    \coordinate (1v1) at ($(\initialg+4*\step,\initialg-\step)$);
    \coordinate (1v2) at ($(1v1)+\scalep*\step*(0,-1)$);
        
    \foreach \point in {1v1, 1v2} \fill [black] (\point) circle (\rad);
    \foreach \point in {2v1, 2v2} \fill [black] (\point) circle (\rad);
    
    % Edge-bins
    \coordinate (1e1) at ($(1v1)+(0,\step)+1.3*\step*(1,0)$);
    \coordinate (1e2) at ($(1e1)+\scalep*\step*(0,-1)$);

    \coordinate (2e1) at ($(1e2)+\scalep*\step*2*(0,-1)$);
    \coordinate (2e2) at ($(2e1)+\scalep*\step*(0,-1)$);
    \coordinate (2e3) at ($(2e2)+\scalep*\step*(0,-1)$);

    \coordinate (3e1) at ($(2e3)+\scalep*\step*2*(0,-1)$);
    \coordinate (3e2) at ($(3e1)+\scalep*\step*(0,-1)$);
    \coordinate (3e3) at ($(3e2)+\scalep*\step*(0,-1)$);

    %\foreach \point in {1e1, 1e2} \fill [black] (\point) circle (\rad);
    \foreach \point in {2e1, 2e2, 2e3} \fill [black] (\point) circle (\rad);
    \foreach \point in {3e1, 3e2, 3e3} \fill [black] (\point) circle (\rad);
    
    % Vertex-bins ellipse
    \draw[thick] ($.5*(1v1)+.5*(1v2)$) ellipse ({\epsm} and {0.15+\epsm}); 
        
    % Edge-bins ellipse
    %\draw ($.5*(1e1)+.5*(1e2)$) ellipse ({\epsm} and {0.15+\epsm}); 
    \draw (2e2) ellipse ({\epsm} and {0.3+\epsm}); 
    \draw (3e2) ellipse ({\epsm} and {0.3+\epsm}); 
    
    % Matching
    \draw[thick] (1v1)--(2e1);
    \draw[thick] (1v2)--(3e1);
    \coordinate (p2e2) at ($(2e2)+\step*0.5*({cos(180)},{sin(180)})$);
    \draw[dashed] (2e2)--(p2e2);
    \coordinate (p2e3) at ($(2e3)+\step*0.5*({cos(180)},{sin(180)})$);
    \draw[dashed] (2e3)--(p2e3);
    \coordinate (p3e2) at ($(3e2)+\step*0.5*({cos(180)},{sin(180)})$);
    \draw[dashed] (3e2)--(p3e2);
    \coordinate (p3e3) at ($(3e3)+\step*0.5*({cos(180)},{sin(180)})$);
    \draw[dashed] (3e3)--(p3e3);
  \end{tikzpicture}
  \caption{Transforming an edge-bin of size~$2$ matched to two
    vertex-bins of size~$2$ into a vertex-bin of size~$2$}
  \label{fig:left-bins-hyper}
\end{figure}
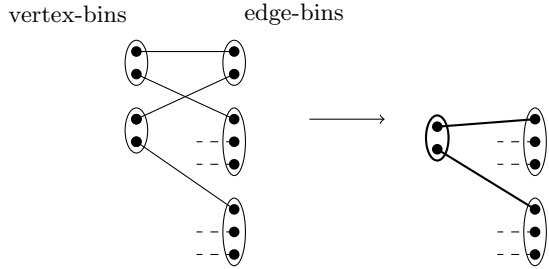

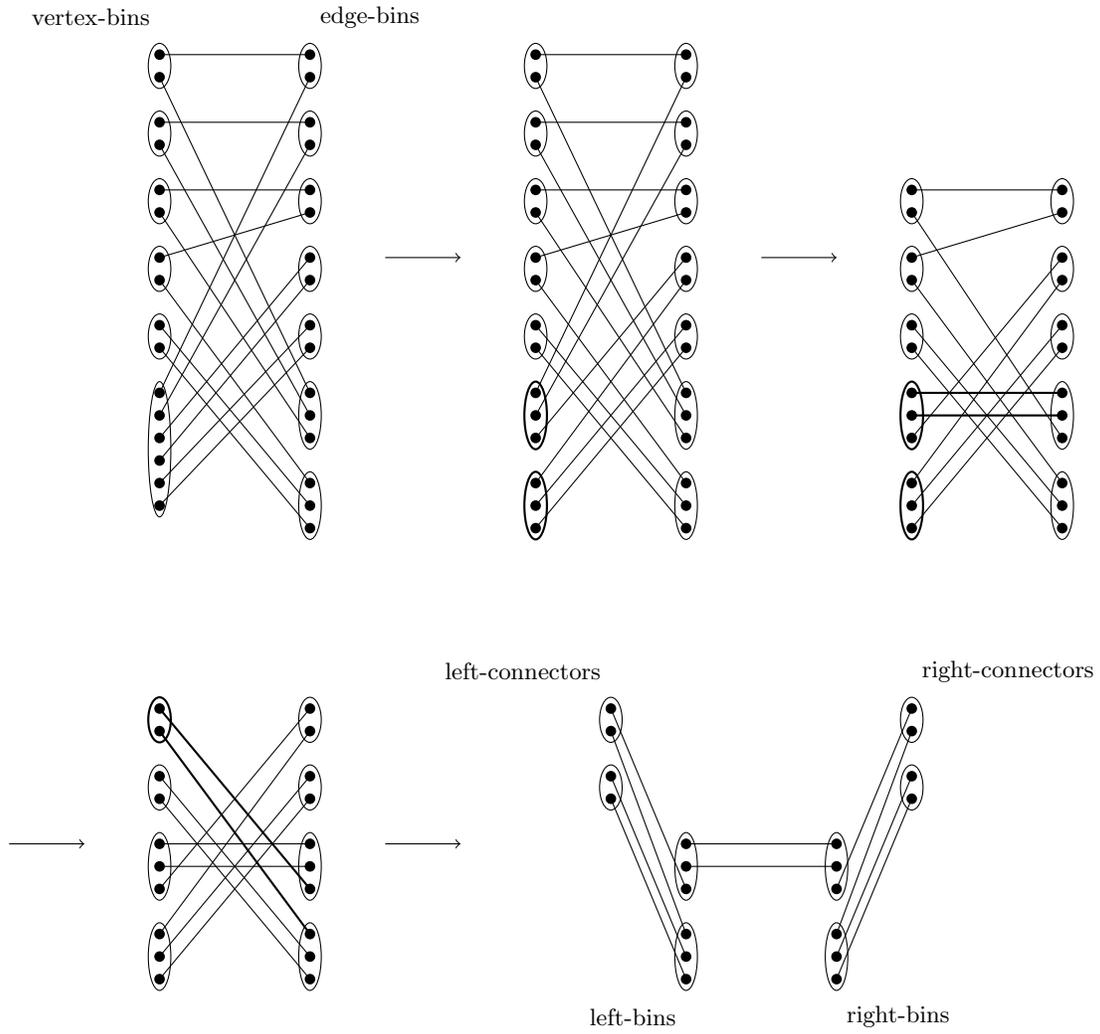
\begin{figure}
  \centering
  \begin{tikzpicture}
    \def \initialg {0};
    \def \step {1};
    \def \rad {2pt};
    \def \eps {.3};
    \def \epsm {.15};
    \def \epsg {.2};
    \def \round {5pt};
    \def \scalep {0.3}

    % Bins
    % Vertex-bins
    \coordinate (1v1) at ($(\initialg,\initialg)$);
    \coordinate (1v2) at ($(1v1)+\scalep*\step*(0,-1)$);
    
    \coordinate (2v1) at ($(1v2)+\scalep*\step*2*(0,-1)$);
    \coordinate (2v2) at ($(2v1)+\scalep*\step*(0,-1)$);

    \coordinate (3v1) at ($(2v2)+\scalep*\step*2*(0,-1)$);
    \coordinate (3v2) at ($(3v1)+\scalep*\step*(0,-1)$);

    \coordinate (4v1) at ($(3v2)+\scalep*\step*2*(0,-1)$);
    \coordinate (4v2) at ($(4v1)+\scalep*\step*(0,-1)$);

    \coordinate (5v1) at ($(4v2)+\scalep*\step*2*(0,-1)$);
    \coordinate (5v2) at ($(5v1)+\scalep*\step*(0,-1)$);

    \coordinate (6v1) at ($(5v2)+\scalep*\step*2*(0,-1)$);
    \coordinate (6v2) at ($(6v1)+\scalep*\step*(0,-1)$);
    \coordinate (6v3) at ($(6v2)+\scalep*\step*(0,-1)$);
    \coordinate (6v4) at ($(6v3)+\scalep*\step*(0,-1)$);
    \coordinate (6v5) at ($(6v4)+\scalep*\step*(0,-1)$);
    \coordinate (6v6) at ($(6v5)+\scalep*\step*(0,-1)$);

    \foreach \point in {1v1, 1v2} \fill [black] (\point) circle (\rad);
    \foreach \point in {2v1, 2v2} \fill [black] (\point) circle (\rad);
    \foreach \point in {3v1, 3v2} \fill [black] (\point) circle (\rad);
    \foreach \point in {4v1, 4v2} \fill [black] (\point) circle (\rad);
    \foreach \point in {5v1, 5v2} \fill [black] (\point) circle (\rad);
    \foreach \point in {6v1, 6v2, 6v3, 6v4, 6v5, 6v6} \fill [black] (\point) circle (\rad);
    
    % Edge-bins
    \coordinate (1e1) at ($(1v1)+(2*\step,0)$);
    \coordinate (1e2) at ($(1e1)+\scalep*\step*(0,-1)$);
    
    \coordinate (2e1) at ($(1e2)+\scalep*\step*2*(0,-1)$);
    \coordinate (2e2) at ($(2e1)+\scalep*\step*(0,-1)$);

    \coordinate (3e1) at ($(2e2)+\scalep*\step*2*(0,-1)$);
    \coordinate (3e2) at ($(3e1)+\scalep*\step*(0,-1)$);

    \coordinate (4e1) at ($(3e2)+\scalep*\step*2*(0,-1)$);
    \coordinate (4e2) at ($(4e1)+\scalep*\step*(0,-1)$);

    \coordinate (5e1) at ($(4e2)+\scalep*\step*2*(0,-1)$);
    \coordinate (5e2) at ($(5e1)+\scalep*\step*(0,-1)$);

    \coordinate (6e1) at ($(5e2)+\scalep*\step*2*(0,-1)$);
    \coordinate (6e2) at ($(6e1)+\scalep*\step*(0,-1)$);
    \coordinate (6e3) at ($(6e2)+\scalep*\step*(0,-1)$);

    \coordinate (7e1) at ($(6e3)+\scalep*\step*2*(0,-1)$);
    \coordinate (7e2) at ($(7e1)+\scalep*\step*(0,-1)$);
    \coordinate (7e3) at ($(7e2)+\scalep*\step*(0,-1)$);

    \foreach \point in {1e1, 1e2} \fill [black] (\point) circle (\rad);
    \foreach \point in {2e1, 2e2} \fill [black] (\point) circle (\rad);
    \foreach \point in {3e1, 3e2} \fill [black] (\point) circle (\rad);
    \foreach \point in {4e1, 4e2} \fill [black] (\point) circle (\rad);
    \foreach \point in {5e1, 5e2} \fill [black] (\point) circle (\rad);
    \foreach \point in {6e1, 6e2, 6e3} \fill [black] (\point) circle (\rad);
    \foreach \point in {7e1, 7e2, 7e3} \fill [black] (\point) circle (\rad);
    
    \coordinate[label=left:\footnotesize{vertex-bins}] (vbins) at ($(1v1)+\step*(0,.5)$);
    \coordinate[label=right:\footnotesize{edge-bins}] (ebins) at ($(1e1)+\step*(0,.5)$);
    % Vertex-bins ellipse
    \draw ($.5*(1v1)+.5*(1v2)$) ellipse ({\epsm} and {0.15+\epsm}); 
    \draw ($.5*(2v1)+.5*(2v2)$) ellipse ({\epsm} and {0.15+\epsm}); 
    \draw ($.5*(3v1)+.5*(3v2)$) ellipse ({\epsm} and {0.15+\epsm}); 
    \draw ($.5*(4v1)+.5*(4v2)$) ellipse ({\epsm} and {0.15+\epsm}); 
    \draw ($.5*(5v1)+.5*(5v2)$) ellipse ({\epsm} and {0.15+\epsm}); 
    \draw ($.5*(6v3)+.5*(6v4)$) ellipse ({\epsm} and {0.75+\epsm}); 
    
    % Edge-bins ellipse
    \draw ($.5*(1e1)+.5*(1e2)$) ellipse ({\epsm} and {0.15+\epsm}); 
    \draw ($.5*(2e1)+.5*(2e2)$) ellipse ({\epsm} and {0.15+\epsm}); 
    \draw ($.5*(3e1)+.5*(3e2)$) ellipse ({\epsm} and {0.15+\epsm}); 
    \draw ($.5*(4e1)+.5*(4e2)$) ellipse ({\epsm} and {0.15+\epsm}); 
    \draw ($.5*(5e1)+.5*(5e2)$) ellipse ({\epsm} and {0.15+\epsm}); 
    \draw (6e2) ellipse ({\epsm} and {0.3+\epsm}); 
    \draw (7e2) ellipse ({\epsm} and {0.3+\epsm}); 
    
    % Matching
    \draw (1v1)--(1e1);
    \draw (1v2)--(6e1);
    \draw (2v1)--(2e1);
    \draw (2v2)--(6e2);
    \draw (3v1)--(3e1);
    \draw (3v2)--(6e3);
    \draw (4v1)--(3e2);
    \draw (4v2)--(7e1);
    \draw (5v1)--(7e2);
    \draw (5v2)--(7e3);
    \draw (6v1)--(1e2);
    \draw (6v2)--(2e2);
    \draw (6v3)--(4e1);
    \draw (6v4)--(4e2);
    \draw (6v5)--(5e1);
    \draw (6v6)--(5e2);

    \coordinate  (startarrow) at ($(4e1)+(\step,0)$);
    \coordinate  (endarrow) at ($(startarrow)+(\step,0)$);
    \draw [->] (startarrow)--(endarrow);

    % Bins
    % Vertex-bins
    \coordinate (1v1) at ($(\initialg+5*\step,\initialg)$);
    \coordinate (1v2) at ($(1v1)+\scalep*\step*(0,-1)$);
    
    \coordinate (2v1) at ($(1v2)+\scalep*\step*2*(0,-1)$);
    \coordinate (2v2) at ($(2v1)+\scalep*\step*(0,-1)$);

    \coordinate (3v1) at ($(2v2)+\scalep*\step*2*(0,-1)$);
    \coordinate (3v2) at ($(3v1)+\scalep*\step*(0,-1)$);

    \coordinate (4v1) at ($(3v2)+\scalep*\step*2*(0,-1)$);
    \coordinate (4v2) at ($(4v1)+\scalep*\step*(0,-1)$);

    \coordinate (5v1) at ($(4v2)+\scalep*\step*2*(0,-1)$);
    \coordinate (5v2) at ($(5v1)+\scalep*\step*(0,-1)$);

    \coordinate (6v1) at ($(5v2)+\scalep*\step*2*(0,-1)$);
    \coordinate (6v2) at ($(6v1)+\scalep*\step*(0,-1)$);
    \coordinate (6v3) at ($(6v2)+\scalep*\step*(0,-1)$);

    \coordinate (6v4) at ($(6v3)+\scalep*\step*2*(0,-1)$);
    \coordinate (6v5) at ($(6v4)+\scalep*\step*(0,-1)$);
    \coordinate (6v6) at ($(6v5)+\scalep*\step*(0,-1)$);

    \foreach \point in {1v1, 1v2} \fill [black] (\point) circle (\rad);
    \foreach \point in {2v1, 2v2} \fill [black] (\point) circle (\rad);
    \foreach \point in {3v1, 3v2} \fill [black] (\point) circle (\rad);
    \foreach \point in {4v1, 4v2} \fill [black] (\point) circle (\rad);
    \foreach \point in {5v1, 5v2} \fill [black] (\point) circle (\rad);
    \foreach \point in {6v1, 6v2, 6v3, 6v4, 6v5, 6v6} \fill [black] (\point) circle (\rad);
    
    % Edge-bins
    \coordinate (1e1) at ($(1v1)+(2*\step,0)$);
    \coordinate (1e2) at ($(1e1)+\scalep*\step*(0,-1)$);
    
    \coordinate (2e1) at ($(1e2)+\scalep*\step*2*(0,-1)$);
    \coordinate (2e2) at ($(2e1)+\scalep*\step*(0,-1)$);

    \coordinate (3e1) at ($(2e2)+\scalep*\step*2*(0,-1)$);
    \coordinate (3e2) at ($(3e1)+\scalep*\step*(0,-1)$);

    \coordinate (4e1) at ($(3e2)+\scalep*\step*2*(0,-1)$);
    \coordinate (4e2) at ($(4e1)+\scalep*\step*(0,-1)$);

    \coordinate (5e1) at ($(4e2)+\scalep*\step*2*(0,-1)$);
    \coordinate (5e2) at ($(5e1)+\scalep*\step*(0,-1)$);

    \coordinate (6e1) at ($(5e2)+\scalep*\step*2*(0,-1)$);
    \coordinate (6e2) at ($(6e1)+\scalep*\step*(0,-1)$);
    \coordinate (6e3) at ($(6e2)+\scalep*\step*(0,-1)$);

    \coordinate (7e1) at ($(6e3)+\scalep*\step*2*(0,-1)$);
    \coordinate (7e2) at ($(7e1)+\scalep*\step*(0,-1)$);
    \coordinate (7e3) at ($(7e2)+\scalep*\step*(0,-1)$);

    \foreach \point in {1e1, 1e2} \fill [black] (\point) circle (\rad);
    \foreach \point in {2e1, 2e2} \fill [black] (\point) circle (\rad);
    \foreach \point in {3e1, 3e2} \fill [black] (\point) circle (\rad);
    \foreach \point in {4e1, 4e2} \fill [black] (\point) circle (\rad);
    \foreach \point in {5e1, 5e2} \fill [black] (\point) circle (\rad);
    \foreach \point in {6e1, 6e2, 6e3} \fill [black] (\point) circle (\rad);
    \foreach \point in {7e1, 7e2, 7e3} \fill [black] (\point) circle (\rad);
    
    % Vertex-bins ellipse
    \draw ($.5*(1v1)+.5*(1v2)$) ellipse ({\epsm} and {0.15+\epsm}); 
    \draw ($.5*(2v1)+.5*(2v2)$) ellipse ({\epsm} and {0.15+\epsm}); 
    \draw ($.5*(3v1)+.5*(3v2)$) ellipse ({\epsm} and {0.15+\epsm}); 
    \draw ($.5*(4v1)+.5*(4v2)$) ellipse ({\epsm} and {0.15+\epsm}); 
    \draw ($.5*(5v1)+.5*(5v2)$) ellipse ({\epsm} and {0.15+\epsm}); 
    \draw[thick] (6v2) ellipse ({\epsm} and {0.3+\epsm}); 
    \draw[thick] (6v5) ellipse ({\epsm} and {0.3+\epsm}); 
    
    % Edge-bins ellipse
    \draw ($.5*(1e1)+.5*(1e2)$) ellipse ({\epsm} and {0.15+\epsm}); 
    \draw ($.5*(2e1)+.5*(2e2)$) ellipse ({\epsm} and {0.15+\epsm}); 
    \draw ($.5*(3e1)+.5*(3e2)$) ellipse ({\epsm} and {0.15+\epsm}); 
    \draw ($.5*(4e1)+.5*(4e2)$) ellipse ({\epsm} and {0.15+\epsm}); 
    \draw ($.5*(5e1)+.5*(5e2)$) ellipse ({\epsm} and {0.15+\epsm}); 
    \draw (6e2) ellipse ({\epsm} and {0.3+\epsm}); 
    \draw (7e2) ellipse ({\epsm} and {0.3+\epsm}); 
    
    % Matching
    \draw (1v1)--(1e1);
    \draw (1v2)--(6e1);
    \draw (2v1)--(2e1);
    \draw (2v2)--(6e2);
    \draw (3v1)--(3e1);
    \draw (3v2)--(6e3);
    \draw (4v1)--(3e2);
    \draw (4v2)--(7e1);
    \draw (5v1)--(7e2);
    \draw (5v2)--(7e3);
    \draw (6v1)--(1e2);
    \draw (6v2)--(2e2);
    \draw (6v3)--(4e1);
    \draw (6v4)--(4e2);
    \draw (6v5)--(5e1);
    \draw (6v6)--(5e2);

    \coordinate  (startarrow) at ($(4e1)+(\step,0)$);
    \coordinate  (endarrow) at ($(startarrow)+(\step,0)$);
    \draw [->] (startarrow)--(endarrow);

    % Bins
    % Vertex-bins
    \coordinate (1v1) at ($(1v1)+(5*\step,0)$);
    \coordinate (1v2) at ($(1v1)+\scalep*\step*(0,-1)$);
    
    \coordinate (2v1) at ($(1v2)+\scalep*\step*2*(0,-1)$);
    \coordinate (2v2) at ($(2v1)+\scalep*\step*(0,-1)$);

    \coordinate (3v1) at ($(2v2)+\scalep*\step*2*(0,-1)$);
    \coordinate (3v2) at ($(3v1)+\scalep*\step*(0,-1)$);

    \coordinate (4v1) at ($(3v2)+\scalep*\step*2*(0,-1)$);
    \coordinate (4v2) at ($(4v1)+\scalep*\step*(0,-1)$);

    \coordinate (5v1) at ($(4v2)+\scalep*\step*2*(0,-1)$);
    \coordinate (5v2) at ($(5v1)+\scalep*\step*(0,-1)$);

    \coordinate (6v1) at ($(5v2)+\scalep*\step*2*(0,-1)$);
    \coordinate (6v2) at ($(6v1)+\scalep*\step*(0,-1)$);
    \coordinate (6v3) at ($(6v2)+\scalep*\step*(0,-1)$);

    \coordinate (6v4) at ($(6v3)+\scalep*\step*2*(0,-1)$);
    \coordinate (6v5) at ($(6v4)+\scalep*\step*(0,-1)$);
    \coordinate (6v6) at ($(6v5)+\scalep*\step*(0,-1)$);

    %\foreach \point in {1v1, 1v2} \fill [black] (\point) circle (\rad);
    %\foreach \point in {2v1, 2v2} \fill [black] (\point) circle (\rad);
    \foreach \point in {3v1, 3v2} \fill [black] (\point) circle (\rad);
    \foreach \point in {4v1, 4v2} \fill [black] (\point) circle (\rad);
    \foreach \point in {5v1, 5v2} \fill [black] (\point) circle (\rad);
    \foreach \point in {6v1, 6v2, 6v3, 6v4, 6v5, 6v6} \fill [black] (\point) circle (\rad);
    
    % Edge-bins
    \coordinate (1e1) at ($(1v1)+(2*\step,0)$);
    \coordinate (1e2) at ($(1e1)+\scalep*\step*(0,-1)$);
    
    \coordinate (2e1) at ($(1e2)+\scalep*\step*2*(0,-1)$);
    \coordinate (2e2) at ($(2e1)+\scalep*\step*(0,-1)$);

    \coordinate (3e1) at ($(2e2)+\scalep*\step*2*(0,-1)$);
    \coordinate (3e2) at ($(3e1)+\scalep*\step*(0,-1)$);

    \coordinate (4e1) at ($(3e2)+\scalep*\step*2*(0,-1)$);
    \coordinate (4e2) at ($(4e1)+\scalep*\step*(0,-1)$);

    \coordinate (5e1) at ($(4e2)+\scalep*\step*2*(0,-1)$);
    \coordinate (5e2) at ($(5e1)+\scalep*\step*(0,-1)$);

    \coordinate (6e1) at ($(5e2)+\scalep*\step*2*(0,-1)$);
    \coordinate (6e2) at ($(6e1)+\scalep*\step*(0,-1)$);
    \coordinate (6e3) at ($(6e2)+\scalep*\step*(0,-1)$);

    \coordinate (7e1) at ($(6e3)+\scalep*\step*2*(0,-1)$);
    \coordinate (7e2) at ($(7e1)+\scalep*\step*(0,-1)$);
    \coordinate (7e3) at ($(7e2)+\scalep*\step*(0,-1)$);

    %\foreach \point in {1e1, 1e2} \fill [black] (\point) circle (\rad);
    %\foreach \point in {2e1, 2e2} \fill [black] (\point) circle (\rad);
    \foreach \point in {3e1, 3e2} \fill [black] (\point) circle (\rad);
    \foreach \point in {4e1, 4e2} \fill [black] (\point) circle (\rad);
    \foreach \point in {5e1, 5e2} \fill [black] (\point) circle (\rad);
    \foreach \point in {6e1, 6e2, 6e3} \fill [black] (\point) circle (\rad);
    \foreach \point in {7e1, 7e2, 7e3} \fill [black] (\point) circle (\rad);
    
    % Vertex-bins ellipse
    %\draw ($.5*(1v1)+.5*(1v2)$) ellipse ({\epsm} and {0.15+\epsm}); 
    %\draw ($.5*(2v1)+.5*(2v2)$) ellipse ({\epsm} and {0.15+\epsm}); 
    \draw ($.5*(3v1)+.5*(3v2)$) ellipse ({\epsm} and {0.15+\epsm}); 
    \draw ($.5*(4v1)+.5*(4v2)$) ellipse ({\epsm} and {0.15+\epsm}); 
    \draw ($.5*(5v1)+.5*(5v2)$) ellipse ({\epsm} and {0.15+\epsm}); 
    \draw[thick] (6v2) ellipse ({\epsm} and {0.3+\epsm}); 
    \draw[thick] (6v5) ellipse ({\epsm} and {0.3+\epsm}); 
    
    % Edge-bins ellipse
    %\draw ($.5*(1e1)+.5*(1e2)$) ellipse ({\epsm} and {0.15+\epsm}); 
    %\draw ($.5*(2e1)+.5*(2e2)$) ellipse ({\epsm} and {0.15+\epsm}); 
    \draw ($.5*(3e1)+.5*(3e2)$) ellipse ({\epsm} and {0.15+\epsm}); 
    \draw ($.5*(4e1)+.5*(4e2)$) ellipse ({\epsm} and {0.15+\epsm}); 
    \draw ($.5*(5e1)+.5*(5e2)$) ellipse ({\epsm} and {0.15+\epsm}); 
    \draw (6e2) ellipse ({\epsm} and {0.3+\epsm}); 
    \draw (7e2) ellipse ({\epsm} and {0.3+\epsm}); 
    
    % Matching
    \draw[thick] (6e1)--(6v1);
    \draw[thick] (6v2)--(6e2);
    \draw (3v1)--(3e1);
    \draw (3v2)--(6e3);
    \draw (4v1)--(3e2);
    \draw (4v2)--(7e1);
    \draw (5v1)--(7e2);
    \draw (5v2)--(7e3);
    \draw (6v3)--(4e1);
    \draw (6v4)--(4e2);
    \draw (6v5)--(5e1);
    \draw (6v6)--(5e2);

    % Bins
    % Vertex-bins
    \coordinate (1v1) at ($(1v1)+(-10*\step,-6*\step)$);
    \coordinate (1v2) at ($(1v1)+\scalep*\step*(0,-1)$);
    
    \coordinate (2v1) at ($(1v2)+\scalep*\step*2*(0,-1)$);
    \coordinate (2v2) at ($(2v1)+\scalep*\step*(0,-1)$);

    \coordinate (3v1) at ($(2v2)+\scalep*\step*2*(0,-1)$);
    \coordinate (3v2) at ($(3v1)+\scalep*\step*(0,-1)$);

    \coordinate (4v1) at ($(3v2)+\scalep*\step*2*(0,-1)$);
    \coordinate (4v2) at ($(4v1)+\scalep*\step*(0,-1)$);

    \coordinate (newv1) at ($(3v2)+\scalep*\step*2*(0,-1)$);
    \coordinate (newv2) at ($(newv1)+\scalep*\step*(0,-1)$);

    \coordinate (5v1) at ($(4v2)+\scalep*\step*2*(0,-1)$);
    \coordinate (5v2) at ($(5v1)+\scalep*\step*(0,-1)$);

    \coordinate (6v1) at ($(5v2)+\scalep*\step*2*(0,-1)$);
    \coordinate (6v2) at ($(6v1)+\scalep*\step*(0,-1)$);
    \coordinate (6v3) at ($(6v2)+\scalep*\step*(0,-1)$);

    \coordinate (6v4) at ($(6v3)+\scalep*\step*2*(0,-1)$);
    \coordinate (6v5) at ($(6v4)+\scalep*\step*(0,-1)$);
    \coordinate (6v6) at ($(6v5)+\scalep*\step*(0,-1)$);

    \coordinate  (startarrow) at ($(6v1)+(-2*\step,0)$);
    \coordinate  (endarrow) at ($(startarrow)+(\step,0)$);
    \draw [->] (startarrow)--(endarrow);

    %\foreach \point in {1v1, 1v2} \fill [black] (\point) circle (\rad);
    %\foreach \point in {2v1, 2v2} \fill [black] (\point) circle (\rad);
    %\foreach \point in {3v1, 3v2} \fill [black] (\point) circle (\rad);
    %\foreach \point in {4v1, 4v2} \fill [black] (\point) circle
    %(\rad);
    \foreach \point in {newv1, newv2} \fill [black] (\point) circle (\rad);
    \foreach \point in {5v1, 5v2} \fill [black] (\point) circle (\rad);
    \foreach \point in {6v1, 6v2, 6v3, 6v4, 6v5, 6v6} \fill [black] (\point) circle (\rad);
    
    % Edge-bins
    \coordinate (1e1) at ($(1v1)+(2*\step,0)$);
    \coordinate (1e2) at ($(1e1)+\scalep*\step*(0,-1)$);
    
    \coordinate (2e1) at ($(1e2)+\scalep*\step*2*(0,-1)$);
    \coordinate (2e2) at ($(2e1)+\scalep*\step*(0,-1)$);

    \coordinate (3e1) at ($(2e2)+\scalep*\step*2*(0,-1)$);
    \coordinate (3e2) at ($(3e1)+\scalep*\step*(0,-1)$);

    \coordinate (4e1) at ($(3e2)+\scalep*\step*2*(0,-1)$);
    \coordinate (4e2) at ($(4e1)+\scalep*\step*(0,-1)$);

    \coordinate (5e1) at ($(4e2)+\scalep*\step*2*(0,-1)$);
    \coordinate (5e2) at ($(5e1)+\scalep*\step*(0,-1)$);

    \coordinate (6e1) at ($(5e2)+\scalep*\step*2*(0,-1)$);
    \coordinate (6e2) at ($(6e1)+\scalep*\step*(0,-1)$);
    \coordinate (6e3) at ($(6e2)+\scalep*\step*(0,-1)$);

    \coordinate (7e1) at ($(6e3)+\scalep*\step*2*(0,-1)$);
    \coordinate (7e2) at ($(7e1)+\scalep*\step*(0,-1)$);
    \coordinate (7e3) at ($(7e2)+\scalep*\step*(0,-1)$);

    %\foreach \point in {1e1, 1e2} \fill [black] (\point) circle (\rad);
    %\foreach \point in {2e1, 2e2} \fill [black] (\point) circle (\rad);
    %\foreach \point in {3e1, 3e2} \fill [black] (\point) circle (\rad);
    \foreach \point in {4e1, 4e2} \fill [black] (\point) circle (\rad);
    \foreach \point in {5e1, 5e2} \fill [black] (\point) circle (\rad);
    \foreach \point in {6e1, 6e2, 6e3} \fill [black] (\point) circle (\rad);
    \foreach \point in {7e1, 7e2, 7e3} \fill [black] (\point) circle (\rad);
    
    % Vertex-bins ellipse
    %\draw ($.5*(1v1)+.5*(1v2)$) ellipse ({\epsm} and {0.15+\epsm}); 
    %\draw ($.5*(2v1)+.5*(2v2)$) ellipse ({\epsm} and {0.15+\epsm}); 
    %\draw ($.5*(3v1)+.5*(3v2)$) ellipse ({\epsm} and {0.15+\epsm}); 
    \draw[thick] ($.5*(newv1)+.5*(newv2)$) ellipse ({\epsm} and {0.15+\epsm}); 
    \draw ($.5*(5v1)+.5*(5v2)$) ellipse ({\epsm} and {0.15+\epsm}); 
    \draw (6v2) ellipse ({\epsm} and {0.3+\epsm}); 
    \draw (6v5) ellipse ({\epsm} and {0.3+\epsm}); 
    
    % Edge-bins ellipse
    %\draw ($.5*(1e1)+.5*(1e2)$) ellipse ({\epsm} and {0.15+\epsm}); 
    %\draw ($.5*(2e1)+.5*(2e2)$) ellipse ({\epsm} and {0.15+\epsm}); 
    %\draw ($.5*(3e1)+.5*(3e2)$) ellipse ({\epsm} and {0.15+\epsm}); 
    \draw ($.5*(4e1)+.5*(4e2)$) ellipse ({\epsm} and {0.15+\epsm}); 
    \draw ($.5*(5e1)+.5*(5e2)$) ellipse ({\epsm} and {0.15+\epsm}); 
    \draw (6e2) ellipse ({\epsm} and {0.3+\epsm}); 
    \draw (7e2) ellipse ({\epsm} and {0.3+\epsm}); 
    
    % Matching
    \draw[thick] (newv1)--(6e3);
    \draw[thick] (newv2)--(7e1);
    \draw (5v1)--(7e2);
    \draw (5v2)--(7e3);
    \draw (6v3)--(4e1);
    \draw (6v4)--(4e2);
    \draw (6v5)--(5e1);
    \draw (6v6)--(5e2);
    \draw (6v1)--(6e1);
    \draw (6v2)--(6e2);

    % Vertex-bins
    \coordinate (1v1) at ($(1v1)+(7*\step,0)$);
    \coordinate (1v2) at ($(1v1)+\scalep*\step*(0,-1)$);
    
    \coordinate (2v1) at ($(1v2)+\scalep*\step*2*(0,-1)$);
    \coordinate (2v2) at ($(2v1)+\scalep*\step*(0,-1)$);

    \coordinate (3v1) at ($(2v2)+\scalep*\step*2*(0,-1)$);
    \coordinate (3v2) at ($(3v1)+\scalep*\step*(0,-1)$);

    \coordinate (4v1) at ($(3v2)+\scalep*\step*2*(0,-1)$);
    \coordinate (4v2) at ($(4v1)+\scalep*\step*(0,-1)$);

    \coordinate (4l1) at ($(4v1)+\step*(-1,0)$);
    \coordinate (4l2) at ($(4l1)+\scalep*\step*(0,-1)$);

    \coordinate (newv1) at ($(3v2)+\scalep*\step*2*(0,-1)$);
    \coordinate (newv2) at ($(newv1)+\scalep*\step*(0,-1)$);

    \coordinate (5v1) at ($(4v2)+\scalep*\step*2*(0,-1)$);
    \coordinate (5v2) at ($(5v1)+\scalep*\step*(0,-1)$);

    \coordinate (5l1) at ($(5v1)+\step*(-1,0)$);
    \coordinate (5l2) at ($(5l1)+\scalep*\step*(0,-1)$);

    \coordinate (6v1) at ($(5v2)+\scalep*\step*2*(0,-1)$);
    \coordinate (6v2) at ($(6v1)+\scalep*\step*(0,-1)$);
    \coordinate (6v3) at ($(6v2)+\scalep*\step*(0,-1)$);

    \coordinate (6v4) at ($(6v3)+\scalep*\step*2*(0,-1)$);
    \coordinate (6v5) at ($(6v4)+\scalep*\step*(0,-1)$);
    \coordinate (6v6) at ($(6v5)+\scalep*\step*(0,-1)$);

    %\foreach \point in {1v1, 1v2} \fill [black] (\point) circle (\rad);
    %\foreach \point in {2v1, 2v2} \fill [black] (\point) circle (\rad);
    %\foreach \point in {3v1, 3v2} \fill [black] (\point) circle (\rad);
    %\foreach \point in {4v1, 4v2} \fill [black] (\point) circle
    %(\rad);
    \foreach \point in {4l1,4l2} \fill [black] (\point) circle (\rad);
    \foreach \point in {5l1, 5l2} \fill [black] (\point) circle (\rad);
    \foreach \point in {6v1, 6v2, 6v3, 6v4, 6v5, 6v6} \fill [black] (\point) circle (\rad);
    
    % Edge-bins
    \coordinate (1e1) at ($(1v1)+(2*\step,0)$);
    \coordinate (1e2) at ($(1e1)+\scalep*\step*(0,-1)$);
    
    \coordinate (2e1) at ($(1e2)+\scalep*\step*2*(0,-1)$);
    \coordinate (2e2) at ($(2e1)+\scalep*\step*(0,-1)$);

    \coordinate (3e1) at ($(2e2)+\scalep*\step*2*(0,-1)$);
    \coordinate (3e2) at ($(3e1)+\scalep*\step*(0,-1)$);

    \coordinate (4e1) at ($(3e2)+\scalep*\step*2*(0,-1)$);
    \coordinate (4e2) at ($(4e1)+\scalep*\step*(0,-1)$);

    \coordinate (5e1) at ($(4e2)+\scalep*\step*2*(0,-1)$);
    \coordinate (5e2) at ($(5e1)+\scalep*\step*(0,-1)$);

    \coordinate (6e1) at ($(5e2)+\scalep*\step*2*(0,-1)$);
    \coordinate (6e2) at ($(6e1)+\scalep*\step*(0,-1)$);
    \coordinate (6e3) at ($(6e2)+\scalep*\step*(0,-1)$);

    \coordinate (7e1) at ($(6e3)+\scalep*\step*2*(0,-1)$);
    \coordinate (7e2) at ($(7e1)+\scalep*\step*(0,-1)$);
    \coordinate (7e3) at ($(7e2)+\scalep*\step*(0,-1)$);

    % Right connectors
    \coordinate (4c1) at ($(4e1)+\step*(1,0)$);
    \coordinate (4c2) at ($(4c1)+\scalep*\step*(0,-1)$);

    \coordinate (5c1) at ($(5e1)+\step*(1,0)$);
    \coordinate (5c2) at ($(5c1)+\scalep*\step*(0,-1)$);

    \coordinate  (startarrow) at ($(6v1)+(-4*\step,0)$);
    \coordinate  (endarrow) at ($(startarrow)+(\step,0)$);
    \draw [->] (startarrow)--(endarrow);

    %\foreach \point in {1e1, 1e2} \fill [black] (\point) circle (\rad);
    \foreach \point in {4c1, 4c2} \fill [black] (\point) circle (\rad);
    \foreach \point in {5c1, 5c2} \fill [black] (\point) circle (\rad);
    %\foreach \point in {3e1, 3e2} \fill [black] (\point) circle (\rad);
    %\foreach \point in {4e1, 4e2} \fill [black] (\point) circle (\rad);
    %\foreach \point in {5e1, 5e2} \fill [black] (\point) circle (\rad);
    \foreach \point in {6e1, 6e2, 6e3} \fill [black] (\point) circle (\rad);
    \foreach \point in {7e1, 7e2, 7e3} \fill [black] (\point) circle (\rad);
    
    % Vertex-bins ellipse
    %\draw ($.5*(1v1)+.5*(1v2)$) ellipse ({\epsm} and {0.15+\epsm}); 
    \draw ($.5*(5c1)+.5*(5c2)$) ellipse ({\epsm} and {0.15+\epsm}); 
    \draw ($.5*(4c1)+.5*(4c2)$) ellipse ({\epsm} and {0.15+\epsm}); 
    \draw ($.5*(5l1)+.5*(5l2)$) ellipse ({\epsm} and {0.15+\epsm}); 
    \draw ($.5*(4l1)+.5*(4l2)$) ellipse ({\epsm} and {0.15+\epsm}); 
    % \draw ($.5*(newv1)+.5*(newv2)$) ellipse ({\epsm} and {0.15+\epsm}); 
    %\draw ($.5*(5v1)+.5*(5v2)$) ellipse ({\epsm} and {0.15+\epsm}); 
    \draw (6v2) ellipse ({\epsm} and {0.3+\epsm}); 
    \draw (6v5) ellipse ({\epsm} and {0.3+\epsm}); 
    
    % Edge-bins ellipse
    %\draw ($.5*(1e1)+.5*(1e2)$) ellipse ({\epsm} and {0.15+\epsm}); 
    %\draw ($.5*(2e1)+.5*(2e2)$) ellipse ({\epsm} and {0.15+\epsm}); 
    %\draw ($.5*(3e1)+.5*(3e2)$) ellipse ({\epsm} and {0.15+\epsm}); 
   % \draw ($.5*(4e1)+.5*(4e2)$) ellipse ({\epsm} and {0.15+\epsm}); 
    %\draw ($.5*(5e1)+.5*(5e2)$) ellipse ({\epsm} and {0.15+\epsm}); 
    \draw (6e2) ellipse ({\epsm} and {0.3+\epsm}); 
    \draw (7e2) ellipse ({\epsm} and {0.3+\epsm}); 
    
    % Matching
    \draw (4c1)--(6e3);
    \draw (4c2)--(7e1);
    \draw (5c1)--(7e2);
    \draw (5c2)--(7e3);
    \draw (6v3)--(4l1);
    \draw (6v4)--(4l2);
    \draw (6v5)--(5l1);
    \draw (6v6)--(5l2);
    \draw (6v1)--(6e1);
    \draw (6v2)--(6e2);

    \coordinate[label=left:\footnotesize{left-connectors}] (lc) at
    ($(4l1)+(0,0.5*\step)$); 
    \coordinate[label=right:\footnotesize{right-connectors}] (rc) at
    ($(4c1)+(0,0.5*\step)$); 
    \coordinate[label=left:\footnotesize{left-bins}] (lb) at
    ($(6v6)+(0,-0.5*\step)$);
    \coordinate[label=right:\footnotesize{right-bins}] (rb) at
    ($(7e3)+(0,-0.5*\step)$);  
  \end{tikzpicture}
  \caption{Modifying a kernel-configuration}
  \label{fig:proc-left-right-hyper}
\end{figure}  

See Figure~\ref{fig:proc-left-right-hyper} for an example of the
procedure. If the kernel-configuration created in Step~1 is connected,
the original kernel-configuration was also connected, since splitting
vertex-bins cannot turn a disconnected kernel-configuration into a
connected one. We say that the structures in Step~2 and Step~3 are
connected if the $2$-uniform hypergraph obtained by contracting each
bin into a single vertex is connected. It is trivial that, if the
structure obtained is connected, then the original
kernel-configuration was connected. 

Recall that $M$ is chosen \uar\ from all possible matchings when
generating a random kernel as described in
Section~\ref{sec:random-prek-hyper}. This implies that, in the
structure obtained after Step~3, the resulting matching has uniform
distribution among the perfect matchings on the set of points in the
bins such that each point in an edge-bin of size $2$ is matched to a
point in a vertex-bin of size at least~$3$, each point in a vertex-bin
of size $2$ is matched to a point in an edge-bin of size~$3$, each
point in a vertex-bin of size at least~$3$ is matched to a point
in an edge-bin, and each point in an edge-bin of size $3$ is 
matched to a point in a vertex-bin.

%once we choose a set points $S_L$ in
%the vertex-bins of size at least~$3$ to be matched to the points in
%the edge-bins of size~$2$ and a set points $S_R$ in the edge-bins of
%size least~$3$ to be matched to the points in the vertex-bins of
%size~$2$ for the kernel-configuration, the matching from the points in
%the $S_L$ and the points in the edge-bins of size~$2$ has uniform
%distribution, the matching from the points in the $S_R$ and the points
%in the vertex-bins of size~$2$ has uniform distribution, the matching
%from the remaining points in the vertex-bins of size at least $3$ and
%the remaining points in the edge-bins of size~$3$ has uniform
%distribution. Also, in this last matching, the number of remaining
%points in the vertex-bins (and edge-bins) is $\Tthree+\mtwoprime(1)$,
%where $\mtwoprime(1)$ is the number of edge-bins  as described in
%Step~$2$.

Here we describe a new model to generate structures as the one
obtained by the process above. Let and $\ts \in\set{3,4,5}^{N}$ and let
$\ts'\in\set{3,4,5}^{N'}$. Let $L\leq \sum_{i}t_i/2$ and $L'\leq
\sum_{i}t_i'/2$ be such that $\sum_{i} t_i-2L = \sum_{i} t_i'-2L'
\eqdefinv K$. Let $B(\ts,\ts',L,L')$ be generated as follows. In each
step, every choice is made \uar:
\begin{enumerate}
\item(\textit{Left-bins}) For each $i\in[N]$, create one bin/set with $t_i$ points in
  it. We call these bins \textdef{left-bins}.
\item(\textit{Right-bins}) For each $i\in[N']$, create one bin/set with $t_i'$ points in
  it. We call these bins \textdef{right-bins}.
\item(\textit{Left-connectors}) Create $L$ bins with $2$ points inside
  each. We call these bins \textdef{left-connectors}.
\item(\textit{Right-connectors}) Create $L'$ bins with $2$ points
  inside each. We call these bins \textdef{right-connectors}.
\item(\textit{Matching}) Choose a perfect matching such that each
  point in a left-connector is matched to a point in a left-bin, each
  point in a right-connector is matched to a point in a right-bin,
  each point in a left-bin is either matched to a point in
  a left-connector or in a right-bin, and each point in a right-bin is
  either matched to a point in a right-connector or in a left-bin.  The
  edges in the matching from points in right-bins to points in
  left-bins are called \textdef{across-edges}.
\end{enumerate}
In the structure we obtained from the kernel-configuration,
vertex-bins of size at least~$3$ have the same role as the left-bins,
edge-bins of size~$3$ have the same role as the right-bins,
vertex-bins of size~$2$ have the same role as the right-connectors,
and edge-bins of size~$2$ have the same role as the left-connectors.
See Figure~\ref{fig:proc-left-right-hyper}.

We will prove that $B(\ts,\ts',L,L')$ with $K\to \infty$ is connected
\aas
\begin{lem}
  \label{lem:connected-new-hyper}
  Let and $\ts \in\set{3,4,5}^{N}$ and let
  $\ts'\in\set{3,4,5}^{N'}$. Let $L\leq \sum_{i}t_i/2$ and $L'\leq
  \sum_{i}t_i'/2$ be such that $\sum_{i} t_i-2L = \sum_{i} t_i-2L'
  \eqdefinv K$. If $K\to\infty$, then $B(\ts,\ts',L,L')$ is connected \aas
\end{lem}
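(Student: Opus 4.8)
The plan is to reduce the claim to the connectivity of a single random multigraph and then run a first--moment argument over potential unions of connected components, handling small and intermediate component sizes separately.

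First I would observe that $B(\ts,\ts',L,L')$ is connected if and only if the multigraph $H$ obtained from it by suppressing every connector is connected: a left--connector whose two points lie in left--bins $u$ and $v$ becomes an edge $uv$ of $H$ (a loop when $u=v$), and similarly for right--connectors, so $H$ has the $N+N'$ bins as its vertex set, every vertex degree in $\{3,4,5\}$, and $L+L'+K$ edges; suppressing a degree--$2$ vertex does not change connectivity. Since $K\le\sum_i t_i\le 5N$ and $K\le\sum_i t_i'\le 5N'$, the hypothesis $K\to\infty$ forces $N,N'\to\infty$. For the probability estimates I would use that the random matching defining $B$ splits into independent uniform pieces: a uniform $2L$--element subset of the left--bin points together with a uniform perfect matching of it onto the left--connector points, the analogous data on the right, and a uniform perfect matching of the remaining $K$ left--bin points onto the remaining $K$ right--bin points. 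This is a mild variant of the configuration model, to which the usual double--factorial estimates apply.

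To prove $H$ connected \aas\ I would bound the expected number of sets $W$ of bins that form a union of connected components of $H$ with $1\le|W|\le (N+N')/2$; by complementation this is enough. For a fixed such $W$, the event ``$W$ is closed'' forces the across--points of $W$ on the two sides to be equal in number and matched to each other, and the connector points used by $W$ to be matched inside $W$; its probability is at most a product of ratios of binomials and double factorials that decays geometrically in $D_W:=\sum_{v\in W}\deg_H(v)$. In the small regime, $|W|=j\le\eps(N+N')$ for a small constant $\eps$, I would combine this with $D_W\ge 3j$ and the bound $\binom{N+N'}{j}$ on the number of choices of $W$; the resulting sum over $j$ is dominated by its first terms and is $o(1)$. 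The smallest cases are automatic (a lone bin of degree $3$ or $5$ cannot self--close, while one of degree $4$ would need two loops, which happens with probability $O(1/(N+N'))$), and loops or repeated edges only decrease the counts.

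The main obstacle is the intermediate regime $\eps(N+N')\le j\le (N+N')/2$, where $D_W\ge 3j$ alone is too weak (the corresponding term in the sum need not be small). Here I would follow the approach of \Luczak~\cite{LucA}: instead of merely using that $W$ is closed, exploit that each of its components is connected, so that the number of internal configurations on $W$ is much smaller than the number of all configurations with the same degrees --- quantitatively, after fixing a spanning forest of the components of $W$ one counts the remaining matchings, and the resulting entropy saving more than compensates the factor $\binom{N+N'}{j}$. In addition, since the across--edges form a uniform matching between the two sides, a vanishing cut between $W$ and its complement is a large--deviation event and contributes a further exponentially small factor. Carrying out this computation with the degrees confined to $\{3,4,5\}$ and with the extra bookkeeping forced by the two--sided bin/connector structure (and by the matching constraints already used in the proof of Lemma~\ref{lem:number-kernel-hyper}) is where the bulk of the work lies; once each $j$ in this range is shown to contribute $o(1/(N+N'))$, summing over $j$ and combining with the small regime gives that $H$, and hence $B(\ts,\ts',L,L')$, is connected \aas{}, as required.
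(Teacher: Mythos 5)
Your overall framework matches the paper's: a first--moment argument over potential isolated sets, with small and intermediate sets handled by different arguments, and with the probability of isolation computed exactly from the product structure of the matching. The paper even works directly with pairs $(A,A')$ of left-- and right--bins (together with the parameters $P,S$ and $P',S'$ counting points matched to connectors) rather than passing to a suppressed multigraph $H$, but this is just presentation. Where you genuinely diverge is the intermediate regime. You propose to handle it \`a la \Luczak: restrict attention to $W$ being a connected component (not merely a union of components --- this matters, since the spanning--tree saving degrades when $W$ has many pieces), fix a spanning tree, and exploit the entropy deficit of connected configurations, plus a large--deviation bound for the across--cut. The paper instead never uses connectivity of the isolated set at all. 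It writes the isolation probability as ${\binom{L}{S/2}\binom{L'}{S'/2}\binom{K}{r}}\big/{\binom{Q}{P}\binom{Q'}{P'}}$ with $r=P-S=P'-S'$ and then kills the troublesome middle factor via the elementary inequality $\binom{K}{r}\le\alpha\sqrt{K}\binom{\lceil K/2\rceil}{\lceil r/2\rceil}\binom{\lfloor K/2\rfloor}{\lfloor r/2\rfloor}$, which lets the two $K/2$--binomials be absorbed into the denominators $\binom{Q}{P}$ and $\binom{Q'}{P'}$ separately; combined with $\binom{N}{n}\le\binom{Q/3}{P/3}$ this gives a geometric decay $\binom{Q/6-u(K)}{P/6-u(r)}^{-1}$ uniformly over the whole intermediate range. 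That route avoids having to count connected configurations in a two--sided, constrained pairing model, which is exactly the piece you flag as ``where the bulk of the work lies'' and leave unexecuted; adapting \Luczak's argument to this structure (two bin classes, prescribed numbers $L,L',K$ of within-- and across--edges, degree set $\{3,4,5\}$) is plausible but nontrivial, so the paper's purely combinatorial estimate is the cleaner route here. Your argument also needs $N,N'\to\infty$ for the small--set case, which you correctly deduce from $K\to\infty$; the paper uses the same observation implicitly in its Case~2, where $P\le C$ and $Q'-P'\le C$ is shown incompatible with $K\to\infty$.
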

Before presenting the proof for this lemma, we explain how to prove
Lemma~\ref{lem:connected-pre-hyper} assuming
Lemma~\ref{lem:connected-new-hyper} holds.  In the structure obtained
from the kernel-configuration, the number of points from vertex-bins
of size at least $3$ (which corresponds to left-bins) that are matched
to points in edge-bins of size $3$ (which corresponds to right-bins)
is $\Tthree+\mtwoprime(1)$, where $\mtwoprime(1)$ is the number of
edge-bins  as described in Step~$2$ of the procedure. In order to
use Lemma~\ref{lem:connected-new-hyper} to conclude that the
kernel-configuration is connected \aas\ (and thus proving
Lemma~\ref{lem:connected-pre-hyper}), it suffices to show that
$\mtwoprime(1)\to \infty$ \aas\ (which ensures that the condition 
$K\to\infty$
is satisfied).

Let $U$ be the set of points in vertex-bins of size~$2$ that will be
matched to points in edge-bins of size~$2$. (See Step~3 in the proof
of Lemma~\ref{lem:number-kernel-hyper}.)  There are
\begin{equation*}
  \binom{2\mtwoprime}{\kone}\kone!
\end{equation*}
ways of matching the points in $U$ to points in edge-bins of
size~$2$. For every edge-bin $i$ of size~$2$, let $X_i$ be the
indicator random for the event that $i$ has both of its points matched
to points in~$U$.  For $x\in S_{\psi}^*$, we have that $\mtwoprime\sim
\kone$ and so
\begin{align*}
  \prob{X_i = 1} =\frac{\displaystyle \binom{\kone}{2}2! 
    \binom{2\mtwoprime-2}{\kone-2}(\kone-2)!}
  {\displaystyle\binom{2\mtwoprime}{\kone}\kone!}\sim \frac{1}{4},\\
  \prob{X_i =1, X_j=1} =
\frac{\displaystyle \binom{\kone}{4}4! 
  \binom{2\mtwoprime-4}{\kone-4}(\kone-4)!}
{\displaystyle\binom{2\mtwoprime}{\kone}\kone!}\sim
\frac{1}{16},\text{ for }i\neq j,
\end{align*}
and so $\mean{\sum_i X_i}\sim \mtwoprime/4$ and $\var{\sum_i X_i} =
o(\mean{\sum_i X_i}^2)$. Thus, by Chebyshev's inequality, 
\begin{equation*}
  \prob[\Big]{{\textstyle\abs[\Big]{\sum_i X_i-\mean[\big]{\sum_i X_i}}
    }\geq {\textstyle t\mean[\big]{\sum_i X_i}}}=
  \frac{o(1)}{t^2}
\end{equation*}
and so we can choose $t$ going to $0$ sufficiently slowly so that
$\mtwoprime(2) = \sum_i X_i \sim \mtwoprime/4$ \aas{} Similarly,
$\mtwoprime(0) = \sum_i X_i \sim \mtwoprime/4$ \aas{} Thus,
\begin{equation*}
  \mtwoprime{(1)}\geq (1+o(1))\frac{\mtwoprime}{2}\to\infty
\end{equation*}
since $x\in S_{\psi}^*$.

We finish this section by presenting the proof for
Lemma~\ref{lem:connected-new-hyper}.
\begin{proof}[Proof of Lemma~\ref{lem:connected-new-hyper}]
  Let $Q = \sum_i t_i$ and let $Q' =\sum_{i} t_i'$. The number of
  choices for the matching in Step~5 is
\begin{equation*}
  \binom{Q}{2L}(2L)!
  \binom{Q'}{2L'}(2L')!
  K!
  = \frac{Q!Q'!}{K!}.
\end{equation*}
Let $A$ be a set of left-bins with $P$ points of which $S$ points are
matched to a set of left-connectors (covering all points in these
left-connectors). Similarly, let $A'$ be a set of right-bins with $P'$
points of which $S'$ points are matched to a set of right-connectors. Note
that $S$ and $S'$ must be even numbers. We compute the number of
configurations such that $A,A'$ form a connected component with
$r\eqdef P-S=P'-S'$ across-edges:
\begin{equation*}
  \begin{split}
    &\left( \binom{L}{S/2}\binom{P}{S} S! \binom{Q-P}{2L-S} (2L-S)! \right)\\
    \quad&\times
    r!(K-r)!\\
    \quad&\times
    \left( \binom{L'}{S'/2}\binom{P'}{S'} S'! \binom{Q'-P'}{2L'-S'} (2L'-S')! \right)
  \end{split}
\end{equation*}
Thus, the probability that $A,A'$ form a connected component (with
parameters $S,S'$) is exactly
\begin{equation*}
  \frac{{\displaystyle \binom{L}{S/2}\binom{L'}{S'/2}\binom{K}{r}}}
  {{\displaystyle\binom{Q}{P}\binom{Q'}{P'}}}.
\end{equation*}
So we want to bound the summation:
\begin{equation}
  \label{eq:sumprob_connected}
  \sum_{\substack{(P,S,n)\\(P',S',n')}}
  \sum_{(A,A')}
  \frac{{\displaystyle \binom{L}{S/2}\binom{L'}{S'/2}\binom{K}{r}}}
  {{\displaystyle\binom{Q}{P}\binom{Q'}{P'}}}
\end{equation}
where the second summation is over the pairs $(A,A')$ where $A$ is a
set of $n$ left-bins with $P$ points and $S$ points matched to
left-connectors and $A'$ is a set of $n'$ right-bins with $P'$ points
and $S'$ points matched to right-connectors; and $r = P-S =
P'-S'$.  Let $C$ be an integer constant to be determined later.

First consider the case where
\begin{align*}
  P\leq C &\text{ and }P'\leq C,\\
  &\text{or}\\
  Q-P\leq C &\text{ and }Q'-P'\leq C.
\end{align*}
We only need to check one of the options above because if $A\cup A'$
is disconnected from the rest of the graph the same is true for the
$\overline{A}\cup\overline{A}'$ where $\overline{A}$ is the complement
of $A$ in the set of left-bins and $\overline{A'}$ is the complement
of $A'$ in the set of right-bins. So let us assume $P\leq C$ and
$P'\leq C$. Then the number of choices for $(P,S,n)$ and $(P',S',n')$
is $O(1)$. Moreover, there are at most $\binom{N}{n}$ choices for $A$
and $\binom{N'}{n'}$ choices for $A'$, where $N$ is the number of
left-bins and $N'$ is the number of right-bins. Then the summation
in~\eqref{eq:sumprob_connected} for this case is at most
\begin{equation*}
  \begin{split}
    \frac{{\displaystyle \binom{L}{S/2}\binom{L'}{S'/2}
        \binom{K}{r}\binom{N}{n}\binom{N'}{n'}}}
    {{\displaystyle\binom{Q}{P}\binom{Q'}{P'}}}
    &=O\left(\frac{L^{S/2}(L')^{S'/2}K^rN^n (N')^{n'}}{Q^P(Q')^{P'}}
    \right)
    \\
    &=O\left(\frac{1}{Q^{P-S/2-r/2-n}(Q')^{P'-S'/2-r/2-n'}}
  \right)
  \\
  &=O\left(\frac{1}{Q^{P/6}(Q')^{P'/6}}
  \right)
  =
  o(1),
  \end{split}
\end{equation*}
since $P-S/2-r/2-n \geq P-S/2-(P-S)/2-P/3 = P/6$ (and similarly for
$P'-S'/2-r/2-n'$) and $P$ or $P'$ is at least $1$.

Now consider the case where
\begin{align*}
  P\leq C &\text{ and }Q'-P'\leq C,\\
  &\text{or}\\
  Q-P\leq C &\text{ and }P'\leq C.
\end{align*}
If $P \leq C$ and $Q'-P'\leq C$. Then $r=P-S\leq C$ and $r =
P'-S'\geq P'-2L' \geq Q-C-2L' =K-C$, which is impossible since
$K\to\infty$ and $C=O(1)$.

Finally consider the case
\begin{align*}
  P\geq C &\text{ and }P'\geq C,\\
  &\text{or}\\
  Q-P\geq C &\text{ and }Q'-P'\geq C.
\end{align*}
Using Stirling's approximation\crisc{removed reference to appendix},
there is a positive constant $\alpha$ such that
\begin{equation*}
  \frac{\displaystyle \binom{K}{r}}
  {\displaystyle
    \binom{\ceil{K/2}}{\ceil{r/2}}\binom{\floor{K/2}}{\floor{r/2}}}
  \leq
  \alpha\sqrt{K}.
\end{equation*}
Thus, for $P$ and $P'$ in this range, 
\begin{equation*}
  \begin{split}
    &\sum_{\substack{(P,S,n)\\(P',S',n')}}
    \sum_{(A,A')}
    \frac{{\displaystyle \binom{L}{S/2}\binom{L'}{S'/2}\binom{K}{r}}}
    {{\displaystyle\binom{Q}{P}\binom{Q'}{P'}}}
    \leq
    \alpha
    \sum_{\substack{(P,S,n)\\(P',S',n')}}
    \sum_{(A,A')}
  \frac{{\displaystyle \binom{L}{S/2}\binom{L'}{S'/2}
      \sqrt{K}\binom{\ceil{K/2}}{\ceil{r/2}}\binom{\floor{K/2}}{\floor{r/2}}}}
  {{\displaystyle\binom{Q}{P}\binom{Q'}{P'}}}\\
  \\
  &\leq
    \alpha
    \sum_{\substack{(P,S,n)\\(P',S',n')}}
  \frac{{\displaystyle \binom{N}{n}\binom{N'}{n'} \binom{L}{S/2}\binom{L'}{S'/2}
      \sqrt{K}\binom{\ceil{K/2}}{\ceil{r/2}}\binom{\floor{K/2}}{\floor{r/2}}}}
  {{\displaystyle\binom{Q}{P}\binom{Q'}{P'}}}\\
  &\leq
  \alpha 
  \sum_{\substack{(P,S,n)\\(P',S',n')}}
  \frac{\displaystyle\binom{N}{n}\binom{N'}{n'} \displaystyle  
    \sqrt{K}}
  {{\displaystyle\binom{Q-L-\ceil{K/2}}{P-S/2-\ceil{r/2}}
      \binom{Q'-L'-\floor{K/2}}{P'-S'/2-\floor{r/2}}}}\\
  &=
  \alpha 
  \sum_{\substack{(P,S,n)\\(P',S',n')}}
  \frac{\displaystyle  
    \binom{N}{n}\binom{N'}{n'} \sqrt{K}}
  {{\displaystyle\binom{Q/2-u(K)}{P/2-u(r)}
      \binom{Q'/2-d(K)}{P'/2-d(r)}}},
\end{split}
  \end{equation*}
  where $u(x) := \ceil{x/2}-x/2$ and $d(x) := x/2-\floor{x/2}$. Note
  that, for $P'\leq Q'/2$,
  \begin{equation*}
    \frac{\displaystyle \binom{N'}{n'}}
    {\displaystyle \binom{Q'/2-d(K)}{P'/2-d(r)}}
    \leq
    \frac{\displaystyle \binom{Q'/3}{P'/3}}
    {\displaystyle \binom{Q'/2-d(K)}{P'/2-d(r)}}
    \leq
    \frac{1}
    {\displaystyle \binom{Q'/6-d(K)}{P'/6-d(r)}}
    \leq 1,
  \end{equation*}
  and for $P'\geq Q'/2$
  \begin{equation*}
    \begin{split}
      \frac{\displaystyle \binom{N'}{n'}}
    {\displaystyle \binom{Q'/2-d(K)}{P'/2-d(r)}}
    &=
    \frac{\displaystyle \binom{N'}{N'-n'}}
    {\displaystyle \binom{Q'/2-d(K)}{Q'/2-P'/2-d(K)+d(r)}}
    \leq
    \frac{\displaystyle \binom{Q'/3}{Q'/3-P'/3}}
    {\displaystyle \binom{Q'/2-d(K)}{Q'/2-P'/2-d(K)+d(r)}}
    \\
    &\leq
    \frac{1}
    {\displaystyle \binom{Q'/6-d(K)}{Q'/6-P'/6-d(K)+d(r)}}
    \leq 1,
    \end{split}
  \end{equation*}
  Thus, for $P'\leq Q'$,
  \begin{equation}
    \label{eq:aux1-conn-hyper}
    \frac{\displaystyle \binom{N'}{n'}}
    {\displaystyle \binom{Q'/2-d(K)}{P'/2-d(r)}}
    \leq
    1.
  \end{equation}
  For $C\leq P\leq \beta \log Q$,
  \begin{equation*}
  \frac{\displaystyle  
    \binom{N}{n}}
  {{\displaystyle\binom{Q/2-u(K)}{P/2-d(r)}
      }}
  \leq
  \frac{\displaystyle  
    \binom{Q/3}{P/3}}
  {{\displaystyle\binom{Q/2-u(K)}{P/2-u(r)}
    }}
  \leq
    \binom{Q/6-u(K)}{P/6-u(r)}^{-1}
    =O\left( \frac{Q}{\beta\log Q}\right)^{-P/6+u(r)}
  \end{equation*}
  and so by choosing $C$ big enough and
  using~\eqref{eq:aux1-conn-hyper}
  \begin{equation*}
    \sum_{\substack{(P,S,n)\\(P',S',n')\\C\leq P\leq \beta \log Q}}
    \frac{\displaystyle  
      \binom{N}{n}\binom{N'}{n'} \sqrt{K}}
    {{\displaystyle\binom{Q/2-u(K)}{P/2-u(r)}
      \binom{Q'/2-d(K)}{P'/2-d(r)}}}
    \leq Q^{11/2}\log Q \cdot O\left(\frac{\beta \log Q}{Q}\right)^6
    = o(1).
    \end{equation*}  
  The range $Q-\beta\log Q\leq P\leq Q-C$ can be treated similarly.

There exists a constant $\gamma>0$ such that, for $\beta\log Q\leq P \leq Q/2$, 
    \begin{equation*}
  \frac{\displaystyle  
    \binom{N}{n}}
  {{\displaystyle\binom{Q/2-u(K)}{P/2-u(r)}
    }}
  \leq
  \frac{\displaystyle  
    \binom{Q/3}{P/3}}
  {{\displaystyle\binom{Q/2-u(K)}{P/2-u(r)}
    }}
  \leq
  \binom{Q/6-u(K)}{P/6-u(r)}^{-1}
      =O(\gamma^{P/6-u(r)}),
    \end{equation*}
    and so, by~\eqref{eq:aux1-conn-hyper},
    \begin{equation*}
      \sum_{\substack{(P,S,n)\\(P',S',n')\\\beta\log Q\leq P\leq Q/2}}
      \frac{\displaystyle  
        \binom{N}{n}\binom{N'}{n'} \sqrt{K}}
      {{\displaystyle\binom{Q/2-u(K)}{P/2-u(r)}
          \binom{Q'/2-d(K)}{P'/2-d(r)}}}
      \leq
      Q^{13/2} \cdot O\left(\gamma^{
          \beta\log N}\right)
      = o(1),
    \end{equation*}
    for sufficiently large constant $\beta$. The range $Q/2\leq P\leq
    Q-\beta\log Q$ can be treated similarly. The same argument works
    for $(P',S',n')$ and $Q'$. We are done because $P\leq Q-C$ or
    $P'\leq Q'-C$ (otherwise, it falls in a case that has already been
    treated).
\end{proof}

\subsection{Proof of Theorem~\ref{thm:formula-pre-hyper}}
  In this section we obtain an asymptotic formula
for the number of connected \prekernels\ with vertex set $[n]$ and
$m=n/2+R$ edges, when $R=\omega(n^{1/2} \log^{3/2} n)$ and
$R=o(n)$. The complete proof is contained in this section together
with Sections~\ref{sec:partial-prek-hyper},~\ref{sec:max-pre-hyper}
and~\ref{sec:approx-tail-pre-hyper}, in which we prove some lemmas we
state in this section. This proves
Theorem~\ref{thm:formula-pre-hyper}.

We rewrite the conditions defining $S_m\subseteq \setR^4$. We have
that $(\nn,\kzero,\kone,\ktwo)\in S_m$ if  all of the following
conditions are satisfied:
\begin{itemize}
\item[(C1)] $\nn,\kzero,\kone,\ktwo\geq 0$;
\item[(C2)] $\Ttwo \geq 0$ (equivalently, $2\nn-2\kzero-\kone\geq 0$);
\item[(C3)] $\Tthree \geq 0$; (equivalently, $3\nn+\kone+2\ktwo\leq 3m$);
\item[(C4)] $\Qthree \geq 3\nthree\geq 0$ (equivalently,
  $\kzero-\kone-\ktwo \leq 3m-n$ and $\nn-\kzero-\kone-\ktwo\leq n$);
\item[(C5)] $\Qthree = 0$ whenever $\nthree=0$.
\end{itemize}

For $x = (\nn, \kzero, \kone, \ktwo) \in S_m$, let
\begin{equation}
  \label{eq:wpre-def-hyper}
  \wpre(x)
  =
  \begin{cases}
    {\displaystyle\frac{\Pthree!\Ptwo!\Qthree!(\mtwo-1)!}
    {\kzero!\kone!\ktwo!
      \nthree!\mthree!
      \Tthree!\Ttwo!
      (\mtwoprime-1)!\mtwoprime!
      2^{\ktwo} 2^{\mtwoprime} 6^{\mthree}}
      \frac{\fff(\lambda)^{\nthree}}{\lambda^{\Qthree}}},&\text{if
      }\Qthree > 3\nthree;\\
      {\displaystyle\frac{\Pthree!\Ptwo!\Qthree!(\mtwo-1)!}
      {\kzero!\kone!\ktwo!
      \nthree!\mthree!
      \Tthree!\Ttwo!
      (\mtwoprime-1)!\mtwoprime!
      2^{\ktwo} 2^{\mtwoprime} 6^{\mthree}}
      \frac{1}{6^{\nthree}}},&\text{otherwise.}
  \end{cases}
\end{equation}

%Recall that $\fpre$ was defined only in points of $\preS$ such that
%$\prenn,\prekzero,\prekone, \prektwo, \preTtwo, \preTthree,
%\preQthree, \prenthree$ are strictly positive (and $\preQthree >
%3\prenthree$). We extend the definition of $\fpre$ for the other
%points of $\preS$ as the limit of the function of any sequence of points
%is $\preS$ converging to these points. We discuss later the value of
%$\fpre$ in these boundary points, why the limit is unique and how it
%relates to $\wpre$.

Recall that $\prexopt =
(\prennopt,\prekzeroopt,\prekoneopt,\prektwoopt)$ is defined as
\begin{align*}
  &\prennopt = \frac{3\mpre}{g_2(\lambdaopt)},
  &\prekzeroopt = \frac{3\mpre}{g_2(\lambdaopt)}
                  \frac{2\lambdaopt}{f_1(\lambdaopt)g_1(\lambdaopt)},\\
  &\prekoneopt  = \frac{3\mpre}{g_2(\lambdaopt)}
                   \frac{2\lambdaopt}{g_1(\lambdaopt)},
  &\prektwoopt =\frac{3\mpre}{g_2(\lambdaopt)}
                \frac{\lambdaopt f_1(\lambdaopt)}{2g_1(\lambdaopt)},\end{align*}
where $\lambdaopt = \lambdaopt(n)$ is the unique nonnegative solution of
the equation
\begin{equation*}
  \frac{\lambda \f(\lambda) \fgg(\lambda)}{\FF(2\lambda)}
  =
  3\mpre.
\end{equation*}
The existence and uniqueness of $\lambdaopt$ was discussed in
Lemma~\ref{lem:unique-pre-tools-hyper}.

We will show that $\prexopt$ is the unique point achieving the maximum
for $\fpre$ in the set $\preS_m$ and then we will expand the summation
around $\prexopt$. To determine the region where the summation will be
expanded we will analyse the Hessian of $\fpre$. Let
  \begin{equation}
    \label{eq:def-H0-T-pre-hyper}
  H_0
  =\frac{1}{36}
  \left(\begin{array}{cccc}
      33&12&15&18\\
      12&6&6&6\\
      15&6&7&8\\
      18&6&8&12
  \end{array}
\right)
\quad
\text{and}
\quad
T =
\frac{1}{30}
\left(
  \begin{array}{cccc}
    -47& -16& -11& -6\\ 
    -16&  22&  12&  2\\ 
    -11&  12& 31/3& -4/3\\
    -6&   2& -4/3& -4/3\\
  \end{array}
\right)
\end{equation}
Later we will see that the Hessian of $\fpre$ at $\prexopt$ is
$(-1/r^2) H_0 - (1/r) T + O(J)$, where $J$ denotes the $4\times 4$
matrix of all 1's. For two matrices $A,B$ of same dimensions, we say that
a matrix $A = O(B)$ if $A_{ij}= O(B_{ij})$ for all $i,j$.

Let $z_1 = (1,1,-3,0)$. Then $z_1$ is an eigenvector of
$H_0$ with eigenvalue~$0$. Let $e_i\in\setR^4$ be the vector such that
the $i$-th coordinate is~$1$ and all the others are~$0$. Let
\begin{equation*}
  B
  \eqdef
  \set[\Big]{
    x\in\setR^4 :
    x = \gamma_1 z_1 + \gamma_2 e_2 + \gamma_3 e_3 + \gamma_4 e_4,\
   |\gamma_1|\leq \delta_1 n
   \text{ and }
   |\gamma_i|\leq \delta n  \text{ for }i=2,3,4
  },
\end{equation*}
and let $\preB = \set{(\nn/n, \kzero/n, \kone/n, \ktwo/n): (\nn,
  \kzero, \kone, \ktwo) \in B}$, that is, $\preB$ is a scaled version
of~$B$.  We will choose $\delta_1$ and $\delta$ later. The set
$\xopt+B$ (this is the Minkowski sum of $\set{\xopt}$ and $B$) is the
region where we will approximate $\sum_{x} n!\exp(n\fpre(\prex))$ by
using Taylor's approximation. For this, we show that, for an
appropriate choice for $\delta_1$ and $\delta$, the set $\xopt+ B$ is
contained in $S_m$.
\begin{lem}
  \label{lem:B-in-S}
  Suppose that $\delta_1 = o(r)$ and that $\delta = o(r^2)$. Let $x\in
  B$.  For any function $F$ among $\nn(x+\xopt)$, $k_i(\xopt+x)$ for
  $i=0,1,2$, $\Qthree(\xopt+x)-3\nthree(\xopt+x)$, and the linear
  functions defined in~\eqref{eq:pre-param-def}, we have that
  $F(\xopt+x)\sim F(\xopt)$.  Moreover, $\lambda(x)\sim \lambda(\xopt)$.
\end{lem}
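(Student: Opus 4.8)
The plan is to use that each function $F$ appearing in the statement --- the coordinates $\nn, k_0, k_1, k_2$, all the affine functions listed in~\eqref{eq:pre-param-def}, and $\Qthree - 3\nthree$ --- is an \emph{affine} function of $(\nn,\kzero,\kone,\ktwo)$, so that $F(\xopt + x) = F(\xopt) + L_F(x)$, where $L_F$ denotes the homogeneous linear part of $F$. Writing $x = \gamma_1 z_1 + \gamma_2 e_2 + \gamma_3 e_3 + \gamma_4 e_4$ with $z_1 = (1,1,-3,0)$, so that $x = (\gamma_1,\, \gamma_1+\gamma_2,\, -3\gamma_1+\gamma_3,\, \gamma_4)$, I would read off $L_F(x)$ for each $F$ directly from~\eqref{eq:pre-param-def}. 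The key structural fact, and the reason $B$ is built around $z_1$, is that $z_1$ lies in the kernel of the linear parts of the three functions $\mtwoprime = \nn - \kzero$, $\Tthree = 3m - 3\nn - \kone - 2\ktwo$, and $\Qthree - 3\nthree$ (the latter having linear part $(2,1,1,1)$ in the coordinates $(\nn,\kzero,\kone,\ktwo)$); hence for these three, $L_F(x)$ involves only $\gamma_2,\gamma_3,\gamma_4$ and is $O(\delta n)$, whereas for every other $F$ in the list $L_F(x) = O(\delta_1 n + \delta n) = O(\delta_1 n)$.

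Next I would read off the magnitudes of the $F(\xopt)$ from~\eqref{eq:optpre-rel-hyper}, using that $\lambdaopt = \Theta(r) \to 0$: the functions $\nn,\kzero,\mtwo$ are $\Theta(n)$; the functions $k_1,\nthree,\Qthree,\mthree,\mtwoprime,\Ptwo,\Pthree,\Ttwo$ are $\Theta(rn)$; and $k_2,\Tthree,\Qthree-3\nthree$ are $\Theta(r^2 n)$. Combining this with the previous paragraph and the hypotheses $\delta_1 = o(r)$, $\delta = o(r^2)$: for the three critical functions, $L_F(x) = O(\delta n) = o(r^2 n) = o(F(\xopt))$; for every other function, $L_F(x) = O(\delta_1 n) = o(rn)$, which is $o(F(\xopt))$ since those $F(\xopt)$ are $\Theta(rn)$ or larger; and the same bounds apply to the coordinate functions $k_0 = \kzero$, $k_1 = \kone$, $k_2 = \ktwo$ and to $\nn$ itself. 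In all cases $F(\xopt + x) = (1+o(1)) F(\xopt)$, uniformly over $x \in B$, which is the first assertion.

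For the statement about $\lambda$ I would reduce everything to $\cthree - 3$. By~\eqref{eq:lambda-def-pre-hyper}, $\cthree(\xopt+x) - 3 = \big(\Qthree - 3\nthree\big)(\xopt+x) \big/ \nthree(\xopt+x)$, and both the numerator and the denominator were just shown to be $(1+o(1))$ times their values at $\xopt$; hence $\cthree(\xopt+x) - 3 \sim \cthree(\xopt) - 3 = \Theta(\lambdaopt) > 0$, so in particular $\lambda(\xopt+x)$ is well defined for large $n$. Expanding $\lambda \ff(\lambda)/\fff(\lambda) = 3 + \lambda/4 + O(\lambda^2)$ as $\lambda \to 0$ and inverting gives $\lambda = 4(c-3)\big(1 + O(c-3)\big)$ as $c \to 3^+$; applying this at $\xopt+x$ and at $\xopt$ (and recalling $\lambda(\xopt) = \lambdaopt$) yields $\lambda(\xopt+x) \sim 4(\cthree(\xopt+x)-3) \sim 4(\cthree(\xopt)-3) \sim \lambda(\xopt)$.

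The work here is mostly the bookkeeping in the first paragraph --- for each of the roughly ten functions in~\eqref{eq:pre-param-def}, checking whether $z_1$ is annihilated by its linear part and, if not, that the $O(\delta_1 n)$ perturbation it causes is dominated by a $\Theta(rn)$ or $\Theta(n)$ main term. I do not expect any real obstacle here; the one conceptual point is the shape of $B$: the extra room $\delta_1 \gg \delta$ is available precisely because $z_1$ does not disturb the delicate cancellations that make $\mtwoprime$, $\Tthree$, and $\Qthree - 3\nthree$ of lower order than their summands, while the constraint $\delta = o(r^2)$ is exactly what is needed to preserve those cancellations in the remaining directions.
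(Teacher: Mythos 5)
Your decomposition and overall strategy match the paper's proof exactly: write $x = \gamma_1 z_1 + \gamma_2 e_2 + \gamma_3 e_3 + \gamma_4 e_4$, use linearity, check which functions are annihilated by $z_1$, and compare the perturbations $O(\delta_1 n)$ and $O(\delta n)$ against the magnitudes of $F(\xopt)$ read off from~\eqref{eq:optpre-rel-hyper}. Your handling of $\lambda$ via $\cthree - 3 = (\Qthree - 3\nthree)/\nthree$ is slightly more explicit than the paper's terse appeal to Lemma~\ref{lem:lambda-close-hyper}, but it is the same idea and correct.

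However, there is a genuine slip in your first paragraph that, taken literally, breaks the argument. You identify the ``critical'' functions with $z_1$ in the kernel of their linear parts as $\mtwoprime$, $\Tthree$, and $\Qthree - 3\nthree$. But the functions whose value at $\xopt$ is only $\Theta(r^2 n)$ — which you correctly list in your second paragraph as $\ktwo$, $\Tthree$, $\Qthree - 3\nthree$ — are the ones that actually need the $z_1$ cancellation, and $\ktwo$ is missing from your annihilated set while $\mtwoprime$ (which is $\Theta(rn)$ and therefore survives an $O(\delta_1 n)$ perturbation anyway) is included. As your argument stands, $\ktwo$ falls into the ``every other function'' bucket where you bound $L_F(x) = O(\delta_1 n) = o(rn)$ and claim ``those $F(\xopt)$ are $\Theta(rn)$ or larger'' — which is false for $\ktwo(\xopt) = \Theta(r^2 n)$. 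The fix is immediate: $\ktwo$ has linear part $(0,0,0,1)$ and $z_1 = (1,1,-3,0)$ has fourth coordinate zero, so $\ktwo(z_1) = 0$; thus $L_{\ktwo}(x) = \gamma_4 = O(\delta n) = o(r^2 n)$. The paper's stated exceptional triple is $\ktwo$, $\Tthree$, $\Qthree - 3\nthree$, and you should replace $\mtwoprime$ by $\ktwo$ in your list (it is fine, though unnecessary, to observe that $z_1$ also annihilates $\mtwoprime$).
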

\begin{proof}
  Write $x$ as $x=\gamma_1 z_1 + \gamma_2 e_2 + \gamma_3 e_3 +
  \gamma_4 e_4$ with $|\gamma_1|\leq \delta_1$ and $|\gamma_i|\leq
  \delta$ for $i=2,3,4$.  We will show that $F(\gamma_1
  z_1)=o(F(\xopt))$ and $F(\gamma e_i)=o(F(\xopt))$ for
  $i=2,3,4$. Since $F$ is a linear function, this implies that
  $F(\xopt+x) = F(\xopt) + F(x) = F(\xopt)+o(F(\xopt))$, proving the
  first statement in the lemma.

  Using~\eqref{eq:optpre-rel-hyper}, we have that $F(\xopt) =
  \Omega(r^2n)$ for all the functions $F$ under consideration and so,
  for $i=2,3,4$, we have that $F(\gamma_i e_i) = o(r^2 n) =
  o(F(\xopt))$ since $|\gamma_i|\leq \delta n = o(r^2n)$.
  
  Using~\eqref{eq:optpre-rel-hyper}, we have that $F(\xopt) =
  \Omega(rn)$ for all $F$ under consideration except $\ktwo$,
  $\Tthree$ and $\Qthree-3\nthree$.  Since $|\gamma_1|\leq \delta_1n =
  o(rn)$, we have that $F(\gamma_1 z_1) = o(rn) = o(F(\xopt))$ for all
  $F$ under consideration, except $\ktwo$, $\Tthree$ and
  $\Qthree-3\nthree$. So let $F$ be one of the functions $\ktwo$,
  $\Tthree$ or $\Qthree-3\nthree$. Then, using $z_1 = (1,1,-3,0)$, we
  have that $F(z_1)=0$ and so
  $F(\xopt+x)=F(\xopt)$, finishing the proof of the first statement in
  the lemma.

  Since $\Qthree(\xopt+x)\sim \Qthree(\xopt)$ and
  $\nthree(\xopt+x)\sim \nthree(\xopt)$, we have that
  $\cthree(x+\xopt) = \Qthree(\xopt+x)/\nthree(\xopt+x) \sim
  \cthree(\xopt)$. Thus, since $\lambda(y)$ is defined as the unique
  solution of $\lambda \ff(\lambda)/\fff(\lambda) = \cthree(y)$, we
  have that $\lambda(x)\sim\lambda(\xopt)$ by
  Lemma~\ref{lem:lambda-close-hyper}.
\end{proof}

\begin{cor}
  \label{cor:B-in-S}
  Suppose that $\delta_1 = o(r)$ and that $\delta = o(r^2)$. Let $x\in
  B$. Then there exists $\psi = o(1)$ such that $\xopt+x\in
  S_{\psi}^*$ and $\xopt+x$ is in the interior of $S_m$.
\end{cor}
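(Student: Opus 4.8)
The plan is to verify, directly, that $\xopt+x$ meets every defining inequality of $S_{\psi}^*$ in~\eqref{eq:Spsi} and every one of the conditions (C1)--(C5) cutting out $S_m$, transporting everything from $\xopt$ by Lemma~\ref{lem:B-in-S} and then substituting the expansions~\eqref{eq:optpre-rel-hyper}.

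For the first part, recall that the functions constrained in~\eqref{eq:Spsi} are exactly $\nn$, $\kzero$, $\kone$, $\ktwo$, $\nthree$, $\Qthree$, $\mthree$, $\mtwoprime$, $\Ttwo$ and $\Tthree$, each of which Lemma~\ref{lem:B-in-S} shows satisfies $\widehat F(\xopt+x)\sim\widehat F(\xopt)$. The first thing I would check is that this $o(1)$ is uniform over $x\in B$: writing $x=\gamma_1 z_1+\gamma_2 e_2+\gamma_3 e_3+\gamma_4 e_4$ as in the definition of $B$, linearity of $F$ gives $\widehat F(\xopt+x)-\widehat F(\xopt)=\widehat F(x)$, which is $O(\delta_1)+O(\delta)$ in general and just $O(\delta)$ when $F(z_1)=0$ (as happens for $\ktwo$ and $\Tthree$); since $\widehat F(\xopt)$ is of order $r$ or $r^2$, the relative error is at most $O(\delta_1/r)+O(\delta/r^2)=o(1)$, a function of $n$ alone. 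Combining this with~\eqref{eq:optpre-rel-hyper}, which shows each $\widehat F(\xopt)$ differs from the value prescribed for $F$ in~\eqref{eq:Spsi} by an additive $O(r^2)$ or $O(r^3)$ — small compared with that value, which is of order $r$ or $r^2$ — I would obtain for each such $F$ a function $\varepsilon_F=\varepsilon_F(n)=o(1)$, not depending on $x$, with $\widehat F(\xopt+x)$ within $\varepsilon_F\,r$ (respectively $\varepsilon_F\,r^2$) of its prescribed value for all $x\in B$. As there are finitely many conditions, $\psi(n):=\max_F\varepsilon_F(n)$ is $o(1)$ and gives $\xopt+x\in S_{\psi}^*$.

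For the second part, $\xopt$ lies in the interior of $S_m$ by~\eqref{eq:optpre-rel-hyper}: each of $\nn(\xopt)$, $\kzero(\xopt)$, $\kone(\xopt)$, $\ktwo(\xopt)$, $\Ttwo(\xopt)$, $\Tthree(\xopt)$, $\nthree(\xopt)$ and $\Qthree(\xopt)-3\nthree(\xopt)$ is strictly positive, of order $rn$ or $r^2n$. By Lemma~\ref{lem:B-in-S} the same quantities at $\xopt+x$ are then also strictly positive, which makes all the inequalities in (C1)--(C4) strict at $\xopt+x$, while (C5) holds there automatically because $\nthree(\xopt+x)>0$. Each of these conditions persists under a small perturbation — the linear ones since the inequalities are strict, and (C5) since it only restricts points with $\nthree=0$ — so a whole neighbourhood of $\xopt+x$ is contained in $S_m$; that is, $\xopt+x$ is in the interior of $S_m$.

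The only genuine work is in the first part: establishing uniformity of the $o(1)$ over $x$ (exactly the bookkeeping indicated above, using $F(z_1)=0$ for $\ktwo,\Tthree$ together with the hypotheses $\delta_1=o(r)$ and $\delta=o(r^2)$) and correctly matching each $\widehat F(\xopt)$ against the leading term dictated by~\eqref{eq:optpre-rel-hyper}. Both are routine; everything else is an immediate consequence of Lemma~\ref{lem:B-in-S} and~\eqref{eq:optpre-rel-hyper}.
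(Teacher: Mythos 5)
Your overall approach — push the asymptotics of Lemma~\ref{lem:B-in-S} (with a check that the $o(1)$ is uniform over $x\in B$) through the definitions of $S_{\psi}^*$ and of $S_m$ — is the same route the paper takes, and your treatment of the interior claim (conditions (C1)--(C5) hold with slack $\Omega(r^2 n)\to\infty$ at $\xopt+x$) is fine. The remark about uniformity of the $o(1)$ in Lemma~\ref{lem:B-in-S} is a worthwhile addition, and your bookkeeping there is correct.

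However, there is a concrete error in your first part, and it is exactly where you claim the work is ``routine.'' You assert that \eqref{eq:optpre-rel-hyper} shows each $\widehat F(\xopt)$ differs from its target in \eqref{eq:Spsi} by $O(r^2)$ or $O(r^3)$, ``small compared with that value, which is of order $r$ or $r^2$.'' That is true for eight of the ten coordinates, but \emph{not} for the $\prenn$ and $\prekzero$ constraints. There the target is the constant $\tfrac12$ (order $1$), not order $r$, and from \eqref{eq:optpre-rel-hyper} with $\lambdaopt=12r+O(r^2)$ one has $\prennopt - \tfrac12 = -r + O(r^3)$ and $\prekzeroopt - \tfrac12 = -7r + O(r^2)$, both of which are $\Theta(r)$. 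Consequently $|\prenn(\xopt+x) - \tfrac12| = r + o(r)$, which is \emph{not} $\le \psi r$ for any $\psi = o(1)$: with \eqref{eq:Spsi} read literally, not even $\xopt$ itself lies in $S_{\psi}^*$ for $\psi=o(1)$, so the corollary as stated is vacuous. Your $\varepsilon_F(n)$ construction produces $\varepsilon_{\nn}\to 1$ and $\varepsilon_{\kzero}\to 7$, not $o(1)$. This is almost certainly a typo in \eqref{eq:Spsi} (the intended targets should be the actual leading terms $\tfrac12-r$ and $\tfrac12-7r$, or the bounds should be loosened, e.g.\ to $\le 2r$ and $\le 8r$, since nothing later uses finer precision for those two coordinates), and the paper's own one-line proof glosses over the same discrepancy — but your write-up asserts the $O(r^2)$ matching for all ten constraints, which is false for two of them, so the gap needs to be acknowledged and the definition of $S_{\psi}^*$ adjusted for the argument to close.
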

\begin{proof}
  Recall that $S_\psi^*$ is defined
  in~\eqref{eq:Spsi}. Lemma~\ref{lem:B-in-S} and the definition of
  $S_\psi^*$ immediately imply the first part of the conclusion.

  We check whether $\xopt+x$ satisfies the conditions (C1)--(C5)
  strictly. We have that $\xopt$ satisfies the constraints (C1)--(C4)
  with slack $\Omega(r^2n)$ by~\eqref{eq:optpre-rel-hyper} and recall
  that $r^2n\to \infty$. By Lemma~\ref{lem:B-in-S}, we have that
  $\xopt+x$ also satisfies all the constraints (C1)--(C4) with slack
  $\Omega(r^2n)$.

  It remains to check (C5). We have that $\nthree(\xopt+x)\sim
  \nthree(\xopt) = \Omega(rn)=\omega(1)$ and so (C5) is satisfied
  strictly. We conclude that $\xopt+x$ is in the interior of $S_m$.
\end{proof}

The following lemmas are the main steps in the proof of
Theorem~\ref{thm:formula-pre-hyper}. We show that $\prexopt$ is the
unique maximum for $\fpre$ in $\preS$ and compute a bound for any
other local maximum.
\begin{lem}
  \label{lem:max-pre-hyper}
  The point $\prexopt =
  (\prennopt,\prekzeroopt,\prekoneopt,\prektwoopt)$ is the unique
  maximum for $\fpre$ in $\preS_m$ and
  \begin{equation*}
    \begin{split}
      \fpre(\prexopt) =   
      2r\ln n -4r\ln r &+ 
      \left(
        -\frac{2}{3}\ln(2)-\frac{1}{3}\ln(3)+\frac{1}{3}\right)
      \lambdaopt
      \\
      &+
      \left(
        -\frac{2}{9}\ln(2)-\frac{1}{9}\ln(3)+\frac{7}{36}
      \right) (\lambdaopt)^2
      +O((\lambdaopt)^3).
    \end{split}
  \end{equation*}
  Moreover, there exists a constant $\beta <
  -(2/9)\ln(2)-(1/9)\ln(3)+(7/36)$ such that any other
  local maximum in $\preS_m$ has value at most
  \begin{equation*}
    2r\ln n -4r\ln r +  
      \left(
        -\frac{2}{3}\ln(2)-\frac{1}{3}\ln(3)+\frac{1}{3}\right)
      \lambdaopt
      +
      \beta (\lambdaopt)^2.
  \end{equation*}
\end{lem}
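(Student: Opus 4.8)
The plan is to mimic the one-dimensional argument of Lemma~\ref{lem:max-core}, now in the four-dimensional region $\preS_m$. The first step is to compute the gradient of $\fpre$ at an interior point $\prex=(\prenn,\prekzero,\prekone,\prektwo)$. Every argument of $\hpre$ in~\eqref{eq:fpre-def-hyper} is a linear function of $x$, and since $\hpre(y)=y\ln(yn)-y$ has $\hpre'(y)=\ln(yn)$, all those terms differentiate immediately by the chain rule; for the remaining pair $\prenthree\ln\fff(\lambda)-\preQthree\ln\lambda$ the identity~\eqref{eq:difdeg} (applied with $t=\prenthree$, $T=\preQthree$, $k=3$) shows that differentiation with respect to any coordinate simply produces $(\partial\prenthree/\partial x_j)\ln\fff(\lambda)-(\partial\preQthree/\partial x_j)\ln\lambda$, i.e.\ $\lambda$ may be treated as constant inside the logarithms. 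The result is a system of four ``log-linear'' stationarity equations, one per coordinate, each setting a fixed integer combination of $\ln2$, $\ln6$, $\ln\lambda$, $\ln\fff(\lambda)$ and logarithms of the quantities in~\eqref{eq:pre-param-def} equal to zero.

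Next I would check that $\prexopt$ of~\eqref{eq:pre-param-opt}, together with $\lambda^*=\lambda(\xopt)$ (so $\lambda^*\ff(\lambda^*)/\fff(\lambda^*)=\cthree(\xopt)$) and with $\lambdaopt$ the solution of~\eqref{pre-m-opt}, satisfies these four equations. After exponentiating, each equation becomes a multiplicative identity among the functions in~\eqref{eq:pre-param-def} at $\prexopt$; substituting~\eqref{eq:pre-param-opt} and using~\eqref{eq:pre-r-opt} to eliminate $r$ reduces each to an elementary identity in $\lambdaopt$ (checked with Maple; cf.~\cite[Appendix~A]{Sato13}). I would then argue that $\prexopt$ is the \emph{only} interior stationary point: the four equations, modulo the defining relation $\lambda\ff(\lambda)/\fff(\lambda)=\cthree$, pin down $\prekzero,\prekone,\prektwo$ (and hence all quantities of~\eqref{eq:pre-param-def}) as the functions of $\nn$ and $\lambda$ recorded in~\eqref{eq:pre-param-opt}, and the single remaining relation is exactly~\eqref{pre-m-opt}, which has a unique positive solution by Lemma~\ref{lem:unique-pre-tools-hyper}. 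Finally, plugging~\eqref{eq:pre-param-opt} into~\eqref{eq:fpre-def-hyper} and expanding as $\lambdaopt\to0$ with~\eqref{eq:optpre-rel-hyper} and~\eqref{eq:pre-r-opt} yields $\fpre(\prexopt)=2r\ln n-4r\ln r+(-\tfrac23\ln2-\tfrac13\ln3+\tfrac13)\lambdaopt+(-\tfrac29\ln2-\tfrac19\ln3+\tfrac7{36})(\lambdaopt)^2+O((\lambdaopt)^3)$.

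To confirm that this unique interior stationary point is a maximum (and not a saddle), I would compute the Hessian of $\fpre$: differentiating the four stationarity expressions once more, using~\eqref{eq:lambda-diff-hyper} for the $\lambda$-terms, and evaluating at $\prexopt$ with~\eqref{eq:optpre-rel-hyper} gives $\nabla^2\fpre(\prexopt)=-(1/r^2)H_0-(1/r)T+O(J)$ with $H_0,T$ as in~\eqref{eq:def-H0-T-pre-hyper}. Since $H_0$ is positive semidefinite with kernel spanned by $z_1=(1,1,-3,0)$ and positive definite on $z_1^{\perp}$, while $Tz_1=(-1,-1,-1,0)$ so that $z_1^{\mathsf T}Tz_1=1>0$, the matrix $(1/r^2)H_0+(1/r)T$ is positive definite for small $r$, hence $\prexopt$ is a strict maximum. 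The remaining piece is the bound on other (necessarily boundary) local maxima: one must analyze $\fpre$ on each face of $\preS_m$ obtained by setting one of $\nn,\kzero,\kone,\ktwo$, or one of the linear functions of~\eqref{eq:pre-param-def}, to zero (or $\nthree=0$), solve the resulting constrained stationarity problem, and expand in $\lambdaopt$ to show the value is at most $2r\ln n-4r\ln r+(-\tfrac23\ln2-\tfrac13\ln3+\tfrac13)\lambdaopt+\beta(\lambdaopt)^2$ for some constant $\beta<-\tfrac29\ln2-\tfrac19\ln3+\tfrac7{36}$; the excess hypothesis (which forces $\premthree,\prenthree=\Omega(r)$) is what rules out the degenerate ``cycle-like'' faces. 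I expect this boundary analysis — several faces, each needing its own reduction and expansion — to be the main obstacle; the stationarity verification and the Hessian computation, although lengthy, are routine once~\eqref{eq:difdeg} and~\eqref{eq:lambda-diff-hyper} are in hand.
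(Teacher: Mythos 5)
Your proposal follows the same three-phase structure the paper uses (Lemmas~\ref{lem:max-interior-prek}, \ref{lem:boundary-prek}, \ref{lem:open-prek}): show $\prexopt$ is the unique interior stationary point, verify the Hessian there is negative definite, then bound $\fpre$ on the boundary of $\preS_m$. Your interior argument matches the paper's, though your Hessian verification is a bit slicker: rather than checking the signs of the leading principal minors (as the paper does), you observe that $H_0$ is positive semidefinite with null direction $z_1$ and that $z_1^{\mathsf T}Tz_1=1>0$, which is correct — but you should spell out that for a general $v=\alpha z_1+w$ with $w\perp z_1$, the cross term $(2\alpha/r)\,z_1^{\mathsf T}Tw$ must be absorbed into the $(1/r^2)w^{\mathsf T}H_0w$ and $(\alpha^2/r)$ terms via AM--GM, which works only after taking $r$ small.

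The genuine gap is the boundary analysis, which you flag as ``the main obstacle'' and then do not carry out. Two concrete issues. First, you have no analogue of the paper's Lemma~\ref{lem:aux-boundary}, which observes that at a boundary point where a linear defining form $\ell_i$ vanishes, the entropy term $-\ell_i\ln\ell_i$ has one-sided derivative $+\infty$ along any feasible direction, so such a point cannot be a local maximum unless the remaining (smooth) part of $\fpre$ is unbounded below in that direction. Without this, the number of faces to inspect explodes; the paper uses it to reduce to exactly six cases, each a one- or two-variable constrained problem solved by resultants and then expanded in $\lambdaopt$. Second, your claim that ``the excess hypothesis (which forces $\premthree,\prenthree=\Omega(r)$) \ldots rules out the degenerate `cycle-like' faces'' is not right: on the boundary $\prenthree$ can indeed vanish (this is exactly the constraint (C5)/(D1) making $\preS_m$ non-closed), and the paper handles it not via an excess argument but by Lemma~\ref{lem:open-prek}, showing $\fpre\to-\infty$ along any sequence with $\prenthree\to0$, $\preQthree\not\to 0$, and by explicit evaluation in Case~1 for $\prenthree=\preQthree=0$. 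Since the quantitative second part of the lemma — the existence of $\beta<-(2/9)\ln2-(1/9)\ln3+7/36$ — comes entirely from these boundary calculations, the proof as proposed does not establish it.
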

We then estimate the summation of $\exp\paren{ n \fpre(\prex+\prexopt)}$ over points $x\in B$
    such that $x+\xopt$ is integer.
\begin{lem}
  \label{lem:approx-pre-hyper} Suppose that $\delta_1^3=o(r/n)$ and
  $\delta_1^2=\omega(r/n)$, and $\delta^3 = o(r^4/n)$ and $\delta^2 =
  \omega(r^2/n)$. Then
  \begin{equation*}
    \sum_{\substack{x\in B\\x+\xopt \in\setZ^4}}
    \exp\paren[\Big]{ n \fpre(\prex+\prexopt)}
    \sim
    144\sqrt{3} \pi^2n^2r^{7/2}
    \exp\paren{ n \fpre(\prexopt)}.
\end{equation*}
\end{lem}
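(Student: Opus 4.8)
The plan is to apply the Laplace method to the lattice sum, expanding $\fpre$ around its maximizer. By Lemma~\ref{lem:max-pre-hyper}, $\prexopt$ is the unique maximum of $\fpre$ on $\preS_m$ and, by Corollary~\ref{cor:B-in-S}, it lies in the interior of $S_m$ together with all of $\xopt+B$ (under the stated hypotheses on $\delta_1,\delta$, which in particular force $\delta_1=o(r)$ and $\delta=o(r^2)$); hence $\nabla\fpre(\prexopt)=0$. Writing $\prex=x/n$ and Taylor expanding, one gets
\begin{equation*}
  n\fpre(\prexopt+\prex)=n\fpre(\prexopt)+\tfrac{1}{2n}\,x^{\mathsf T}Hx+E(x),
\end{equation*}
where $H$ is the Hessian of $\fpre$ at $\prexopt$, i.e.\ $H=-\tfrac1{r^2}H_0-\tfrac1rT+O(J)$ with $H_0,T$ as in~\eqref{eq:def-H0-T-pre-hyper}, and $E(x)$ gathers the cubic and higher terms. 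The first task is to show $E(x)=o(1)$ uniformly for $x\in B$: this requires bounds on the third partial derivatives of $\fpre$ near $\prexopt$ (obtained by expanding each term of~\eqref{eq:fpre-def-hyper} as a series in $\lambdaopt\to0$, as was done for cores in the proof of Theorem~\ref{thm:corevalue-hyper}), and the constraints $\delta_1^3=o(r/n)$ and $\delta^3=o(r^4/n)$ are exactly what make $nE=o(1)$ once these third-derivative estimates are inserted with $|\gamma_1|\le\delta_1 n$ and $|\gamma_i|\le\delta n$.

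With the cubic error disposed of, the next step is to pass from the sum to a Gaussian integral. The change of variables $x=\gamma_1z_1+\gamma_2e_2+\gamma_3e_3+\gamma_4e_4$ (with $z_1=(1,1,-3,0)$) is unimodular, so the shifted lattice $\setZ^4-\xopt$ has covolume $1$ in the $\gamma$-coordinates as well, and since $\tfrac1n Hx=o(1)$ on $B$ the integrand varies slowly on the unit lattice; hence
\begin{equation*}
  \sum_{\substack{x\in B\\x+\xopt\in\setZ^4}}\exp\bigl(\tfrac1{2n}x^{\mathsf T}Hx\bigr)\sim\int_{B}\exp\bigl(\tfrac1{2n}x^{\mathsf T}Hx\bigr)\,\mathrm dx.
\end{equation*}
Because $H_0z_1=0$ and (one checks $Tz_1=(-1,-1,-1,0)^{\mathsf T}$, so) $z_1^{\mathsf T}Tz_1=1$, in these coordinates the quadratic form is, to leading order, $-\tfrac1{2nr}\gamma_1^2-\tfrac1{2nr^2}(\gamma_2,\gamma_3,\gamma_4)\,H_0'\,(\gamma_2,\gamma_3,\gamma_4)^{\mathsf T}$, where $H_0'$ is the positive-definite lower-right $3\times3$ block of $H_0$; thus the standard deviations are $\Theta(\sqrt{rn})$ in the $\gamma_1$-direction and $\Theta(r\sqrt n)$ in the others. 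Since $B$ has half-widths $\delta_1n=\omega(\sqrt{rn})$ and $\delta n=\omega(r\sqrt n)$, the Gaussian mass outside $B$ is negligible, and $\int_B$ may be replaced by $\int_{\setR^4}$. (A four-dimensional version of Lemma~\ref{lem:integral-tools} handles both the Riemann-sum approximation and the tail truncation in one step.)

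It remains to evaluate the Gaussian integral. In the $\{z_1,e_2,e_3,e_4\}$ basis, $-\tfrac1nH$ has block form with lower-right block $\tfrac1{nr^2}H_0'+O(\tfrac1{nr})$ (whence determinant $\sim\det H_0'/(nr^2)^3$), and taking the Schur complement against the $\gamma_1$-coordinate leaves effective entry $\tfrac1{nr}\,z_1^{\mathsf T}Tz_1\,(1+O(r))=\tfrac1{nr}(1+o(1))$; a direct computation gives $\det H_0'=12/36^{3}=1/3888$, so
\begin{equation*}
  \det\bigl(-\tfrac1nH\bigr)\sim\frac1{nr}\cdot\frac{1/3888}{(nr^2)^3}=\frac1{3888\,n^4r^7},
\end{equation*}
and hence
\begin{equation*}
  \int_{\setR^4}\exp\bigl(\tfrac1{2n}x^{\mathsf T}Hx\bigr)\,\mathrm dx=\frac{(2\pi)^2}{\sqrt{\det(-H/n)}}\sim4\pi^2\sqrt{3888}\;n^2r^{7/2}=144\sqrt3\,\pi^2n^2r^{7/2},
\end{equation*}
using $\sqrt{3888}=\sqrt{2^4\cdot3^5}=36\sqrt3$. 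Multiplying back the factor $\exp(n\fpre(\prexopt))$ gives the lemma.

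I expect the main difficulty to be the uniform control of the remainder $E(x)$, not the final integral: the Hessian, the relevant third derivatives, and the box $B$ all live on two incompatible scales — curvature of order $1/r$ along the flat direction $z_1\in\ker H_0$ and of order $1/r^2$ transverse to it — so one must carry anisotropic bounds through the Taylor estimate and verify that the chosen $\delta_1,\delta$ simultaneously (i) keep the quadratic term dominant over the cubic one, (ii) keep $\exp(\cdot)$ slowly varying on the lattice, and (iii) still capture the bulk of the Gaussian in every direction. The mutual compatibility of (i)--(iii) is precisely where the density hypothesis $R=\omega(n^{1/2}\log^{3/2}n)$ is needed.
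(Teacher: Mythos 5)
Your argument is correct and reaches the paper's answer $144\sqrt{3}\pi^2 n^2 r^{7/2}$, but you evaluate the Gaussian integral by a genuinely different calculation. The paper uses the same unimodular change of variables $A$ (columns $z_1,e_2,e_3,e_4$) and then integrates out $y_1,y_2,y_3,y_4$ one at a time, invoking the one-dimensional Lemma~\ref{lem:integral-tools} four times and multiplying the resulting constants $\sqrt{2rn\pi}\cdot 2\sqrt{3\pi}\,r\sqrt n\cdot 6\sqrt{2\pi}\,r\sqrt n\cdot 6r\sqrt{\pi n}$; you instead compute $\det(-H/n)$ in one step via a Schur complement, exploiting $H_0 z_1=0$ and $z_1^{\mathsf T}Tz_1=1$ to get $\det(-H/n)\sim\det H_0'/(n^4r^7)$ with $\det H_0'=12/36^3=1/3888$, whence $(2\pi)^2\sqrt{3888\,n^4r^7}=144\sqrt3\,\pi^2 n^2r^{7/2}$. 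I have verified $H_0z_1=0$, $Tz_1=(-1,-1,-1,0)^{\mathsf T}$, and all the arithmetic; the two computations agree. Your determinant route is cleaner conceptually, while the paper's iterated one-dimensional route only requires the elementary one-dimensional Lemma~\ref{lem:integral-tools} that it has already proved. If you wish to fully rigorize your route you must supply the multidimensional analogue of Lemma~\ref{lem:integral-tools} you invoke; while standard, this is not trivial here because the Gaussian is anisotropic on two scales ($\sqrt{rn}$ versus $r\sqrt n$) and has $\Theta(1/r)$-size cross terms between the flat and transverse directions, so the rescaling to unit variance must be done direction by direction and the Riemann-sum/tail arguments carried through with that anisotropy. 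One further small point of calibration: your final paragraph locates the main technical burden in the cubic-remainder estimate, which the paper handles separately in Lemma~\ref{lem:third-pre-hyper}; the hypotheses $\delta_1^3=o(r/n)$, $\delta^3=o(r^4/n)$ are chosen precisely to make that lemma apply, and the compatibility with the lower bounds $\delta_1^2=\omega(r/n)$, $\delta^2=\omega(r^2/n)$ only needs $R=\omega(n^{1/2})$, not the full $R=\omega(n^{1/2}\log^{3/2}n)$; the logarithmic factors are used in the companion tail bound of Lemma~\ref{lem:tail-pre-hyper} rather than in this lemma.
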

Finally, we bound the contribution from points far from the maximum.
\begin{lem}
\label{lem:tail-pre-hyper}
Suppose that $\delta_1^3=o(r/n)$ and $\delta_1^2 = \omega(r\ln n/n)$,
and $\delta^3 = o(r^4/n)$ and $\delta^2 = \omega (r^2\ln n/n)$. We
have that
\begin{equation*} 
  \sum_{\substack{x\in S\setminus (\xopt+B)\\ x\in \setZ^4}}  
  \wpre(x) =
  o\left({n!\exp(n\fpre(\prexopt))}\right).
\end{equation*}
\end{lem}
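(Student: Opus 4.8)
The plan is to adapt the two-stage argument used for cores in the proof of Theorem~\ref{thm:corevalue-hyper}. First, replacing each factorial in the definition~\eqref{eq:wpre-def-hyper} of $\wpre$ by its Stirling estimate~\eqref{eq:error-Stirling-core-hyper} (and using the limiting formula~\eqref{eq:fpre-extreme} when $\Qthree=3\nthree$, so that $\lambda=0$), one obtains a constant $\beta$ such that $\wpre(x)\le n^{\beta}\exp\paren[\big]{n\fpre(\prex)}$ for every $x\in S_m\cap\setZ^4$; the factors $2^{\ktwo}$, $2^{\mtwoprime}$, $6^{\mthree}$ and $\fff(\lambda)^{\nthree}/\lambda^{\Qthree}$ in~\eqref{eq:wpre-def-hyper} are matched exactly by the corresponding terms of $\fpre$, and $(\mtwo-1)!$, $(\mtwoprime-1)!$ only cost polynomial factors. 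Since $S_m\subseteq[0,n]^4$ contains $O(n^4)$ integer points, the lemma reduces to the claim
\begin{equation*}
  \sup\set[\big]{\fpre(\prex)\st \hat x\in \preS_m\setminus(\prexopt+\preB)}
  \ \le\ \fpre(\prexopt)-\omega(\log n/n),
\end{equation*}
because this gives $\sum_{x\in S_m\cap\setZ^4\setminus(\xopt+B)}\wpre(x)\le O(n^{\beta+4})\exp\paren[\big]{n\fpre(\prexopt)}e^{-\omega(\log n)}=o\paren[\big]{n!\exp(n\fpre(\prexopt))}$. Note the hypotheses, combined with $r^2n\to\infty$, yield $\delta_1=o(r)$ and $\delta=o(r^2)$, so Lemma~\ref{lem:B-in-S} and Corollary~\ref{cor:B-in-S} apply.

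To prove the displayed bound I would split $\preS_m\setminus(\prexopt+\preB)$ into a near part and a far part using a slightly enlarged box. Fix a small constant $\eps>0$ and let $\preB^{*}=\set{\gamma_1 z_1+\gamma_2 e_2+\gamma_3 e_3+\gamma_4 e_4\st |\gamma_1|\le\eps r,\ |\gamma_i|\le\eps r^2\ (i=2,3,4)}$; then $\preB\subseteq\preB^{*}$, and by the argument of Lemma~\ref{lem:B-in-S} one has $\prexopt+\preB^{*}\subseteq\preS_m$ for $\eps$ small. On $\prexopt+\preB^{*}$ the Hessian of $\fpre$ is $-r^{-2}H_0-r^{-1}T+O(J)$ at $\prexopt$, and using third-derivative estimates for $\fpre$ (of order $O(1/r^{4})$ in the directions $e_2,e_3,e_4$, since the arguments $\prektwo,\preTthree$ are $\Theta(r^2)$, but only $O(1/r)$ along $z_1$ once the leading $O(1/r^2)$ contributions cancel) one checks that the Hessian stays negative definite throughout $\prexopt+\preB^{*}$; hence $\fpre$ is concave there with maximum at $\prexopt$. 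Consequently, for $\hat x\in(\prexopt+\preB^{*})\setminus(\prexopt+\preB)$ the segment $[\prexopt,\hat x]$ crosses $\partial(\prexopt+\preB)$ at an intermediate point $\hat x'$ with $\fpre(\hat x)\le\fpre(\hat x')$, reducing matters to bounding $\fpre$ on $\partial(\prexopt+\preB)$. Writing $w=\hat x-\prexopt=\gamma_1 z_1+\sum_{i\ge2}\gamma_i e_i$, a second-order Taylor expansion (whose remainder is $o$ of the quadratic term on $\preB$ because $\delta_1=o(r)$, $\delta=o(r^2)$), together with $z_1^{T}Tz_1=1>0$ and the positive definiteness of $H_0$ on every complement of $\setR z_1$, yields $\fpre(\prexopt)-\fpre(\hat x)\ge c\paren[\big]{\gamma_1^{2}/r+\operatorname{dist}(w,\setR z_1)^{2}/r^{2}}$ for a constant $c>0$. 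On $\partial(\prexopt+\preB)$ either $|\gamma_1|=\delta_1$, forcing a drop of at least $c\delta_1^{2}/r$, or some $|\gamma_i|=\delta$ with $i\ge2$, forcing $\operatorname{dist}(w,\setR z_1)=\Omega(\delta)$ (as $z_1$ has vanishing fourth coordinate and is not parallel to $e_2$ or $e_3$) and hence a drop of at least $c'\delta^{2}/r^{2}$. Multiplying by $n$ and invoking $\delta_1^{2}=\omega(r\log n/n)$ and $\delta^{2}=\omega(r^{2}\log n/n)$, both drops are $\omega(\log n)$.

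For the far part, $\overline{\preS_m}\setminus(\prexopt+\preB^{*})$, the extended function $\fpre$ attains its maximum there at an interior critical point, on $\partial(\prexopt+\preB^{*})$, or on $\partial\preS_m$. Interior critical points are controlled by Lemma~\ref{lem:max-pre-hyper}: every local maximum other than $\prexopt$ has value at most $2r\ln n-4r\ln r+(\cdots)\lambdaopt+\beta(\lambdaopt)^2$ with $\beta$ strictly below the $(\lambdaopt)^2$-coefficient of $\fpre(\prexopt)$, so the gap is $\Omega((\lambdaopt)^2)=\Omega(r^2)$ since $\lambdaopt=\Theta(r)$ by~\eqref{eq:optpre-rel-hyper}; points on $\partial(\prexopt+\preB^{*})$ are handled by the concavity estimate above applied with $|\gamma_1|=\eps r$ or $|\gamma_i|=\eps r^2$, giving a drop of order $\min(\eps^2 r,\eps^2 r^2)=\Omega(r^2)$; and the behaviour on $\partial\preS_m$ is subsumed in the analysis of Lemma~\ref{lem:max-pre-hyper}. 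In every case the drop in $\fpre$ is $\Omega(r^2)$, so after multiplying by $n$ it is $\Omega(nr^2)=\Omega(R^2/n)=\omega(\log^{3}n)$ by $R=\omega(n^{1/2}\log^{3/2}n)$; in particular it is $\omega(\log n)$, completing the proof.

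The main obstacle I expect is the near part, for two linked reasons. First, $H_0$ is only negative \emph{semi}definite --- degenerate along $z_1$ --- so the decay of $\fpre$ around $\prexopt$ is genuinely anisotropic (quadratic of size $1/r^{2}$ transversally to $z_1$, but only $1/r$ along $z_1$), which is exactly why the box $\preB$ is taken thinner in the $e_i$-directions than along $z_1$ and why the hypotheses on $\delta$ and on $\delta_1$ play distinct roles; keeping the distance estimates on $\partial(\prexopt+\preB)$ consistent with this anisotropy is delicate. Second, verifying that $\fpre$ is concave on all of $\prexopt+\preB^{*}$, and not merely at $\prexopt$, requires the full set of third-derivative bounds for $\fpre$ --- in particular the cancellation that reduces the third derivative along $z_1$ from $O(1/r^2)$ to $O(1/r)$ --- which is the four-dimensional analogue of the third-derivative estimate used in the core case and is where the bulk of the computation sits.
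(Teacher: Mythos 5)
Your proposal follows the same overall approach as the paper: Stirling reduces $\wpre$ to $\exp(n\fpre)$ up to a polynomial factor, points near $\xopt$ are controlled by a second-order Taylor expansion together with the Hessian estimate $H=-r^{-2}H_0-r^{-1}T+O(J)$ from Lemma~\ref{lem:hessian-hyper}, and points far from $\xopt$ are controlled via the gap at secondary local maxima supplied by Lemma~\ref{lem:max-pre-hyper}; finally $R=\omega(n^{1/2}\log^{3/2}n)$ makes the far gap $nr^2=R^2/n=\omega(\log n)$. The one place where you and the paper diverge is in how the near part is organized: the paper introduces an annulus $\delta_1\le|\gamma_1|\le\delta_1'$, $\delta\le|\gamma_i|\le\delta'$ and applies Taylor directly to points satisfying these simultaneous inequalities, which as stated covers only a corner of the annulus $\overline{B'}\setminus B$ and not the full boundary $\partial(\xopt+B)$; your version instead establishes concavity of $\fpre$ on a slightly larger box $\prexopt+\preB^*$ and uses the segment-through-the-boundary argument to reduce to $\partial(\prexopt+\preB)$, which is the cleaner way to close that gap. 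Both hinge on the same data (null vector $z_1$ of $H_0$, $z_1^TTz_1=1>0$, positive definiteness of the $e_2,e_3,e_4$ block of $H_0$), and on the same parameter relations $\delta_1^2n/r=\omega(\log n)$, $\delta^2 n/r^2=\omega(\log n)$. One caveat: you invoke negative definiteness of the Hessian throughout $\prexopt+\preB^*$, and this requires uniform control of the second derivatives over $\preB^*$, which is a bit stronger than the pointwise third-derivative product bound proved in Lemma~\ref{lem:third-pre-hyper}; the needed extension is a routine use of the same series expansions and Lemma~\ref{lem:B-in-S}, but it should be stated rather than left implicit.
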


The proof of Lemma~\ref{lem:max-pre-hyper} is deferred to
Section~\ref{sec:max-pre-hyper}. The proofs of
Lemmas~\ref{lem:approx-pre-hyper} and~\ref{lem:tail-pre-hyper} are
presented in Section~\ref{sec:approx-tail-pre-hyper}. We are now ready
to prove Theorem~\ref{thm:formula-pre-hyper}.

% Choose \delta_1 and \delta
In order to use Lemmas~\ref{lem:approx-pre-hyper}
and~\ref{lem:tail-pre-hyper}, we need to check if there exists
$\delta_1$ such that $\delta_1^3=o(r/n)$ and $\delta_1^2 = \omega(r\ln
n/n)$, and $\delta$ such that $\delta^3 = o(r^4/n)$ and $\delta^2 =
\omega (r^2\ln n/n)$. There exists such $\delta_1$ if and only if
$(r/n)^2 = \omega((r \ln n/n)^3)$, which is true if and only if $n/r =
\omega(\ln^3 n)$, which is true since $r = o(1)$. There exists such
$\delta$ if and only if $(r^4/n)^2 = \omega((r^2\ln n/n)^3)$, which is
true if and only if $r^2 = \omega(\ln^3 n/n)$, which is one of the
hypotheses of the theorem.

% summation in B by magic trick
By Proposition~\ref{prop:magic-pre-hyper} and Lemma~\ref{lem:B-in-S},
we have that, for $x\in (\xopt+B)$,
\begin{equation*}
  \begin{split}
    \gpre(x)
    =
    \wpre(x)
    \meancond[\Big]{\prob[\big]{\Gpre(x,\Ys)\text{ simple and
          connected}}}{\Sigma(x)}
    \prob[\big]{\Sigma(x)},
  \end{split}
\end{equation*}
% Expectation
where $\Sigma(x)$ is the event that a random vector $\Ys =
(Y_1,\dotsc, Y_{\nthree(x)})$ of independent truncated Poisson random
variables with parameters $(3,\lambda(x))$ satisfy
$\sum_{i=1}^{\nthree(x)}=\Qthree(x)$. By
Corollary~\ref{cor:expectation-pre-hyper} and Lemma~\ref{lem:B-in-S},
\begin{equation}
  \meancond[\Big]{\prob[\big]{\Gpre(x,\Ys)\text{ simple and
        connected}}}{\Sigma(x)}\sim 1.
\end{equation}

% Stirling
By Stirling's approximation, the definition of $\fpre$
(in~\eqref{eq:fpre-def-hyper} and~\eqref{eq:fpre-extreme}), and
definition of $\wpre$ (in~\ref{eq:wpre-def-hyper}) , we have that
\begin{equation*}
  \wpre(x)
  \sim
  n!\frac{1}{(2\pi n)^{5/2}}
  \left(\frac{\prePthree\prePtwo\preQthree}
    {\prekzero\prekone\prektwo\prenthree\premthree
      \preTthree \preTtwo
      \premtwo
    }
  \right)^{1/2}
  \exp(n \fpre(\hat x)).
\end{equation*}
Since $x\in (\xopt+B)$, by Lemma~\ref{lem:B-in-S}, we have that
  \begin{equation*}
    \frac{\prePthree\prePtwo\preQthree}
    {\prekzero \prekone\prektwo\prenthree\premthree
      \preTthree\preTtwo
      \premtwo
    }
    \sim
     \frac{\prePthreeopt\prePtwoopt\preQthreeopt}
    {\prekzeroopt \prekoneopt\prektwoopt\prenthreeopt\premthreeopt
      \preTthreeopt\preTtwoopt
      \premtwoopt
    }
    \sim\frac{1}{ r^{5/2}4\sqrt{6}}.
  \end{equation*}
  % Prob Sigma
  Next we estimate $\prob{\Sigma(x)}$. We will use~\textred{\cite[Theorem 4]{PWa}},
   applied with $\nthree$ as the
  parameter $n$ in~\textred{\cite[Theorem 4]{PWa}} and $\cthree =
  \Qthree/\nthree$ as $c$ in~\textred{\cite[Theorem 4]{PWa}}.  By
  Lemma~\ref{lem:B-in-S} and~\eqref{eq:optpre-rel-hyper}, we have that
  $\Qthree(x) - 3\nthree(x) \sim (\Qthree(\xopt)-\nthree(\xopt)) \sim
  12 R^2/n = \omega \ln(n)$. Thus, by~\textred{\cite[Theorem 4]{PWa}},
  \begin{equation*}
    \prob{\Sigma(x)}\sim
    \frac{1}{\sqrt{2\pi \Qthree(x)  (1+\etathree(x)-\cthree(x))}},
  \end{equation*}
  where $\etathree(x) = \lambda(x) \f(\lambda(x))/\ff(\lambda(x))$ and
  $\cthree(x) = \Qthree(x)/\nthree(x) = \lambda(x)
  \ff(\lambda(x))/\fff(\lambda(x))$. Since $\Qthree(x)/\nthree(x) \sim
  \preQthree(\prexopt)/\prenthree(\prexopt)$,
  Lemma~\ref{lem:lambda-close-hyper} implies that $\lambda(x) \sim
  \lambdaopt\to 0$ and so (omitting the $(x)$ in the following)
  \begin{equation*}
    \begin{split}
      &1+\eta_3-c_3
      = \frac{\ff(\lambda)\fff(\lambda)
               +\lambda\f(\lambda)\fff(\lambda)
               -\lambda\ff(\lambda)^2}
              {\ff(\lambda)\fff(\lambda)}
      \\
      &=
      \frac{ {\displaystyle
          \paren[\Big]{\frac{\lambda^2}{2}+\frac{\lambda^3}{6}}
          \paren[\Big]{\frac{\lambda^3}{6}+\frac{\lambda^4}{24}}
          +\lambda \paren[\Big]{\lambda+\frac{\lambda^2}{2}}
          \paren[\Big]{\frac{\lambda^3}{6}+\frac{\lambda^4}{24}}
          +\lambda \paren[\Big]{\frac{\lambda^2}{2}+\frac{\lambda^3}{6}}^2
          +O(\lambda^7)}}
        {\displaystyle
          \paren[\Big]{\frac{\lambda^2}{2}+\frac{\lambda^3}{6}}
          \paren[\Big]{\frac{\lambda^3}{6}+\frac{\lambda^4}{24}}
        +O(\lambda^7)}
      \\
      &=
      \frac{\lambda^6/144}{\lambda^5/12}
      \paren[\big]{1+O(\lambda)}
      \sim
      \frac{\lambda}{12}
      \sim
      \frac{\lambdaopt}{12}\sim r,
    \end{split}
  \end{equation*}
by  Lemma~\ref{lem:B-in-S} and~\eqref{eq:optpre-rel-hyper}. Moreover,
  $\Qthree\sim 6R$ by~\eqref{eq:optpre-rel-hyper}. Hence,
  \begin{equation*}
    \prob{\Sigma}\sim
    \frac{1}{\sqrt{2\pi (6R) (1+\etathree-\cthree)}}
    \sim
    \frac{1}{r\sqrt{12\pi n}}.
  \end{equation*}
% Summation
Thus,
\begin{equation}
  \label{eq:approx-gprex-pre-hyper}
  \gpre(x)
  =
  n!
  \frac{1}{144(\pi n)^3 r^{7/2}}
  \sum_{x\in B}
  \exp(n\fpre(x))(1+o(1)),
\end{equation}
for all $x\in(\xopt+B)$. Since $(\xopt+B)\cap \setZ^4$ is a finite set
for each~$n$, \crisc{removed a reference to a uniformity lemma in the
  appendix}\Old{Lemma~\ref{lem:unif-deg-seq} implies }\New{we have
}that there is a function $q(n) = o(1)$ such that the error
in~\eqref{eq:approx-gprex-pre-hyper} is bounded by $q(n)$ uniformly
for all $x\in (\xopt+B)\cap \setZ^4$. Thus,
\begin{equation*}
  \begin{split}
    \sum_{x\in (\xopt+B)\cap\setZ^4}
    \gpre(x)
    &\sim
    n!
    \frac{1}{144(\pi n)^3 r^{7/2}}
    \sum_{x\in (\xopt+B)}
    \exp(n\fpre(x))
    \\
    &\sim
    n!
    \frac{1}{144(\pi n)^3 r^{7/2}}
    \cdot
    144\sqrt{3} \pi^2n^2r^{7/2}
    \exp\paren{ n \fpre(\prexopt)},
  \end{split}
\end{equation*}
by Lemma~\ref{lem:approx-pre-hyper}. Thus,
\begin{equation*}
  \sum_{x\in (\xopt+B)\cap\setZ^4}
  \gpre(x)
  \sim
  n!
  \frac{\sqrt{3}}{\pi n}
  \exp(n\fpre(\prexopt)).
  \end{equation*}
  Together with Lemma~\ref{lem:tail-pre-hyper}, this finishes the
  proof of Theorem~\ref{thm:formula-pre-hyper}.
  
\subsection{Partial derivatives}
\label{sec:partial-prek-hyper}
In this section, we will analyse the first, second, and third partial
derivatives of $\fpre$. This will be used in the proof that $\prexopt$
achieves the maximum for $\fpre$ (Lemma~\ref{lem:max-pre-hyper}) and
also to approximate the summation around $\prexopt$
(Lemma~\ref{lem:approx-pre-hyper}). \nickca{deleted: See Section~ for a
Maple spreadsheet.}

Recall that $\hpre(y) = y\ln(yn)-y$  and, for $\prex =
(\prenn,\prekzero,\prekone,\prektwo)$,
\begin{equation*}
  \begin{split}
    \fpre(\prex)
    =
    &
    \ \hpre(\prePthree)
    +\hpre(\prePtwo)
    +\hpre(\preQthree)
    +\hpre(\mtwo)
    \\
    &-\hpre(\prekzero)
    -\hpre(\prekone)
    -\hpre(\prektwo)
    -\hpre(\prenthree)
    -\hpre(\premthree)
    \\
    &-\hpre(\preTthree)
    -\hpre(\preTtwo)
    -2\hpre(\premtwoprime)
    \\
    &
    -\prektwo \ln 2
    - \premtwoprime \ln 2
    - \premthree \ln 6
    \\
    &+\prenthree\ln f_3(\lambda(x))
    -\preQthree\ln\lambda(x),
  \end{split}
\end{equation*}
where $\lambda(x)$ is the unique positive solution to $\lambda
\ff(\lambda)/\fff(\lambda) = \cthree$, where $\cthree =
\preQthree/\prenthree$.

Using~\eqref{eq:difdeg} to compute the partial derivatives of
$\prenthree\ln f_3(\lambda(x)) -\preQthree\ln\lambda(x)$
(w.r.t.~$\prenn$, $\prekzero$, $\prekone$ and $\prektwo$), we obtain
  \begin{align}
    \label{eq:derivative-pre-hyper}
    \exp\left(\frac{\dif \fpre(x)}{\dif \prenn}\right) &=\frac{4 \preTthree^3 \prenthree \prenn\lambda}
    {9\premthree^2\preQthree\preTtwo^2 \fff{\lambda}};\\
    \exp\left(\frac{\dif\fpre(x)}{\dif \prekzero}\right) 
    &= \frac{\prenthree \preTtwo^2\lambda^2}
    {2\preQthree^2\prekzero \fff(\lambda)};\\
    \exp\left(\frac{\dif \fpre(x)}{\dif \prekone}\right) 
    &=\frac{\preTthree\prenthree \preTtwo\lambda^2}
    {\prekone\preQthree^2 \fff(\lambda)};\\
    \exp\left(\frac{\dif \fpre(x)}{\dif \prektwo}\right)
    &=
    \frac{\preTthree^2\prenthree\lambda^2}
    {2\prektwo\preQthree^2 \fff(\lambda)};
  \end{align}

  For the second partial derivatives, we need to compute
  \begin{equation*}
    \frac{ \partial^2 (\prenthree \ln \fff(\lambda(x)) 
          - \preQthree\ln \lambda(x))}
        {\partial a \partial b},
\end{equation*}
for any $a,b\in
\set{\prenn,\prekzero,\prekone,\prektwo}$. Using~\eqref{eq:difdeg},
this is

\begin{equation}
  \label{eq:second-lambda-prek}
  \begin{split}
    \frac{\partial}{\partial a}
    \left(\frac{\partial \prenthree}{\partial b} \ln f_{3}(\lambda)
      - \frac{\partial \preQthree}{\partial b}\ln\lambda\right)
    &=
    \frac{\partial}{\partial a}\left(-\ln f_{3}(\lambda)
      - \frac{\partial\preQthree}{\partial b}\ln\lambda
    \right)
    =    \frac{\partial \lambda}{\partial a}
    \left(-\frac{f_{2}(\lambda)}{f_3(\lambda)}
          - \frac{\partial\preQthree}{\partial b}\frac{1}{\lambda}
    \right)
    \\
    &=
    \frac{\partial c_3}{\partial a}
    \frac{\lambda}{c_3 (1+\eta_3-c_3)}
    \left(-\frac{f_{2}(\lambda)}{f_3(\lambda)}
      - \frac{\partial\preQthree}{\partial b}\frac{1}{\lambda}
    \right),
    \\
    &=
    \left(\frac{\partial{\preQthree}}{\partial a}
      \frac{1}{\prenthree}
      -\frac{\partial{\prenthree}}{\partial a}
      \frac{\preQthree}{\prenthree^2}
    \right)
    \frac{1}{c_3 (1+\eta_3-c_3)}
    \left(-c_3- \frac{\partial\preQthree}{\partial b} \right)
    \\
    &=
    -\left(c_3+\frac{\partial{\preQthree}}{\partial a}\right)
    \left(c_3+ \frac{\partial\preQthree}{\partial b} \right)
    \frac{1}{\preQthree (1+\eta_3-c_3)}.
  \end{split}
\end{equation}
The second partial derivatives now are
\begin{equation}
  \label{eq:second-pre-hyper}
  \begin{split}
    &\frac{\partial^2 \fpre(\prex)}{\partial \prenn\partial\prenn} =
  \frac{9}{\Pthree} +\frac{4}{\Ptwo}-\frac{9}{\Tthree}+\frac{1}{\Qthree}
  -\frac{1}{\nthree}-\frac{1}{\mthree}-\frac{4}{\Ttwo}
  -\frac{2}{\mtwoprime} +\frac{1}{\nn}
  +D_1\\
  & \frac{\partial^2 \fpre(\prex)}{\partial \prenn\partial\prekzero} =
  -\frac{4}{\Ptwo}+\frac{2}{\Qthree}-\frac{1}{\nthree}
  +\frac{4}{\Ttwo}+\frac{2}{\mtwoprime}
  +D_k\\
  & \frac{\partial^2 \fpre(\prex)}{\partial \prenn\partial\prekone} =
  -\frac{3}{\Tthree}+\frac{2}{\Qthree}
  -\frac{1}{\nthree}+\frac{2}{\Ttwo}
  +D_k\\
  &\frac{\partial^2 \fpre(\prex)}{\partial \prenn\partial\prektwo} =  
  -\frac{6}{\Tthree}+\frac{2}{\Qthree}-\frac{1}{\nthree}
  +D_k\\
  &\frac{\partial^2 \fpre(\prex)}{\partial \prekzero\partial\prekzero} =  
  \frac{4}{\Ptwo}+\frac{4}{\Qthree}-\frac{1}{\nthree}
  -\frac{4}{\Ttwo}-\frac{2}{\mtwoprime}-\frac{1}{\kzero} 
  +D_{kk}\\
  &\frac{\partial^2 \fpre(\prex)}{\partial \prekzero\partial\prekone} =  
  \frac{4}{\Qthree}-\frac{1}{\nthree}-\frac{2}{\Ttwo}
  +D_{kk} \\
  &\frac{\partial^2 \fpre(\prex)}{\partial \prekzero\partial\prektwo} =  
  \frac{4}{\Qthree}-\frac{1}{\nthree}  
  +D_{kk}\\
  &\frac{\partial^2 \fpre(\prex)}{\partial \prekone\partial\prekone} =  
  -\frac{1}{\kone}-\frac{1}{\Tthree}+\frac{4}{\Qthree}
  -\frac{1}{\nthree}-\frac{1}{\Ttwo}
  +D_{kk}\\
  & \frac{\partial^2 \fpre(\prex)}{\partial \prekone\partial\prektwo} =  
  -\frac{2}{\Tthree}+\frac{4}{\Qthree}-\frac{1}{\nthree}
  +D_{kk}\\
  &\frac{\partial^2 \fpre(\prex)}{\partial \prektwo\partial\prektwo} =  
  -\frac{1}{\ktwo}-\frac{4}{\Tthree}+\frac{4}{\Qthree}-\frac{1}{\nthree}
  +D_{kk},
  \end{split}
\end{equation}
where
\begin{align*}
  &D_1 = -\frac{(\cthree-1)^2}{(1+\etathree-\cthree)\preQthree};\\
  &D_k =  -\frac{(\cthree-1)(\cthree-2)}{(1+\etathree-\cthree)\preQthree};\\
  &D_{kk} =  -\frac{(\cthree-2)^2}{(1+\etathree-\cthree)\preQthree}.
\end{align*}

In the next lemma, we find an approximation for the Hessian $\fpre$ at
$\prexopt$. It follows immediately by computing the series of each
partial second derivative with $\lambda\to 0$. \nickca{Deleted: See
Section~ for a Maple spreadsheet.}
\begin{lem}
  \label{lem:hessian-hyper}
  The Hessian of $\fpre$ at $\prexopt$ is $(-1/r^2) H_0 - (1/r) T +
  O(J)$, where $H_0$ and $T$ are defined in~\eqref{eq:def-H0-T-pre-hyper}
  and $J$ is a $4\times 4$ matrix  with all entries equal to $1$.
\end{lem}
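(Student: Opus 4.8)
The plan is to read the Hessian off from the closed forms of the second partial derivatives recorded in~\eqref{eq:second-pre-hyper}, by substituting the values at $\prexopt$ of all the normalized quantities that occur there and then collecting terms according to their order in $r$. No new idea is needed beyond the expansions already set up; the statement really is a consequence of Taylor-expanding each derivative as $\lambda\to0$.

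The first step is to assemble the inputs. Since $\lambda(\xopt)=\lambdaopt$ and $\lambdaopt=12r+O(r^2)\to0$ for $r=o(1)$ (by Lemma~\ref{lem:unique-pre-tools-hyper} and~\eqref{eq:optcore-r-hyper}), equations~\eqref{eq:pre-param-opt} and~\eqref{pre-m-opt} express each of $\prePthree,\prePtwo,\preQthree,\prekzero,\prekone,\prektwo,\prenthree,\premthree,\preTthree,\preTtwo,\premtwoprime$ and $\mtwo$ at $\prexopt$ as a convergent power series in $\lambdaopt$; the leading terms are displayed in~\eqref{eq:optpre-rel-hyper}, and carrying each series one further order is routine (this is the symbolic computation flagged in the introduction). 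Using $r=\tfrac1{12}\lambdaopt+\tfrac1{36}(\lambdaopt)^2+O((\lambdaopt)^3)$ these become power series in $r$. For the correction terms $D_1,D_k,D_{kk}$ I would use $\cthree(\xopt)\to3$, the estimate $1+\etathree-\cthree\sim\lambda/12\sim r$ already carried out in the proof of Theorem~\ref{thm:formula-pre-hyper}, and $\preQthree(\prexopt)\sim6r$, which give $D_1\sim-\tfrac2{3r^2}$, $D_k\sim-\tfrac1{3r^2}$ and $D_{kk}\sim-\tfrac1{6r^2}$, with one more order again available from the same closed forms.

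The second step is to substitute these expansions into the ten entries of~\eqref{eq:second-pre-hyper} and sort by powers of $r$. At order $r^{-2}$ the only contributions come from $1/\preTthree\sim\tfrac1{36r^2}$, $1/\prektwo\sim\tfrac1{18r^2}$ and the $D$'s, so matching coefficients against $-(1/r^2)H_0$ is a one-line check per entry; for instance the $(\nn,\nn)$ entry is $-9/\preTthree+D_1+O(r^{-1})\sim-\tfrac{11}{12r^2}=-\tfrac1{r^2}\cdot\tfrac{33}{36}$, the $(\prekone,\prekone)$ entry is $-1/\preTthree+D_{kk}+O(r^{-1})\sim-\tfrac7{36r^2}$, and the $(\prekzero,\prekzero)$, $(\prekzero,\prekone)$, $(\prekzero,\prektwo)$ entries are $D_{kk}+O(r^{-1})\sim-\tfrac1{6r^2}$. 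Repeating the computation to one more order yields the $r^{-1}$ part of each entry, whose coefficient matrix one checks equals $-T$; every remaining term is $O(1)$ and is absorbed into $O(J)$. This establishes the claimed identity.

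The main obstacle is bookkeeping rather than anything conceptual: the $r^{-2}$ coefficients, i.e.\ $H_0$, drop out immediately, but pinning down $T$ requires the order-$(\lambdaopt)^2$ terms of all twelve normalized quantities (a few of them one order beyond what~\eqref{eq:optpre-rel-hyper} displays) together with the second-order term of $1+\etathree-\cthree$, and then combining the ten entries without arithmetic or sign slips. This is exactly the kind of expansion we delegate to Maple, and it is where the only real risk of error lies.
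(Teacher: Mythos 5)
Your proposal is correct and follows the same route the paper takes: the published proof of this lemma consists of the single sentence "It follows immediately by computing the series of each partial second derivative with $\lambda\to 0$," with the arithmetic delegated to Maple, and your plan of substituting the $\lambdaopt$-expansions of the normalized quantities at $\prexopt$ (and the series for $1+\etathree-\cthree$) into~\eqref{eq:second-pre-hyper}, then reading off the $r^{-2}$, $r^{-1}$, and $O(1)$ coefficient matrices, is exactly that computation. Your spot-checks of the $r^{-2}$ coefficients all agree with $H_0$, and you correctly identify that extracting $T$ requires the expansions one order beyond what~\eqref{eq:optpre-rel-hyper} displays (e.g.\ the $(\lambdaopt)^3$ terms of $\preTthree$ and $\prektwo$), which is the only genuine bookkeeping risk.
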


We will bound the third partial derivatives for points close to
$\xopt$.
\begin{lem}
  \label{lem:third-pre-hyper} 
  Suppose that $\delta_1^3=o(r/n)$ and $\delta^3 = o(r^4/n)$. Then for
  any $x \in B$ we have that
  \begin{equation*}
    n\frac{\partial \fpre(\prexopt+\prex)}
          {\partial t_1 \partial t_2 \partial t_3} 
    t_1(\prex) t_2(\prex) t_3(\prex)
    =o(1),
  \end{equation*}
  for any $t_1,t_2,t_3 \in\set{\prenn,\prekzero,\prekone,\prektwo}$.
\end{lem}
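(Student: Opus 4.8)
The plan is to exploit the structural feature behind the choice of the box $B$: the flat direction $z_1=(1,1,-3,0)$ spans the kernel of the leading Hessian part $H_0$ precisely because every linear form occurring in $\fpre$ that is small enough to contribute at order $1/r^4$ to a third derivative is constant along $z_1$. First I would record the relevant magnitudes. By Lemma~\ref{lem:B-in-S}, for $x\in B$ each linear form in~\eqref{eq:fpre-def-hyper} and also $\lambda(\prexopt+\prex)$ is asymptotic to its value at $\prexopt$, and those values are listed in~\eqref{eq:optpre-rel-hyper}; thus $\prektwo,\preTthree\asymp r^2$, the forms $\prekone,\preTtwo,\prenthree,\premthree,\premtwoprime,\prePthree,\prePtwo,\preQthree$ are $\asymp r$, and $\prenn,\prekzero,\mtwo\asymp 1$, while $\lambda\asymp r\to 0$. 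Since $\hpre'''(y)=-y^{-2}$, the third partial derivatives of each summand $\pm\hpre(\ell)$ of $\fpre$ equal $\mp\ell^{-2}$ times a product of the constant partial derivatives of $\ell$, so a summand contributes at order $1/r^4$ only when $\ell\in\{\prektwo,\preTthree\}$, at order $1/r^2$ for the forms $\asymp r$, and at order $1$ otherwise. For the $\lambda$-term $\prenthree\ln\fff(\lambda)-\preQthree\ln\lambda$ I would differentiate the second-derivative identity~\eqref{eq:second-lambda-prek} once more and insert the $\lambda\to0$ expansions of $c_3,\eta_3,\lambda$; writing the singular part of this term near $\lambda=0$ as $s\ln(-s)+(\text{less singular})$ with $s=3\prenthree-\preQthree$ exhibits its order-$1/r^4$ contribution as $-s^{-2}(\nabla s)^{\otimes 3}$, a rank-one symmetric $3$-tensor. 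In all, the order-$1/r^4$ part of the third-derivative tensor of $\fpre$ at $\prexopt+\prex$ is a sum of rank-one tensors built from $\prektwo$, $\preTthree$ and $s=3\prenthree-\preQthree$ — the same forms whose Hessians make up $(-1/r^2)H_0$ in Lemma~\ref{lem:hessian-hyper} — each of which vanishes on $z_1$ (see the proof of Lemma~\ref{lem:B-in-S}, where $\ktwo$, $\Tthree$ and $\Qthree-3\nthree$ are checked to vanish on $z_1$), and hence annihilates any triple one of whose entries is $z_1$.

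I would then decompose $\prex\in\preB$ as $\prex=\gamma_1 z_1+\gamma_2 e_2+\gamma_3 e_3+\gamma_4 e_4$ with $|\gamma_1|\le\delta_1$ and $|\gamma_i|\le\delta$ for $i=2,3,4$, and bound the third directional derivative $D^3\fpre(\prexopt+\prex)[\prex,\prex,\prex]$, which is what the Taylor remainder $\frac{n}{6}D^3\fpre(\xi)[\prex,\prex,\prex]$ in the proof of Lemma~\ref{lem:approx-pre-hyper} requires, by expanding it by multilinearity into the finitely many monomials in $z_1,e_2,e_3,e_4$. There are three cases. For the $\gamma_1^3$-monomial, $D^3\fpre[z_1,z_1,z_1]=O(1/r)$: its order-$1/r^4$ part kills $z_1$, and the order-$1/r^2$ contributions along $z_1$ cancel — just as the Hessian drops from $\asymp 1/r^2$ to $z_1^T H z_1\asymp 1/r$ — so $n\gamma_1^3 D^3\fpre[z_1,z_1,z_1]=O(n\delta_1^3/r)=o(1)$ by $\delta_1^3=o(r/n)$. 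For monomials with exactly one or two $z_1$-factors, the order-$1/r^4$ part still annihilates them, so $D^3\fpre[z_1,\cdot,\cdot]=O(1/r^2)$, while the prefactors are $O(\delta_1^2\delta)$ and $O(\delta_1\delta^2)$; since $\delta_1^2\delta=o(r^2/n)$ and $\delta_1\delta^2=o(r^3/n)$ follow from the two hypotheses, $n$ times each such monomial is $o(1)$. For the monomial in $e_2,e_3,e_4$ only, $D^3\fpre[e_i,e_j,e_k]=O(1/r^4)$ and the prefactor is $O(\delta^3)$, giving $O(n\delta^3/r^4)=o(1)$ by $\delta^3=o(r^4/n)$. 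Summing the $O(1)$ monomials and re-expressing the coordinates $\prenn,\prekzero,\prekone,\prektwo$ of $\prex$ through $\gamma_1,\dots,\gamma_4$ yields the stated bound.

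The main obstacle is the two cancellations that have to be read off the $\lambda\to 0$ expansions: that the order-$1/r^4$ part of the third-derivative tensor is a sum of rank-one tensors built from forms vanishing on $z_1$ (so it annihilates $z_1$), and that the surviving order-$1/r^2$ contributions evaluated at $(z_1,z_1,z_1)$ cancel to leave $O(1/r)$. These are the $3$-tensor analogues of the identities $H_0 z_1=0$ and $z_1^T H z_1=O(1/r)$ already encoded in Lemma~\ref{lem:hessian-hyper} and~\eqref{eq:def-H0-T-pre-hyper}, and like the other expansions in this section they would be confirmed with Maple; the remainder is the routine bookkeeping of the three cases against $\delta_1^3=o(r/n)$ and $\delta^3=o(r^4/n)$.
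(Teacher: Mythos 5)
Your proposal follows essentially the same route as the paper: decompose $\prex=\gamma_1 z_1+\gamma_2 e_2+\gamma_3 e_3+\gamma_4 e_4$, expand the third-derivative by multilinearity into monomials in the $\gamma_i$, and show that the coefficient of each monomial loses one power of $1/r$ per $z_1$-factor, so that $\delta_1^3 = o(r/n)$ and $\delta^3 = o(r^4/n)$ suffice. What you add is a cleaner structural explanation of \emph{why} the coefficient orders drop: the order-$1/r^4$ part of the third-derivative tensor consists of rank-one tensors $-\ell^{-2}(\nabla\ell)^{\otimes 3}$ with $\ell\in\{\prektwo,\preTthree, 3\prenthree-\preQthree\}$, each of which has $\nabla\ell\cdot z_1=0$ (the same vanishing observed in the proof of Lemma~\ref{lem:B-in-S}), so any $z_1$-argument annihilates the $1/r^4$ part. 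The paper instead just asserts $T_2(f)=O(1/r^{4-f_1})$ after a Maple computation. Both you and the paper leave the final and most delicate step --- that the surviving $1/r^2$ contributions at $(z_1,z_1,z_1)$ cancel down to $O(1/r)$ --- to a symbolic computation, so your proposal is at the same level of rigour. One detail worth flagging: what you bound (and what the paper actually bounds, and what the Taylor remainder in Lemma~\ref{lem:approx-pre-hyper} requires) is the \emph{full} third directional derivative $D^3\fpre[\prex,\prex,\prex]$, i.e.\ the sum over triples. For an individual triple such as $(t_1,t_2,t_3)=(\prenn,\prenn,\prenn)$, the direction $e_1$ does \emph{not} lie in the kernel of $\nabla\preTthree$ or $\nabla(3\prenthree-\preQthree)$, so $\partial^3\fpre/\partial\prenn^3=\Theta(1/r^4)$ and $n\,\partial^3\fpre/\partial\prenn^3\cdot\gamma_1^3$ would need $\delta_1^3=o(r^4/n)$, stronger than hypothesized; the cancellations occur only in the sum over triples. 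Your framing via the directional derivative makes this explicit, whereas the lemma's ``for any $t_1,t_2,t_3$'' phrasing glosses over it; since only the sum is used downstream this is a wording issue, not a gap in your argument.
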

\begin{proof}
  Let $x\in B$. Then $x = \alpha z_1 + b$, where
  $\abs{\alpha}\leq \delta_1$ and $b= (0,b_2,b_3,b_4)$ and
  $|b_i|\leq \delta$ and $b^T z_1 = 0$. Recall that $z_1 =
  (1,1,-3,0)$ and so $x = (\alpha, \alpha+b_2,
  -3\alpha+b_3,b_4)$. Then, by using~\eqref{eq:second-pre-hyper}, we
  may compute each partial derivative $\frac{\partial \fpre}{\partial
    t_1 \partial t_2 \partial t_3} t_1(\prex)t_2(\prex)(t_3(\prex))$
  exactly. We omit the lengthy computations here. \nickca{Deleted: (See
  Section~ for a Maple spreadsheet.)}
  The third derivative is the sum of the part involving
  $\lambda$ and the part that does not involve $\lambda$.  The part
  not involving $\lambda$ can be written as 
  \begin{equation*}
    \sum_{\substack{a=(a_1,a_2,a_3,a_4)\in\setN^4,\\
        a_1+a_2+a_3+a_4=3}}
    T(a)\alpha^{a_1}(\alpha+b_2)^{a_2}(-3\alpha+b_3)^{a_3}b_4^{a_4},
  \end{equation*}
  where each $T(a)$ is a sum of terms in the format $1/z^2$, where
  \begin{equation*}
    z
    \in
    \set{\prenn,\prekzero,\prekone,\prektwo,\prenthree,
      \prePtwo,\prePthree,\preQthree,\preTtwo,\preTthree}.
  \end{equation*}
  This can be expanded so that it is
  \begin{equation*}
    \sum_{\substack{f=(f_1,f_2,f_3,f_4)\in\set{0,1,2,3}\times\set{0,1}^3,\\
        f_1+f_2+f_3+f_4=3}}
    T_2(f)\alpha^{f_1} b_2^{f_2} b_3^{f_3}b_4^{f_4},
    \end{equation*}
    where each $T_2(f)$ is also a sum of terms in the format $1/z^2$,
    where
  \begin{equation*}
    z
    \in
    \set{\prenn,\prekzero,\prekone,\prektwo,\prenthree,
      \prePtwo,\prePthree,\preQthree,\preTtwo,\preTthree}.
  \end{equation*}
  Since $\delta_1^3=o(r/n)$ and $\delta^3 = o(r^4/n)$ and $R^3 =
  \omega(N)$, we have that $\delta_1=o(r)$ and $\delta =
  o(r^2)$. Thus, by Lemma~\ref{lem:B-in-S}, we have that $z\sim
  z(x^*)$.  Using this fact and computing the series of each term with
  $r\to 0$, we obtain $T_2(f) = O(1/r^{4-f_1})$, and so $\delta =
  o(r^4/n)$ and $\delta_1 = o(1/\sqrt{n})$ ensure
  $|\alpha^{f_1}b_2^{f_2}b_3^{f_3}b_4^{f_4}T_2(f)| \leq
  \delta_1^{f_1}\delta^{f_2+f_3+f_4}|T_2(f)| = o(1)$.

  Similarly the part involving $\lambda$ can be written
  as \begin{equation*}
    \sum_{\substack{f=(f_1,f_2,f_3,f_4)\in\set{0,1,2,3}\times\set{0,1}^3,\\
        f_1+f_2+f_3+f_4=3}}
    U(f)\alpha^{f_1} b_2^{f_2} b_3^{f_3}b_4^{f_4}
  \end{equation*}
  where each $U(f)$ is a sum of terms in the following format
  \begin{equation*}
    \begin{split}
      \frac{1}{\preQthree(1+\eta-c_3)}
   &{\Bigg(}
    -\frac{(c3-e_1)(2 c_3- e_2- e_3)}{\prenthree}
    \\
    &+ 
    (c_3-e_2)(c_3-e_3)\left(\frac{e_1}{\preQthree}+
      \frac{(c_3-e_1)}{(1+\eta-c_3)^2}
      \left(
        \frac{\eta(1+\lambda e^{\lambda}/\f(\lambda) - \eta)}
        {\preQthree(1+\eta-c_3)}
        -\frac{1}{\prenthree}
      \right)
    \right)
    {\Bigg )}
    \end{split}
  \end{equation*}
  where $e_1,e_2,e_3\in\set{1,2}$.  Since $\delta_1=o(r)$ and $\delta
  = o(r^2)$, by Lemma~\ref{lem:B-in-S}, we have that
  $\lambda(\xopt+x)\sim \lambda(\xopt)$.  Using this fact and
  computing the series of $U(f)$ with $r\to 0$, we have that $U(f) =
  O(1/r^{4-f_1})$, and so $\delta = o(r^4/n)$ and $\delta_1^3 =
  o(r/n)$ ensure $|\alpha^{f_1}b_2^{f_2}b_3^{f_3}b_4^{f_4}U(f)| \leq
  \delta_1^{f_1}\delta^{f_2+f_3+f_4}|U(f)| = o(1)$.
\end{proof}

\subsection{Establishing the maximum}
\label{sec:max-pre-hyper}

In this section, we prove Lemma~\ref{lem:max-pre-hyper} which
establishes the maximum of $\fpre$ in $\preS$. Recall that the region
$\preS$ where we want to optimise $\fpre(\prex)$ over is defined by
conditions (C1)--(C4). We rewrite these conditions as follows:
\begin{itemize}
\item[(D1)] $\preQthree \geq 3\prenthree\geq 0$ and, if $\prenthree=0$, then $\preQthree=0$.
\item[(D2)] $\prePtwo\geq 0$;
\item[(D3)] $\prePthree\geq 0$;
\item[(D4)] $\prekzero,\prekone,\prektwo\geq 0$ and $\preTtwo\geq 0$ and
  $\preTthree \geq 0$;
\end{itemize}
These conditions are obviously a subset of the conditions (C1)--(C5),
with the $\prenn \geq 0$ being the only constraint missing, which is
implied by $\preTtwo \geq 0$. First we will show that $\prexopt$ is
the only local maximum in the interior of $\preS$:
\begin{lem}
  \label{lem:max-interior-prek}
  The point $\prexopt= (\prennopt, \prekzeroopt, \prekoneopt,
  \prektwoopt)$ is the unique local maximum for $\fpre$ in the
  interior of $\preS$ and its value is
  \begin{equation*}
    2r\ln n -4r\ln r + 
      \left(
        -\frac{2}{3}\ln(2)-\frac{1}{3}\ln(3)+\frac{1}{3}\right)
      \lambdaopt
      +
      \left(
        -\frac{2}{9}\ln(2)-\frac{1}{9}\ln(3)+\frac{7}{36}
      \right) (\lambdaopt)^2
      +O((\lambdaopt)^3).
  \end{equation*}
\end{lem}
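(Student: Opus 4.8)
The plan is to locate the critical points of $\fpre$ in the interior of $\preS$ by setting all four partial derivatives in~\eqref{eq:derivative-pre-hyper} to zero, show that the unique solution is $\prexopt$, and then use concavity (via the Hessian approximation in Lemma~\ref{lem:hessian-hyper}) to conclude that this critical point is the unique local maximum. First I would set $\exp(\partial\fpre/\partial a) = 1$ for each $a\in\set{\prenn,\prekzero,\prekone,\prektwo}$; this gives four algebraic equations relating $\prenn$, $\prekzero$, $\prekone$, $\prektwo$, $\lambda$ and the various linear functions in~\eqref{eq:pre-param-def}. Taking suitable ratios of these equations (e.g.\ dividing the equation for $\prekzero$ by the one for $\prekone$, and the one for $\prekone$ by the one for $\prektwo$) eliminates most factors and yields clean relations such as $\prekone/\prekzero = \f(\lambda)$ (up to the factors of $2$ and the $\preT$'s and $\preQthree$'s), which reproduce the defining ratios in~\eqref{eq:pre-param-opt}. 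Combining these with the constraint $\cthree = \preQthree/\prenthree = \lambda\ff(\lambda)/\fff(\lambda)$ that defines $\lambda(x)$, and with the identities $\mtwo = \nn$, $\mthree = m-\nn$, $\Qthree = 3m-\nn-2\ntwoequal$, etc., one reduces everything to a single equation in $\lambda$, which after simplification is exactly~\eqref{pre-m-opt}; by Lemma~\ref{lem:unique-pre-tools-hyper} this has a unique positive root $\lambdaopt$, and back-substitution gives $\prexopt$ as in~\eqref{eq:pre-param-opt}. This shows $\prexopt$ is the unique interior critical point.

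Next I would argue it is a local maximum. By Lemma~\ref{lem:hessian-hyper}, the Hessian at $\prexopt$ is $(-1/r^2)H_0 - (1/r)T + O(J)$. The matrix $H_0$ is positive semidefinite with one-dimensional kernel spanned by $z_1 = (1,1,-3,0)$, so on the complement of $z_1$ the term $(-1/r^2)H_0$ dominates and is strictly negative definite there; along the direction $z_1$ the quadratic form from $H_0$ vanishes, and one checks that $z_1^T T z_1 > 0$ so that $(-1/r)T$ contributes a strictly negative term of order $1/r$ in that direction, which dominates the $O(1)$ error. Hence the Hessian at $\prexopt$ is negative definite (with eigenvalues of order $1/r^2$ transverse to $z_1$ and of order $1/r$ along $z_1$), so $\prexopt$ is a strict local maximum, and since it is the only interior critical point, it is the unique interior local maximum.

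Finally I would compute the value $\fpre(\prexopt)$. Using~\eqref{eq:optpre-rel-hyper} to express each argument of $\hpre$ appearing in~\eqref{eq:fpre-def-hyper} as a power series in $\lambdaopt$ (recalling $r = \frac{1}{12}\lambdaopt + \frac{1}{36}(\lambdaopt)^2 + O((\lambdaopt)^3)$), expanding $\hpre(y) = y\ln(yn) - y = y\ln n + y\ln y - y$, and collecting the $\ln n$ terms (these combine to $2r\ln n$ since the signed sum of the $\hpre$-arguments equals $\ntwo + \nthree$-type bookkeeping that reduces to $2r + O(r^2)$ after using $\prePthree + \prePtwo + \preQthree + \mtwo - \prekzero - \cdots = 2n r + \dots$), together with the explicit $\ln 2$, $\ln 6$ and $\ln\lambda$ terms, gives after routine Maple-assisted simplification the stated expansion $2r\ln n - 4r\ln r + (-\frac23\ln 2 - \frac13\ln 3 + \frac13)\lambdaopt + (-\frac29\ln 2 - \frac19\ln 3 + \frac{7}{36})(\lambdaopt)^2 + O((\lambdaopt)^3)$.

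The main obstacle I expect is the algebra in the first step: solving the four-variable critical-point system cleanly enough to recognize that the solution is precisely~\eqref{eq:pre-param-opt} and reduces to~\eqref{pre-m-opt} requires careful bookkeeping of the many factors $\preTtwo$, $\preTthree$, $\preQthree$, $\mtwoprime$ in~\eqref{eq:derivative-pre-hyper}, and the elimination must be organized so that the $\lambda$-dependence collapses to a single equation; this is where I would lean on the Maple computations referenced in~\cite[Appendix A]{Sato13}. The Hessian-definiteness argument is conceptually the delicate secondary point, since it is genuinely degenerate at leading order and one must track the subleading $(1/r)T$ term along the kernel direction $z_1$ to see negative definiteness.
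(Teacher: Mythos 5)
Your proposal is correct and follows essentially the same route as the paper: set the four partial derivatives in~\eqref{eq:derivative-pre-hyper} to zero, eliminate variables (the paper uses a chain of resultants where you use ratios, but both collapse, with Maple assistance, to a single equation in $\lambda$ equivalent to~\eqref{pre-m-opt}, whose unique positive root $\lambdaopt$ is given by Lemma~\ref{lem:unique-pre-tools-hyper}), verify the Hessian is negative definite, and expand $\fpre(\prexopt)$ in powers of $\lambdaopt$. The one genuinely different ingredient is your treatment of the Hessian: the paper simply states that the series expansions of the leading principal minors of $(-1/r^2)H_0-(1/r)T+O(J)$ have the alternating signs needed for negative definiteness, whereas you decompose along $z_1=(1,1,-3,0)$ and its complement, noting that $H_0$ is PSD with one-dimensional kernel $\langle z_1\rangle$ (this does need checking — e.g.\ the $3\times 3$ principal minor of $36H_0$ on indices $\{1,2,4\}$ is $108>0$, so rank $3$, and the characteristic polynomial $\lambda(\lambda^3-58\lambda^2+194\lambda-132)$ has no negative roots) and that $z_1^T T z_1 = 1 > 0$, so the $(1/r)T$ term supplies strict negativity in the degenerate direction; this is a cleaner structural explanation of why the Hessian is negative definite despite the leading-order degeneracy, and the cross-term bound you would need (absorbing $O(\alpha\|w\|/r)$ into $\gamma\alpha^2/r$ and $c\|w\|^2/r^2$ by Cauchy--Schwarz) does go through for small $r$. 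Either way the content is the same; yours makes the mechanism of the degeneracy visible, the paper's is more directly Maple-checkable.
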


We will then analyse local maximums when some condition in
(D1)--(D4) is tight. The following lemma will be useful to reduce the
number of cases to be analysed by giving sufficient conditions 
for a point not being a local maximum.
\begin{lem}
  \label{lem:aux-boundary}
  Let $k$ be a fixed positive integer and $S\subseteq\setR$ be a
  bounded set. Let $f: S \to \setR$ be a continuous function such that
  $f(x) = -\sum_{i=1}^q \ell_i(x)\ln\ell_i(x) + g(x)$, where
  $\ell_i(x)=\sum_{j=1}^k \alpha_{i,j} x_j \geq 0$ for all $x\in
  S$. Suppose $x^{(0)}\in S$ is such that $\ell_i(x^{(0)}) =0$ for
  some $i$. Suppose there is $v\in\setR^k$ such that $x^{(0)}+tv$ is
  in the interior of $S$ for small enough $t$ and 
  \begin{equation*}
    \frac{\dif
      g(x^{(0)}+t v)}{\dif t}|_{t=0}>C,    
  \end{equation*}
 for some (possibly negative)
  constant $C$. Then $x^{(0)}$ is not a local maximum for $f$ in~$S$.
\end{lem}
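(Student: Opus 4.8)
The plan is to exhibit points arbitrarily close to $x^{(0)}$ at which $f$ is strictly larger than $f(x^{(0)})$, namely the points $x^{(0)}+tv$ for small $t>0$. First I would set, for each index $i$, $a_i=\ell_i(v)=\sum_{j=1}^k\alpha_{i,j}v_j$ and let $I=\set{i:\ell_i(x^{(0)})=0}$, which is nonempty by hypothesis. Since each $\ell_i$ is linear, $\ell_i(x^{(0)}+tv)=\ell_i(x^{(0)})+ta_i$, so $\ell_i(x^{(0)}+tv)=ta_i$ for $i\in I$, while $\ell_i(x^{(0)}+tv)\to\ell_i(x^{(0)})>0$ for $i\notin I$. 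As $x^{(0)}+tv$ lies in the interior of $S$ for all small $t>0$ and $\ell_i\geq0$ throughout $S$, we get $a_i\geq0$ for $i\in I$. Moreover, if $a_i=0$ for some $i\in I$ then $\ell_i$ vanishes at an interior point of $S$ while being nonnegative on a whole neighbourhood of it, which (being linear) forces $\ell_i\equiv0$; such a term contributes nothing to $f$, so I may discard it. After doing so, every $i\in I$ has $a_i>0$, and $I$ is still nonempty (the forms vanishing at $x^{(0)}$ in the applications below being genuine, non-trivial coordinate-type forms), so that $A:=\sum_{i\in I}a_i>0$.

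Next I would restrict $f$ to the ray and analyse $\phi(t):=f(x^{(0)}+tv)$ for $t\in[0,\varepsilon)$. Splitting the sum defining $f$ at the set $I$,
\begin{equation*}
  \phi(t)=-\sum_{i\in I}(ta_i)\ln(ta_i)-\sum_{i\notin I}\ell_i(x^{(0)}+tv)\ln\ell_i(x^{(0)}+tv)+g(x^{(0)}+tv).
\end{equation*}
The first sum equals $-(A\ln t)\,t-t\sum_{i\in I}a_i\ln a_i=-At\ln t+O(t)$ as $t\to0^+$. Each term of the second sum is smooth in $t$ near $0$ because $\ell_i(x^{(0)})>0$, so that sum differs from its value at $t=0$ by $O(t)$. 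The hypothesis on $g$ gives $g(x^{(0)}+tv)-g(x^{(0)})>Ct$ for all small $t>0$. Since $\phi(0)$ picks up no contribution from the indices $i\in I$, these estimates combine to give a constant $M$ with
\begin{equation*}
  \phi(t)-\phi(0)\geq -At\ln t-Mt
\end{equation*}
for all sufficiently small $t>0$.

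To finish, the key observation is that $-At\ln t=At\ln(1/t)$ tends to $0^+$ whereas $(-At\ln t)/t=A\ln(1/t)\to+\infty$ as $t\to0^+$, since $A>0$; hence $-At\ln t>Mt$ for all small enough $t>0$, so that $\phi(t)>\phi(0)$, i.e.\ $f(x^{(0)}+tv)>f(x^{(0)})$. As these points lie in $S$ and converge to $x^{(0)}$, this shows $x^{(0)}$ is not a local maximum of $f$ in $S$. The only delicate point, and the reason the lemma is worth isolating, is that $t\mapsto-t\ln t$ has an infinite one-sided derivative at $0$: the singular contribution of the forms $\ell_i$ that vanish at $x^{(0)}$ dominates every term of finite slope (in particular the one coming from $g$), which is precisely why only a lower bound on the directional derivative of $g$, rather than its exact value, is needed.
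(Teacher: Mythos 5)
Your proof is correct and is essentially the paper's argument: both restrict $f$ to the ray $t\mapsto x^{(0)}+tv$ and exploit the fact that the $-t\ln t$ singularity coming from the forms $\ell_i$ with $\ell_i(x^{(0)})=0$ overwhelms the bounded-slope contributions from $g$ and the non-vanishing $\ell_i$; the paper phrases this as the one-sided derivative of $f(x^{(0)}+tv)$ at $t=0^+$ being $+\infty$, while you bound $\phi(t)-\phi(0)\geq -At\ln t - Mt$ directly and conclude positivity for small $t$. Your version is slightly more careful on one point: the paper simply asserts $\ell_i(v)>0$ for $i\in I$, whereas you observe that this fails precisely when $\ell_i\equiv 0$, handle that degenerate case by discarding such forms, and explicitly flag the implicit requirement that at least one vanishing form be nontrivial.
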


The following lemma gives a bound for the value of $\fpre(\prex)$ for
any local maximum other than~$\prexopt$:
\begin{lem}
  \label{lem:boundary-prek}
  Let $\preS_1$ be the points in $\preS_m$ such that any of the
  constraints in (D1)--(D4) is tight.  There exists a constant $\beta
  < -(2/9)\ln(2)-(1/9)\ln(3)+(7/36)$ such that any local maximum of
  $\preS_m$ in $\preS_1$ for $\fpre$ has value at most
  \begin{equation*}
    2r\ln n -4r\ln r +  
      \left(
        -\frac{2}{3}\ln(2)-\frac{1}{3}\ln(3)+\frac{1}{3}\right)
      \lambdaopt
      +
      \beta (\lambdaopt)^2.
  \end{equation*}
\end{lem}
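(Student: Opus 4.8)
The plan is to reduce the boundary analysis to a finite collection of lower-dimensional optimisation problems, one for each way in which a constraint among (D1)--(D4) can be tight, and to show that in each such case either the point is not a local maximum (via Lemma~\ref{lem:aux-boundary}) or the value of $\fpre$ there falls strictly below the target value by a quantity of order $(\lambdaopt)^2$. First I would use Lemma~\ref{lem:aux-boundary} as the primary pruning tool: for each linear function $\ell\in\{\prePtwo,\prePthree,\preTtwo,\preTthree,\prekzero,\prekone,\prektwo\}$ (and for the combined condition $\preQthree=3\prenthree$ with $\prenthree>0$, which I will handle separately since it is where $\lambda\to$ a finite value rather than an endpoint of a factorial), I would exhibit a direction $v$ pointing into the interior of $\preS_m$ along which the smooth part $g$ of $\fpre$ — i.e. the sum of the $-\prektwo\ln 2$, $-\premtwoprime\ln 2$, $-\premthree\ln 6$ terms plus the $\prenthree\ln f_3(\lambda)-\preQthree\ln\lambda$ term — has derivative bounded below by a constant, so that the $-\ell\ln\ell$ term forces $\fpre$ to increase. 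This eliminates all faces except a short list where no such escape direction exists.

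Second, for the faces that survive, I would parametrise the remaining free variables and optimise $\fpre$ restricted to that face. The key observation to keep the work finite is that setting any one of $\prePtwo=0$, $\prekzero=0$, etc., forces several of the quantities in~\eqref{eq:pre-param-def} to be $O(r^2)$ or $O(r^3)$ rather than $\Theta(r)$, which is incompatible with being near $\prexopt$; then a direct estimate using $\hpre(y)=y\ln(yn)-y$ and the series expansions in~\eqref{eq:optpre-rel-hyper} shows that forcing such a quantity away from its optimal scale costs a factor $\exp(\Omega((\lambdaopt)^2 n))$ in $n\fpre$, i.e. lowers $\fpre$ by $\Omega((\lambdaopt)^2)$. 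For the face $\preQthree=3\prenthree$ (so $\lambda(x)=0$) I would instead use the explicit form~\eqref{eq:fpre-extreme} and optimise directly; the leading $2r\ln n-4r\ln r$ behaviour is unchanged, but the $(\lambdaopt)^2$ coefficient is strictly worse because the $\prenthree\ln f_3(\lambda)-\preQthree\ln\lambda$ term is replaced by $-\prenthree\ln 6$, and one checks the resulting constant is $<-(2/9)\ln 2-(1/9)\ln 3+7/36$.

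Third, having bounded $\fpre$ on every piece of $\preS_1$ by an expression of the form $2r\ln n-4r\ln r+(-(2/3)\ln 2-(1/3)\ln 3+1/3)\lambdaopt+\beta_j(\lambdaopt)^2+O((\lambdaopt)^3)$ with each $\beta_j<-(2/9)\ln 2-(1/9)\ln 3+7/36$, I would take $\beta=\max_j\beta_j$ over the finitely many faces, which is still strictly less than the target coefficient; absorbing the $O((\lambdaopt)^3)$ error into the gap (legitimate since $\lambdaopt\to 0$, hence for $n$ large the cubic term is dominated by the quadratic slack) gives the stated bound. Combining this with Lemma~\ref{lem:max-interior-prek}, which already identifies $\prexopt$ as the unique interior local maximum with exactly the claimed value, yields Lemma~\ref{lem:max-pre-hyper}.

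The main obstacle I anticipate is the case analysis itself: making sure the list of faces of $\preS_m$ on which Lemma~\ref{lem:aux-boundary} does \emph{not} apply is complete and correct, and that on each surviving face the constrained optimum genuinely has a strictly worse $(\lambdaopt)^2$-coefficient rather than merely a non-strictly-worse one. The degenerate face $\preQthree=3\prenthree$ is the delicate one, because there the restriction of $\fpre$ still has the same leading order as at $\prexopt$ and one must compute the next-order term carefully (using Maple for the series, as elsewhere in the paper) to verify the strict inequality; a second subtlety is handling corners where several constraints are simultaneously tight, which I would dispatch by noting that any such corner lies in the closure of a face already treated and using continuity of $\fpre$ together with the fact that the bound on each face is an open inequality with uniform slack.
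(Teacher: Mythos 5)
Your high-level plan coincides with the paper's: use Lemma~\ref{lem:aux-boundary} to prune boundary faces, explicitly optimize on the surviving ones, and identify $\preQthree=3\prenthree$ with $\prePthree,\prePtwo>0$ as the face whose restricted maximum is closest to the interior value, the gap appearing only at order $(\lambdaopt)^2$. Two imprecisions in the execution should be flagged.

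First, Lemma~\ref{lem:aux-boundary} cannot prune the faces $\prePtwo=0$ or $\prePthree=0$. Those quantities enter $\fpre$ with a positive sign, as $+\hpre(\prePtwo)$ and $+\hpre(\prePthree)$, so they do not produce the $-\ell\ln\ell$ blow-up that the lemma exploits; the lemma only rules out the constraints appearing with negative sign ($\prekzero$, $\prekone$, $\prektwo$, $\prenthree$, $\premthree$, $\preTtwo$, $\preTthree$, $\premtwoprime$). The paper accordingly keeps $\prePthree=0$ and $\prePtwo=0$ as genuine cases (its Cases 2, 3, 5, 6) and optimizes each directly. Your fallback of explicit optimization on surviving faces would eventually catch them, but as written your first step would not succeed in pruning them and you should not expect it to.

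Second, your mechanism for the non-critical faces — that the face constraint forces a quantity which is $\Theta(r)$ at $\prexopt$ down to $O(r^2)$ or $O(r^3)$, incurring a uniform $\Omega((\lambdaopt)^2)$ loss — does not match what actually happens, and the orders are wrong in both directions. For instance $\prekzero^*\approx 1/2$, so $\prekzero=0$ is a $\Theta(1)$ perturbation, not an $O(r^2)$ one; and on $\prePtwo=0$ the loss is far larger than $(\lambdaopt)^2$, of order $r\ln(1/r)$, because the $-4r\ln r$ coefficient in the leading term itself degrades (the paper's Case 3 value has $+2r\ln r$ in place of $-4r\ln r$). The paper simply solves the restricted critical-point equations on each face (via resultants) and expands in $\lambdaopt$; the resulting gaps are of different orders in different cases, with only the critical face giving a $\Theta((\lambdaopt)^2)$ gap. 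Your final step of taking $\beta = \max_j \beta_j$ and absorbing $O((\lambdaopt)^3)$ error is sound, but the intermediate justification should be replaced with the case-by-case expansion rather than a single scaling heuristic.
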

Note that the constraint $\preQthree = 0$ whenever $\prenthree=0$
makes $\preS_m$ not closed. We analyse the value of any sequence of
points converging to a point with $\preQthree >0$ and $\nthree = 0$:
\begin{lem}
  \label{lem:open-prek}
  Let $(\prex(i))_{i\in \setN}$ be a sequence of points in 
  $\preS_m$ converging to a point $z$ with $\preQthree(z)>0$ and
  $\prenthree(z)=0$. Then $\lim_{i\to \infty}\fpre(x_i) = -\infty$.
\end{lem}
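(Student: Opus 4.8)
\textbf{Proof of Lemma~\ref{lem:open-prek} (proposal).}
The plan is to show that the single term $g(x) \eqdef \prenthree\ln\fff(\lambda(x)) - \preQthree\ln\lambda(x)$ in the definition of $\fpre$ tends to $-\infty$ along the sequence, while every other term stays bounded. First I would record the consequences of $x_i \to z$. Since each of $\prenthree$ and $\preQthree$ is a linear (hence continuous) function, $\prenthree(x_i)\to\prenthree(z)=0$ and $\preQthree(x_i)\to\preQthree(z)>0$; in particular there is $i_0$ with $\preQthree(x_i)\ge \preQthree(z)/2>0$ for all $i\ge i_0$. Because $x_i\in\preS_m$ satisfies condition (C5), $\prenthree(x_i)=0$ would force $\preQthree(x_i)=0$, which is impossible for $i\ge i_0$; so $\prenthree(x_i)>0$ and $\cthree(x_i)=\preQthree(x_i)/\prenthree(x_i)\to\infty$. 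Since $x_i$ also satisfies (D1), $\cthree(x_i)\ge 3$, and $\cthree(x_i)>3$ for large $i$ (so $\lambda(x_i)>0$ and $x_i$ is not one of the excluded boundary points); by the defining relation $\lambda\ff(\lambda)/\fff(\lambda)=\cthree(x_i)$ together with the monotonicity of $\lambda\mapsto\lambda\ff(\lambda)/\fff(\lambda)$ from~\cite[Lemma 1]{PWa}, it follows that $\lambda(x_i)\to\infty$.

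Next I would analyse $g$ for $\lambda$ large. Writing $\lambda_i=\lambda(x_i)$ and using $\prenthree(x_i)=\preQthree(x_i)\fff(\lambda_i)/(\lambda_i\ff(\lambda_i))$ (which is just $\prenthree=\preQthree/\cthree$ rewritten via the definition of $\lambda(x_i)$), one gets
\begin{equation*}
  g(x_i) = \preQthree(x_i)\left(\frac{\fff(\lambda_i)\ln\fff(\lambda_i)}{\lambda_i\ff(\lambda_i)} - \ln\lambda_i\right).
\end{equation*}
As $\lambda\to\infty$ we have $\fff(\lambda)/\ff(\lambda)\to 1$ (both are $e^\lambda(1+o(1))$) and $\ln\fff(\lambda) = \lambda + o(1)$, hence $\fff(\lambda_i)\ln\fff(\lambda_i)/(\lambda_i\ff(\lambda_i))\to 1$. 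Therefore the bracket equals $1+o(1)-\ln\lambda_i\to-\infty$, and since $\preQthree(x_i)$ is bounded away from $0$, we conclude $g(x_i)\to-\infty$.

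Finally I would check that $\fpre(x_i)-g(x_i)$ is bounded. Every remaining summand of $\fpre$ is either a constant multiple of a linear function ($-\prektwo\ln2-\premtwoprime\ln2-\premthree\ln6$) or of the form $\pm\hpre(\ell)$ where $\ell$ is one of the linear functions in~\eqref{eq:pre-param-def} and $\hpre(y)=y\ln(yn)-y$. On $\preS_m$ all these linear functions are nonnegative and bounded (the region satisfies $0\le\prenn,\prekzero,\prekone,\prektwo\le1$ and $\hat m=1/2+r$ with $r=o(1)$), and $\hpre$ is bounded on any bounded subinterval of $[0,\infty)$ (using $t\ln t\ge -1/e$ for the lower bound, continuity with $\hpre(0)=0$ for the upper bound); the boundary-extension formula~\eqref{eq:fpre-extreme} only matters when $\preQthree=3\prenthree$, which does not occur for large $i$. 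Hence $\fpre(x_i)=g(x_i)+O(1)\to-\infty$, as claimed. The only slightly delicate point is the asymptotic evaluation of $g$ as $\lambda\to\infty$, but this is a routine expansion of $\ff$ and $\fff$; everything else is bookkeeping.
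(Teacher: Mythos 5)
Your proposal is correct and follows essentially the same approach as the paper: isolate the term $\prenthree\ln\fff(\lambda)-\preQthree\ln\lambda$ (the rest being bounded because they are continuous functions on the bounded region $\preS_m$ for the fixed $n$ under consideration), deduce $\lambda(x_i)\to\infty$ from $\cthree(x_i)\to\infty$, and show the isolated term tends to $-\infty$. The only cosmetic difference is in the last step: the paper uses the two inequalities $f_3(\lambda)\le e^\lambda$ and $\lambda\le\preQthree/\prenthree$ (the latter from $\ff\ge\fff$) to bound the term above by $\preQthree(1-\ln\lambda)$, whereas you substitute the exact identity $\prenthree=\preQthree\fff(\lambda)/(\lambda\ff(\lambda))$ and take the $\lambda\to\infty$ asymptotics of the resulting bracket; both yield the same conclusion with comparable effort.
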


Lemma~\ref{lem:max-pre-hyper} is trivially implied by
Lemmas~\ref{lem:max-interior-prek},~\ref{lem:boundary-prek},
and~\ref{lem:open-prek}. In the rest of this section, we prove these
lemmas.

\begin{proof}[Proof of Lemma~\ref{lem:max-interior-prek}]
  The computations in this proof are elementary (such as computing
  resultants) but very lengthy. \nickca{Deleted: See Section~ for a
  Maple spreadsheet.}

  Since any local maximum must have value $\exp(\frac{\dif \fpre}{\dif
    t}) = 1$ for any $t\in\set{\prenn,\prekzero,\prekone,\prektwo}$,
  by~\eqref{eq:derivative-pre-hyper}
  \begin{align}
    & \label{eq:n1} 
    4\preTthree^3 
    \prenthree \prenn\lambda
    -9\premthree^2\preQthree
    \preTtwo^2 \fff(\lambda)
    =0\\
    &\label{eq:k0} 
    \prenthree \preTtwo^2\lambda^2
    -2\preQthree^2\prekzero f_3(\lambda)
    =0\\
    &\label{eq:k1} 
    \preTthree\prenthree 
    \preTtwo\lambda^2
    - \prekone\preQthree^2 f_3(\lambda)
    =0\\
    &\label{eq:k2} 
    \preTthree^2\prenthree\lambda^2
    - 2\prektwo\preQthree^2 f_3(\lambda)
    =0.
  \end{align}
  Next we proceed to take resultants between the LHS of these
  equations to show that there is only one solution in the interior of
  $\preS_m$ satisfying all of them. In these computations, we consider
  $f_3(\lambda)$ and $\lambda$ as independent variables.
  The resultant of the RHS of~\eqref{eq:k0} and~\eqref{eq:k1}
  by eliminating $f_3(\lambda)$ is
  \begin{equation*}
    \lambda^2
    \preTtwo
    \prenthree
    \preQthree^2
    (6\prenn\prekzero+4\prektwo\prekzero+2\prenn\prekone-\prekone^2-6\mpre\prekzero)
    =0
  \end{equation*}
  and, since only the last term may possibly be zero in the interior of
  $\preS_m$, this implies that any local maximum in the interior of $\preS_m$ must
  satisfy
  \begin{equation}
    \label{eq:r1}
    6\prenn\prekzero+4\prektwo\prekzero+2\prenn\prekone-\prekone^2-6\mpre\prekzero=0,
  \end{equation}
  and note that this determines $\prektwo$ in terms of $\prenn$,
  $\prekone$ and $\prekzero$ for any local maximum in the interior of~$\preS_m$.
  Similarly, the  resultant of the RHS of~\eqref{eq:k1} and~\eqref{eq:k2}
  by eliminating $\lambda$ is
  \begin{equation*}
    f_3(\lambda)^2
    \preTthree^2
    \prenthree^2
    \preQthree^4 
    (4\prektwo\prekzero+3\mpre\prekone-3\prenn \prekone-\prekone^2-4\prenn\prektwo)^2=0
  \end{equation*}
  and it implies that any local maximum in the interior of $\preS_m$ must
  satisfy
  \begin{equation}
    \label{eq:r2}
    4\prektwo\prekzero+3\mpre\prekone-3\prenn \prekone-\prekone^2-4\prenn\prektwo=0.
  \end{equation}
  The resultant of the RHS of~\eqref{eq:r1} and~\eqref{eq:r2}
  by eliminating $\prektwo$ is
  \begin{equation*}
    4\preTtwo
    (3\mpre \prekzero-3\prenn\prekzero-\prenn\prekone)=0,
  \end{equation*}
  and it implies that any local maximum in the interior of $\preS_m$ must
  satisfy
  \begin{equation}
    \label{eq:r3}
    3\mpre \prekzero-3\prenn\prekzero-\prenn\prekone=0,
  \end{equation}
  which gives determines $\prekone$ in terms of $\prekzero$ and $\prenn$.

  Taking the resultant of the RHS of~\eqref{eq:n1} and~\eqref{eq:r1}
  by eliminating $\prektwo$ and ignoring the factors that cannot be
  zero in $\preS_m$ gives us
  \begin{equation}
    \label{eq:r4}
    \begin{split}
      &-4\lambda\prekone^3\prenn\prekzero
      -2\lambda\prekone^4\prenn^2
      +\lambda\prekone^5\prenn
      -2\lambda\prekone^3\prenn^2\prekzero
      +6\lambda\prekone^3\mpre\prenn\prekzero
      +4\lambda\prekone^4\prenn\prekzero
      +4\lambda\prekone^3\prenn\prekzero^2
      +36\prekzero^3 f_3(\lambda)\prekone\prenn^2
      \\
      &
      -72\prekzero^3 f_3(\lambda) \prekone\mpre\prenn
      +36\prekzero^3 f_3(\lambda) \prekone\mpre^2
      +72 f_3(\lambda) \prekzero^4\prenn^2
      -144 f_3(\lambda) \prenn\prekzero^4\mpre
      +72 f_3(\lambda) \mpre^2\prekzero^4
      =0
    \end{split}
  \end{equation}
  and then we take the resultant of the RHS of~\eqref{eq:r3}
  and~\eqref{eq:r4} by eliminating $\prekone$ and ignoring the factors
  that cannot be zero in $\preS_m$ gives us
  \begin{equation}
    \label{eq:r5}
    \begin{split}
      &27\prekzero \lambda \mpre^3
      -45\prekzero \lambda \mpre^2\prenn
      +21\prekzero \lambda \mpre\prenn^2
      -3 \lambda \prenn^3\prekzero
      +12\mpre\prenn^3f_3(\lambda)
      +12\prenn^3 \lambda \mpre
      \\
      &-12 \lambda \mpre\prenn^2
      -12\prenn^4 \lambda 
      -4f_3(\lambda)\prenn^4
      +12 \lambda \prenn^3
      =0.
    \end{split}
\end{equation}

Taking the resultant of the RHS of~\eqref{eq:k0} and~\eqref{eq:r1}
  by eliminating $\prektwo$ and ignoring the factors that cannot be zero in $\preS_m$ gives us
  \begin{equation}
    \label{eq:r6}
8\prekzero^2f_3(\lambda)
+4 \lambda^2\prekzero^2
+6 \lambda^2\mpre\prekzero
+4 \lambda^2\prekone\prekzero
-4\prekzero \lambda^2 
-2 \lambda^2\prenn\prekzero
+8\prekone f_3(\lambda)\prekzero
+\lambda^2\prekone^2
+2f_3(\lambda)\prekone^2
-2 \lambda^2\prenn\prekone
=0
  \end{equation}
and then we take the resultant of the RHS of~\eqref{eq:r3} and~\eqref{eq:r6}
  by eliminating $\prekone$ and ignoring the factors that cannot be zero in $\preS_m$ gives us
  \begin{equation}
    \label{eq:r7}
2\prekzero f_3(\lambda)\prenn^2
+\lambda^2\prenn^2\prekzero
-6 \lambda^2\mpre\prenn\prekzero
-12\prekzero f_3(\lambda)\mpre\prenn
+9 \lambda^2\mpre^2\prekzero
+18\prekzero f_3(\lambda)\mpre^2
-4 \lambda^2\prenn^2
+4 \lambda^2\prenn^3
=0,
  \end{equation}
  and note that this determines $\prekzero$ in terms of $\prenn$ and $\lambda$.

  Finally we take the resultant of the RHS of~\eqref{eq:r5}
  and~\eqref{eq:r7} by eliminating $\prekzero$ and ignoring the
  factors that cannot be zero in $\preS_m$, we get
  \begin{equation}
    \label{eq:r8}
    6 \lambda \mpre\prenn
    +6f_3(\lambda)\mpre\prenn
    +3 \lambda^2\mpre\prenn
    -6\mpre \lambda 
    -6 \lambda \prenn^2
    -2f_3(\lambda)\prenn^2
    -\lambda^2\prenn^2
    +6\prenn \lambda =0.
  \end{equation}
  We can then use the equation determining $\lambda$ (that is,
  $\lambda f_2(\lambda)/f_3(\lambda) = \preQthree/\prenthree$) by
  replacing $\prekzero,\prekone$ and $\prektwo$ by the values
  determined by $\prenn,\lambda$ and $m$ and taking the resultant
  with~\eqref{eq:r8} by eliminating $\prekzero$ and ignoring the
  factors that cannot be zero in $\preS_m$:
  \begin{equation*}
    3\mpre e^{2\lambda}
    -9\mpre^2 e^{2\lambda}
    +3\mpre e^{2\lambda}\lambda
    -\lambda e^{2\lambda}
    +3\mpre e^{\lambda}\lambda
    -e^{\lambda}\lambda
    +2\lambda-12\mpre\lambda+9\mpre^2-3\mpre+18\lambda\mpre^2=0
  \end{equation*}
  which has two solutions for $\mpre$: $\mpre=1/3$ (which is false) or
  \begin{equation*}
    \mpre =\frac{1}{3}\frac{\lambda f_1(\lambda) g_2(\lambda)}{\FF(2\lambda)},
  \end{equation*}
  which has a unique positive solution $\lambdaopt$ by
  Lemma~\ref{lem:unique-pre-tools-hyper}, which defines $\prexopt$.  Thus,
  $\prexopt$ is the only point in the interior of $\preS_m$ such that
  all partial derivatives at it are zero. We now show that $\prexopt$
  is a local maximum. Using the second partial derivatives computed
  in~\eqref{eq:second-pre-hyper} and the series of the determinants of
  each leading principal submatrix with $\lambda \to 0$, we have that
  the Hessian at $\prexopt$ is negative definite, which implies that
  $\prexopt$ is a local maximum.

  By writing $\fpre(\xopt)$ in terms of $\lambdaopt$ and computing its
  series with $\lambda\to 0$, we obtain 
  \begin{equation*}
    2r\ln n -4r\ln r + 
      \left(
        -\frac{2}{3}\ln(2)-\frac{1}{3}\ln(3)+\frac{1}{3}\right)
      \lambdaopt
      +
      \left(
        -\frac{2}{9}\ln(2)-\frac{1}{9}\ln(3)+\frac{7}{36}
      \right) (\lambdaopt)^2
      +O((\lambdaopt)^3).
  \end{equation*}
\end{proof}

\clearpage
\begin{proof}[Proof of Lemma~\ref{lem:aux-boundary}]
  Let $I\in[q]$ be the set of indices such that $\ell_{i}(x^{(0)})=0$. We
  compute the derivative of $f(x^{(0)}+tv)$ at $t=0$, using the fact that
  $\ell_i$ is a linear function,
  \begin{equation*}
    \begin{split}
      \frac{\dif f(x^{(0)}+tv)}{\dif t} {\Big|_{t=0}}
      &\geq
      C
      +
      \sum_{i=1}^{q} \lim_{t\to 0^{+}}
      \frac{\left( -\ell_i(x^{(0)}+tv) \ln\ell_i(x^{(0)}+tv)+\ell_i(x^{(0)}) \ln\ell_i(x^{(0)})
        \right)}{t}\\
      &=
      C
      +
      \sum_{i=1}^{q} \lim_{t\to 0^{+}}
      \frac{\left( -\ell_i(tv) \ln\ell_i(x^{(0)}+tv)+
          \ell_i(x^{(0)}) (\ln\ell_i(x^{(0)})-\ln(\ell_i(x^{(0)}+tv)
        \right)}{t}\\
      &=
      C
      +
      \sum_{i=1}^{q} \lim_{t\to 0^{+}}
      \left( -\ell_i(v) \ln\ell_i(x^{(0)}+tv)\right)
      -\sum_{i\in[q]\setminus I} \lim_{t\to 0^{+}}
      \frac{\ell_i(x^{(0)})}{t}
      \ln\left(1+t\frac{\ell_i(v)}{\ell_{i}(x^{(0)})}
        \right)\\
        &=
      C
      +
      \sum_{i=1}^{q} \lim_{t\to 0^{+}}
      \left( -\ell_i(v) \ln\ell_i(x^{(0)}+tv)\right)
      -\sum_{i\in[q]\setminus I} \ell_i(v).
      \end{split}
\end{equation*}
Since $x_0+tv$ is in the interior of $S$ for
small enough but positive $t$, we have that $\ell_{i}(v) > 0$ for
all $i\in I$.  For $i\in [q]\setminus I$, we have that $\ell_i(v) \ln\ell_i(x^{(0)}
   + tv)+\ell_i(v)$ is bounded. For $i\in I$, using the fact that
  $\ell_i(v)>0$, we have that $\ell_i(v) \lim_{t\to
      0^+}\ln\ell_i(x^{(0)}+tv)= -\infty$. Thus, we
  conclude that
  \begin{equation*}
    \frac{\dif f(x^{(0)}+tv)}{\dif t}{\Big|}_{t=0} > 0,
  \end{equation*}
which shows that $x^{(0)}$ is not a local maximum.
\end{proof}

\begin{proof}[Proof of Lemma~\ref{lem:boundary-prek}]
  We want to find the local maximums in $\preS_1$, which is the set of
  points in~$\preS_m$ such that any of the constraints in (D1)--(D4) is
  tight. Recall that the constraints (D1)--(D4) are the following:
  \begin{itemize}
  \item[(D1)] $\preQthree \geq 3\prenthree\geq 0$ and, if $\prenthree=0$, then $\preQthree=0$.
  \item[(D2)] $\prePtwo\geq 0$;
  \item[(D3)] $\prePthree\geq 0$;
  \item[(D4)] $\prekzero,\prekone,\prektwo\geq 0$ and $\preTtwo\geq 0$ and
    $\preTthree \geq 0$;
  \end{itemize}
  We split the analysis in the following cases:
  \begin{enumerate}
  \item[Case 1:] $\preQthree = \prenthree = 0$;
  \item[Case 2:] $\preQthree = 3\prenthree > 0$ and $\prePthree=0$;
  \item[Case 3:] $\preQthree = 3\prenthree > 0$ and $\prePtwo=0$;
  \item[Case 4:] $\preQthree = 3\prenthree > 0$ and $\prePthree\neq 0$ and $\prePtwo\neq 0$;
  \item[Case 5:] $\preQthree > 3\prenthree > 0$ and $\prePthree = 0$;
  \item[Case 6:] $\preQthree > 3\prenthree > 0$ and $\prePtwo = 0$.
\end{enumerate}
We will use the definitions in~\eqref{eq:pre-param-def} many times in
the analysis. \nickca{Deleted: For Maple spreadsheets with the
  computations below see 
Section~ for Case 1, Section~
for Case 2, Section~ for Case 3,
Section~ for Case 4, Section~
for Case 5, and Section~ for Case 6.} \nickka{Maple was used for several computations in the following.}

\noindent{\textbf{Case 1:}} Assume that $\preQthree = \prenthree =
0$. Recall that, by definition, we have that $\preQthree =
3\mpre-\prenn-2\prekzero-2\prekone-2\prektwo$, $\preTtwo =
2\prenn-2\prekzero-\prekone$, and $\preTthree =
3\mpre-3\prenn-\prekone-2\prektwo$. Thus,
\begin{equation}
  \label{eq:Qthree-Tthree-Ttwo-hyper}
  \preQthree = \preTtwo + \preTthree.
\end{equation}
Moreover, $\preTtwo \geq 0$ and $\preTthree \geq 0$ are constraints in
the definition of $\preS_m$. Thus, since $\preQthree =0$, we have that
$\preTtwo = \preTthree = 0$. Recall that $\prenthree =
1-\prenn-\prekzero-\prekone-\prektwo$. Hence, we obtain the following
equations:
\begin{align*}
  &1-\prenn-\prekzero-\prekone-\prektwo=0,\\
  &2\prenn-2\prekzero-\prekone=0,\\
  &3\mpre-3\prenn-\prekone-2\prektwo.
\end{align*}
By solving this system of equation, we obtain the following values for
$\prenn$, $\prekone$, and $\prektwo$ in terms of $\prekzero$ and
$\mpre$:
\begin{align*}
  &\prenn = 2-3\mpre;\\
  &\prekone = 4-6\mpre-2\prekzero;\\
  &\prektwo = -5+9\mpre+\prekzero.
\end{align*}
Moreover, $\prePthree = 3(m-\prenn) = -6+12\mpre$, $\prePtwo = 2(\prenn-\prekzero)=4-6\mpre-2\prekzero$ Thus,
$\fpre(x)$ depends only on $\prekzero$ and we get
\begin{equation*}
  \begin{split}
  \fpre(x)
  =
  f(\prekzero)
  &\eqdef
      \hpre(\prePthree)
      +\hpre(\prePtwo)
      +\hpre(\premtwo)
      -\hpre(\prekzero)
      -\hpre(\prekone)
      -\hpre(\prektwo)
      \\
      &\quad-\hpre(\premthree)
      -2\hpre(\premtwoprime)
      -\prektwo \ln 2
      - \premtwoprime \ln 2
      - \premthree \ln 6,
         \end{split}
   \end{equation*}
where $\prekzero \in[5-9\mpre,2-3\mpre]$. We have that
\begin{equation*}
  \exp\left(\frac{\dif f}{\dif \prekzero}\right)
  =
  \frac{(3\mpre-2+\prekzero)^2}{(-5+9\mpre+\prekzero)\prekzero}
  \quad\text{and}\quad
  \frac{\dif^2 f}{\dif^2 \prekzero}
  =\frac{3\mpre\prekzero-\prekzero+33\mpre-10-27\mpre^2}{\prekzero(-5+9\mpre+\prekzero)(3\mpre-2+\prekzero)}.
\end{equation*}
For $\prekzero \in[5-9\mpre,2-3\mpre]$, the denominator of the second
derivative is always nonnegative and its numerator is always negative
for sufficiently small $r$ (that is, sufficiently large $n$). Hence,
$f$ is strictly concave. Thus, there is a unique maximum and it
satisfies:
\begin{equation*}
  \frac{(3\mpre-2+\prekzero)^2}{(-5+9\mpre+\prekzero)\prekzero}
  =1,
\end{equation*}
that is,
\begin{equation*}
  \prekzero =
  \frac{(3\mpre-2)^2}{3\mpre-1}.
\end{equation*}
We then compute the series for $f(\prekzero)$ at this point with
$\lambdaopt$ going to zero (by using~\eqref{eq:pre-r-opt}):
\begin{equation*}
  f(\prekzero)=
  2r\ln n -4r\ln r
  +  \left(-\ln(2)-\frac{1}{3}\ln(3)+\frac{1}{3}\right)\lambdaopt
  +(\lambdaopt)^2\ln(\lambdaopt)
  +O((\lambdaopt)^2).
\end{equation*}

\noindent{\textbf{Case 2:}} Assume that $\preQthree = 3\prenthree>0$
and $\prePthree=0$. Since $\prePthree=0$ and
$\prePthree=3(\mpre-\prenn)$ by definition
(see~\eqref{eq:pre-param-def}), we have that $\prenn =
\mpre$. Moreover, since $\preTthree =\prePthree-\prekone-2\prektwo$
and $\preTthree,\prekone,\prekone\geq 0$ are constraints in the
definition of $\preS_1$, we have that $\prekone=0$ and $\prektwo=0$.
Using $\preQthree=3\prenthree$ and their definitions
in~\eqref{eq:pre-param-def}, we have that
$3\mpre-\prenn-2\prekzero-2\prekone-2\prektwo =
3(1-\prenn-\prekzero-\prekone-\prektwo)$ and so $\prekzero = 3 -
3\mpre-2\prenn = 3-5\mpre$. Thus, we only have to compute the value of
$\fpre$ in the point $(\mpre,3-5\mpre,0,0)$. By computing the series
of $\fpre$ in this point with $\lambdaopt$ going to zero (by
using~\eqref{eq:pre-r-opt}), we get
\begin{equation*}
  2r\ln(n)-4r\ln r
  +
  \left(\frac{1}{3}-\ln(2)-\frac{1}{3}\ln(3)\right)\lambdaopt
  +O((\lambdaopt)^2).
\end{equation*}

\noindent{\textbf{Case 3:}} Assume that $\preQthree = 3\prenthree>0$
and $\prePtwo=0$. Since $\prePtwo=0$ and
$\prePtwo=2(\prenn-\prekzero)$ by definition
(see~\eqref{eq:pre-param-def}), we have that $\prekzero =
\prenn$. Moreover, since $\preTtwo =\prePtwo-\prekone$ and
$\preTtwo,\prekone\geq 0$ are constraints in the definition of
$\preS_m$, we have that $\prekone=0$.  Using $\preQthree=3\prenthree$
and their definition in~\eqref{eq:pre-param-def}, we have that
$3\mpre-\prenn-2\prekzero-2\prekone-2\prektwo =
3(1-\prenn-\prekzero-\prekone-\prektwo)$ and so $\prektwo
=3-3\mpre-3\prenn$. So let $f(\prenn) \eqdef
\fpre(\prenn,\prenn,0,3-3\mpre-3\prenn)$ and $\prenn
\in[2-3\mpre,1-\mpre]$. We have that
\begin{equation*}
  \exp\left(\frac{\dif f}{\dif \prenn}\right)
  =
  \frac{8(1-\prenn-\mpre)^3}
  {(\mpre-\prenn)^2 (-2\prenn+3\mpre)}
  \quad\text{and}\quad
  \frac{\dif^2 f}{\dif^2 \prenn}
  =\frac{(2\mpre-1)(4-7\mpre-\prenn)}
  {(-2+\prenn+3\mpre)(1-\prenn-m)(\mpre-\prenn)}
\end{equation*}
For $\prenn \in[2-3\mpre,1-\mpre]$, the denominator of the second
derivative is always nonnegative and its numerator is always negative
for sufficiently small $r$. Hence, $f$ is strictly concave. Thus,
there is unique maximum satisfying
\begin{equation*}
  8(1-\prenn-\mpre)^3-(\mpre-\prenn)^2 (-2\prenn+3\mpre))=0,
\end{equation*}
which has a unique real solution at $1/2+ \alpha r$, where
$\alpha\approx -2.03566$, which is the real solution for
\begin{equation*}
  9\alpha^3+25\alpha^2+19\alpha+11=0.
\end{equation*}
We then compute the value of the function $f$ at $1/2+\alpha r$:
\begin{equation*}
  2r\ln(n) + 2r\ln r+ \beta,
\end{equation*}
with $\beta\approx 1.9389$.

\noindent\textbf{Case 4:} Now suppose that $\preQthree = 3\prenthree > 0$ and
$\prePthree >0$ and $\prePtwo > 0$. By Lemma~\ref{lem:aux-boundary}, we
do not need to consider the cases $\prekzero=0$, $\prekone=0$, $\prektwo=0$,
$\preTthree=0$, $\preTtwo=0$ and $\premthree=0$.

Since $\preQthree = 3\prenthree$, we have that $\prekzero =
3-3\mpre-2\prenn-\prekone-\prektwo$. Thus we analyse the function
\begin{equation*}
  f(\prenn,\prekone,\prektwo)
  \eqdef
  \fpre(\prenn,3-3\mpre-2\prenn-\prekone-\prektwo,\prekone,\prektwo).
\end{equation*}
We have 
that, for any local maximum in this case,
\begin{align*}
  &\exp\left(\frac{\dif f}{\dif \prenn}\right)
  = \frac{8\prePthree^3 \prekzero^2 \prenthree^2 \prenn}{\premthree^2
    \preTtwo^6}
  =1;\\
  &\exp\left(\frac{\dif f}{\dif \prekone}\right)
  =  \frac{2\prePthree \prekzero}{\preTtwo \prekone}
  =1;\\
  &\exp\left(\frac{\dif f}{\dif \prektwo}\right)
  =  \frac{\prePthree^2 \prekzero}{\preTtwo^2 \prektwo}
  =1;
\end{align*}
and so
\begin{align}
  &\label{eq:fn1-case4} 8\prePthree^3 \prekzero^2 \prenthree^2 \prenn-\premthree^2 \preTtwo^6=0;\\
  &\label{eq:fk1-case4} 2\prePthree \prekzero-\preTtwo \prekone = 0;\\
  &\label{eq:fk2-case4} \prePthree^2 \prekzero-\preTtwo^2 \prektwo = 0.
\end{align}
By taking the resultant of the RHS of~\eqref{eq:fk1-case4}
and~\eqref{eq:fk2-case4}, by eliminating $\prekone$, we get
\begin{multline*}
  9\prenthree^2
  (\prektwo-3+3\mpre+4\prenn)
  \\(9\mpre^2\prektwo-6\mpre\prenn\prektwo+n1^2\prektwo+27\mpre^3-27\mpre^2
  -36\mpre^2\prenn-9\mpre\prenn^2+54\mpre\prenn-27\prenn^2+18\prenn^3)
  =0.
\end{multline*}
Using $\preQthree=3\prenthree$ and their definition
in~\eqref{eq:pre-param-def}, we have that
$3\mpre-\prenn-2\prekzero-2\prekone-2\prektwo =
3(1-\prenn-\prekzero-\prekone-\prektwo)$ and so $3\mpre -3 =
\prekzero+\prekone+\prektwo-2\prenn$. Thus, $\prektwo-3+3\mpre+4\prenn
= \prekzero+\prekone+2\prektwo+2\prenn>0$ since we already excluded
the case $\prekzero =0$. Recall that in this case we have
$\prenthree>0$.  Thus, for any local maximum in this case,
\begin{equation}
  \label{eq:eq1-case4} 
  9\mpre^2\prektwo-6\mpre\prenn\prektwo+n1^2\prektwo+27\mpre^3-27\mpre^2
  -36\mpre^2\prenn-9\mpre\prenn^2+54\mpre\prenn-27\prenn^2+18\prenn^3
  =0.
\end{equation}
This implies that $\prektwo$ can be determined in terms of $\prenn$:
\begin{equation*}
  \prektwo
  =
\frac{9(-3\mpre+3-2\prenn)(\mpre-\prenn)^2}{(3\mpre-\prenn)^2}.
\end{equation*}
By taking the resultant of the RHS of~\eqref{eq:fk1-case4}
and~\eqref{eq:fk2-case4}, by eliminating $\prektwo$, we get
\begin{equation*}
  44
  \prenthree^2
  (\prekone-2\prenn)
  (9\mpre^2\prekone-6\mpre\prenn\prekone+n1^2\prekone
  +36\mpre^2\prenn-36\mpre\prenn+36\prenn^2-24\prenn^3-12\mpre\prenn^2)
  =0.
\end{equation*}
In this case $\prenthree>0$. Moreover, $2\prenn-\prekone = 0$ implies,
by the definitions in~\eqref{eq:pre-param-def}, that $\preTtwo =
\prePtwo-\prekone = 2\prenn-2\prekzero-\prekone\leq 0$ since $\prekzero
\geq 0$ in $\preS_m$. But we have already excluded the case $\preTtwo =
0$. Thus,
\begin{equation}
  \label{eq:eq2-case4}
  9\mpre^2\prekone-6\mpre\prenn\prekone+n1^2\prekone
  +36\mpre^2\prenn-36\mpre\prenn+36\prenn^2-24\prenn^3-12\mpre\prenn^2
  =0.
\end{equation}
This implies that $\prekone$ can be determined in terms of $\prenn$:
\begin{equation*}
  \prekone = \frac{12\prenn (-3\mpre+3-2\prenn) (\mpre-\prenn)}{(3\mpre-\prenn)^2}.
\end{equation*}
We take the resultant of the RHS of~\eqref{eq:fn1-case4}
and~\eqref{eq:eq2-case4} by eliminating $\prekone$ and then the resultant
of the polynomial obtained with the RHS of~\eqref{eq:eq1-case4} by
eliminating $\prektwo$ and ignoring the factors that cannot be zero in $\preS_m$ and we obtain:
\begin{equation*}
  18\mpre-36\mpre^2+18\mpre^3-18\prenn+18\mpre\prenn-3\mpre^2\prenn+22\prenn^2-16\mpre\prenn^2-7\prenn^3
  =0.
\end{equation*}
This cubic equation has one real solution for $\prenn$ and two complex
solutions because the discriminant $\Delta$ of the polynomial above is
$-63/4+O(r)$, which is negative for sufficiently large $n$. For we
have that the real solution is $1/2-r-6r^2-O(r^3)$ and so the value of the
function $\fpre$ at this point is, by using~\eqref{eq:pre-r-opt},
\begin{equation*}
2r\ln n-4r\ln r+
\left(\frac{1}{3}-\frac{1}{3}\ln(3)-\frac{2}{3}\ln(2)\right)\lambdaopt
+ 
\left(\frac{11}{72}-\frac{2}{9}\ln(2)-\frac{1}{9}\ln(3)\right)(\lambdaopt)^2
+
O((\lambdaopt)^3).  
\end{equation*}

\noindent\textbf{Case 5:} Now suppose that $\prePthree = 0$ and $\preQthree >
\prenthree >0$. Since $\prePthree = 0$ and $\prePthree =
3(\mpre-\prenn)$ by definition (see~\eqref{eq:pre-param-def}), we have
that $\prenn=\mpre$. Moreover, since $\preTthree,\prekone,\prektwo\geq
0$ in $\preS_m$ and $\preTthree = \prePthree - \prekone-2\prektwo$ by
definition, we have that $\prekone = 0$ and $\prektwo = 0$. Thus, for
any local maximum with $\prePthree =0$, it suffices to analyse
\begin{equation*}
  f(\prenthree) \eqdef
  \fpre(\mpre,1-\mpre-\prenthree,0,0),
\end{equation*}
where $\prenthree \in(0,1-\mpre)$, since by definition
$\prekzero=1-\prenn-\prekone-\prektwo-\prenthree =
1-\mpre-\prenthree\geq 0$
and $\preQthree = 3\mpre-\prenn-2\prekzero-2\prekone-2\prektwo =
4\mpre-2+2\prenthree\geq 0$. We do not have to analyse the value at the
endpoints of the interval for $\prenthree$ as they were already
considered in cases before. Also, in this case $\preQthree=\prePtwo$,
thus we do not have to check the case $\prePtwo=0$. Thus, it suffices
to consider points satisfying
\begin{equation*}
  \exp\left(\frac{\dif f}{\dif \prenthree}\right)
  =
  \frac{2(1-\mpre-\prenthree)f_3(\lambda)}{\prenthree \lambda^2}=1,
\end{equation*}
where $\lambda f_2(\lambda)/f_3(\lambda) = \preQthree/\prenthree$. The equation
below is equivalent to
\begin{equation*}
  \prenthree = \frac{(1-m)f_3(\lambda)}{f_2(\lambda)}.
\end{equation*}
Combining this with the equation defining $\lambda$ implies:
\begin{equation*}
r = \frac{1}{2} 
\frac{-2 e^\lambda+2+\lambda+\lambda e^{\lambda}}{2 e^{\lambda}-2-3\lambda+\lambda
  e^{\lambda}},
\end{equation*}
and since $r$ goes to zero so does $\lambda$. We have that
\begin{equation*}
  r = \frac{1}{24} \lambda +O(\lambda^2),
\end{equation*}
which implies
\begin{equation*}
  \lambda = 2\lambdaopt+O(\lambdaopt)^2.
\end{equation*}
We then compute the series of $f(\prenthree)$ with $\lambda$ going to zero:
\begin{align*}
&2r\ln n-4r\ln r+  \left(-\frac{1}{2}\ln(2)-\frac{1}{6}\ln(3)+\frac{1}{6}\right)\lambda + O(\lambda^2)\\
  =
&2r\ln n-4r\ln r+
  \left(-\ln(2)-\frac{1}{3}\ln(3)+\frac{1}{3}\right)\lambdaopt + O((\lambdaopt)^2).
\end{align*}
\noindent\textbf{Case 6:}  Now suppose that $\prePtwo = 0$ and $\preQthree >
\prenthree >0$. We have that $\prePtwo = 2(\prenn-\prekzero)$. Thus,
we have $\prenn=\prekzero$ since $\prePtwo=0$. Moreover, since
$\preTtwo,\prekone\geq 0$ in $\preS_m$ and $\preTtwo =
\prePtwo-\prekone$ by definition, we have that $\prekone = 0$. Thus,
we only need to analyse
\begin{equation*}
  f(\prenn,\prektwo) \eqdef
  \fpre(\prenn,\prenn,0,\prektwo),
\end{equation*}
where $\preQthree > 3\prenthree>0$ and $\prePthree > 0$.
Thus, it suffices to
consider points satisfying
\begin{equation*}
  \exp\left(\frac{\dif f}{\dif \prenn}\right)
  =
  \frac{2}{9}
  \frac{\prenthree^2\lambda^3}
    {\premthree^2f_3(\lambda)^2}
    =1
  \quad\text{and}\quad
  \exp\left(\frac{\dif f}{\dif \prektwo}\right)
  =
  \frac{1}{2}
  \frac{\prenthree \lambda^2}{\prektwo f_3(\lambda)}=1,
\end{equation*} 
where $\lambda f_2(\lambda)/f_3(\lambda) = \preQthree/\prenthree$. The second
equation implies that for any local maximum
\begin{equation*}
  \prektwo = \frac{1}{2}\frac{(1-2\prenn)\lambda^2}{f_2(\lambda)}.
\end{equation*}
By using this with the derivative w.r.t.~$\prenn$,
we get
\begin{equation*}
\prenn =
\frac{\lambda^{3/2}\sqrt{2}-3 f_2(\lambda)\mpre}
  {2\lambda^{3/2}\sqrt{2}-3f_2(\lambda)}.
\end{equation*}
By putting this together with the equation defining $\lambda$, we have that
\begin{equation*}
\frac{(-e^{\lambda}+1+\sqrt{2\lambda})\lambda}
{f_3(\lambda)}=0,
\end{equation*}
which has a unique solution $\ell^* \approx 0.8267$. For $\lambda
=\ell^*$, we have $\prenn =
\frac{1}{2}+\alpha r$, with $\alpha\approx 1.4887$ and $\prektwo =
\beta(1/2-\prenn)$ with $\beta \approx 0.1173$. By using this values
of $\prenn$ and $\prektwo$, we evaluate the function
$f(\prenn,\prektwo)$ as 
\begin{equation*}
2r\ln n - 4r\ln r+  6\ln r + O(r),
\end{equation*}
since $\alpha < 0$, $0< \beta < 2$ and $\lambda > 0$.
\end{proof}

\begin{proof}[Proof of Lemma~\ref{lem:open-prek}]
  Let $\prex(i) = (\prenn(i),\prekzero(i),\prekone(i),\prektwo(i))$ and similarly
  for $\preQthree(i), \prenthree(i)$, etc. Let $\lambda(i)$ be such that
  $\lambda(i)f_2(\lambda(i))/f_1(\lambda(i))= \preQthree(i)/\prenthree(i)$.
  Recall that
\begin{equation*}
  \begin{split}
    \fpre(\prenn,\prekzero,\prekone,\prektwo)
    =
    &\hpre(\prePthree)
    +\hpre(\prePtwo)
    +\hpre(\preQthree)
    +\hpre(\premtwo)
    \\
    &-\hpre(\prekzero)
    -\hpre(\prekone)
    -\hpre(\prektwo)
    -\hpre(\prenthree)
    -\hpre(\premthree)
    \\
    &-\hpre(\preTthree)
    -\hpre(\preTtwo)
    -2\hpre(\premtwoprime)
    \\
    &
    -\prektwo \ln 2
    - \premtwoprime \ln 2
    - \premthree \ln 6
    \\
    &+\prenthree\ln f_3(\lambda)
    -\preQthree\ln\lambda.
  \end{split}
\end{equation*}
Since $S\subseteq[0,1]^4$ and the fact that $|y\ln y|\leq 1/e$ for
$y\in [0,1]$, we have that $\fpre(x)\leq C +\prenthree\ln f_3(\lambda)
-\preQthree\ln\lambda$ for some constant $C$. Thus, it suffices to show
that $\prenthree(i)\ln f_3(\lambda(i)) -\preQthree(i)\ln\lambda(i) \to -\infty$
as $i\to\infty$.

Since $\preQthree(i)$ converges to a positive number and $\prenthree(i)$
converges to $0$, we have that $\preQthree(i)/\prenthree(i)\to\infty$. This
implies that $\lambda(i)\to\infty$. Thus,
\begin{equation*}
  \begin{split}
    &\prenthree(i)\ln f_3(\lambda(i)) -\preQthree(i)\ln\lambda(i) \\
    &\leq
    \prenthree(i)\lambda(i) -\preQthree(i)\ln\lambda(i),\quad \text{since
    }f_3(\lambda)\leq \exp(\lambda)\\
    &\leq
    \prenthree(i)\frac{\preQthree(i)}{\prenthree(i)} -\preQthree(i)\ln\lambda(i),\quad \text{since
    }\lambda(i)\leq \preQthree(i)/\prenthree(i)
    \\
    &= \preQthree(i)(1-\ln(\lambda(i))) \to-\infty,\quad \text{since
    }\lambda(i)\to\infty \text{ and }\lim\inf_{i\to \infty} \preQthree(i)>0.
  \end{split}
\end{equation*}

\end{proof}

\subsection{Approximation around the maximum and bounding the tail}
\label{sec:approx-tail-pre-hyper}

In this section, we approximate the sum of
$\exp(n\fpre(x))$ over a set of points `close' to $\xopt$ and  bound
the sum for the points `far' from $\xopt$. More specifically, we prove
Lemmas~\ref{lem:approx-pre-hyper} and~\ref{lem:tail-pre-hyper}.

\begin{proof}[Proof of Lemma~\ref{lem:approx-pre-hyper}]
  We use Lemma~\ref{lem:third-pre-hyper} and
  Lemma~\ref{lem:max-interior-prek}, which were proved in
  Section~\ref{sec:partial-prek-hyper} and
  Section~\ref{sec:max-pre-hyper}, resp. Let $x\in B$. By
  Lemma~\ref{lem:third-pre-hyper}, since $\delta_1^3=o(r/n)$ and
  $\delta^3 = o(r^4/n)$, we have that
  \begin{equation*}
    n\frac{\partial \fpre(\prexopt+\prex)}
          {\partial t_1 \partial t_2 \partial t_3} 
    t_1(\prex) t_2(\prex) t_3(\prex)
    =o(1),
  \end{equation*}
  for any $t_1,t_2,t_3
  \in\set{\prenn,\prekzero,\prekone,\prektwo}$. By
  Lemma~\ref{lem:max-pre-hyper}, we have that
  \begin{equation*}
    \frac{\partial \fpre(\prexopt)}
    {\partial t} 
    =0,
  \end{equation*}
  for any $t\in\set{\prenn,\prekzero,\prekone,\prektwo}$. Thus, by
  Taylor's approximation \crisc{Removed reference to Taylor in
    preliminaries},
  \begin{equation}
    \label{eq:taylor-aux-pre-hyper}
    \exp\left( n \fpre(\prexopt+\prex)\right)
    =
    \exp\left( n \fpre(\prexopt)  
      +\frac{n\prex^T H \prex}{2}+o(1)
    \right),
  \end{equation}
  where $H$ is the Hessian of $\fpre$ at $\xopt$. Using the fact that
  $\preB\cap ((\setZ^4-\xopt)/n)$ is a finite set for
  each~$n$\crisc{removed a reference to a uniformity lemma in the appendix}, this implies that
  \begin{equation}
    \label{eq:taylor-pre-hyper}
    \sum_{\substack{\prex\in \preB\\ \prex\in (\setZ^4-\xopt)/n}}
    \exp\left( n \fpre(\prexopt+\prex)\right)
    \sim
    \sum_{\substack{\prex\in \preB\\ \prex\in (\setZ^4-\xopt)/n}}
    \exp\left( n \fpre(\prexopt)  
    +\frac{n\prex^T H \prex}{2}
  \right).
  \end{equation}
  So we need to show that
  \begin{equation}
    \label{eq:taylor-integral-aux-1-prek-hyper}
    \sum_{\substack{\prex\in \preB\\ \prex\in (\setZ^4-\xopt)/n}}
    \exp\left(
      \frac{n\prex^T H \prex}{2}
    \right)
    \sim
    144\sqrt{3}\pi^2 r^{7/2}n^2.
  \end{equation}
  Let 
  \begin{equation*}
    A =
    \left(\begin{array}{cccc}
    1&0&0&0\\
    1&1&0&0\\
    -3&0&1&0\\
    0&0&0&1
  \end{array}
\right).
  \end{equation*}
  We rewrite the summation in the LHS
  of~\eqref{eq:taylor-integral-aux-1-prek-hyper} over $\preC \eqdef
  \set{y: Ay \in\preB}$ as
  \begin{multline}
    \label{eq:taylor-integral-aux-2-prek-hyper}
          \sum_{\substack{y\in \preC\\ y\in (\setZ^4-A^{-1}\xopt)/n}}
      \exp\left(
        \frac{ny^T (A^T HA) y}{2}
      \right)
      \\=
    {\sum_{\substack{y\in \preC\\ y\in (\setZ^4-A^{-1}\xopt)/n}}}
    \exp\left(
      \frac{-ny^T (A^T H_0A) y }{2r^2}
      -\frac{ny^T (A^T TA) y }{2r}
    + \frac{O(ny^T J y)}{2}
    \right),
  \end{multline}
  by~Lemma~\ref{lem:hessian-hyper} (for the definitions of $H_0$ and
  $T$, see~\eqref{eq:def-H0-T-pre-hyper}).  Note
  that the condition ``$\prex\in (\setZ^4-\xopt)/n$'' became ``$y\in
  (\setZ^4-A^{-1}\xopt)/n$'' because $A$ is an integer invertible
  matrix and
  \begin{equation*}
    A^{-1}
    =
    \left(\begin{array}{cccc}
    1&0&0&0\\
    -1&1&0&0\\
    3&0&1&0\\
    0&0&0&1
  \end{array}
\right)
  \end{equation*}
  is also an integer matrix. Using the definition of $H_0$ and
  $T$ in~\eqref{eq:def-H0-T-pre-hyper}, we have that
  \begin{equation}
    \label{eq:taylor-integral-aux-3-prek-hyper}
    \begin{split}
      &\sum_{\substack{y\in \preC\\ y\in (\setZ^4-A^{-1}\xopt)/n}}
      \exp\left(
        \frac{-ny (A^T H_0A) y }{2r^2}
        -\frac{ny (A^T TA) y }{2r}
        + \frac{nO(y^T J y)}{2}
      \right)=\\
      &=
      \sum_{\substack{y\in \preC\\ y\in (\setZ^4-A^{-1}\xopt)/n}}
      \exp{\Bigg(}
      -\frac{n}{12r^2}y_2^2
      -\frac{n}{6r^2}y_2 y_3
      -\frac{n}{6r^2}y_2 y_4
      -\frac{7n}{72r^2} y_3^2
      -\frac{4n}{9r^2}y_3 y_4
      - \frac{n}{6r^2}y_4^2
      \\
      &\qquad \qquad \qquad \qquad\qquad
      -\frac{n}{2r} y_1^2
      +\frac{n}{r} y_1 y_2
      +\frac{n}{r} y_1 y_3
      \\&\qquad\qquad\qquad\qquad\qquad
      -\frac{11n}{30r} y_2^2
      -\frac{2n}{5r} y_2 y_3
      -\frac{n}{15r} y_2 y_4
      -\frac{31n}{180r} y_3^2
      +\frac{2n}{45r} y_3 y_4
      +\frac{n}{45r} y_4^2
      + \frac{nO(y^T J y)}{2}
    \Bigg{)},
    \end{split}
  \end{equation}
    The set $\preC = \set{y:
    Ay \in\preB}$ can be described as
  \begin{equation*}
    \preC
    =
    {\big\{}
    y\in\setR^4:
    \abs{y_1}\leq \delta_1, \
    \abs{y_i}\leq \delta\text{ for }i=2,3,4
    {\big\}},
  \end{equation*}
  since $\preB$ was defined as
  \begin{equation*}
    \preB = \set[\Big]{
      \prex\in\setR^4 :
      \prex = \gamma_1 z_1 + \gamma_2 e_2 + \gamma_3 e_3 + \gamma_4 e_4,\
      |\gamma_1|\leq \delta_1 
      \text{ and }
      |\gamma_i|\leq \delta   \text{ for }i=2,3,4
    }.
  \end{equation*}
  Thus, the ranges of the summation for different variables $y_i$'s
  are independent. We have that
  \begin{equation*}
    \begin{split}
      &\sum_{\substack{|y_1|\leq \delta_1\\
                y_1\in (\setZ-(A^{-1}\xopt)_1)/n}}
      \exp{\Bigg(}
      -\frac{n}{2r} y_1^2
      +\frac{n}{r} y_1 y_2
      +\frac{n}{r} y_1 y_3
      +\sum_{j=1}^4 O\paren{n y_1  y_j}
      \Bigg{)}
    \\
    &=
    \sum_{\substack{|\tilde y_1|\leq
        \delta_1 \sqrt{n/r}\\
                \tilde y_1\in (\setZ-(A^{-1}\xopt)_1)/\sqrt{rn}}}
      \exp{\Bigg(}
      - y_1^2/2
      + \tilde y_1 \tilde y_2
      + \tilde y_1 \tilde y_3
      +\sum_{j=1}^4 O\paren{r\tilde y_1 \tilde y_j}
    \Bigg{)},
    \end{split}
  \end{equation*}
  where $\tilde y_i = \sqrt{n} y_i/\sqrt{r}$ for $i=2,3$. We
  apply Lemma~\ref{lem:integral-tools} with $\alpha = 1/2$, $\beta =
  \tilde y_2+\tilde y_3$, $\phi = O(r) = o(1)$, $\psi = O(r\tilde y_2+
  r\tilde y_3 + r\tilde y_4) = O(r) =o(1)$, $s_n = \sqrt{rn}
  \to\infty$ and $T_n = \delta_1 \sqrt{n/r}\to \infty$:
  \begin{equation*}
    \begin{split}
      \sum_{\substack{|\tilde y_1|\leq
        \delta_1 \sqrt{n/r}\\
                y_1\in (\setZ-(A^{-1}\xopt)_1)/(r\sqrt{n})}}
      \exp{\Bigg(}
      - y_1^2/2
      + \tilde y_1 \tilde y_2
      + \tilde y_1 \tilde y_3
      +\sum_{j=1}^4 O\paren{r\tilde y_1 \tilde y_j}
    \Bigg{)}
    &\sim
      \sqrt{2rn\pi}
      \exp((\tilde y_2+\tilde y_3)^2/2).
    \end{split}
  \end{equation*}
  We then proceed similarly for $y_2$, $y_3$ and $y_4$. Fix $y_3$ and
  $y_4$. Set $\check y_i = \sqrt{n}y_i/r$ for $i=3,4$.  We apply
  Lemma~\ref{lem:integral-tools} with $\alpha = 1/12$, $\beta =
  -(1/6) \check y_3 - (1/6) \check y_4 $, $\phi =
  -(2r/15) +O(r^2) = o(1)$, $\psi = (3r/5) \check y_3
  -r/15 +\sum_{j=3}^4 O\paren{r^2\check y_j} =o(1)$, $s_n =
  r\sqrt{n} \to\infty$ and $T_n = \delta\sqrt{n}/r\to \infty$:
  \begin{equation*}
    \begin{split}
      &\sum_{\substack{|y_2|\leq \delta\\
          y_2\in (\setZ-(A^{-1}\xopt)_2)/n}} \hspace{-14pt}\exp{\Bigg(}
      -\frac{n}{12r^2}y_2^2 -\frac{n}{6r^2}y_2 y_3 -\frac{n}{6r^2}y_2
      y_4 
      -\frac{2n}{15r} y_2^2 +\frac{3n}{5r} y_2 y_3 -\frac{n}{15r}
      y_2 y_4 +\sum_{j=2}^4 O\paren{n y_2 y_j}
      \Bigg{)}\\
      &= \sum_{\substack{|\check y_2|\leq \delta\sqrt{n}/r \\ \check
          y_2\in (\setZ-(A^{-1}\xopt)_2)/(r\sqrt{n})}} \exp{\Bigg(}
      -\frac{1}{12}\check y_2^2 -\frac{1}{6}\check y_2 \check y_3
      -\frac{1}{6} \check y_2 \check y_4 
      -\frac{2r}{15} \check y_2^2
      +\frac{3r}{5} \check y_2 \check y_3 -\frac{r}{15} \check y_2
      +\sum_{j=2}^4 O\paren{r^2\check y_2 \check y_j} \Bigg{)}
      \\
      &\sim  2\sqrt{3\pi}r\sqrt{n} \exp((\check y_3+\check
      y_4)^2/12).
    \end{split}
  \end{equation*}
  Fix $y_4$ and set $\check y_4 = \sqrt{n}y_4/r$.  We apply
  Lemma~\ref{lem:integral-tools} with $\alpha = 1/72$, $\beta = -(1/8)\tilde
  y_4$, $\phi = O(r) = o(1)$, $\psi = O(r\tilde y_4) = O(r) =o(1)$, $s_n = r\sqrt{n}
  \to\infty$ and $T_n = \delta\sqrt{n}/r\to \infty$:
  \begin{equation*}
    \begin{split}
      &\sum_{\substack{|y_3|\leq \delta\\
          y_3\in (\setZ-(A^{-1}\xopt)_3)/n}}
      \exp{\Bigg(}
      -\frac{n}{72r^2}y_3^2
      -\frac{n}{18r^2}y_3y_4
      +\sum_{j=3}^4 O(ny_3y_j/r)
      \Bigg{)}\\
      =
      &\sum_{\substack{|\check y_3|\leq \delta\sqrt{n}/r\\
          \check y_3\in (\setZ-(A^{-1}\xopt)_3)/(r\sqrt{n})}}
      \exp{\Bigg(}
      -\frac{1}{72}\check y_3^2
      -\frac{1}{18}\check y_3 \check y_4
      +\sum_{j=3}^4 O(r\check y_3\check y_j)
      \Bigg{)}\\
      &\sim
      6\sqrt{2\pi}r\sqrt{n}
      \exp(\check y_4^2/18)).
    \end{split}
  \end{equation*}
  Finally, for $y_4$, we apply Lemma~\ref{lem:integral-tools} with
  $\alpha = 1/36$, $\beta = 0$, $\phi = O(r) = o(1)$, $\psi = 0$, $s_n
  = r\sqrt{n} \to\infty$ and $T_n = \delta\sqrt{n}/r\to \infty$:
  \begin{equation*}
    \begin{split}
      \sum_{\substack{|y_4|\leq \delta\\
          y_4\in (\setZ-(A^{-1}\xopt)_4)/n}} \exp{\Bigg(}
      -\frac{n}{36r^2} y_4^2 +O(ny_4y_4/r) \Bigg{)} &=
      \sum_{\substack{|\check y_4|\leq \delta\sqrt{n}/r\\
          \check y_4\in (\setZ-(A^{-1}\xopt)_4)/(r\sqrt{n})}}
      \exp{\Bigg(} -\frac{1}{36} \check y_4^2 +O(r \check y_4 \check
      y_4)
      \Bigg{)}\\
      &\sim 6r\sqrt{\pi n}.
    \end{split}
  \end{equation*}
  Hence,
  \begin{equation*}
    \sum_{\substack{\prex\in \preB\\ \prex\in (\setZ^4-\xopt)/n}}
    \exp\left(
      \frac{n\prex^T H \prex}{2}
    \right)
    \sim
    \sqrt{2rn\pi}
    \cdot
    2\sqrt{3\pi}r\sqrt{n}
    \cdot
    6\sqrt{2\pi}r\sqrt{n}
    \cdot
    6r\sqrt{\pi n}
    =144\sqrt{3}\pi^2n^2r^{7/2},
  \end{equation*}
  completing the proof.
\end{proof}

\begin{proof}[Proof of Lemma~\ref{lem:tail-pre-hyper}]
  Recall that $\hpre(y) = y\ln(yn)-y$,
\begin{equation*}
  \wpre(x)
  =
  \begin{cases}
    {\displaystyle\frac{\Pthree!\Ptwo!\Qthree!(\mtwo-1)!}
    {\kzero!\kone!\ktwo!
      \nthree!\mthree!
      \Tthree!\Ttwo!
      (\mtwoprime-1)!\mtwoprime!
      2^{\ktwo} 2^{\mtwoprime} 6^{\mthree}}
      \frac{\fff(\lambda)^{\nthree}}{\lambda^{\Qthree}}},&\text{if
      }\Qthree > 3\nthree;\\
      {\displaystyle\frac{\Pthree!\Ptwo!\Qthree!(\mtwo-1)!}
      {\kzero!\kone!\ktwo!
      \nthree!\mthree!
      \Tthree!\Ttwo!
      (\mtwoprime-1)!\mtwoprime!
      2^{\ktwo} 2^{\mtwoprime} 6^{\mthree}}
      \frac{1}{6^{\nthree}}},&\text{otherwise.}
  \end{cases}
\end{equation*}
and, for $x\in S$ such that $\Qthree > 3\nthree$
\begin{equation*}
  \begin{split}
    \fpre(\prex) =
    &\hpre(\prePthree)
    +\hpre(\prePtwo)
    +\hpre(\preQthree)
    +\hpre(\mtwo)
    \\
    &-\hpre(\prekzero)
    -\hpre(\prekone)
    -\hpre(\prektwo)
    -\hpre(\prenthree)
    -\hpre(\premthree)
    \\
    &-\hpre(\prePthree-\prekone-2\prektwo)
    -\hpre(\prePtwo-\prekone)
    -2\hpre(\premtwoprime)
    \\
    &
    -\prektwo \ln 2
    - \premtwoprime \ln 2
    - \premthree \ln 6
    \\
    &+\prenthree\ln f_3(\lambda)
    -\preQthree\ln\lambda,
  \end{split}
\end{equation*}
and, if $\Qthree = 3\nthree$,
\begin{equation*}
  \begin{split}
    \fpre(\prex) = &\hpre(\prePthree)
    +\hpre(\prePtwo)
    +\hpre(\preQthree)
    +\hpre(\mtwo)
    \\
    &-\hpre(\prekzero)
    -\hpre(\prekone)
    -\hpre(\prektwo)
    -\hpre(\prenthree)
    -\hpre(\premthree)
    \\
    &-\hpre(\prePthree-\prekone-2\prektwo)
    -\hpre(\prePtwo-\prekone)
    -2\hpre(\premtwoprime)
    \\
    &
    -\prektwo \ln 2
    - \premtwoprime \ln 2
    - \premthree \ln 6
    \\
    & -\prenthree\ln 6.
  \end{split}
\end{equation*}
Thus, by \Old{Lemma~\ref{lem:stirling-cte-prelim} (which states that
Stirling's approximation is correct up to a constant
factor)}\New{Stirling's approximation}, there
is a polynomial $Q(n)$ such that for $\prex\in \preS_m$
\begin{equation*}
    \wpre(x)
    \leq
    Q(n) n!\exp(n\fpre(\prex)).
  \end{equation*}
  Hence, if we obtain an upper bound for the tail $\sum_{x\in
    (S\setminus(\xopt+B))\cap \setZ^4}n!  \exp(n\fpre(\prex))$, we
  also get an upper bound for the tail $\sum_{x\in
    (S\setminus(\xopt+B))\cap \setZ^4} \wpre(x)$ although it is a
  weaker bound because of the polynomial factor $Q(n)$.

  Let $x\in (S\setminus(\xopt+B))\cap \setZ^4$.  Let
  $\gamma_1,\gamma_2,\gamma_3,\gamma_4$ be such that $x = \xopt+\gamma_1 z_1
  + \gamma_2 e_2 + \gamma_3 e_3 + \gamma_4 e_4$. Let
  $\delta_1'=\omega(\delta_1)$ be such that $\delta_1'/\delta_1$ goes
  to infinity arbitrarily slowly and let $\delta'$ be such that
  $\delta'/\delta$ goes to infinity arbitrarily slowly. If
  $\delta_1\leq |\gamma_1|\leq \delta_1'$ and $\delta\leq |\gamma_i|\leq \delta'$ for
  $i=2,3,4$,   by~\eqref{eq:taylor-aux-pre-hyper},
  \begin{equation*}
    \begin{split}
      \frac{\exp\left( n \fpre(\prex)\right)}{\exp\left( n \fpre(\prexopt)\right)}
      \sim
      \exp\left(\frac{n(\prex-\prexopt)^T H (\prex-\prexopt)}{2}
      \right),
    \end{split}
  \end{equation*}
  where $H$ is the Hessian of $\fpre$ at $\xopt$. Recall that, by
  Lemma~\ref{lem:hessian-hyper}, $H = (-1/r^2) H_0 - (1/r) T + J$,
  where $H_0$ and $T$ are defined in~\eqref{eq:def-H0-T-pre-hyper} and
  $J= J(n)$ is a matrix with bounded entries. Thus, there exists
  a  positive constant $\alpha$ such that
  \begin{equation*}
    \begin{split}
      \exp\left(\frac{n(\prex-\prexopt)^T H (\prex-\prexopt)}{2}
      \right)
      &=
      \exp\left(\frac{n (\prex-\prexopt)^T \left((-1/r^2) H_0 - (1/r) T + J\right) (\prex-\prexopt)}{2}
      \right)
      \\
      &\leq
      \exp\left(-\frac{\alpha\gamma_1^2 n}{r}
        -\sum_{i=2}^{4}\frac{\alpha\gamma_i^2 n}{r^2}\right)
      \\
      &\leq
      \max\paren[\bigg]{\exp\left(-\frac{\alpha \delta_1^2n}{r}\right),
        \exp\left(-\frac{\alpha \delta^2 n}{r^2}\right)}
      \\
      &= \frac{1}{n^{\omega(1)}},
    \end{split}
  \end{equation*}
  where the last relation follows from $\delta_1^2n/r =\omega(\ln
  n)$ and $\delta^2n/r^2 =\omega(\ln n)$.
    
  By Lemma~\ref{lem:max-pre-hyper}, for any local maximum $x$ in $S$
  other than $x^*$,
  \begin{equation*}
    \frac{\exp(n\fpre(x))}{\exp(n\fpre(x^*))}
    = \frac{1}{\exp(\Omega(r^2n))}
    = \frac{1}{\exp(\Omega(R^2/n))}
    = \frac{1}{\exp(\Omega(\ln^{3/2} n))},
  \end{equation*}
  since $R=\omega(n^{1/2}\ln^{3/2} (n))$. Hence, for any $x\in S\setminus (\xopt+B)$,
    \begin{equation*}
    \frac{\exp(n\fpre(x))}{\exp(n\fpre(x^*))}= \frac{1}{\exp(\Omega(\ln^{3/2} n))}.
  \end{equation*}
  Thus,
  \begin{equation*}
    \begin{split}
      \sum_{x\in (S\setminus(\xopt+B))\cap \setZ^4}
      \wpre(x)
      &\leq 
      Q(n)n!\sum_{x\in (S\setminus(\xopt+B))\cap \setZ^4}
      \exp(n\fpre(\prex))
      \leq
      Q(n)n! n^4 \frac{\exp(n\fpre(\prexopt))}{\exp(\Omega(\ln^{3/2} n))}
      \\
      &=
      o\left(n! {\exp(n\fpre(\prexopt))}\right).
    \end{split}
  \end{equation*}
\end{proof}

%%%%%%%%%%%%%%%%%%%%%%%%%%%%%%%%%%%%%%%%%%%%%%%%%%%%%%%%%%%%%%%%
% Combining
%%%%%%%%%%%%%%%%%%%%%%%%%%%%%%%%%%%%%%%%%%%%%%%%%%%%%%%%%%%%%%%%
\section{Combining pre-kernels and forests}
\label{sec:core-cacti-hyper}

In this section, we will obtain a formula for the number of connected
graphs with vertex set $[n]$ and $m$ edges, proving
Theorem~\ref{thm:main-hyper}. We defer the proof of some lemmas to
Section~\ref{sec:lemmas-core-cacti-hyper}. \Old{We will perform Step~4
  as described in the overview of the proof in
  Section~\ref{sec:overview-hyper}: we will analyse the summation
  $\sum_n \gcacti(N,n)\gcore(n,M-(N-n)/2)$ by combining the formula
  obtained for forests (Section~\ref{sec:cacti-hyper}) and for cores
  (Section~\ref{sec:core-hyper}). We relate the formulae for cores and
  \prekernels\ (Section~\ref{sec:pre-hyper}) so that we can deduce the
  asymptotic value of $\sum_n \gcacti(N,n)\gpre(n,M-(N-n)/2)$, which
  is the number of connected graphs.}\New{We will perform Steps~4
  and~5 as described in the overview of the proof in
  Section~\ref{sec:overview-hyper}.}

For $\nN\in [0,1]$, let
\begin{equation}
\label{eq:t-def-hyper}
  t(\nN)
  =
  -\frac{(1-\nN)}{2} \ln(1-\nN)
      +\frac{1-\nN}{2}
      +\nN\fcore(\nncopt),
\end{equation}
where $\nncopt=\nncopt(n) = {3\mpre}/{\fgg(\lambdaopt)}$ and
$\lambdaopt=\lambdaopt(n)$ is the unique positive solution of the equation
${\lambda \f(\lambda)\fgg(\lambda)}/{\FF(2\lambda)} = 3m/n$. We
have already discussed the existence and uniqueness of $\lambdaopt$
in Section~\ref{sec:core-hyper}.

Elementary but lengthy computations show that
\begin{equation}
  \label{eq:t-simple-hyper}
  \begin{split}
    t(\nN) = &-\frac{(1-\nN)}{2} \ln(1-\nN) +\frac{1-\nN}{2}\\
      &2\RN\ln(N)+(2\ln(3)
      - \ln(2)-2)\RN
      + 2\RN\ln(\nN)
      + \left(\ln 3- \frac{1}{2}\ln(2)\right)\nN
      \\
      &+ \ln\left(\frac{\FF(2\lambdaopt)}{g_1(\lambdaopt)}\right)\nN
      +\left(\frac{1}{2}\nN+\RN\right)\ln\left(\frac{\mcore^2
          g_1(\lambdaopt)^3}{g_2^2(\lambdaopt)f_1(\lambdaopt)(\lambdaopt)^3}
      \right),  
  \end{split}
\end{equation}
where $\RN = R/N$. \nickca{Deleted: See Section for a Maple spreadsheet.}
In this section, we use $\check y$ to denote $y/N$. We obtain the
following asymptotic formulae.
\begin{thm}
  \label{thm:formula-core-cacti-hyper}
  We have that\crisc{I deleted the formula for cores$+$forests from here}
\Old{ \begin{equation}
    \label{eq:formula-core-cacti}
    \sum_{\substack{n\in[N]\\ N-n\text{ even }}}
    \binom{N}{n}\gcacti(N,n)\gcore(n,m)
    \sim
    \frac{\sqrt{3}}
    {\sqrt{\pi \lambdaoptopt N} }\exp(N t(\nNopt)+ N\ln N-N)
  \end{equation}
  and}
  \begin{equation}
    \label{eq:formula-pre-core-cacti}
    \sum_{\substack{n\in[N]\\ N-n\text{ even }}}
    \binom{N}{n}\gcacti(N,n)\gpre(n,m)
    \sim
    \frac{\sqrt{3}}
    {\sqrt{\pi N} }\exp(N t(\nNopt)+ N\ln N-N)
  \end{equation}
  where   \begin{equation}
    \label{eq:n-sol-core-cacti}
    \nNopt =
    \frac{\FF(2\lambdaoptopt)}{f_1(\lambdaoptopt) g_1(\lambdaoptopt)}.
  \end{equation} 
  and $\lambdaoptopt$ is the unique positive solution to 
  \begin{equation}
    \label{eq:R-sol-core-cacti}
    \frac{2\lambda f_1(\lambda) g_2(\lambda)-3 \FF(2\lambda)
      }{f_1(\lambda)
      g_1(\lambda)}=\frac{6R}{N}.
  \end{equation}
\end{thm}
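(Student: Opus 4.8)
The plan is to prove Theorem~\ref{thm:formula-core-cacti-hyper} by analysing the summation $\sum_{n} \binom{N}{n}\gcacti(N,n)\gpre(n,m)$ using the same localisation-and-expansion strategy used for cores in Section~\ref{sec:core-hyper} and for pre-kernels in Section~\ref{sec:pre-hyper}, but now in the single remaining variable $n$. The key structural input is the formula $\gpre(n,m)\sim (\sqrt{3}/(\pi n)) n!\exp(n\fpre(\prexopt))$ from Theorem~\ref{thm:formula-pre-hyper}, which holds whenever $R=m-n/2=o(n)$ and $R=\omega(n^{1/2}\log^{3/2}n)$. The first thing I would do is establish the relationship between $\fpre$ and $\fcore$: by comparing the maximum value of $\fpre(\prexopt)$ computed in Lemma~\ref{lem:max-pre-hyper} with the value $\fcore(\nncopt)$ from Theorem~\ref{thm:corevalue-hyper} (and the expansions~\eqref{eq:optpre-rel-hyper} and~\eqref{eq:optcore-r-hyper}), I would show that $\fpre(\prexopt) = \fcore(\nncopt)$, so that $\gpre(n,m)\sim (\sqrt{3}/(\pi n)) n!\exp(n\fcore(\nncopt))$ as stated in~\eqref{eq:gpre-simpler-overview}. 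This lets us work with the simpler one-dimensional function $\fcore$.

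Next I would define the exponent function $t(\nN)$ of~\eqref{eq:t-def-hyper} for the combined summand: using the exact formula for $\gcacti(N,n)$ from Theorem~\ref{thm:cacti-formula} (with $k=3$, so $\mcacti=(N-n)/2$), Stirling's approximation on the factorials in $\binom{N}{n}\gcacti(N,n)$, and the formula for $\gpre$, one checks that the summand $\binom{N}{n}\gcacti(N,n)\gpre(n,m)$ has exponential part $\exp(Nt(\nN)+N\ln N - N)$ up to polynomial factors, where $\nN=n/N$. Then I would show that $t$ is maximised uniquely at $\nNopt = \FF(2\lambdaoptopt)/(f_1(\lambdaoptopt)g_1(\lambdaoptopt))$: differentiating $t(\nN)$ and using the chain rule together with the identity~\eqref{eq:difdeg} for the derivative of the $\fcore$-type terms, the stationarity condition reduces to~\eqref{eq:R-sol-core-cacti}, whose unique positive solution exists by an argument analogous to Lemma~\ref{lem:unique-pre-tools-hyper} (monotonicity of the left-hand side as a function of $\lambda$, plus the limiting value as $\lambda\to 0$). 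The second derivative of $t$ should be shown strictly negative at $\nNopt$ (inheriting concavity from the strict concavity of $\fcore$ established in Lemma~\ref{lem:max-core}, combined with the concave $-(1-\nN)/2\ln(1-\nN)$ term), giving a unique maximum. Since $R=o(N)$ one also checks $\lambdaoptopt\to 0$, so all the small-$\lambda$ expansions are valid.

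With the maximum located, the plan is then the standard two-part argument. First, in a window $|\nN-\nNopt|\le \delta$ for suitable $\delta=\delta(N)$ (chosen so that Taylor's approximation applies, i.e. $\delta=o(\text{scale of curvature})$ and $\delta^2 N/(\text{curvature})\to\infty$, analogously to Lemmas~\ref{lem:approx-core-hyper} and~\ref{lem:approx-pre-hyper}), I would verify that within this window $n$ is large enough and $R=m-n/2$ satisfies the hypotheses $R=o(n)$ and $R=\omega(n^{1/2}\log^{3/2}n)$ required by Theorem~\ref{thm:formula-pre-hyper} — this is where the global hypothesis $R=\omega(N^{1/3}\ln^2 N)$ enters, since $n=\Theta(\sqrt{RN})$ near the maximum, so $n^{1/2}\log^{3/2}n$ translates back into a condition on $R$ and $N$. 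Then approximate the sum by a Gaussian integral via Lemma~\ref{lem:integral-tools}, collecting the polynomial prefactors (the $1/(n)$ from $\gpre$, the Stirling square-roots, and the $\sqrt{2\pi/|t''|}$ from the integral) to get the constant $\sqrt{3}/\sqrt{\pi N}$. Second, for $n$ outside the window, I would bound the tail: using $\gpre(n,m)\le\gcore(n,m)$ (every pre-kernel is a core) together with the upper bound~\eqref{eq:upper-formula-core-hyper} from Theorem~\ref{thm:corevalue-hyper}, which is valid for all $R\to\infty$ with no restriction on how fast, the tail sum is $o$ of the main term by the strict concavity of $t$ near the maximum and by crude exponential bounds far from it (as in Lemma~\ref{lem:tail-core-hyper}). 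Finally, substituting $t(\nNopt)$ via the explicit form~\eqref{eq:t-simple-hyper} and simplifying gives the formula in the statement, and identifying it with $\phi(\nNopt)$ in Theorem~\ref{thm:main-hyper} completes the deduction.

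The main obstacle I expect is the bookkeeping needed to make the tail estimate uniform across the whole range $n\in[N]$: near the maximum one must confirm that the pre-kernel formula's hypotheses hold (forcing the $\omega(N^{1/3}\ln^2 N)$ condition to be exactly the right threshold), while for small $n$ (where $R=m-n/2$ may fail to be $o(n)$, or even where the core is tiny) one can only use the core upper bound, and for $n$ close to $N$ (where the forest is small) one needs that $t(\nN)$ drops off fast enough; stitching these regimes together, and checking that the polynomial factors $Q(n)$ lost in passing from $\wpre$ to $\exp(n\fpre)$ do not spoil the $o$-bound, is the delicate part. The algebraic verification that $\fpre(\prexopt)=\fcore(\nncopt)$ and that $t$'s stationarity condition is exactly~\eqref{eq:R-sol-core-cacti} is routine (Maple-assisted, as elsewhere in the paper) but must be done carefully since it underlies the identification of the final constant and exponent.
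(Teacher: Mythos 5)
Your proposal follows essentially the same route as the paper: establish $\fpre(\prexopt)=\fcore(\nncopt)$ to reduce to the one-dimensional function $t$, locate the unique maximum at $\nNopt$ solving~\eqref{eq:R-sol-core-cacti}, expand the sum by a local Gaussian approximation near the maximum (where $n=\Theta(\sqrt{RN})$ so the hypotheses of Theorem~\ref{thm:formula-pre-hyper} give exactly the threshold $R=\omega(N^{1/3}\ln^2 N)$), and bound the tail via $\gpre\le\gcore$ and the core upper bound. The one small imprecision is the claim that negativity of $t''(\nNopt)$ follows by ``inheriting'' concavity of $\fcore$: since $\nncopt$ depends on $\nN$ through the solution $\lambdaopt$ of~\eqref{eq:m-sol-core}, the composed dependence is not a simple affine change of variable, and the paper instead verifies $t''(\nNopt)=-1+O(\lambdaoptopt)<0$ by direct series expansion.
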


Theorem~\ref{thm:main-hyper} follows immediately from
Theorem~\ref{thm:formula-core-cacti-hyper} by simplifying $t(\nNopt)$
by using~\eqref{eq:t-simple-hyper} with~\eqref{eq:n-sol-core-cacti}
and~\eqref{eq:m-sol-core}.  The rest of this section is dedicated to
prove Theorem~\ref{thm:formula-core-cacti-hyper}.

The following lemma shows that $\lambdaoptopt$ is well-defined. 
\begin{lem}
 \label{lem:R-sol-increasing}
  The equation
  \begin{equation*}
    \frac{2\lambda f_1(\lambda)
      g_2(\lambda)-3\FF(2\lambda)}{f_1(\lambda) g_1(\lambda)}=
    \alpha_n
  \end{equation*}
  has a unique solution for $\alpha_n > 0$ and it goes to $0$ if
  $\alpha_n\to 0$. 
\end{lem}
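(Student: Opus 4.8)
\emph{Proof proposal for Lemma~\ref{lem:R-sol-increasing}.} The plan is to factorise the left-hand side so that monotonicity follows from Lemma~\ref{lem:unique-pre-tools-hyper}, which already did the hard work. Write
\begin{equation*}
  h(\lambda) \eqdef \frac{2\lambda f_1(\lambda) g_2(\lambda) - 3\FF(2\lambda)}{f_1(\lambda) g_1(\lambda)},
  \qquad
  u(\lambda) \eqdef \frac{\lambda f_1(\lambda) g_2(\lambda)}{\FF(2\lambda)} .
\end{equation*}
Here $u$ is precisely the function called $f$ in Lemma~\ref{lem:unique-pre-tools-hyper}. Using $f_1(\lambda)g_1(\lambda) = (e^\lambda-1)(e^\lambda+1) = e^{2\lambda}-1$ and $\FF(2\lambda) = e^{2\lambda}-1-2\lambda = f_1(\lambda)g_1(\lambda)-2\lambda$, a one-line manipulation gives the factorisation
\begin{equation*}
  h(\lambda) = \frac{\FF(2\lambda)}{f_1(\lambda)g_1(\lambda)}\,\bigl(2u(\lambda)-3\bigr)
  = \Bigl(1 - \frac{2\lambda}{e^{2\lambda}-1}\Bigr)\bigl(2u(\lambda)-3\bigr).
\end{equation*}

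First I would record the two factors separately. By Lemma~\ref{lem:unique-pre-tools-hyper}, $u$ is strictly increasing on $(0,\infty)$ with $\lim_{\lambda\to 0^+}u(\lambda) = 3/2$; moreover $u(\lambda)\to\infty$ as $\lambda\to\infty$ (the quotient behaves like $\lambda e^{2\lambda}/e^{2\lambda}$). Hence $2u(\lambda)-3$ is strictly increasing, strictly positive for $\lambda>0$, tends to $0$ as $\lambda\to 0^+$, and tends to $+\infty$ as $\lambda\to\infty$. For the other factor I would note that $x/(e^x-1)$ is strictly decreasing on $(0,\infty)$: the numerator of its derivative is $e^x(1-x)-1$, which vanishes at $x=0$ and has derivative $-x e^x<0$, so it is negative for $x>0$. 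Thus $1-2\lambda/(e^{2\lambda}-1)$ is strictly increasing in $\lambda$, it is strictly positive for $\lambda>0$ since $e^{2\lambda}>1+2\lambda$, it tends to $0$ as $\lambda\to 0^+$, and it tends to $1$ as $\lambda\to\infty$.

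Consequently $h$ is a product of two strictly increasing, strictly positive functions on $(0,\infty)$, hence strictly increasing there; it extends continuously to $\lambda=0$ with value $0$, and $h(\lambda)\to\infty$ as $\lambda\to\infty$. Therefore $h$ is a continuous strictly increasing bijection of $(0,\infty)$ onto $(0,\infty)$, so for every $\alpha_n>0$ the equation $h(\lambda)=\alpha_n$ has a unique positive solution, and that solution tends to $0$ as $\alpha_n\to 0^+$ by continuity of $h^{-1}$ at $0$. (If a rate is desired one may expand $h(\lambda)=\tfrac12\lambda^2+O(\lambda^3)$ near $0$, which gives the solution $\sim\sqrt{2\alpha_n}$ and re-proves the limit.) I do not anticipate any genuine obstacle: the only points to check are the algebraic identity giving the factorisation and the sign of $e^x(1-x)-1$, both routine, since the substantive monotonicity of $u$ is inherited from Lemma~\ref{lem:unique-pre-tools-hyper} rather than re-established.
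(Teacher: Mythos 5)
Your proof is correct, and it takes a genuinely different route from the paper's. The paper proceeds by brute force: it differentiates $h$ directly, producing a numerator $e^{4\lambda}+e^{3\lambda}-e^{\lambda}-1-\lambda e^{3\lambda}-4\lambda e^{2\lambda}-\lambda e^{\lambda}$, and then proves positivity of that expression by the iterate-and-check strategy (differentiating five times and verifying each stage vanishes or is nonnegative at $0$). Your factorisation $h(\lambda) = \bigl(1 - \tfrac{2\lambda}{e^{2\lambda}-1}\bigr)\bigl(2u(\lambda)-3\bigr)$ — using $f_1 g_1 = e^{2\lambda}-1$ and $\FF(2\lambda) = f_1 g_1 - 2\lambda$, both of which I checked — sidesteps all of that. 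The hard monotonicity is inherited from Lemma~\ref{lem:unique-pre-tools-hyper} via $u$, and the only new ingredient is the elementary fact that $x/(e^x-1)$ is strictly decreasing, which you establish correctly (the numerator of its derivative is $e^x(1-x)-1$, zero at the origin with derivative $-xe^x<0$). The limits at $0^+$ and $\infty$ are verified for each factor, the product of two positive strictly increasing functions is strictly increasing, and continuity of $h^{-1}$ at $0$ gives the final claim. The trade-off is that the paper's argument is self-contained while yours reuses Lemma~\ref{lem:unique-pre-tools-hyper}; but since that lemma is proved earlier in the same section and by the very same iterated-differentiation technique, exploiting it here both shortens the proof and better exposes why the two monotonicity facts are related. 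Your approach is the cleaner one.
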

\begin{proof}
  For the first part, it suffices to show that the function 
  \begin{equation*}
    f(\lambda) = \frac{2\lambda f_1(\lambda)
      g_2(\lambda)-3\FF(2\lambda)}{f_1(\lambda) g_1(\lambda)}
  \end{equation*} is
  strictly increasing and it goes to zero as $\lambda\to 0$. By
  computing the series of $f(\lambda)$ with $\lambda\to 0$, we obtain
  $f(\lambda) = \lambda^2/2 +O(\lambda^3) \to 0$ as $\lambda\to 0$. To
  show $f(\lambda)$ is strictly increasing, we compute its
  derivative:
  \begin{equation*}
    \frac{\dif f(\lambda)}{\dif \lambda}
    =
    \frac{2 (e^{4\lambda}+ e^{3\lambda}-e^{\lambda}-1-\lambda e^{3\lambda}-4\lambda e^{2\lambda}-\lambda e^{\lambda})}
    {\f(\lambda)^2\fg(\lambda)^2}
  \end{equation*}
  while it is obvious that the denominator is positive for $\lambda
  >0$, it is not immediate that so is the numerator.

  Let $g(\lambda)= e^{4\lambda}+ e^{3\lambda}-e^{\lambda}-1-\lambda
  e^{3\lambda}-4\lambda e^{2\lambda}-\lambda e^{\lambda} $. We will
  use the following strategy: starting with $i=1$, we check that
  $\frac{\dif^{i-1} g(\lambda)}{\dif^{i-1} \lambda}|_{\lambda=0}=0$
  and compute $\frac{\dif^i g(\lambda)}{\dif^i \lambda}$. If for some
  $i$ we can show that $\frac{\dif^i g(\lambda)}{\dif^i \lambda} > 0$
  for any $\lambda$, then we obtain $g(\lambda) > 0$ for $\lambda>0$.
  We omit the computations here. \nickca{Deleted: See
  Section in the Appendix for a maple
  spreadsheet.} We have that
  \begin{equation*}
    \frac{\dif^5 g(x)}{\dif^5 x}
    =
    2048e^{4\lambda}-12e^{\lambda}-324e^{3\lambda}-486\lambda
    e^{3\lambda}
    -640e^{2\lambda}-256\lambda e^{2\lambda}-2\lambda e^{x},
  \end{equation*}
  which is trivially positive since $\exp(x) > 1+x$ for all $x\in\setR$
  and the sum of the coefficients of the negative terms is less than $2048$.
  \begin{comment}
  For the second statement of the lemma, note that $\lambdaoptopt\to
  0$ since $\RN\to 0$. Moreover, the first derivative goes to $0$ with
  $\lambda\to 0$ and it is a continuous function. By choosing a
  constant $\eps > 0$, we can bound the first derivative in $(0,\eps)$
  by some positive constant $\eps'$. This implies that for any
  $\nN\in(0,\eps)$ we have $|\lambdaopt(n)-\lambdaoptopt| \leq \alpha
  |\nN-\nNopt|$.
  \end{comment}
\end{proof}

It will be useful to know how $\lambdaoptopt$ compares to $R$ and
$\nNopt$. By Lemma~\ref{lem:R-sol-increasing}, $\lambdaoptopt=o(1)$
since $R = o(N)$. We can write $\RN$ and $\nNopt$ in terms of
$\lambdaoptopt$ by using~\eqref{eq:R-sol-core-cacti}
and~\eqref{eq:n-sol-core-cacti}. By expanding the LHS
of~\eqref{eq:R-sol-core-cacti} and the RHS~\eqref{eq:n-sol-core-cacti}
as functions of $\lambdaoptopt$ about $0$, we have that
\begin{equation}
  \label{eq:opt-sol-lambda-core-cacti-hyper}
  \begin{split}
    &\RN
    = \frac{(\lambdaoptopt)^2}{12} + O((\lambdaoptopt)^4),\\
    &\nNopt =
    \lambdaoptopt -\frac{(\lambdaoptopt)^2}{3} + O((\lambdaoptopt)^4).
  \end{split}
\end{equation}

Next, we state the main lemmas for the proof of
Theorem~\ref{thm:formula-core-cacti-hyper}. We defer their proofs to
Section~\ref{sec:lemmas-core-cacti-hyper}.

\Old{First we show the relation
between $\gpre(n,m)$ and $\gcore(n,m)$ for a certain range of~$n$.} The
next lemma follows from \Old{Theorem~\ref{thm:corevalue-hyper} and}
Theorem~\ref{thm:formula-pre-hyper} and a series of simplifications
that show that $\fcore\paren{\nnopt} = \fpre(\xopt)$. \crisc{Removed: For the
simplifications see~ Section.} 
\begin{lem}
  \label{lem:pre-core-rel}
  Let $\alpha_1<\alpha_2$ be positive constants. If
  $\alpha_1\sqrt{RN}\leq n\leq \alpha_2\sqrt{RN}$, then
 \Old{\begin{equation}
    \label{eq:ratio-applies-hyper}
    \frac{\gpre(n,m)}{\gcore(n,m)}
    \sim
    2\sqrt{3r}.
  \end{equation}
  Moreover, for all $n\geq 0$,
  \begin{equation}
    \gpre(n,m)\leq \gcore(n,m)
  \end{equation}}
\New{   \begin{equation}
      \label{eq:gpre-simpler-overview}
      \gpre(n,m)\sim
      \frac{\sqrt{3}}{\pi n}\cdot
    n!\exp(n\fcore(\nncopt)),
    \end{equation}
where $\nncopt=\nncopt(n) = {3\mpre}/{\fgg(\lambdaopt)}$ and
$\lambdaopt=\lambdaopt(n)$ is the unique positive solution of the equation
${\lambda \f(\lambda)\fgg(\lambda)}/{\FF(2\lambda)} = 3m/n$.}
\end{lem}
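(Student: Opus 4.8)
The plan is to derive \eqref{eq:gpre-simpler-overview} from Theorem~\ref{thm:formula-pre-hyper} by showing that the exponent $\fpre(\prexopt)$ appearing there equals $\fcore(\nncopt)$, and that the polynomial prefactors match. First I would recall that by Theorem~\ref{thm:formula-pre-hyper}, for $n$ in the claimed range (which forces $r=R/n=o(1)$ and $r=\omega(n^{-1/2}\log^{3/2}n)$, so the hypotheses of that theorem are met), we have $\gpre(n,m)\sim \frac{\sqrt 3}{\pi n}n!\exp(n\fpre(\prexopt))$. The prefactor $\frac{\sqrt3}{\pi n}$ is already exactly the one in \eqref{eq:gpre-simpler-overview}, so the only thing to check is that $\fpre(\prexopt)=\fcore(\nncopt)$, where $\nncopt = 3\mpre/\fgg(\lambdaopt)$ and $\lambdaopt$ solves $\lambda\f(\lambda)\fgg(\lambda)/\FF(2\lambda)=3m/n$ --- the \emph{same} equation \eqref{pre-m-opt} that defines the optimal $\lambda$ in Theorem~\ref{thm:formula-pre-hyper}. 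This is the crucial structural coincidence: both the core optimization (Lemma~\ref{lem:max-core}) and the pre-kernel optimization (Lemma~\ref{lem:max-interior-prek}) lead, after taking resultants, to the identical transcendental equation \eqref{eq:m-sol-core}, hence share the same $\lambdaopt$.

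The main work is thus the identity $\fpre(\prexopt)=\fcore(\nncopt)$. I would verify this by substituting the explicit formulas \eqref{eq:pre-param-opt} for $\prennopt,\prekzeroopt,\prekoneopt,\prektwoopt$ (all expressed through $\lambdaopt$ and $\mpre$) into the definition \eqref{eq:fpre-def-hyper} of $\fpre$, and separately substituting $\nncopt=3\mpre/\fgg(\lambdaopt)$ and the induced values of $\coreQtwo,\corentwo,\coremthree$ into \eqref{eq:fcore-hyper}. Both sides are built from the same building block $\hpre(y)=\hcore(y)=y\ln(yn)-y$ together with terms $\corentwo\ln\ff(\lambda)-\coreQtwo\ln\lambda$ on the core side and $\prenthree\ln\fff(\lambda)-\preQthree\ln\lambda$ on the pre-kernel side. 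The plan is to show the difference of the two expressions telescopes to zero: the many $\hcore$-terms on the pre-kernel side (which involve $\kzero,\kone,\ktwo,\nthree,\mtwoprime,\Ttwo,\Tthree$) should collapse, using the relations among these parameters at $\prexopt$ and the defining relation between $\lambda_{\nn}$ on the core side (which satisfies $\lambda\f(\lambda)/\ff(\lambda)=\ctwo$) and $\lambda(\xopt)$ on the pre-kernel side (which satisfies $\lambda\ff(\lambda)/\fff(\lambda)=\cthree$). A clean way to organize this is to observe that the kernel of a pre-kernel with parameters at $\prexopt$ is obtained by splitting and degree-1 insertion, and the combinatorial identity $m = \mthree + \mtwo$, $N$-vertex counts, etc., should make the exponential generating-function weights agree; concretely, one checks that with $\lambda^{\mathrm{core}}:=2\lambda(\xopt)$ one has $\ff(2\lambda)=e^{2\lambda}-1$ matching $\FF(2\lambda)$ in \eqref{eq:m-sol-core}, which is precisely why the two $\lambda$'s are linked by a factor of $2$.

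The honest shortcut, which I expect the paper takes, is to verify the identity $\fpre(\prexopt)=\fcore(\nncopt)$ purely computationally: both are now explicit functions of $\lambdaopt$ (after eliminating $\mpre$ via $3\mpre = \lambda\f(\lambda)\fgg(\lambda)/\FF(2\lambda)$), and one simply checks that $\fpre(\prexopt)-\fcore(\nncopt)$ simplifies to $0$ as a function of $\lambdaopt$ --- this is a finite symbolic computation of the type done throughout the paper with Maple, and it can be cross-checked against the series expansions, since Lemma~\ref{lem:max-pre-hyper} already gives $\fpre(\prexopt)=2r\ln n-4r\ln r+(-\tfrac23\ln2-\tfrac13\ln3+\tfrac13)\lambdaopt+(-\tfrac29\ln2-\tfrac19\ln3+\tfrac7{36})(\lambdaopt)^2+O((\lambdaopt)^3)$, and one computes the corresponding series of $\fcore(\nncopt)$ from \eqref{eq:fcore-hyper} using the expansions \eqref{eq:optcore-r-hyper} and confirms the same expansion.

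The hard part will be the bookkeeping in matching the two expressions term by term: $\fpre$ has twelve $\hpre$-terms plus log-corrections while $\fcore$ has only four $\hcore$-terms plus log-corrections, so the collapse is not visible at a glance and relies on several exact cancellations among $\prekzeroopt,\prekoneopt,\prektwoopt$ and the parameters $\preTtwoopt,\preTthreeopt,\premtwoprimeopt$. I would isolate this as the one genuinely substantive computation, present the resulting identity $\fpre(\prexopt)=\fcore(\nncopt)$ as a lemma proved by direct substitution (deferring the algebra to the Maple appendix, consistent with the paper's style), and then conclude: combining this identity with Theorem~\ref{thm:formula-pre-hyper} yields $\gpre(n,m)\sim\frac{\sqrt3}{\pi n}n!\exp(n\fcore(\nncopt))$, which is exactly \eqref{eq:gpre-simpler-overview}, completing the proof of Lemma~\ref{lem:pre-core-rel}.
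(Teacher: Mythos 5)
Your proposal matches the paper's own proof: the paper states exactly that the lemma ``follows from Theorem~\ref{thm:formula-pre-hyper} and a series of simplifications that show that $\fcore(\nnopt)=\fpre(\xopt)$,'' deferring the algebra to a Maple computation, which is precisely your ``honest shortcut.'' One small aside in your middle paragraph is off --- there is no factor-of-$2$ link between the two $\lambda$'s; in fact one can check via the identity $g_1(\lambda)f_2(\lambda)=f_2(2\lambda)-\lambda f_1(\lambda)$ that $\lambda_{\nncopt}=\lambdaopt=\lambda(\xopt)$ all coincide --- but since you do not rely on that remark and fall back on the direct verification, the overall argument is sound.
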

This will allow us to obtain the formula for connected hypergraphs
from the formula for simple hypergraphs. We compute the point of
maximum for $t(\nN)$:
\begin{lem}
  \label{lem:max-core-cacti-hyper}
  The point $\nNopt$ is the unique maximum of the function $t(\nN)$ in
  the interval $[0,1]$. Moreover, $\nNopt$ is the unique point such
  that the derivative of $t(\nN)$ is $0$ in $(0,1)$, and
  $t'(\nN) > 0$ for $\nN <\nNopt$ and $t'(\nN) < 0$ for $\nN <\nNopt$.
\end{lem}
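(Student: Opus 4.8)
The plan is to differentiate $t(\nN)$ directly and exploit the same cancellation of $\lambda$-derivatives that was used for $\fcore$ in Lemma~\ref{lem:max-core}. Starting from the simplified expression~\eqref{eq:t-simple-hyper}, I would treat $\lambdaopt=\lambdaopt(\nN N)$ as the function of $\nN$ determined by $\lambda f_1(\lambda)\fgg(\lambda)/\FF(2\lambda)=3m/n=3/2+3R/(\nN N)$, and apply~\eqref{eq:difdeg} (with the linear combinations of $\nN$ and $R/N$ that multiply $\ln\fpo{k}$ and $\ln\lambda$ playing the roles of $t(y)$ and $T(y)$). As in the proof of Lemma~\ref{lem:max-core}, every term proportional to $d\lambdaopt/d\nN$ then drops out, and after a routine (Maple-assisted) simplification $t'(\nN)$ becomes an explicit elementary expression in $\nN$, $R/N$ and $\lambdaopt$. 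Rearranging it should show that $t'(\nN)=0$ is equivalent to
\[
  \frac{2\lambdaopt f_1(\lambdaopt) g_2(\lambdaopt)-3\FF(2\lambdaopt)}{f_1(\lambdaopt) g_1(\lambdaopt)}=\frac{6R}{N},
\]
i.e.\ to~\eqref{eq:R-sol-core-cacti}, and moreover that $t'(\nN)$ factors as a manifestly positive quantity times the difference of the two sides above, so the sign of $t'(\nN)$ is governed by that difference.

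Writing $\Psi(\nN)$ for the left-hand side viewed as a function of $\nN$, I would next show $\Psi$ is strictly decreasing on $(0,1)$: the map $\lambda\mapsto(2\lambda f_1(\lambda) g_2(\lambda)-3\FF(2\lambda))/(f_1(\lambda) g_1(\lambda))$ is strictly increasing by (the proof of) Lemma~\ref{lem:R-sol-increasing}, while $\nN\mapsto\lambdaopt(\nN N)$ is strictly decreasing, because $3m/n=3/2+3R/(\nN N)$ strictly decreases in $\nN$ and $\lambdaopt$ strictly increases in $3m/n$ (the monotonicity of $\lambda f_1(\lambda)/\FF(2\lambda)\cdot g_2(\lambda)$ from~\cite[Lemma 1]{PWa}, already invoked in Section~\ref{sec:core-hyper}). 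Thus $\Psi$ is a composition of an increasing with a decreasing function, so $t'(\nN)=0$ has at most one solution on $(0,1)$, and $t'(\nN)>0$ exactly when $\Psi(\nN)>6R/N$, which by monotonicity means exactly $\nN<\nNopt$; this yields both uniqueness of the critical point and the asserted sign pattern of $t'$.

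For existence of $\nNopt$ in the open interval I would inspect the endpoints of $\Psi$. As $\nN\to0^+$ we have $3m/n\to\infty$, hence $\lambdaopt\to\infty$ and $\Psi(\nN)\sim 2\lambdaopt\to\infty$; at $\nN=1$ we have $3m/n=3/2+3R/N$, so $\lambdaopt(N)\sim 12R/N$ and, by the expansion $\Psi=\tfrac12\lambda^2+O(\lambda^3)$ from the proof of Lemma~\ref{lem:R-sol-increasing}, $\Psi(1)=O((R/N)^2)<6R/N$ for $N$ large. Since $\Psi$ is continuous and strictly decreasing from $+\infty$ at $0$ to a value below $6R/N$ at $1$, there is exactly one $\nNopt\in(0,1)$ with $\Psi(\nNopt)=6R/N$; by the sign analysis $t$ is strictly increasing on $(0,\nNopt)$ and strictly decreasing on $(\nNopt,1)$, and the endpoints $\nN=0,1$ are dominated because $t(\nN)\to-\infty$ as $\nN\to0^+$ and $t(1)<t(\nNopt)$ by monotonicity. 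Hence $\nNopt$ is the unique maximum of $t$ on $[0,1]$.

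The main obstacle I anticipate is the first step: carrying the differentiation of $t$ far enough to see both that the $d\lambdaopt/d\nN$ terms cancel and that the remainder factors as $(\text{positive})\cdot(\Psi(\nN)-6R/N)$. This is a lengthy but essentially mechanical computation of the type performed with Maple elsewhere in the paper, and it is forced by the structure of~\eqref{eq:difdeg}. Should the factorization turn out to be less transparent than hoped, the back-up route is to compute $t''(\nN)$ and show it is negative on $(0,1)$ (the forest part $-\tfrac{1-\nN}{2}\ln(1-\nN)+\tfrac{1-\nN}{2}$ already contributes $-\tfrac1{2(1-\nN)}<0$), giving strict concavity of $t$ and hence the conclusion once the unique critical point has been located.
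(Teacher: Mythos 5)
Your proposal takes a genuinely different route from the paper, and it is essentially sound with one flagged gap. The paper's proof sets $t'(\nN)=0$ (using~\eqref{first-derivative-t}), eliminates $\nN$ via $3\mcore = \lambdaopt \f(\lambdaopt)\fgg(\lambdaopt)/\FF(2\lambdaopt)$ to obtain~\eqref{RN-lambda-optimal}, which has a unique solution by Lemma~\ref{lem:R-sol-increasing}; it then checks only $t''(\nNopt)=-1+O(\lambdaopt)<0$. The sign pattern and global maximality follow from uniqueness of the critical point plus local concavity there (if $t'$ has exactly one zero and is strictly decreasing through it, it is positive before and negative after). Your primary route instead wants to show $t'$ itself is a positive factor times $\Psi(\nN)-6R/N$ and then use strict monotonicity of $\Psi$; the monotonicity argument is correct as a composition of increasing and decreasing maps, and the endpoint analysis as $\nN\to 0^+$ and at $\nN=1$ is the right way to pin existence. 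What you get from this is a slightly cleaner, more self-contained verification of the sign pattern. The cost is the factorization claim, which you correctly identify as the obstacle: $t'$ as written in~\eqref{first-derivative-t} is a sum of logarithms, i.e.\ $\ln G(\nN)$, and $G>1 \Leftrightarrow \Psi>6R/N$ is not a priori an algebraic identity; it requires the Maple-level verification you anticipate. Your fallback (show $t''<0$ on all of $(0,1)$) would also close the argument but is stronger than needed, since the paper's local check at $\nNopt$ already suffices once uniqueness of the zero of $t'$ is established. One small reference fix: the strict monotonicity of $\lambda\mapsto \lambda \f(\lambda)\fgg(\lambda)/\FF(2\lambda)$ that you invoke is the content of Lemma~\ref{lem:unique-pre-tools-hyper}, not of \cite[Lemma 1]{PWa} (which handles $\lambda \fpo{k-1}(\lambda)/\fpo{k}(\lambda)$ only).
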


We then expand the summation around this maximum and approximate it by
an integral that can be easily computed.
\begin{lem}
  \label{lem:approx-core-cacti-hyper}
  Suppose $\delta^3=o(\lambdaoptopt/N)$ and $\delta=\omega(1/N^{1/2})$. Then
  \begin{equation*}
    \sum_{\substack{n\in[\nopt-\delta N,\nopt+\delta N]\\ N-n\textrm{ even}}}
    \exp(N t(\nN))
    \sim
    \sqrt{\frac{\pi N}{2}}\exp(N t(\nNopt)).
  \end{equation*}
\end{lem}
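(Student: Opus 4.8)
\textbf{Proof proposal for Lemma~\ref{lem:approx-core-cacti-hyper}.}

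The plan is to follow the same template used in the proof of Lemma~\ref{lem:approx-core-hyper} for cores: expand $t(\nN)$ to second order around $\nNopt$ using Taylor's theorem, argue the third-order term is negligible on the range $|n-\nopt|\le \delta N$, and then recognise the resulting sum as a Riemann sum for a Gaussian integral, to which Lemma~\ref{lem:integral-tools} applies. First I would record from Lemma~\ref{lem:max-core-cacti-hyper} that $t'(\nNopt)=0$, so that the linear term in the Taylor expansion vanishes. Then I would compute $t''(\nNopt)$: differentiating~\eqref{eq:t-def-hyper} twice and using the chain rule together with the derivative identity~\eqref{eq:difdeg} (applied through $\fcore$ and the implicit definition of $\lambdaopt(n)$), one gets, after the routine series expansion in $\lambdaoptopt$ via~\eqref{eq:opt-sol-lambda-core-cacti-hyper}, that $t''(\nNopt)\sim -1/\lambdaoptopt$ (this is the analogue of~\eqref{eq:second-dif-core-hyper}, where $\fcore''(\nncopt)\sim -1/r$; the factor changes because here the relevant small parameter is $\lambdaoptopt\sim 12R/N$ rather than $r$). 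I would also need a bound $t'''(\nN)=O(1/\lambdaoptopt^{2})$ uniformly for $\nN$ in the interval $[\nNopt-\delta,\nNopt+\delta]$, which is proved exactly as Lemma~\ref{third-derivative-core-hyper}: expand each term appearing in $t'''$ as a series in $\lambdaoptopt$, using that $\lambda_{\nN}\sim\lambdaoptopt$ on this range by Lemma~\ref{lem:lambda-close-hyper}.

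Given these, Taylor's theorem gives, for $\nN\in[\nNopt-\delta,\nNopt+\delta]$,
\begin{equation*}
  N t(\nN) = N t(\nNopt) + \frac{N t''(\nNopt)}{2}(\nN-\nNopt)^2 + O\bigl(N\delta^3/\lambdaoptopt^2\bigr),
\end{equation*}
and the error term is $o(1)$ precisely because $\delta^3 = o(\lambdaoptopt/N)$ (note $N\delta^3/\lambdaoptopt^2 = o(1/\lambdaoptopt)=o(N)\cdot o(1)$ — more carefully, $N\delta^3/\lambdaoptopt^2 = (N\delta^3/\lambdaoptopt)/\lambdaoptopt = o(1/\lambdaoptopt)$, and since $\lambdaoptopt\to 0$ this needs the sharper reading $N\delta^3=o(\lambdaoptopt)$ wait — actually $\delta^3=o(\lambdaoptopt/N)$ means $N\delta^3=o(\lambdaoptopt)$, hence $N\delta^3/\lambdaoptopt^2 = o(1/\lambdaoptopt)$, which is not obviously $o(1)$). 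The correct hypothesis pairing, as in Lemma~\ref{lem:approx-core-hyper}, should be read so that the cubic error is $o(1)$; I would state it as $\delta^3 = o(\lambdaoptopt^2/N)$ or re-derive the exact exponent — this bookkeeping is the one place to be careful, but it is entirely parallel to the core case. Then
\begin{equation*}
  \sum_{\substack{n\in[\nopt-\delta N,\nopt+\delta N]\\ N-n\textrm{ even}}}\exp(N t(\nN))
  \sim \exp(N t(\nNopt))\sum_{\substack{n\\ N-n\textrm{ even}}}\exp\Bigl(\frac{t''(\nNopt)}{2N}(n-\nopt)^2\Bigr).
\end{equation*}

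For the last sum, substitute $y = \sqrt{|t''(\nNopt)|/N}\,(n-\nopt)$, note that the condition ``$N-n$ even'' restricts $n$ to an arithmetic progression of common difference $2$, so $y$ ranges over a translate of $\tfrac{2}{s_n}\setZ$ with $s_n = \sqrt{N/|t''(\nNopt)|}\to\infty$ and $T_n = \delta\sqrt{N|t''(\nNopt)|}\to\infty$ (the latter using $\delta=\omega(1/N^{1/2})$ and $|t''(\nNopt)|\sim 1/\lambdaoptopt$). Applying Lemma~\ref{lem:integral-tools} (with the factor $2$ absorbed: the ``even'' sum is over $z+2\setZ$ scaled, so one gets $\tfrac12\exp(0)\sqrt{\pi/(1/2)}\cdot s_n = \tfrac12\sqrt{2\pi}\,s_n$) yields
\begin{equation*}
  \sum_{\substack{n\\ N-n\textrm{ even}}}\exp\Bigl(-\frac{y^2}{2}\Bigr) \sim \frac{1}{2}\sqrt{2\pi}\,s_n \sim \frac{1}{2}\sqrt{2\pi}\sqrt{\lambdaoptopt N}.
\end{equation*}
Combining and using $\lambdaoptopt\sim 12R/N$ gives $\sqrt{\pi N/2}\cdot\exp(Nt(\nNopt))$ up to the multiplicative constant claimed — here I would double-check the constant by tracking $|t''(\nNopt)|$ exactly rather than just its order; the target $\sqrt{\pi N/2}$ forces $|t''(\nNopt)|^{-1/2}\cdot\sqrt{2\pi}/2 = \sqrt{\pi N/2}/\sqrt N$, i.e. $|t''(\nNopt)| = 1$, so in fact the exact second derivative at the optimum must be $-1+o(1)$ (not $-1/\lambdaoptopt$); this is consistent with the core computation once one accounts for the reparametrisation $n = \Theta(\sqrt{RN})$ versus the core's $n$, and pinning it down precisely is the main bit of honest work. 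The rest is routine and mirrors Section~\ref{sec:thm-proof-cores-hyper} verbatim.
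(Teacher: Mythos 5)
Your template is exactly the paper's: Taylor-expand $t$ about $\nNopt$ using $t'(\nNopt)=0$ from Lemma~\ref{lem:max-core-cacti-hyper}, bound the third-derivative remainder on the interval, then invoke Lemma~\ref{lem:integral-tools} with the step $2$ arising from the parity constraint. However, you guess the wrong orders of magnitude for both $t''$ and $t'''$, and while your sanity check at the end correctly forces $t''(\nNopt)=-1+o(1)$ (matching the paper: the series computation in the proof of Lemma~\ref{lem:max-core-cacti-hyper} gives $-1+O(\lambdaoptopt)$), you never revise the third-derivative guess $O(1/\lambdaoptopt^2)$, which is off by a factor of $\lambdaoptopt$. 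This is not a cosmetic slip: with your guess the Taylor remainder would be $O(N\delta^3/(\lambdaoptopt)^2)$, which under the lemma's actual hypothesis $\delta^3=o(\lambdaoptopt/N)$ is only $o(1/\lambdaoptopt)$, not $o(1)$ — so you end up proposing to \emph{change} the hypothesis to $\delta^3=o((\lambdaoptopt)^2/N)$, which would break the consistency check with Lemma~\ref{lem:tail-core-cacti-hyper} (feasibility of $\delta$ there needs $R=\omega(\log^3 N)$, which is tied to the weaker hypothesis).

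The correct input is Lemma~\ref{lem:third-derivative-core-cacti}: $t'''(\nN)=O(1/\lambdaoptopt)$ uniformly for $|\nN-\nNopt|=o(\nNopt)$. That this has only one power of $1/\lambdaoptopt$, not two, is not merely routine bookkeeping parallel to $\fcore'''=O(1/r^2)$ — the ladder $t''\sim -1$, $t'''=O(1/\lambdaoptopt)$ is a genuine computation with the explicit functions $a(\lambda),b(\lambda)$ of~\eqref{eq:a-def-hyper} and the chain rule through $\lambdaopt(\nN)$ via~\eqref{eq:dlambda-core-cacti}, and your proposal does not carry it out. Once this bound is established, the remainder is $O(N\delta^3/\lambdaoptopt)=o(1)$ exactly as the stated hypothesis requires, and the rest of your argument (change of variables, spacing $2$, $|t''|\sim 1$ giving $\sqrt{\pi N/2}$) goes through and agrees with the paper.
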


Finally, we show that the terms far from the maximum do not
contribute significantly to the summation:
\begin{lem}
  \label{lem:tail-core-cacti-hyper}
  Suppose that $\delta^3=o(\lambdaoptopt/N)$ and $\delta^2 = \omega((\ln N)/N)$. Then
  \begin{equation*}
    \sum_{n\in [0,N]\setminus[\nopt-\delta N,\nopt+\delta N]}
    \binom{N}{n}\gcacti(N,n)\Old{\gcore(n,m)}\New{\gpre(n,m)}
    =
    \frac{N!\exp(Nt(\nNopt))}{N^{\omega(1)}}.
  \end{equation*}
\end{lem}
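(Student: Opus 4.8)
The plan is to dominate every summand in the tail by a fixed polynomial times $N!\exp\!\big(Nt(\nN)\big)$ and then exploit the unimodality of $t$ together with a curvature estimate at $\nNopt$ to extract a super-polynomial decay factor. For the first part, recall that every pre-kernel is a core, so $\gpre(n,m)\le\gcore(n,m)$, and since $m=n/2+R$ with $R\to\infty$, Theorem~\ref{thm:corevalue-hyper} gives $\gcore(n,m)\le\alpha n\sqrt m\cdot n!\exp\!\big(n\fcore(\nncopt)\big)$ for every $n\ge1$. I would then insert the exact formula of Theorem~\ref{thm:cacti-formula} for $\gcacti(N,n)$ (which vanishes unless $N-n$ is even), cancel the factorials in $N!\big/\!\big(n!(N-n)!\big)\cdot n(N-n)!N^{(N-n)/2}\big/\!\big(((N-n)/2)!\,2^{(N-n)/2}\big)\cdot n!$, and estimate $((N-n)/2)!$ via~\eqref{eq:error-Stirling-core-hyper}. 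What survives in the exponent is exactly $N\big(-\tfrac{1-\nN}{2}\ln(1-\nN)+\tfrac{1-\nN}{2}\big)+n\fcore(\nncopt)=Nt(\nN)$, by the definition~\eqref{eq:t-def-hyper}; hence there is a polynomial $Q$ with
\[
  \binom Nn\gcacti(N,n)\gpre(n,m)\ \le\ Q(N)\,N!\exp\!\big(Nt(\nN)\big)\qquad\text{for all }n\text{ with }N-n\text{ even.}
\]

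Next I would use the shape of $t$. By Lemma~\ref{lem:max-core-cacti-hyper}, $t$ is strictly increasing on $(0,\nNopt)$ and strictly decreasing on $(\nNopt,1)$, so for every $\nN$ with $|\nN-\nNopt|>\delta$ we have $t(\nN)<\max\{t(\nNopt-\delta),t(\nNopt+\delta)\}$, and it remains to estimate $t(\nNopt\pm\delta)-t(\nNopt)$. Using~\eqref{eq:t-simple-hyper} and the expansions~\eqref{eq:opt-sol-lambda-core-cacti-hyper}, a (Maple-assisted) computation parallel to the derivative analysis of $\fcore$ in Section~\ref{sec:thm-proof-cores-hyper} should give $t''(\nNopt)<0$ with $|t''(\nNopt)|\asymp 1/\lambdaoptopt$, together with $t'''(\nN)=O(1/\lambdaoptopt^2)$ uniformly for $|\nN-\nNopt|\le\delta$. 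The condition $\delta=o(\lambdaoptopt)$ needed to place this window where $t'''$ is controlled follows from $\delta^3=o(\lambdaoptopt/N)$, because $N\lambdaoptopt^2\asymp R\to\infty$ by~\eqref{eq:opt-sol-lambda-core-cacti-hyper}. Taylor's theorem then yields
\[
  t(\nNopt\pm\delta)-t(\nNopt)=\tfrac12 t''(\nNopt)\,\delta^2+O\!\big(\delta^3/\lambdaoptopt^2\big)=-(1+o(1))\,\frac{\delta^2}{2\lambdaoptopt},
\]
the remainder being of smaller order precisely because $\delta=o(\lambdaoptopt)$.

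Combining the two parts: for every $n$ in the tail range $[0,N]\setminus[\nopt-\delta N,\nopt+\delta N]$ we obtain $N\big(t(\nN)-t(\nNopt)\big)\le-(1+o(1))\,N\delta^2/(2\lambdaoptopt)\le-N\delta^2/3$, and $N\delta^2=\omega(\ln N)$ by hypothesis, so $\exp\!\big(N(t(\nN)-t(\nNopt))\big)=N^{-\omega(1)}$. Since the tail sum has at most $N$ nonzero terms, each at most $Q(N)\,N!\exp\!\big(Nt(\nN)\big)$,
\[
  \sum_{n\in[0,N]\setminus[\nopt-\delta N,\nopt+\delta N]}\binom Nn\gcacti(N,n)\gpre(n,m)\ \le\ N\cdot Q(N)\cdot N!\exp\!\big(Nt(\nNopt)\big)\cdot N^{-\omega(1)}\ =\ \frac{N!\exp\!\big(Nt(\nNopt)\big)}{N^{\omega(1)}},
\]
because $N\,Q(N)$ is polynomial, which is the claim.

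The main obstacle is the derivative analysis of $t$ in the middle step: establishing the sign and order of $t''(\nNopt)$ and a usable bound on $t'''$ on $|\nN-\nNopt|\le\delta$, and checking that these dovetail exactly with the hypotheses $\delta^3=o(\lambdaoptopt/N)$ and $\delta^2=\omega(\ln N/N)$. This is lengthy but routine, directly mirroring the treatment of $\fcore$ (in particular the second-derivative computation behind~\eqref{eq:ddiffcore}) in Section~\ref{sec:thm-proof-cores-hyper}; the factorial bookkeeping that identifies the combined exponent as $Nt(\nN)$ is equally mechanical and is essentially the same identity used in the proof of Theorem~\ref{thm:formula-core-cacti-hyper}.
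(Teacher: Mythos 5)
Your strategy is exactly the paper's: bound $\gpre(n,m)\le\gcore(n,m)$, dominate each summand by $Q(N)\,N!\exp(Nt(\nN))$ via Theorems~\ref{thm:corevalue-hyper} and~\ref{thm:cacti-formula} and the definition of $t$, then use the unimodality of $t$ from Lemma~\ref{lem:max-core-cacti-hyper} together with a Taylor expansion at $\nNopt$ to get the $N^{-\omega(1)}$ decay from $N\delta^2=\omega(\ln N)$. However, your curvature estimate is off by a factor of $1/\lambdaoptopt$: the paper computes $t''(\nNopt)=-1+O(\lambdaoptopt)$ in the proof of Lemma~\ref{lem:max-core-cacti-hyper} and $t'''=O(1/\lambdaoptopt)$ in Lemma~\ref{lem:third-derivative-core-cacti}, whereas you assert $|t''(\nNopt)|\asymp 1/\lambdaoptopt$ and $t'''=O(1/\lambdaoptopt^2)$. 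You appear to have carried over the core-function asymptotic $\fcore''(\nncopt)\sim -1/r$ to $t$, but in $t$ the forest-entropy term $-\tfrac{1-\nN}{2}\ln(1-\nN)+\tfrac{1-\nN}{2}$ and the $\nN\fcore(\nncopt)$ contribution combine so that the apparent $1/\lambdaoptopt$ singularity cancels at the critical point, leaving a curvature of constant order (the three pieces of~\eqref{second-derivative-t} contribute $-\tfrac12$, $-\tfrac16$, $-\tfrac13$ as $\lambdaoptopt\to0$). For the present tail bound the error is harmless because it only overstates the decay, and your conclusion still follows with exponent $-(1/2+o(1))N\delta^2$ in place of your $-(1+o(1))N\delta^2/(2\lambdaoptopt)$; but the correct constant in $t''(\nNopt)$ is precisely what sets the prefactor $\sqrt{\pi N/2}$ in the companion Lemma~\ref{lem:approx-core-cacti-hyper}, so it is worth getting right.
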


We are now ready to prove Theorem~\ref{thm:formula-core-cacti-hyper}.
\begin{proof}[Proof of Theorem~\ref{thm:formula-core-cacti-hyper}]
In order to use
Lemma~\ref{lem:approx-core-cacti-hyper} and
Lemma~\ref{lem:tail-core-cacti-hyper}, we need to check if there
exists $\delta$ such that $\delta^3=o(\lambdaoptopt/N)$ and
$\delta^2 =\omega(\ln N/N)$. This is true if and only if
\begin{equation*}
  (\lambdaoptopt)^2
  =\omega
  \paren[\bigg]{
    \frac{\log^3 N}
    {N}
  },
\end{equation*}
which, by~\eqref{eq:opt-sol-lambda-core-cacti-hyper}, is true if and
only if
\begin{equation*}
R=\omega(\log^3 N),
\end{equation*}
which is true by assumption. Thus, assume that $\delta$ satisfies
$\delta^3=o(\lambdaoptopt/N)$ and $\delta^2 =\omega(\ln N/N)$.

Let $J(\delta) = [\nopt-\delta N,\nopt+ \delta N]\cap (2\setZ -N)$.
By~\eqref{eq:opt-sol-lambda-core-cacti-hyper}, we have that
\begin{equation*}
  \nopt = \Theta( \lambdaoptopt N)
  = \Theta(\sqrt{RN}).
\end{equation*}
Moreover, since $\delta^3 = o(\lambdaoptopt/N)$ and $R\to\infty$,
\begin{equation*}
  \delta N = o(\sqrt[6]{RN^3}) = 
  o\paren[\bigg]{\frac{\sqrt{RN}}{R^{1/3}}}
  =
  o(\nopt).
\end{equation*}
Thus, there are constants $\alpha_1>0$ and $\alpha_2>0$ such that any
$n\in J(\delta)$ satisfies $\alpha_1\sqrt{RN} < n <
\alpha_2\sqrt{RN}$. By Lemma~\ref{lem:pre-core-rel}
\Old{\begin{equation*}
  \frac{\gpre(n,m)}{\gcore(n,m)}
  \sim
  2\sqrt{3r},
\end{equation*}}
\New{
  \begin{equation*}
          \gpre(n,m)\sim
      \frac{\sqrt{3}}{\pi n}\cdot
    n!\exp(n\fcore(\nncopt))
  \end{equation*}
}
for any $n\in J(\delta)$ and $m = n/2+R$. \Old{Since $R = o(n)$ and
$R=\omega(\log n)$, by~Theorem~\ref{thm:corevalue-hyper} and by
Theorem~\ref{thm:cacti-formula}, for $n\in J(\delta)$,}  \New{By
Theorem~\ref{thm:cacti-formula}, for $n\in J(\delta)$}
\begin{equation*}
\Old{\gcore(n,m) 
  \sim    
  \frac{1}{2\pi n \sqrt{r}}
  \cdot n!
  \exp\paren[\Big]{
    n\fcore(\nncopt)}
  \quad
  \text{and}
  \quad}
  \gcacti(n,N)
  =
  {\displaystyle \frac{n}{N}\cdot\frac{(N-n)! N^{(N-n)/2}}
    {\left(\frac{N-n}{2}\right)! 2^{(N-n)/2}}}.
\end{equation*}
Thus, for $n\in J(\delta)$, with $m=n/2+R$, by Stirling's
approximation and using the fact that $n = o(N)$ by~\eqref{eq:opt-sol-lambda-core-cacti-hyper},
\begin{equation}
\label{eq:aux-core-cacti-hyper}
  \begin{split}
   \binom{N}{n}\gcacti(N,n)\Old{\gcore(n,m)}\New{\gpre(n,m)}
    &\sim
    \binom{N}{n}\cdot
    {\displaystyle \frac{n}{N}\cdot\frac{(N-n)! N^{\paren{N-n}/{2}}}
      {\left(\frac{N-n}{2}\right)! 2^{\paren{N-n}/{2}}}}
    \cdot
    \Old{\frac{1}{2\pi n \sqrt{r}}}
    \New{\frac{\sqrt{3}}{\pi n}}
    \cdot n!
    \exp\paren[\Big]{
      n\fcore(\nncopt)}
    \\
    &=
    \Old{\frac{\sqrt{n}}{2\pi N \sqrt{R}}}
    \New{\frac{\sqrt{3}}{\pi N}}
    \cdot
    \frac{N! N^{\paren{N-n}/{2}}}{\left(\frac{N-n}{2}\right)!
      2^{\paren{N-n}/{2}}}
    \exp\paren[\Big]{
      n\fcore(\nncopt)}
    \\
    &\sim
    \Old{\frac{\sqrt{2n N}}{2\pi N \sqrt{R(N-n)}}}
    \New{\frac{\sqrt{6}}{\pi N}}
    \cdot
    \exp\paren[\bigg]{
     N t(\nN)+N\ln N-N
    },\text{ by~\eqref{eq:t-def-hyper}}\Old{\\
    &\sim
    \frac{\sqrt{n}}{\pi N \sqrt{2R}}
    \cdot
    \exp\paren[\bigg]{
     N t(\nN)+N\ln N-N
    }}.
  \end{split}
\end{equation}
\Old{By~\eqref{eq:opt-sol-lambda-core-cacti-hyper},
\begin{equation*}
  \frac{\sqrt{n}}{\pi N \sqrt{2R}}
  \sim
  \frac{\sqrt{6}}{\pi N \sqrt{\lambdaoptopt}}
\end{equation*}
and so
\begin{equation}
  \label{eq:aux-core-cacti-hyper}
  \binom{N}{n}\gcacti(N,n)\Old{\gcore(n,m)}\New{\gpre(n,m)}
  \sim
  \frac{\sqrt{6}}{\pi N \sqrt{\lambdaoptopt}}
\end{equation}
for $n\in J(\delta)$.} Since $J(\delta)$ is a finite set for each
$n$,\crisc{removed a reference to uniformity lemma in appendix} we have that there
exists a function $q(n)=o(1)$ such that the $o(1)$
in~\eqref{eq:aux-core-cacti-hyper} is bounded by $q(n)$ for any $n\in
J(\delta)$. Thus,
\begin{equation}
  \label{eq:core-cacti-aux1}
  \begin{split}
    \sum_{n\in J(\delta)}\binom{N}{n}\gcacti(N,n)\gcore(n,m)
    &\sim
    \sum_{n\in J(\delta)}
   \Old{ \frac{\sqrt{6}}{\pi N \sqrt{\lambdaoptopt}}}
   \New{\frac{\sqrt{6}}{\pi N}}
  \cdot
  \exp\paren[\bigg]{
     N t(\nN)+N\ln N-N
   }
   \\
   &\sim
   \Old{\frac{\sqrt{6}}{\pi N \sqrt{\lambdaoptopt}}}
   \New{\frac{\sqrt{6}}{\pi N}}
   \sqrt{\frac{\pi N}{2}}\exp(N t(\nNopt) + N\ln N -N)
  \end{split}
\end{equation}
by Lemma~\ref{lem:approx-core-cacti-hyper}. Together with
Lemma~\ref{lem:tail-core-cacti-hyper}, this proves
\Old{Equation~\eqref{eq:formula-core-cacti} of}
Theorem~\ref{thm:formula-core-cacti-hyper}.
  
\Old{Equations~\eqref{eq:core-cacti-aux1}
and~\eqref{eq:ratio-applies-hyper} implies that
\begin{equation}
    \begin{split}
    \sum_{n\in J(\delta)}\binom{N}{n}\gcacti(N,n)\gpre(n,m)
      &\sim
   2\sqrt{\frac{3R}{n}}\cdot\frac{\sqrt{6}}{\pi N \sqrt{\lambdaoptopt}}
   \sqrt{\frac{\pi N}{2}}\exp(N t(\nNopt) + N\ln N -N)
   \\
      &\sim
   2\sqrt{\frac{3}{12}}\cdot\frac{\sqrt{3}}{\sqrt{\pi N}}
 \exp(N t(\nNopt) + N\ln N -N)
 \\
 &\sim
 \frac{\sqrt{3}}{\sqrt{\pi N}}
 \exp(N t(\nNopt) + N\ln N -N),
  \end{split}
\end{equation}
which, together with Lemma~\ref{lem:tail-core-cacti-hyper} and the
fact that $\gpre(n,m)\leq \gcore(n,m)$, proves
Equation~\eqref{eq:formula-pre-core-cacti} of
Theorem~\ref{thm:formula-core-cacti-hyper}.
}\end{proof}

\subsection{Proof of the lemmas in Section~\ref{sec:core-cacti-hyper}}
\label{sec:lemmas-core-cacti-hyper}

In this section, we prove
Lemmas~\ref{lem:max-core-cacti-hyper},~\ref{lem:approx-core-cacti-hyper}
and~\ref{lem:tail-core-cacti-hyper}. \nickca{Deleted:  See Section
for a Maple spreadsheet.} We start by computing the derivatives of
$t$. For that, we need to compute $\frac{\dif \lambdaopt(\nN)}{\dif
  \nN}$. This can be done by implicit differentiation using
Equation~\eqref{eq:m-sol-core} that defines $\lambdaopt$ and recalling
$m=n/2+R$. We obtain
\begin{equation}
  \label{eq:dlambda-core-cacti}
  \frac{\dif\lambdaopt}{\dif \nN}
  =
  -\frac{\RN}{\nN^2 \mcore a(\lambdaopt)},
\end{equation}
where
\begin{equation}
  \label{eq:a-def-hyper}
  a(\lambda) = \frac{1}{\lambda}
  +\frac{\exp(\lambda)}{f_1(\lambda)}
  +\frac{\exp(\lambda)}{g_2(\lambda)}
  -\frac{2\exp(2\lambda)}{\FF(2\lambda)}
  +\frac{2}{\FF(2\lambda)}.
\end{equation}
Thus, the first derivative of $t(\nN)$, which is defined in~\eqref{eq:t-simple-hyper}, is
\begin{equation}
  \label{first-derivative-t}
  \frac{\ln(1-\nN)}{2}+\ln(3)-\frac{\ln 2}{2}
  +\ln\left(\frac{\FF(2\lambdaopt)}{g_1(\lambdaopt)}\right)
  +\frac{1}{2}\ln\left(\frac{\mcore^2g_1(\lambdaopt)^3}
    {g_2^2(\lambdaopt)f_1(\lambdaopt)(\lambdaopt)^3}\right).
\end{equation}
The second derivative is 
\begin{equation}
  \label{second-derivative-t}
  -\frac{1}{2(1-\nN)}-\frac{2\RN}{(\nN+2\RN)\nN}
  -\frac{4\RN^2}{\nN(\nN+2\RN)^2}
  \frac{b(\lambdaopt)}{a(\lambdaopt)\FF(2\lambdaopt)}
\end{equation}
where
\begin{equation*}
  \begin{split}
    b(\lambda) &=
    2F_1(\lambda)
    -\frac{\FF(2\lambda)\exp(\lambda)}{g_1(\lambda)}.
  \end{split}
\end{equation*}
The third derivative is 
\begin{equation}
  \label{third-derivative-t}
  \begin{split}
  &-\frac{1}{2(1-\nN)^2}
  +\frac{4\RN(\nN+\RN)}{\nN^2(\nN+2\RN)^2}
  \\
  &+\frac{\dif}{\dif\nN}\left(-\frac{4\RN^2}{\nN(\nN+2\RN)^2}\right)
  \frac{b(\lambdaopt)}{a(\lambdaopt)\FF(2\lambdaopt)}
  -\frac{4\RN^2}{\nN(\nN+2\RN)^2}
  \frac{\dif }
  {\dif\lambdaopt}\left(\frac{b(\lambdaopt)}{a(\lambdaopt)\FF(2\lambdaopt)}\right)
  \frac{\dif\lambdaopt}{\dif \nN}.
  \end{split}
\end{equation}

\begin{lem}
   For $\delta = o(\nNopt)$ and   $n\in
  [\nopt-\delta N, \nopt+\delta N]$, we have that
  $|\lambdaopt(n) -\lambdaoptopt| = o(\nNopt)$.
\end{lem}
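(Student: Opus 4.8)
The plan is to reduce the statement to the local Lipschitz property of the map $\alpha\mapsto\lambdaopt_\alpha$ recorded in Lemma~\ref{lem:unique-pre-tools-hyper}, after first establishing the identity $\lambdaopt(\nopt)=\lambdaoptopt$. For the latter: with $m^*\eqdef\nopt/2+R$, the value $\lambdaopt(\nopt)$ is by~\eqref{eq:m-sol-core} the unique positive solution of $\lambda f_1(\lambda)g_2(\lambda)/\FF(2\lambda)=3m^*/\nopt$; on the other hand $\lambdaoptopt$ solves~\eqref{eq:R-sol-core-cacti} and $\nNopt=\FF(2\lambdaoptopt)/(f_1(\lambdaoptopt)g_1(\lambdaoptopt))$ by~\eqref{eq:n-sol-core-cacti}, and dividing~\eqref{eq:R-sol-core-cacti} by $2\FF(2\lambdaoptopt)$ and substituting the formula for $\nNopt$ turns it into $\lambdaoptopt f_1(\lambdaoptopt)g_2(\lambdaoptopt)/\FF(2\lambdaoptopt)=\frac{3}{2}+3R/\nopt=3m^*/\nopt$, so uniqueness gives $\lambdaopt(\nopt)=\lambdaoptopt$. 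I expect this bookkeeping step --- reconciling the mutually-defined quantities $\lambdaoptopt$, $\nNopt$ and the function $\lambdaopt(\cdot)$ --- to be the fiddliest part of the argument.

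Granting that, the rest is routine. For $n$ in the stated range, $\delta N=o(\nNopt N)=o(\nopt)$, so $n\sim\nopt$, and both $3m/n$ and $3m^*/\nopt$ equal $\frac{3}{2}$ plus a quantity tending to $0$ (since $R=o(n)$). Thus $\lambdaopt(n)$ and $\lambdaoptopt=\lambdaopt(\nopt)$ are the values $\lambdaopt_\alpha$ at two arguments lying in an arbitrarily small neighbourhood of $\frac{3}{2}$, so Lemma~\ref{lem:unique-pre-tools-hyper} supplies a constant $\eps'$ with
\[
|\lambdaopt(n)-\lambdaoptopt|\le\eps'\left|\frac{3R}{n}-\frac{3R}{\nopt}\right|=3\eps' R\,\frac{|\nopt-n|}{n\,\nopt}\le\frac{3\eps' R\,\delta N}{n\,\nopt}.
\]
Plugging in the scalings of~\eqref{eq:opt-sol-lambda-core-cacti-hyper}, namely $R\sim\frac{1}{12}(\lambdaoptopt)^2N$, $\nopt\sim\lambdaoptopt N$ and $n\sim\nopt$, gives $R\,\delta N/(n\,\nopt)\sim\delta/12$, whence $|\lambdaopt(n)-\lambdaoptopt|=O(\delta)$, which is $o(\nNopt)$ since $\delta=o(\nNopt)$.

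An equivalent route avoids Lemma~\ref{lem:unique-pre-tools-hyper} by integrating the derivative formula~\eqref{eq:dlambda-core-cacti} over the interval of length at most $2\delta$: after checking that the $1/\lambda$ contributions to $a(\lambda)$ in~\eqref{eq:a-def-hyper} cancel, so that $a(\lambdaopt)=\Theta(1)$ for $\lambda$ near $0$, one has $\dif\lambdaopt/\dif\nN=\Theta(\RN/\nN^2)=\Theta(1)$ throughout the interval, and integration again yields the $O(\delta)$ bound. In either approach the only real obstacle is the algebraic identity $\lambdaopt(\nopt)=\lambdaoptopt$; once it is in place the estimate follows immediately from the monotonicity already established and the series expansions~\eqref{eq:opt-sol-lambda-core-cacti-hyper}.
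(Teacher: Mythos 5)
Your proposal is correct and follows essentially the same route as the paper: both first establish the identity $\lambdaopt(\nopt)=\lambdaoptopt$ by combining~\eqref{eq:n-sol-core-cacti} and~\eqref{eq:R-sol-core-cacti}, and then appeal to the Lipschitz property from Lemma~\ref{lem:unique-pre-tools-hyper}. The only difference is that you make the final Lipschitz estimate explicit (arriving at the $O(\delta)$ bound via~\eqref{eq:opt-sol-lambda-core-cacti-hyper}), where the paper simply states that the conclusion ``follows directly.''
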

\begin{proof}
  Given a connected $(N,M)$-graph such that its core has $n$ vertices
  and $m$ edges, we have that $m = M-(N-n)/2$. Recall that $\nNopt=
  \FF(2\lambdaoptopt)/f_1(\lambdaoptopt) g_1(\lambdaoptopt)$
  by~\eqref{eq:n-sol-core-cacti} and 
  \begin{equation*}
    \frac{6R}{N}= \frac{2\lambdaoptopt
    f_1(\lambdaoptopt) g_2(\lambdaoptopt)-3 \FF(2\lambdaoptopt)}{f_1(\lambdaoptopt)
    g_1(\lambdaoptopt)},
  \end{equation*}
  by~\eqref{eq:R-sol-core-cacti}. Thus,
  \begin{equation*}
    3M =\frac{\lambdaoptopt(1+\exp(2\lambdaoptopt)+\exp(\lambdaoptopt))}
      {\exp(2\lambdaoptopt)-1}.
  \end{equation*}
  Hence,
  \begin{equation*}
    \frac{\lambdaopt(\nopt) \f(\lambdaopt(\nopt)) 
      g_2(\lambdaopt(\nopt))}{\FF(2\lambdaopt(\nopt))}
    =\frac{3m}{\nopt}=
    \frac{3M}{N}\frac{1}{\nNopt}-\frac{3}{2\nNopt}+\frac{3}{2}
    =
    \frac{\lambdaoptopt) \f(\lambdaoptopt) 
      g_2(\lambdaoptopt)}{\FF(2\lambdaoptopt)}
  \end{equation*}
  and so $\lambdaopt = \lambdaopt(\nopt)$.  The lemma then follows
  directly from the fact that $\lambdaoptopt = \lambdaopt(\nNopt)$ and
  Lemma~\ref{lem:unique-pre-tools-hyper}.
\end{proof}

 Now we bound the
third derivative for points close to $\nNopt$:
\begin{lem}
  \label{lem:third-derivative-core-cacti} The third derivative of $t(\nN)$ is
  $O(1/\lambdaoptopt)$ for
  $|\nN-\nNopt|=o(\nNopt)$.
\end{lem}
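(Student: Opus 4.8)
The plan is to bound, one summand at a time, the four terms making up the expression~\eqref{third-derivative-t} for $t'''(\nN)$, throughout under the running hypothesis $|\nN-\nNopt| = o(\nNopt)$. First I would record the baseline magnitudes. Since $R = o(N)$, Lemma~\ref{lem:R-sol-increasing} gives $\lambdaoptopt = o(1)$, and then~\eqref{eq:opt-sol-lambda-core-cacti-hyper} gives $\nNopt = \Theta(\lambdaoptopt)$ and $\RN = \Theta((\lambdaoptopt)^2)$. By the preceding lemma on the closeness of $\lambdaopt(n)$ to $\lambdaoptopt$, the hypothesis $|\nN-\nNopt| = o(\nNopt)$ forces $\lambdaopt = \lambdaopt(\nN) \sim \lambdaoptopt$, hence $\lambdaopt \to 0$, $\nN = \Theta(\lambdaoptopt)$, $\RN = o(\nN)$, and $\nN+\RN = \Theta(\lambdaoptopt)$, $\nN+2\RN = \Theta(\lambdaoptopt)$. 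Finally $\mcore = m/n = 1/2 + o(1) = \Theta(1)$, since the relevant core size satisfies $n = \Theta(\sqrt{RN})$ and $R = o(N)$.

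Next I would record the orders, as $\lambda \to 0$, of the scalar functions that appear, all obtained by series expansion about $\lambda = 0$ (Maple). One has $\FF(2\lambda) = e^{2\lambda}-1-2\lambda = 2\lambda^2 + O(\lambda^3) = \Theta(\lambda^2)$. For $a(\lambda)$ as in~\eqref{eq:a-def-hyper}, each of the summands $1/\lambda$, $e^\lambda/\f(\lambda)$ and $(2-2e^{2\lambda})/\FF(2\lambda)$ carries a $\Theta(1/\lambda)$ singularity, but these cancel exactly, leaving $a(\lambda) = 1/6 + O(\lambda) = \Theta(1)$; this is consistent with differentiating the defining relation~\eqref{eq:m-sol-core} and comparing with~\eqref{eq:dlambda-core-cacti}, which forces $\mcore\,a(\lambda) = 1/12 + o(1)$. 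Similarly $b(\lambda) = \Theta(\lambda)$. Consequently $b(\lambdaopt)/(a(\lambdaopt)\FF(2\lambdaopt)) = \Theta(1/\lambdaoptopt)$ with $\lambda$-derivative $O(1/(\lambdaoptopt)^2)$, and from~\eqref{eq:dlambda-core-cacti} together with the magnitudes above, $\dif\lambdaopt/\dif\nN = -\RN/(\nN^2\,\mcore\,a(\lambdaopt)) = \Theta(1)$.

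With these in hand the four terms of~\eqref{third-derivative-t} are bounded directly. The term $-1/(2(1-\nN)^2)$ is $\Theta(1) = O(1/\lambdaoptopt)$ since $\lambdaoptopt = o(1)$. The term $4\RN(\nN+\RN)/(\nN^2(\nN+2\RN)^2)$ equals $\Theta((\lambdaoptopt)^2\cdot\lambdaoptopt/((\lambdaoptopt)^2(\lambdaoptopt)^2)) = \Theta(1/\lambdaoptopt)$. In the third term, $\RN$ does not depend on $\nN$, so $\dif/\dif\nN\big(-4\RN^2/(\nN(\nN+2\RN)^2)\big) = -4\RN^2\big(-\nN^{-2}(\nN+2\RN)^{-2} - 2\nN^{-1}(\nN+2\RN)^{-3}\big) = O((\lambdaoptopt)^4\cdot(\lambdaoptopt)^{-4}) = O(1)$, and multiplying by $b(\lambdaopt)/(a(\lambdaopt)\FF(2\lambdaopt)) = O(1/\lambdaoptopt)$ gives $O(1/\lambdaoptopt)$. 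In the fourth term, $4\RN^2/(\nN(\nN+2\RN)^2) = O((\lambdaoptopt)^4/(\lambdaoptopt)^3) = O(\lambdaoptopt)$, multiplied by $\dif/\dif\lambdaopt\big(b/(a\FF(2\lambda))\big) = O(1/(\lambdaoptopt)^2)$ and by $\dif\lambdaopt/\dif\nN = O(1)$, which again gives $O(1/\lambdaoptopt)$. Summing the four bounds yields $t'''(\nN) = O(1/\lambdaoptopt)$, as claimed.

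The one delicate point — and the part I would write out in full — is the exact cancellation of the $1/\lambda$ singular parts in $a(\lambda)$, so that $a(\lambda) = \Theta(1)$ rather than $\Theta(1/\lambda)$ (and, correspondingly, that $b(\lambda) = \Theta(\lambda)$ rather than $\Theta(1)$); this is precisely what keeps $b(\lambdaopt)/(a(\lambdaopt)\FF(2\lambdaopt))$ only $\Theta(1/\lambdaoptopt)$ and hence the third and fourth terms of order $1/\lambdaoptopt$. It rests on a careful series expansion of~\eqref{eq:a-def-hyper} and of $b$ about $\lambda = 0$, which I would carry out with Maple as elsewhere in the paper. Everything else is bookkeeping with the magnitudes recorded above, and the same estimates also reconfirm the order of the second derivative in~\eqref{second-derivative-t}.
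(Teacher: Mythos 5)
Your proposal is correct and follows essentially the same approach as the paper: analyze the four summands of~\eqref{third-derivative-t}, invoke the preceding lemma to replace $\lambdaopt(\nN)$ by $\lambdaoptopt$ with small error, and record the small-$\lambda$ expansions $\FF(2\lambda)=\Theta(\lambda^2)$, $a(\lambda)=\Theta(1)$, $b(\lambda)=\Theta(\lambda)$ (the crucial cancellation of the $1/\lambda$ parts in $a$ being exactly what the paper's Maple series $a(\lambda)=1/6+\lambda/12+O(\lambda^2)$ encodes). The only difference is that you track only orders of magnitude, whereas the paper computes exact leading constants ($1/(3\lambdaoptopt)$, $1/\lambdaoptopt$, $2/(3\lambdaoptopt)$) for each summand; since the lemma asks only for an $O(1/\lambdaoptopt)$ bound, your lighter bookkeeping suffices.
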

\begin{proof}
  We analyse the terms in~\eqref{third-derivative-t}. By
  Lemma~\ref{lem:unique-pre-tools-hyper}, since
  $n=\nN(1+o(1))$,
  \begin{equation*}
    \begin{split}
      \frac{\dif}{\dif\nN}\left(-\frac{1}{2(1-\nN)^2}
        +\frac{4\RN(\nN+\RN)}{\nN^2(\nN+2\RN)^2}\right)
      &=
      -\frac{1}{2(1-\nN)^2}
      +\frac{4\RN(\nN+\RN))}{\nN^2 (\nN +2\RN)^2}
      \\
      &=
      \left(-\frac{1}{2(1-\nNopt)^2}
        +\frac{4\RN(\nNopt+\RN)}{(\nNopt)^2 (\nNopt +2\RN)^2}\right)
      (1+o(\nNopt))
      \\
      &=
      \frac{1}{3\lambdaoptopt}+O(1),
    \end{split}
  \end{equation*} 
  where the last equality is obtained by computing the series of the
  expression in the previous equation
  using~\eqref{eq:opt-sol-lambda-core-cacti-hyper}.
  For $\lambda\to 0$,
  \begin{align}
    a(\lambda) &= \frac{1}{6}+\frac{\lambda}{12}+O(\lambda^2)\\ 
    b(\lambda) &= 4\lambda+O(\lambda^2);
  \end{align}
  Thus,  by Lemma~\ref{lem:unique-pre-tools-hyper} and~\eqref{eq:opt-sol-lambda-core-cacti-hyper},
  \begin{equation*}
    \begin{split}
      \frac{\dif}{\dif\nN}\left(-\frac{4\RN^2}{\nN(\nN+2\RN)^2}\right)
      \frac{b(\lambdaopt)}{a(\lambdaopt)\FF(2\lambdaopt)}
      &=
      \frac{4\RN^2 (3\nN -2\RN)}{\nN^2(\nN+2\RN)^3}
      \frac{b(\lambdaopt)}{a(\lambdaopt)\FF(2\lambdaopt)}
      \\&\sim
      \frac{4\RN^2 (3\nNopt -2\RN)}{(\nNopt)^2(\nNopt+2\RN)^3}
      \frac{b(\lambdaoptopt)}{a(\lambdaoptopt)\FF(2\lambdaoptopt)}
      =
      \frac{1}{\lambdaoptopt}+O(1).
    \end{split}
  \end{equation*}
We have that
 \begin{align*}
   \frac{\dif b(\lambda)}{\dif \lambda} = 4\exp(2\lambda)
   -\frac{\exp(\lambda) (3\exp(2\lambda)-3-2\lambda)}{g_1(\lambda)}
   +\frac{\FF(2\lambda)\exp(2\lambda)}{g_1(\lambda)^2}
 \end{align*}
and
\begin{align*}
  \frac{\dif a(\lambda)}{\dif \lambda} =
  -\frac{1}{\lambda^2}
  +\frac{\exp(\lambda)}{f_1(\lambda)}
  -\frac{\exp(2\lambda)}{f_1(\lambda)^2}
  +\frac{\exp(\lambda)}{g_2(\lambda)}
  -\frac{\exp(2\lambda)}{g_2(\lambda)^2}
  -\frac{4\exp(2\lambda)}{\FF(2\lambda)}
  -\frac{4 F_1(\lambda)^2}{\FF(2\lambda)^2}.
\end{align*}
Thus, by~\eqref{eq:dlambda-core-cacti}
\begin{equation*}
  \begin{split}
    &-\frac{4\RN^2}{\nN(\nN+2\RN)^2}
    \frac{\dif
      }    {\dif\lambdaopt}\left(\frac{b(\lambdaopt)}{a(\lambdaopt)\FF(2\lambdaopt)}\right)
    \frac{\dif\lambdaopt}{\dif \nN}
    \\
    &=
    -\frac{4\RN^2}{\nN(\nN+2\RN)^2}
    \Bigg(\frac{\dif b(\lambda)}{\dif
        \lambda}{\Big |}_{\lambda=\lambdaopt}\frac{1}{a(\lambdaopt)\FF(2\lambdaopt)}
      \\
      &\qquad\qquad\qquad\qquad
      -\frac{b(\lambdaopt)}{a(\lambdaopt)^2 \FF(2\lambdaopt)^2}
      \left(\frac{\dif a(\lambda)}{\dif
          \lambda}{\Big |}_{\lambda=\lambdaopt} \FF(2\lambdaopt)
        +2F_1(\lambdaopt) a(\lambdaopt) \right)
    \Bigg)
    \left(-\frac{\RN}{\nN^2 \mcore a(\lambdaopt)}\right).
  \end{split}
  \end{equation*}
  By Lemma~\ref{lem:unique-pre-tools-hyper}, the above is the value
  applied at $\lambdaoptopt$ with an error of $o(\lambdaoptopt)$ and
  the series for it with $\lambdaoptopt\to 0$ is
  \begin{equation*}
    \frac{2}{3\lambdaoptopt}+O(1).
  \end{equation*}
\end{proof}

We now present the proofs of
Lemmas~\ref{lem:max-core-cacti-hyper},~\ref{lem:approx-core-cacti-hyper}
and~\ref{lem:tail-core-cacti-hyper}.
\begin{proof}[Proof of Lemma~\ref{lem:max-core-cacti-hyper}]
  By setting~\eqref{first-derivative-t} to zero and using $\mcore =
  \lambdaopt f_1(\lambdaopt) g_2(\lambdaopt)/\FF(2\lambdaopt)$, we get
  following value for $\nN$
  \begin{equation}
    \label{eq:max-core-cacti-aux1}
    \nNopt =
    \frac{\FF(2\lambdaopt)}{f_1(\lambdaopt) g_1(\lambdaopt)}.
  \end{equation}
  We also know that, by~\eqref{eq:m-sol-core},
  \begin{equation}
    \label{eq:max-core-cacti-aux2}
    \frac{\lambdaopt f_1(\lambdaopt)g_2(\lambdaopt)}{\FF(2\lambdaopt)} =  3\mcore= \frac{3}{2}+\frac{3\RN}{\nN}.
  \end{equation}
  Thus, by combining~\eqref{eq:max-core-cacti-aux1}
  and~\eqref{eq:max-core-cacti-aux2}, we get the following equation:
  \begin{equation}
    \label{RN-lambda-optimal}
    \RN =  \frac{1}{6}\frac{-3 \FF(2\lambdaopt)+2
    \lambdaopt f_1(\lambdaopt) g_2(\lambdaopt)}{f_1(\lambdaopt)
    g_1(\lambdaopt)},
\end{equation}
which has a unique solution $\lambdaoptopt$ for $\RN > 0$ by
Lemma~\ref{lem:R-sol-increasing}.
By computing the series of the second derivative as $\lambdaopt\to 0$,
we get that the second derivative at $\nNopt$ is 
\begin{equation*}
  -1+O(\lambdaopt),
\end{equation*}
which is negative for big enough $n$ and so $\nNopt$ is a local
maximum. 
\end{proof}

\begin{proof}[Proof of Lemma~\ref{lem:approx-core-cacti-hyper}]
  Let $J(\delta) = [\nopt-\delta N, \nopt+\delta N]\cap (2\setZ-N)$.
  Using Taylor's approximation, Lemma~\ref{lem:max-core-cacti-hyper} and
  Lemma~\ref{lem:third-derivative-core-cacti}, for $n\in J(\delta)$,
    \begin{equation*}
    \begin{split}
      \exp(N t(\nN))
    &=
    \exp\left(N t(\nNopt)+ \frac{Nt''(\nNopt)|\nN-\nNopt|^2}{2} +
      O \left( \frac{\delta^3 N}{\lambdaoptopt}\right)\right)
    \\
    &\sim
        \exp\left(N t(\nNopt)+ \frac{Nt''(\nNopt)|\nN-\nNopt|^2}{2}\right)
    \end{split}
  \end{equation*}
  since $\delta^3=o(\lambdaopt / N)$, and so
  \begin{equation}
    \label{eq:taylor-comb-hyper}
    \begin{split}
      \sum_{\nN \in J(\delta)} \exp(N t(\nN))
      &\sim
    \sum_{\nN \in J(\delta)}
    \exp\left(N t(\nNopt)+ \frac{Nt''(\nNopt)|\nN-\nNopt|^2}{2}\right)
    \\
    &=
    \exp\left(N t(\nNopt)\right)
    \sum_{\substack{x\in[-\delta N,\delta N]\\ (\nopt + x) \in
        (-N+2\setZ)}}
    \exp\left(\frac{t''(\nNopt)\prex}{2N}\right).
    \end{split}
  \end{equation}
We change variables from $x$ to $y = \ell x/2$ with $\ell =
\sqrt{|t''(\nNopt)|/2}\sim \frac{1}{2}$. Using $\delta =
\omega(1/\sqrt{N})$ and Lemma~\ref{lem:integral-tools},
\begin{equation*}
  \begin{split}
    \sum_{\substack{x\in[-\delta N,\delta N]\\ N-(\nNopt N + x)\in 2\setZ}}
    \exp\left(\frac{t''(\nNopt)x^2}{2N}\right) 
    &=
    \sum_{\substack{y\in[-\delta \ell\sqrt{N}/2,\delta
        \ell\sqrt{N}/2]\\y\in \setZ\cdot(\ell/\sqrt{N})}}
    \exp\left(-4y^2\right)
    \sim
    \sqrt{\frac{\pi N}{2}}.
  \end{split}
\end{equation*}
\end{proof}

\begin{proof}[Proof of Lemma~\ref{lem:tail-core-cacti-hyper}]
  \New{We have that $\gpre(n,m)\leq\gcore(n,m)$ since every pre-kernel
    is a core and $\gcore(n,m)$ is an upper bound for the number of
    cores with vertex-set $[n]$ and $m$ edges by
    Theorem~\ref{thm:corevalue-hyper}.} \Old{From
    Theorems~\ref{thm:corevalue-hyper} and} \New{Using
    Theorem~\ref{thm:cacti-formula} and} the definition of $t$, we
    have that there is a polynomial $Q(N)$ such that
  \begin{equation*}
    \begin{split}
      \New{ \sum_{n\not\in [(\nNopt-\delta)N,(\nNopt+\delta)N]}
    \binom{N}{n}\gcacti(N,n)\gpre(n,m)}
    &\leq
    \sum_{n\not\in [(\nNopt-\delta)N,(\nNopt+\delta)N]}
    \binom{N}{n}\gcacti(N,n)\gcore(n,m)
    \\
    &\leq Q(N) N!
    \sum_{{\substack{n\in[0,N]\\
      n\not\in[\nopt-\delta N,\nopt+\delta N]}}}
    \exp(N t(\nN))
  \end{split}
 \end{equation*}
 Using Lemma~\ref{lem:max-core-cacti-hyper}
 and~\eqref{eq:taylor-comb-hyper}, we have that
 \begin{equation*}
   \begin{split}
     \sum_{{\substack{n\in[0,N]\\
      n\not\in [\nopt-\delta N,\nopt+\delta N]}}}
    \exp(N t(\nN))
    &\leq
    N
    \exp(Nt(\nNopt)-\Omega(N\delta^2)),
  \end{split}
\end{equation*}
and $N\delta^2 = \omega(\ln N)$ for $\delta^2=\omega(\ln
  N)/{N})$.
\end{proof}

\bigskip

\nickkb{
\noindent
{\bf Acknowledgment\ }
\smallskip

The authors would like to thank Huseyin Acan for pointing out a mistake in~\cite{Sato13} which led to an incorrect claim on the number of cores, not necessarily connected.}
\newpage
\section*{Glossary}

\begin{enumerate} [itemsep=0pt,labelsep=0.5cm,leftmargin=2cm]
\item [$\cNM$] number of connected $3$-uniform hypergraphs on $[N]$
  with $M$ edges
\item [$N$] used for the number of vertices in the graph
\item [$M$] used for the number of edges in the graph
\item [$R$] $M-N/2$ used as an excess function in the graph
\item [$n$] used for the number of edges in the core
\item [$r$] $R/n$, scaled $R$
\item [$f_{k}(\lambda)$] $e^\lambda -\sum_{i=0}^{k-1} \lambda^i/i!$
\item [$g_k(\lambda)$] $e^{\lambda}+k$
\item [$\lambdakc$]  the unique positive solution to
  ${\lambda \fpo{k-1}(\lambda)}/{\fpo{k}(\lambda)}=c$
\item [$\gcore(n,m)$] number of (simple) cores with vertex set $[n]$
  and $m$ edges
\item [$\gcacti(N,n)$] number of forest with vertex set $[N]$
and $[n]$ as its roots
\item [$\gpre(n,m)$] number of (simple) pre-kernels with vertex set
  $[n]$ and $m$ edges that are connected
\item [$\lambdaoptopt$] unique positive solution to $\lambda
  {e^{2\lambda}+e^{\lambda}+1}/\paren {\fpo{1}(\lambda) g_1(\lambda)} =
  {3M}/{N}$. This is used to define a point achieving the maximum
  when combining cores and pre-kernels, p.~\pageref{glo:thm-main}.
\item [$\nNopt$] ${\FF(2\lambdaoptopt)}/\paren{\fpo{1}(\lambdaoptopt)
    g_1(\lambdaoptopt)}$.  This is the point achieving the maximum when
  combining cores and pre-kernels, p.~\pageref{glo:thm-main}.
\item[$2$-edge] an edge that contains exactly one vertex of degree~$1$
\item[$3$-edge] an edge that contains no  vertices of degree~$1$
\end{enumerate}

\noindent\textbf{For the core:}
\begin{enumerate} [itemsep=0pt,labelsep=0.5cm,leftmargin=2cm]
\item[] For any symbol $y$, $\hat y=y/n$ denotes the scaled version of $y$
\item [$\hcore(y)$] $y\ln(yn)-y$.
\item [$\fcore$] a function used to approximate the exponential part
  of $\wcore$, p.~\pageref{eq:fcore-hyper}
\item [$\wcore$] a function used to count cores, p.~\pageref{eq:wcore-def-hyper}
\item [$\nn$]  used as the number of vertices of degree~$1$ 
\item [$\Dcal_{\nn}$] set of all
  $\ds\in(\setN\setminus\set{0,1})^{n-\nn}$ with $\sum_{i} d_i =
  3m-\nn$
\item [$\lambda_{\nn}$] unique positive solution to ${\lambda
    \f(\lambda)}/{\ff(\lambda)} = \ctwo(\nn)$
\item [$\ntwo(\nn)$] $n -\nn$, the number of vertices of
  degree at least~$2$.
\item [$\mthree(\nn)$] $ m -\nn$, the number of $3$-edges
\item [$\Qtwo(\nn)$] $ 3m-\nn$, the sum of degrees of vertices of
  degree at least $2$ 
\item [$\ctwo(\nn)$] $\Qtwo(\nn)/\ntwo(\nn)$, the average degree of
  the vertices of degree at least $2$. 
\item [$\etatwo(\nn)$] $\lambda_{\nn} \exp(\lambda_{\nn}/\f(\lambda_{\nn})$
\item [$\Gcore(\nn,\ds)$ ] random core with $\nn$ vertices of
  degree $1$ and degree sequence $\ds$ for the vertices of degree at
  least $2$, p.~\pageref{glo:gcore}
\item [$\lambdaopt$] unique positive solution to ${\lambda
    \f(\lambda)\fgg(\lambda)}/{\FF(2\lambda)} = 3m/n$. This is used to
  define a point achieving the maximum for $\fcore$,
  p.~\pageref{eq:lambdaopt-def-aux-hyper}
\item [$\nnopt$] ${3m}/{n\fgg(\lambdaopt)}$. This the point
  achieving the maximum for $\fcore$, p.~\pageref{eq:gcore-ineq}
\item [$\Ys$] $(Y_1,\dotsc, Y_{\ntwo}$, where the $Y_i$'s are
  independent random variables with truncated Poisson distribution
  $\tpoisson{2}{\lambda_{\nncore}}$
\item [$\Sigma_{\nn}$] event that a random variable $\Ys$ satisfies
  $\sum_{i} Y_i = 3m-\nn$
\end{enumerate}

\noindent\textbf{For the \prekernel:}

\begin{enumerate} [itemsep=0pt,labelsep=0.5cm,leftmargin=2cm]
\item[] For any symbol $y$, $\hat y = y/n$ denotes the scaled version
  of $y$
\item [$\hpre(y)$]  $y\ln(yn)-y$.
\item [$\fpre$] a function used to approximate the exponential part of
  $\wpre$, p.~\pageref{eq:fpre-def-hyper}
\item [$\wpre$] a function used to count \prekernels, p.~\pageref{eq:wpre-def-hyper}
\item [$\nn$] used as the number of vertices of degree~$1$ 
\item [$\kzero$] used as the number of vertices of degree~$2$ that are
  in two $2$-edges
\item [$\kone$] used as the number of vertices of degree~$2$ that are
  in one $2$-edge and in one $3$-edge
\item [$\ktwo$] used as the number of vertices of degree~$2$ that are
  in two $3$-edges
\item[$x$] used as $(\nn,\kzero,\kone, \ktwo)$
\item[$\Dcal(x)$] subset of $\setN^{\nthree(x)}$ such that
  $\ds\in\Dcal(x)$ if $d_i\geq 3$ for all $i$ and
  $\sum_{i=1}^{\nthree(x)} d_i = \Qthree(x)$, p.~\pageref{Dcalx}
\item[$\ntwoequal(x)$] $\kzero +\kone +\ktwo$, the number of vertices
  of degree $2$ 
\item[$\nthree(x)$] $ n-\nn-\ntwoequal(x)$, the number of vertices
  of degree at least $3$
\item[$\mtwo(x)$] $ \nn$, the number of $2$-edges in the pre-kernel
\item[$\mtwoprime(x)$] $ \nn-\kzero$ , the number of $2$-edges in the
  kernel
\item[$\Ptwo(x)$]   $ 2\mtwoprime(x)$, the number of points in
  $2$-edges in the kernel
\item[$\mthree(x)$] $ m-\nn$, the number of $3$-edges in the
  pre-kernel
\item[$\Pthree(x)$] $ 3\mthree(x)$, the number of points in
  $3$-edges in the pre-kernel
\item[$\Qthree(x)$] $ 3m-\nn-2\ntwoequal(x)$, the sum of the degrees
  of the vertices of degree at least $3$
\item[$\cthree(x)$] $\Qthree(x)/\nthree(x)$, the average degree of the
  vertices of degree at least $3$
\item [$\Tthree(x)$] $\Pthree(x)-\kone(x)-2\ktwo(x)$, the number of
  points in $3$-edges that will be matched to points in vertices of
  degree at least~$3$
\item [$\Ttwo(x)$] $\Ptwo(x)-\kone(x)$, the number of points in
  $2$-edges that will be matched to points in vertices of degree at
  least~$3$
 \item[$\lambda(x)$] unique positive solution to ${\lambda
    \ff(\lambda)}/{\fff(\lambda)} = \cthree(x)$
\item[$\etathree(x)$] $\Qthree(x)/\nthree(x)$
\item[$\lambdaopt$] unique positive solution to ${\lambda
    \f(\lambda)\fgg(\lambda)}/{\FF(2\lambda)} = 3m/n$. This is used to
  define a point achieving the maximum for $\fpre$, p.~\pageref{eq:lambdaopt-def-aux-hyper}
\item[$\xopt$] This the point achieving the maximum for $\fpre$, p.~\pageref{eq:pre-param-opt}
\item[$\Kcal$] random kernel (it receives parameters
  $(V,M_3,\kone,\ktwo,\ds)$), p.~\pageref{glo:kernel}
\item[$\Gpre(x,\ds)$] random pre-kernel with parameters
  $x=(\nn,\kzero,\kone,\ktwo)$ and degree sequence $\ds$ for the
  vertices of degree at least $3$, p.~\pageref{glo:gpre}
\item[$\Ys$] $(Y_1,\dotsc,Y_{\nthree})$, where the $Y_i$'s are
  independent random variables with truncated Poisson distribution
  $\tpoisson{3}{\lambda(x)}$.
\item[$\Sigma(x)$] event that a random variable $\Ys$ satisfies
  $\sum_{i} Y_i = 3m-\nn-2\ntwo$
\item[$S_{\psi}^*$] a set of points `close' to $\xopt$, p.~\pageref{eq:Spsi}
\end{enumerate}

\bibliographystyle{plain}
\bibliography{hyper}

\end{document}